\begin{document}
\newtheorem{theorem}{Theorem}[section]
\newtheorem{lemma}[theorem]{Lemma}
\newtheorem{il}[theorem]{Illustration}
\newtheorem{coro}[theorem]{Corollary}
\newtheorem{prop}[theorem]{Proposition}
\newtheorem{defin}[theorem]{Definition}
\newtheorem{remark}[theorem]{Remark}
\newtheorem{remarks}[theorem]{Remarks}
\newtheorem{ex}[theorem]{Example}
\newtheorem{hyp}[theorem]{Hypothesis}
\newtheorem{Not}[theorem]{Notation}
\newtheorem{Conj}[theorem]{Conjecture}

\begin{titlepage}
\begin{center}
\vspace{2.8cm} {\LARGE\bf Iterative $q$-Difference Galois Theory   }
\end{center}

\begin{center}
  {\large{{\bf  Charlotte
Hardouin}}}\\\vspace{.5cm} {\large{{\bf IWR\\
Im Neuenheimer Feld 368\\
D-69120 Heidelberg
}}}
\end{center}

\begin{center} 4 November 2008 \\
\end{center}
\vspace{.9cm}
\end{titlepage}

\vfil \eject \tableofcontents

\vfil \eject

\section{Introduction}

Initially, the Galois theory of $q$-difference equations was built
for $|q|$ not equal to a root of unity (see for instance \cite{VPS2}).
This choice was made in order to avoid the increase of the field of
constants to a transcendental field. However, P.A. Hendricks studied
this problem in his PhD work under the supervision of M. van der Put
(see \cite{hen}). In Chapter $6$, he gave a notion of Galois groups
for $q$-difference equations over $\mathbb{C}(z)$ with $q^m=1$. His
idea was to compare the category $Diff_{\mathbb{C}(z)}$ of
$q$-difference modules over $\mathbb{C}(z)$ with the category
$FMod_{Z}$ of modules over the ring $\mathbb{C}(z^m)[t,t^{-1}]$. He
thus obtained an equivalence of categories and a fiber functor from
$Diff_{\mathbb{C}(z)}$ with values in the category
$Vect_{\mathbb{C}(z^m)}$ of vector spaces of finite dimension over
$\mathbb{C}(z^m)$. However, in his case there is no unique
Picard-Vessiot ring of a $q$-difference equation. This construction
is also not totally satisfying because we do not
want to have such transcendental base fields for Galois groups.\\

In the same matter, the question of the constant field for
differential modules in positive characteristic has given rise to
the construction of a  differential Galois theory in positive
characteristic. The first work in this direction was made by H.
Hasse and F.K. Schmidt \cite{Hass}, but it was only in 2000 when
B.H. Matzat and M. van der Put set up a modern and systematic
approach to this theory (see \cite{MP} and \cite{M}). The main idea
is to consider not only one derivation but a whole family of
derivations, called \textit{higher derivations} or \textit{iterative
derivations}. By defining the constants as the elements annihilated
by  the whole family of derivations, they succeeded in getting a
good constant field, for instance $\overline{\mathbb{F}_p}$ instead
of $\overline{\mathbb{F}_p}(z^p)$. So they were able to give a
complete description of the Picard-Vessiot theory of differential
equations
in positive characteristic and relate it to a Tannakian approach.\\

For $q$-difference theory, the problem is not the characteristic
 but the roots of unity. Inspired by the work of B.H.
Matzat and M. van der Put,  we  consider in this paper  a family of
\textit{iterative difference operators} instead of considering just
one difference operator, and in this way we stop the increase of the
constant field and succeed in setting up a Picard-Vessiot theory for
$q$-difference equations where $q$ is a root of unity. The theory we
obtain is quite the exact translation of the iterative differential
Galois theory developed by B.H. Matzat and M. van der Put to the
$q$-difference world. This analogy between iterative differential
Galois theory and iterative difference Galois theory could  perhaps
be explained in a more theoretical way,  as it is done in the paper
of Y. Andr\'{e} \cite{And} for classical theories. However we give some tracks of connections in section $3$. \\

The  interests of building such a theory are multiple. The first one
is to fill in  the gap in the classical $q$-difference Galois theory
 for $q$ a root of unity. Thus the theory of iterative $q$-difference
operators developed in this paper encompasses and extends the work
of Singer and van der Put (\cite{VPS2}). But this theory could also
provide a "good" functor of  confluence over complex fields from the world of
$q$-difference to the world of differential equations as it is done over $p$-adic fields
by A. Pulita (\cite{Pul}). Moreover it would be really interesting to establish a link between the
$(\sigma_q,
\partial)$-modules introduced by A. Pulita at the roots of unity and the iterative $q$-difference modules. In a similar way, it will be very enlightening to build   a
confluence functor in characteristic $p$ from iterative
$q$-difference modules to iterative differential modules.\\
  Another
goal of this theory will be to obtain an iterative $q$-difference
version of the Groethendieck Conjecture following the work of L. Di
Vizio \cite{Lu} and the work of P.A. Hendricks \cite{hen}. In other
words, we want to prove that the behavior of an
iterative $q$-difference module defined over $\overline{\mathbb{Q}}$ is determined by the behavior of its reduction modulo $p$ for
 almost all prime $p$. One could try also as it is conjectured in the differential case by Matzat and van der Put (see
 \cite{MP} p.51) to relate the finiteness of the
 the Galois group of an usual $q$-difference module to the existence of an iterative $q$-difference structure for the
 reduction of the module modulo $p$ for almost all prime $p$.\\

For  the whole paper, we fix an  algebraically closed field $C$ and
$q \in C$ with $q \neq 1$. Let $F=C(t)$ denote the field of rational
functions over $C$ and   $\sigma_q$  the automorphism of
$F$ which associates to a function $f(t)$ the function $f(qt)$.\\

In the second section, we introduce the arithmetic basis of
iterative $q$-difference algebra. In this section we work in all
generality, i.e., we do not make any assumptions wether  $q$ is  a
root of unity or not. With this choice we want to emphasize the fact
that we just generalize the Galois theory of $q$-difference of M.F
Singer and M. van der Put
  (\cite{VPS2}).
From the third section until the end of the paper, we will restrict
ourselves to the case of $q$ a primitive root of unity, where the
most peculiar phenomena appear. In Section $3$ we define the
category of iterative $q$-difference modules and their relation with
some specific category of projective systems. As in  \cite{M}, the
equivalence of categories yields  a family of $q$-difference
equations, related to the fact that an iterative $q$-difference
operator is a family of maps. Such a family of equations can be
regarded in two different ways, a general  and a relative one using
the projective
system. Both formulations are used in  later sections.
We build  a Picard-Vessiot theory for iterative
$q$-difference equations by using the classical theory as formulated
for instance in \cite{VPS2}. \\
In Section $5$, we adopt Kolchin's way of thinking  and show how an
iterative $q$-difference Galois group is formed by the $C$-points of
an affine  group-scheme. We also obtain the analogue of Kolchin's
theorem for our theory and the usual Galois
correspondence. To be a little more concrete, at the
end of the section, we give a method to realize linear algebraic
groups of dimension one as iterative $q$-difference Galois groups.\\
As a conclusion to this paper, we state an  analogue of the
Grothendieck-Katz conjecture for  iterative $q$-difference Galois
groups  as in  the work of L. Di Vizio.\\

\noindent \textbf{Acknoledgements}\\

I would like to thank A. Roescheisen and J. Hartmann for all their
help, remarks and so useful comments and also L. Di Vizio specially for enlightening discussions about section $5$.  Last but not least, I am
sincerely grateful to the Professor B.H. Matzat for the inspiration
his theory has provided to me and for all his help and encouragement
to pursue this study. The author also thanks the referee for his important suggestions.

\section{Iterative $q$-difference rings}

In considering an element $q$ of a field $C$ which may be a
primitive root of unity and  trying to construct a $q$-difference
Galois theory, we have to deal with the problem that the field of
constants of the usual $q$-difference operator extends to a
transcendental field. To avoid this increase of the constants, we
have to consider a more arithmetic approach, such as the one
introduced by  H. Hasse and F.K. Schmidt \cite{Hass} for
differential equations in positive characteristic. Until the end of this article, we let
$F=C(t)$ denote the field of rational functions over an algebraically closed field $C$ and
$\sigma_q$ the $q$-difference operator of $F$ defined as follows :
$\sigma_q(f(t)):=f(qt)$.

\subsection{$q$-Arithmetic properties}
In this paragraph, we just recall the most usual $q$-arithmetical
objects.

\begin{defin}\label{defin:bin}
Let $k \geq \mathbb{N}^*$. Put   $ [0]_q=0, \ [k]_q:= \frac{q^k-1}{q-1}.$
\begin{enumerate}
 \item Let  $[k]_q!$ denote the element of $C$ defined by  $[k]_q [k-1]_q...[1]_q$ and  by convention set $[0]_q!=1$. We will say that $[k]_q!$ is  the $q$-factorial of
 $k$.
\item Let  $\binom{r}{k}_q$ denote the element of $C$ defined by
$\frac{[r]_q!}{[k]_q![(r-k)]_q!}$. We will say that $\binom{r}{k}_q$
is the $q$-binomial coefficient of $r$ over $k$.
\item $(t;q)_m:=(1-t)(1-qt)...(1-q^{m-1}t)$.
\end{enumerate}
\end{defin}

\begin{prop} \label{prop:binform}
\begin{enumerate}
\item $\binom{r}{0}_q= \binom{r}{r}_q =1$.
\item $\binom{0}{k}_q =0$ if $k \neq 0$ and $\binom{0}{0}_q=1$.

\item Assume that $q$ is a  primitive $n$-th root of unity. Then for  two integers
$a>b$,

 \begin{equation}\label{eqn:mulb}
 \binom{an}{bn}_q =\binom{a}{b}
 .\end{equation}
\item $\sum_{i+j=k, i\leq s, j\leq r}
\binom{r}{j}_q\binom{s}{i}_q q^{i(r-j)} = \binom{r+s}{k}_q$ for all
$(k,r,s) \in \mathbb{N}^3$ with $r+s\geq k$.
\end{enumerate}
\end{prop}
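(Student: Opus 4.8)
The plan is as follows. Parts~(1) and~(2) are immediate from Definition~\ref{defin:bin}: $\binom{r}{0}_q=\frac{[r]_q!}{[0]_q!\,[r]_q!}=1=\frac{[r]_q!}{[r]_q!\,[0]_q!}=\binom{r}{r}_q$, and likewise $\binom{0}{0}_q=1$, while $\binom{0}{k}_q=0$ for $k\neq 0$ is part of the usual convention $\binom{r}{k}_q=0$ whenever $k<0$ or $k>r$, which is what makes the symbol meaningful outside $0\le k\le r$. For part~(4) I would first record the $q$-binomial theorem $\prod_{i=0}^{m-1}(1+q^iX)=\sum_{k=0}^{m}q^{\binom{k}{2}}\binom{m}{k}_q X^k$, proved by induction on $m$; the inductive step reduces to the $q$-Pascal rule $\binom{m+1}{k}_q=\binom{m}{k}_q+q^{\,m-k+1}\binom{m}{k-1}_q$, which itself comes out of Definition~\ref{defin:bin} upon reducing the two fractions $\binom{m}{k}_q$ and $\binom{m}{k-1}_q$ to the common denominator $[k]_q!\,[m-k+1]_q!$ and using the one-line identity $[m-k+1]_q+q^{\,m-k+1}[k]_q=[m+1]_q$.

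With the $q$-binomial theorem in hand, part~(4) follows by expanding both sides of $\prod_{i=0}^{r+s-1}(1+q^iX)=\bigl(\prod_{i=0}^{r-1}(1+q^iX)\bigr)\bigl(\prod_{i=0}^{s-1}(1+q^i(q^rX))\bigr)$ and comparing the coefficient of $X^k$. This yields $q^{\binom{k}{2}}\binom{r+s}{k}_q=\sum_{i+j=k}q^{\,\binom{i}{2}+\binom{j}{2}+ri}\binom{r}{j}_q\binom{s}{i}_q$; after cancelling $q^{\binom{k}{2}}$ and applying the elementary identity $\binom{i}{2}+\binom{j}{2}-\binom{i+j}{2}=-ij$, the exponent becomes $-ij+ri=i(r-j)$, which is precisely the asserted formula (the constraints $j\le r$ and $i\le s$ are automatic, since the corresponding $q$-binomial vanishes outside that range).

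For part~(3) I would work directly with the product form $\binom{an}{bn}_q=\prod_{j=1}^{bn}\frac{q^{(a-b)n+j}-1}{q^{\,j}-1}$, read (as always when $q$ is a root of unity) as the value at $q$ of the corresponding Gaussian polynomial in $X$. Since $q^n=1$, a factor with $n\nmid j$ is regular at $q$ and takes there the value $\frac{q^{\,j}-1}{q^{\,j}-1}=1$; for $j=mn$ with $1\le m\le b$ one cancels the common factor $X^n-1$ from numerator and denominator, rewriting the factor as the rational function $\frac{1+X^n+\cdots+X^{(a-b+m-1)n}}{1+X^n+\cdots+X^{(m-1)n}}$, which is regular at $X=q$ with value $\frac{a-b+m}{m}$. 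Multiplying the surviving factors gives $\binom{an}{bn}_q=\prod_{m=1}^{b}\frac{a-b+m}{m}=\frac{a!}{(a-b)!\,b!}=\binom{a}{b}$. (Alternatively one can feed part~(4) with $(r,s,k)=((a-1)n,\,n,\,bn)$: since $\binom{n}{i}_q=0$ for $0<i<n$, because there $[n]_q!=0$ while $[i]_q!$ and $[n-i]_q!$ have no vanishing factor, and since $q^{n\ell}=1$ for every $\ell\ge 0$, the sum collapses to the ordinary Pascal recurrence $\binom{an}{bn}_q=\binom{(a-1)n}{bn}_q+\binom{(a-1)n}{(b-1)n}_q$, and then~\eqref{eqn:mulb} follows by induction on $a$ with (1)--(2) supplying the base cases.)

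The one genuine subtlety, and the step I would expect to require the most care, is that for $q$ a root of unity the literal fraction $\frac{[r]_q!}{[k]_q!\,[r-k]_q!}$ for $\binom{an}{bn}_q$ is of the form $\tfrac{0}{0}$: the numerator $[an]_q!$ has exactly $a$ vanishing factors and the denominator $[bn]_q!\,[(a-b)n]_q!$ has $b+(a-b)=a$ of them. So everything above must be understood with $\binom{r}{k}_q$ meaning the specialisation at $q$ of the Gaussian polynomial $\binom{r}{k}_X\in\mathbb{Z}[X]$; equivalently, the cancellations of $X^n-1$ in part~(3), and the clearing of denominators in parts~(1), (2) and~(4), have to be carried out \emph{before} setting $X=q$. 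Once this convention is fixed, parts~(1), (2) and~(4) are purely formal manipulations of polynomial identities, and in part~(3) the only real work is to check that the regrouped product is regular at $q$, so that its value there is obtained by plain substitution; everything else is routine.
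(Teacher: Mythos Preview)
Your proof is correct. For parts~(1), (2), and~(4) you take essentially the same route as the paper: your $q$-binomial theorem $\prod_{i=0}^{m-1}(1+q^iX)=\sum_k q^{\binom{k}{2}}\binom{m}{k}_q X^k$ is precisely the paper's expansion $(t;q)_m=\sum_j(-1)^j\binom{m}{j}_q q^{j(j-1)/2}t^j$ under the substitution $X=-t$, and both of you obtain~(4) by factoring the product as $(t;q)_{r+s}=(t;q)_r\,(q^rt;q)_s$ and comparing the coefficient of degree $k$.

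For part~(3) the strategies genuinely diverge. The paper stays with the generating-function picture: it uses $(t;q)_{an}=\bigl((t;q)_n\bigr)^a$ (valid since $q^n=1$), expands both sides via the $q$-binomial theorem and the ordinary binomial theorem, and reads off the coefficient of $t^{bn}$; this leaves a residual power of $q$ on each side that must then be shown to cancel. Your primary argument instead writes $\binom{an}{bn}_q=\prod_{j=1}^{bn}\frac{X^{(a-b)n+j}-1}{X^{j}-1}$ as a rational function, observes that the factors with $n\nmid j$ specialise to $1$ at $X=q$, and cancels the common factor $X^n-1$ in the remaining $b$ factors before specialising. This is more elementary and avoids any bookkeeping of signs or $q$-powers; it also makes transparent the point you rightly flag about the $0/0$ indeterminacy, which the paper passes over in silence. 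Your alternative derivation via the collapsed Vandermonde identity (feeding~(4) with $(r,s,k)=((a-1)n,n,bn)$ to recover the ordinary Pascal recurrence) is a nice bonus that the paper does not mention.
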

\begin{proof}
~~\\
\textbf{Proof of part $3$}\\
Let $m \in \mathbb{N}$. One expand the function $(t;q)_m$ of $C(t)$ defined in
\ref{defin:bin} part $3$, i.e.
\begin{equation}\label{eqn:fact}
(t;q)_m= \sum_{j=0}^m(-1)^j\binom{m}{j}_q q^{j(j-1)/2} t^j
.\end{equation} Because $q^n=1$ and $n$ is the order of $q$, we have
$ (t;q)_{an}=(t;q)_n^a$. Using Equation (\ref{eqn:fact}), we obtain
$$\sum_{j=0}^{an}(-1)^j\binom{an}{j}_q q^{j(j-1)/2} t^j=
\sum_{j=0}^a \binom{a}{j} (-1)^{nj}q^{n(n-1)j/2} t^{nj}.$$ By
comparing the terms in $t^{bn}$, we have $\binom{an}{bn}_q
q^{\frac{bn(bn-1)}{2}}=\binom{a}{b}
 q^{b\frac{n(n-1)}{2}}.$\\
\textbf{Proof of part $4$}\\
Let $(k,r,s) \in \mathbb{N}^3$ with $r+s\geq k$. We have
\begin{equation}\label{eqn:fact1}
(t;q)_{r+s}=(t;q)_r(q^rt;q)_s.
\end{equation}
By comparing the terms in $t^k$, we obtain
$$(-1)^kq^{k(k-1)/2}\binom{r+s}{k}_q=\sum_{i+j=k, i\leq s, j\leq r}
(-1)^{i+j}q^{k(k-1)/2}\binom{r}{j}_q\binom{s}{i}_q q^{i(r-j)} .$$
\end{proof}
\begin{remark}\label{remark:pnul}
If $C$ is of characteristic $p>0$, then for $p^j>i$ we get from
equation (\ref{eqn:mulb}) $\binom{np^j}{ni}_q=0$.
\end{remark}

\subsection{Iterative $q$-difference ring}

In this paragraph, we establish the formal properties of the iterative
$q$-difference operator. In the world of $q$-difference the analogue
of the derivation $\frac{d}{dt}$ is the operator
$\delta_q:=\frac{\sigma_q -id}{(q-1)t}$ (see for instance
\cite{And2} p.1). Heuristically speaking, when $q$ goes to $1$,
$\delta_q$ goes to the usual derivation $\frac{d}{dt}$. Thus the
main idea of our constructions is to deform the iterative
derivations into iterative difference operators by replacing
$\frac{d}{dt}$ by $\delta_q$ and all the arithmetical factors occurring
in their Definition $1.1$ of \cite{M} by their $q$-analogues. The only
change appears at the part $4$ of Definition \ref{defin:qder}, where
a twist by $\sigma_q$ occurs.
\begin{defin}\label{defin:qder}
Let $R$ be a finitely generated  $C(t)$-algebra having an automorphism also called $\sigma_q$ extending $\sigma_q$ on $C(t)$ (see
\cite{VPS2}, section $1.1$) and let $\delta_R^{*}:= (\delta_R^{(k)})_{k \in \mathbb{N}}$ be a
collection of maps from $R$ to $R$.  The family $\delta_R^{*}$ is
called an \textbf{iterative $q$-difference operator} on $R$, if for
all $a,b \in R$ and all $i,j,k \in \mathbb{N}$, the following
properties are satisfied
\begin{enumerate}
\item $\delta_R^{(0)}=id$,
\item $\delta_R^{(1)} =\frac{\sigma_q -id}{(q-1)t}$,
\item $\delta_R^{(k)}(x +y)= \delta_R^{(k)}(x) + \delta_R^{(k)}(y)$,
\item $\delta^{(k)}(ab) = \sum_{i+j=k} \sigma_q^i(\delta_R^{(j)}(a))
\delta_R^{(i)}(b)$,
\item $\delta_R^{(i)} \circ \delta_R^{(j)}=\binom{i+j}{i}_q
\delta_R^{(i+j)}$.

\end{enumerate}
 The set of
iterative $q$-difference operators is denoted by $ID_q(R)$. For
$\delta_R^* \in ID_q(R)$, the tuple $(R,\delta_R^*)$ is called an
\textbf{iterative $q$-difference ring} ($ID_q$-ring). We say that an
element $c$ of $R$ is a \textbf{constant} if $\delta_R^{(k)}(c)=0$
 for all $k \in
\mathbb{N}^*$. We  denote by $C(R)$ the ring of constants of $R$.

\end{defin}

\begin{remark}\label{remark:ech}

 If $R$ is a ring, then $C(R)$ is a ring. If $R$ is a
field, then $C(R)$ is a field.

\end{remark}

\begin{lemma}\label{lemma: ech}
For all $j,i \in \mathbb{N}$, we have
\begin{equation}\label{eqn:sd}\sigma_q^j \delta_R^{(i)}=
\frac{1}{q^{ji}}\delta_R^{(i)} \sigma_q^j .\end{equation}
\end{lemma}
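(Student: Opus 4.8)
The plan is to reduce the statement to the case $j=1$ and then to establish $\sigma_q\circ\delta_R^{(i)}=q^{-i}\,\delta_R^{(i)}\circ\sigma_q$ for every $i$ by one direct computation with the operators. The reduction is immediate: once the case $j=1$ is available, an induction on $j$ gives
\[
\sigma_q^{\,j}\,\delta_R^{(i)}=q^{-i}\,\sigma_q^{\,j-1}\,\delta_R^{(i)}\,\sigma_q=q^{-ji}\,\delta_R^{(i)}\,\sigma_q^{\,j},
\]
the first equality using the case $j=1$ and the second the inductive hypothesis. The case $i=0$ is trivial since $\delta_R^{(0)}=\mathrm{id}$, so from now on I take $i\ge 1$ and $j=1$.

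The crucial preliminary is the vanishing $\delta_R^{(d)}(t)=0$ for all $d\ge 2$. (Alongside it one also needs $\delta_R^{(k)}(c)=0$ for every $c\in C\cup\{1\}$ and $k\ge 1$, i.e.\ that $C$ lies in the ring of constants and the $\delta_R^{(k)}$ are $C$-linear; the cases $k=1$ are immediate from property~2 and the rest is routine.) To prove $\delta_R^{(d)}(t)=0$ I argue by induction on $d$. From property~5, $\delta_R^{(d-1)}\circ\delta_R^{(1)}=[d]_q\,\delta_R^{(d)}$, so evaluating at $t$ and using $\delta_R^{(1)}(t)=1$, $\delta_R^{(d-1)}(1)=0$ yields $[d]_q\,\delta_R^{(d)}(t)=0$; this already gives $\delta_R^{(d)}(t)=0$ whenever $[d]_q\ne 0$, in particular for all $d\ge 2$ when $q$ is not a root of unity. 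The delicate case is $q^d=1$: then $q^{d+1}=q$, so $[d+1]_q=1$ and property~5 gives $\delta_R^{(1)}\circ\delta_R^{(d)}=\delta_R^{(d)}\circ\delta_R^{(1)}=\delta_R^{(d+1)}$; hence $\delta_R^{(d+1)}(t)=\delta_R^{(d)}(1)=0$ and $\delta_R^{(1)}\bigl(\delta_R^{(d)}(t)\bigr)=\delta_R^{(d+1)}(t)=0$. Now evaluate the equal operators $\delta_R^{(d)}\circ\delta_R^{(1)}$ and $\delta_R^{(1)}\circ\delta_R^{(d)}$ at $t^2$: by the Leibniz rule (property~4), $\delta_R^{(1)}(t^2)=[2]_q\,t$ and $\delta_R^{(d)}(t^2)=2t\,\delta_R^{(d)}(t)$ up to a summand annihilated by $\delta_R^{(1)}$ (it is the induction hypothesis $\delta_R^{(c)}(t)=0$ for $2\le c\le d-1$ that removes the other terms), so, using also the $C$-linearity of $\delta_R^{(d)}$ and $\delta_R^{(1)}\bigl(\delta_R^{(d)}(t)\bigr)=0$, the first evaluation is $[2]_q\,\delta_R^{(d)}(t)$ and the second is $2\,\delta_R^{(d)}(t)$. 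Hence $(q-1)\,\delta_R^{(d)}(t)=([2]_q-2)\,\delta_R^{(d)}(t)=0$, and since $q\ne 1$ we get $\delta_R^{(d)}(t)=0$.

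With the preliminary in hand the case $j=1$ is one computation. Write $\sigma_q=\mathrm{id}+(q-1)t\,\delta_R^{(1)}$ (property~2). By property~5,
\[
q^i\,\sigma_q\circ\delta_R^{(i)}=q^i\,\delta_R^{(i)}+q^i(q-1)[i+1]_q\,t\,\delta_R^{(i+1)}.
\]
On the other hand $\delta_R^{(i)}\bigl(\sigma_q(x)\bigr)=\delta_R^{(i)}(x)+(q-1)\,\delta_R^{(i)}\bigl(t\,\delta_R^{(1)}(x)\bigr)$, and when $\delta_R^{(i)}\bigl(t\,\delta_R^{(1)}(x)\bigr)$ is expanded by property~4 every term carrying a factor $\delta_R^{(b)}(t)$ with $b\ge 2$ vanishes by the preliminary, leaving only the terms with $b=0$ and $b=1$; by property~5 these equal $q^i[i+1]_q\,t\,\delta_R^{(i+1)}(x)$ and $[i]_q\,\delta_R^{(i)}(x)$ respectively. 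Therefore $\delta_R^{(i)}\circ\sigma_q=\bigl(1+(q-1)[i]_q\bigr)\delta_R^{(i)}+(q-1)q^i[i+1]_q\,t\,\delta_R^{(i+1)}$, and as $(q-1)[i]_q=q^i-1$ the first coefficient is exactly $q^i$; comparing with the displayed formula gives $\delta_R^{(i)}\circ\sigma_q=q^i\,\sigma_q\circ\delta_R^{(i)}$, which is the assertion.

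The main obstacle is precisely the vanishing of $[d]_q$, equivalently $q^d=1$: away from roots of unity everything reduces to dividing by $[d]_q$ and bookkeeping with properties~4 and~5, but at a root of unity one cannot divide and must instead exploit that $\delta_R^{(d)}\circ\delta_R^{(1)}$ and $\delta_R^{(1)}\circ\delta_R^{(d)}$ coincide (both being $\delta_R^{(d+1)}$, since $[d+1]_q=1$) and evaluate them on $t^2$. Once $\delta_R^{(d)}(t)=0$ is secured for all $d\ge 2$, the concluding computation is entirely uniform in $i$.
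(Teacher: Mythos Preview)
Your argument is correct and actually makes explicit a point the paper glosses over. Both proofs reduce to $j=1$ and exploit $\delta_R^{(1)}\circ\delta_R^{(i)}=\delta_R^{(i)}\circ\delta_R^{(1)}$ from axiom~5, but they package the computation differently. The paper expands $\delta_R^{(i)}\!\bigl(t^{-1}(\sigma_q-\mathrm{id})(x)\bigr)$ via the Leibniz rule, invokes a recursion $\delta_R^{(k)}(t^{-1})\,t+\sigma_q(\delta_R^{(k-1)}(t^{-1}))=0$ coming from $\delta_R^{(k)}(t\cdot t^{-1})=0$, and then telescopes. That recursion, however, already presupposes $\delta_R^{(j)}(t)=0$ for $j\ge 2$, which the paper never justifies. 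You isolate this vanishing as a separate preliminary, supply the argument (including the delicate root-of-unity case via evaluation on $t^2$), and then finish with a clean two-line comparison of $q^i\sigma_q\circ\delta_R^{(i)}$ and $\delta_R^{(i)}\circ\sigma_q$. Your route is more transparent and more self-contained; the paper's route is shorter on the page but leans on the same unstated fact.

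One small caveat: you call the verification of $\delta_R^{(k)}(c)=0$ for $c\in C$ ``routine,'' and you use it when pulling $[2]_q$ through $\delta_R^{(d)}$ in the $q^d=1$ step. For $k$ with $[k]_q\ne 0$ it is indeed routine (divide $[k]_q\delta_R^{(k)}(c)=\delta_R^{(k-1)}(\delta_R^{(1)}(c))=0$), but for $k$ a multiple of the order of $q$ the axioms do not obviously force it, and one really wants the standing hypothesis that $\delta_R^\ast$ restricts to the standard operator on $C(t)$ (equivalently $C\subset C(R)$). The paper's proof makes the same tacit assumption when it pulls the scalar $q-1$ through $\delta_R^{(i-1)}$, so this is not a defect of your argument relative to the paper; it is worth being aware of, though.
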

\begin{proof}

In order to prove Equation (\ref{eqn:sd}), it is sufficient to prove
it for $j=1$, the general case  obviously follows from this
case.\\
For all $k >0$, we have
\begin{equation}\label{eqn:nul}\delta_R^{(k)}(t
\frac{1}{t})=0=\delta_R^{(k)}(t^{-1})t +
\sigma_q(\delta_R^{(k-1)}(t^{-1})).\end{equation}

By part $5$ of Definition  \ref{defin:qder}, we get $\delta_R^{(1)}
\circ \delta_R^{(i)} =\delta_R^{(i)} \circ \delta_R^{(1)}$ for all
$i \in \mathbb{N}$. Using part $2$ and $4$, we obtain that

\begin{equation} \label{eqn:nul1} \frac{\sigma_q -id}{t} \circ \delta_R^{(i)}(x)=\delta_R^{(i)}
\circ(\frac{\sigma_q
-id}{t})(x)=\sum_{k=1}^i\sigma_q^k(\delta_R^{(i-k)}(\sigma_q
-id)(x))\delta_R^{(k)}(t^{-1}) + \delta_R^{(i)}((\sigma_q -id)(x))
t^{-1}\end{equation} for all $x \in R$ and $i \in \mathbb{N}$. By  Equation
(\ref{eqn:nul1}), we get
$$ \frac{\sigma_q}{t} \circ \delta_R^{(i)}(x)=\frac{-1}{t} \sigma_q
[\sum_{k=0}^{i-1}\sigma_q^{k}(\delta_R^{(i-1-k)}(\sigma_q
-id)(x))\delta_R^{(k)}(t^{-1})] + \frac{\delta_R^{(i)} \circ
\sigma_q(x)}{t},$$ i.e.,
$$ \frac{\sigma_q}{t} \circ \delta_R^{(i)}(x)= \frac{-1}{t} \sigma_q(\delta_R^{(i-1)}\circ
\delta_R^{(1)}(x)) + \frac{\delta_R^{(i)} \circ \sigma_q(x)}{t} $$
that is,

$$\frac{\sigma_q}{t} \circ \delta_R^{(i)}(x)= -\frac{q-1}{t} \sigma_q
\circ( \frac{q^i-1}{q-1} \delta_R^{(i)} )(x) + \frac{\delta_R^{(i)}
\circ \sigma_q(x)}{t}.$$ This last equation gives $$\sigma_q
\delta_R^{(i)}(x)= \frac{1}{q^i}\delta_R^{(i)} \sigma_q (x)$$ which
concludes the proof.
\end{proof}

\begin{remark}[Classical case] If $q$ is not a root of unity then $\delta_R^{(k)}=\frac{{(\delta_R^{(1)})}^k}{[k]_q!}$
and the iterative $q$-difference rings that we consider are the
$q$-difference algebras extensions of $C(t)$ studied by M. van der Put and M.F Singer in
\cite{VPS2} chapter $1$.
\end{remark}

\textbf{Main example : The  field of rational functions over $C$}\\

\begin{defin}\label{defin:itrat}

Let $k \in \mathbb{N}$. Let $\delta_q^{(k)}$ denote the additive map
from $C[t]$ to $C[t]$ defined by    $ \delta_q^{(k)}(\lambda
t^{r}):=\lambda \binom{r}{k}_q t^{r-k}, $ for all $r \in
\mathbb{N}$, and $\lambda \in C $. Using the formula
$\delta^{(k)}(ab) = \sum_{i+j=k} \sigma_q^i(\delta_q^{(j)}(a))
\delta_q^{(i)}(b)$, we extend $\delta_q^{(k)}$ to $F=C(t)$.
\end{defin}

\begin{prop}\label{prop:qder}
The collection  $(\delta_q^{(k)})_{k \in \mathbb{N}}$ of maps from
$F$ to $F$, defined previously, satisfy

\begin{enumerate}
\item $\delta_q^{(0)}=id$,
\item $\delta_q^{(1)}= \frac{\sigma_q -id}{(q-1)t}$,
\item for all $k \in \mathbb{N}$, the map $\delta_q^{(k)}$is
additive,
\item $\delta_q^{(i)} \circ \delta_q^{(j)}=\binom{i+j}{i}_q
\delta_q^{(i+j)}$.

\end{enumerate}
\end{prop}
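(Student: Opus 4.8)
The plan is to establish all four identities first on the polynomial ring $C[t]$, where each $\delta_q^{(k)}$ is given by an explicit formula on monomials, and then to transfer them to $F=C(t)$ through the twisted Leibniz rule that defines the extension. On $C[t]$ everything reduces, by additivity, to identities on a monomial $\lambda t^{r}$. Parts (1) and (2) are immediate: $\delta_q^{(0)}(\lambda t^{r})=\lambda\binom{r}{0}_q t^{r}=\lambda t^{r}$ by Proposition \ref{prop:binform}(1), and $\delta_q^{(1)}(\lambda t^{r})=\lambda\binom{r}{1}_q t^{r-1}=\lambda[r]_q t^{r-1}=\lambda\frac{q^{r}-1}{q-1}t^{r-1}=\frac{\sigma_q-\mathrm{id}}{(q-1)t}(\lambda t^{r})$; part (3) holds by construction. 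For part (4) I evaluate both sides on $\lambda t^{r}$: the left side is $\lambda\binom{r}{j}_q\binom{r-j}{i}_q t^{r-i-j}$ and the right side is $\lambda\binom{i+j}{i}_q\binom{r}{i+j}_q t^{r-i-j}$, and the two scalar factors agree because, rewriting with $q$-factorials, both equal $\lambda\,[r]_q!/\big([i]_q!\,[j]_q!\,[r-i-j]_q!\big)$ when $r\ge i+j$, while for $r<i+j$ both vanish (one of the $q$-binomials involved has its lower index exceeding its upper index, hence is $0$ by the convention implicit in Definition \ref{defin:itrat} and Proposition \ref{prop:binform}(1)--(2)).

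The second step, which is what makes the passage to $F$ legitimate, is to check that the twisted product rule (part (4) of Definition \ref{defin:qder}) already holds on $C[t]$. Again it suffices to test it on two monomials $\lambda t^{r}$ and $\mu t^{s}$: one side equals $\lambda\mu\binom{r+s}{k}_q t^{r+s-k}$, and the other, using $\sigma_q^{i}(t^{r-j})=q^{i(r-j)}t^{r-j}$, equals $\lambda\mu\big(\sum_{i+j=k}\binom{r}{j}_q\binom{s}{i}_q q^{i(r-j)}\big)t^{r+s-k}$; the terms with $i>s$ or $j>r$ vanish, so the inner sum coincides with the constrained sum of Proposition \ref{prop:binform}(4), namely $\binom{r+s}{k}_q$. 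By additivity the product rule holds on all of $C[t]$.

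Finally I extend to $F$. For $f\in F$ choose $a,b\in C[t]$ with $b\neq0$ and $f=a/b$; imposing the product rule on $a=b\cdot f$ forces $\sigma_q^{k}(b)\,\delta_q^{(k)}(f)=\delta_q^{(k)}(a)-\sum_{i+j=k,\ i<k}\sigma_q^{i}\big(\delta_q^{(j)}(b)\big)\delta_q^{(i)}(f)$, which defines $\delta_q^{(k)}(f)$ recursively in $k$ (starting from $\delta_q^{(0)}(f)=f$) since $\sigma_q^{k}(b)\neq0$. The main obstacle is precisely here: one must check that this extension is independent of the representation $a/b$ and that (1)--(4) persist on $F$. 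Independence I would prove by induction on $k$: if $ab'=a'b$ in $C[t]$ then $\delta_q^{(k)}(ab')=\delta_q^{(k)}(a'b)$, and expanding both via the product rule on $C[t]$ and using the inductive hypothesis that $\delta_q^{(i)}(f)$ for $i<k$ is already well defined, the two recursions for $\delta_q^{(k)}(f)$ yield the same element. Properties (1) and (3) then pass to $F$ trivially, and (2) holds because $\delta_q^{(1)}$ and $\frac{\sigma_q-\mathrm{id}}{(q-1)t}$ are both $\sigma_q$-derivations agreeing on the generators $t$ and $C$, hence everywhere. For (4) on $F$ the cleanest route is to record the general fact — the exact $q$-analogue of the corresponding statement for iterative derivations, proved by the same induction on $k$ after clearing denominators — that a family of additive maps satisfying (1)--(5) of Definition \ref{defin:qder} on a $\sigma_q$-domain extends uniquely to its fraction field still satisfying (1)--(5); applying it to $R=C[t]$ concludes the argument.
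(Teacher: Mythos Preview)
Your proof is correct and follows essentially the same route as the paper: verify the identities on monomials $t^{r}$ (which by additivity handles $C[t]$) and then pass to $F=C(t)$ via the twisted Leibniz rule. The paper's argument is considerably terser --- it simply asserts that ``by construction'' it suffices to check on $t^{r}$, and then carries out the same monomial computations you do for parts (1), (2), and (4), using the identity $\binom{r-j}{i}_q\binom{r}{j}_q=\binom{i+j}{i}_q\binom{r}{i+j}_q$ for part (4).

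Where you go further is in explicitly verifying the product rule on $C[t]$ via Proposition~\ref{prop:binform}(4), and in discussing well-definedness of the extension to $F$ and the persistence of (1)--(4) there. The paper defers this: Definition~\ref{defin:itrat} simply declares the extension via the product rule, and the general localisation result (your ``cleanest route'') is only proved later as Proposition~\ref{prop:ext}, using the Taylor-series machinery. So your write-up is more self-contained at this point in the exposition, at the cost of some length; the paper's version relies on the reader accepting that the extension is harmless, with justification arriving downstream.
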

\begin{proof}

By construction of $(\delta_q^{(k)})_{k \in \mathbb{N}}$, it is
sufficient to prove that all the formulas hold upon evaluation on
$t^r$ with $r \in \mathbb{N}$.

\begin{enumerate}
\item Because $\binom{k}{0}_q=1$, it is obvious that
$\delta_q^{(0)} =id$.
\item For all $r \in \mathbb{N}$, we have
$\delta_q^{(1)}(t^r)=\binom{r}{1}_q t^{r-1}=\frac{q^rt^r
-t^r}{(q-1)t}= \frac{\sigma_q -id}{(q-1)t}(t^r)$.
\item
Let $r \in \mathbb{N}$. We have
$$\delta_q^{(i)} \circ \delta_q^{(j)}(t^r)=
\binom{r-j}{i}_q \binom{r}{j}_q t^{r}$$ and
$$\binom{r-j}{i}_q \binom{r}{j}_q=\binom{i+j}{i}_q
\binom{r}{i+j}_q,$$  which gives $$\delta_q^{(i)} \circ
\delta_q^{(j)}(t^r)=\binom{i+j}{i}_q \delta_q^{(i+j)}(t^r).$$
\end{enumerate}
\end{proof}
\begin{prop}
The field  $F=C(t)$ endowed with the collection of maps
$(\delta_q^{(k)})_{k \in \mathbb{N}}$ as in Definition
\ref{defin:itrat} is an iterative $q$-difference field with
$\delta_q^{(n)}(t^n)=1$ for all $n \in \mathbb{N}$ and thus
$C(F)=C$.
\end{prop}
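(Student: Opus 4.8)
The plan is to separate what is already established from the one genuinely new input, and then do a short computation for the constants. By Proposition \ref{prop:qder} the maps $\delta_q^{(k)}$ on $F=C(t)$ already satisfy parts $1$, $2$, $3$ and the composition rule (part $5$) of Definition \ref{defin:qder}, and $\sigma_q$ is visibly an automorphism of $C(t)$ with inverse $f(t)\mapsto f(q^{-1}t)$; so the only thing missing for ``$(F,\delta_q^*)$ is an $ID_q$-field'' is the twisted Leibniz rule, part $4$ of Definition \ref{defin:qder}, on all of $F$. After that, $\delta_q^{(n)}(t^n)=1$ is immediate from $\binom{n}{n}_q=1$ (part $1$ of Proposition \ref{prop:binform}), and $C(F)=C$ follows by a short argument. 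So the steps are: (i) prove the Leibniz rule on $C[t]$; (ii) check that its extension to $C(t)$ is well defined; (iii) compute $C(F)$.

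For (i), since each $\delta_q^{(k)}$ is additive and both sides of the Leibniz rule are additive in $a$ and in $b$, I would reduce to $a=t^r$, $b=t^s$. Using $\sigma_q^i(t^{r-j})=q^{i(r-j)}t^{r-j}$ and $\delta_q^{(j)}(t^r)=\binom{r}{j}_q t^{r-j}$, the right-hand side of the rule becomes $\bigl(\sum_{i+j=k}\binom{r}{j}_q\binom{s}{i}_q q^{i(r-j)}\bigr)t^{r+s-k}$; as $\binom{a}{b}_q=0$ for $b>a$, the terms violating $j\le r$ or $i\le s$ vanish, so the bracket is exactly the sum in part $4$ of Proposition \ref{prop:binform}, hence equals $\binom{r+s}{k}_q$, and the right-hand side is $\binom{r+s}{k}_q t^{r+s-k}=\delta_q^{(k)}(t^{r+s})=\delta_q^{(k)}(t^rt^s)$. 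This is routine $q$-binomial bookkeeping on top of the $q$-Vandermonde identity already recorded in Proposition \ref{prop:binform}.

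For (ii), I would note that the Leibniz rule on $C[t]$ says precisely that $a\mapsto\sum_{k\ge 0}\delta_q^{(k)}(a)T^k$ is a ring homomorphism from $C[t]$ into the twisted power series ring $F[[T]]$ with $T\cdot a=\sigma_q(a)T$. Any nonzero $P\in C[t]$ maps to an element with constant term $P\in F^\times$, hence to a unit of $F[[T]]$, so this homomorphism extends uniquely along the localization $C[t]\hookrightarrow C(t)=\mathrm{Frac}(C[t])$; reading off the coefficient of $T^k$ yields well-defined maps on $C(t)$ which still obey the Leibniz rule and which coincide with the extension prescribed in Definition \ref{defin:itrat} (the latter being forced by the $i=0$ term of the rule). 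Equivalently, one checks directly that the recursion $\delta_q^{(k)}(P/Q)=Q^{-1}\bigl(\delta_q^{(k)}(P)-\sum_{i=1}^{k}\sigma_q^i(\delta_q^{(k-i)}(P/Q))\delta_q^{(i)}(Q)\bigr)$ does not depend on the chosen representative, using (i). Together with the first paragraph this establishes the $ID_q$-field claim; I expect (ii) to be the step requiring the most care, even though it is a standard fact about higher derivations.

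For (iii), the inclusion $C\subseteq C(F)$ holds since $\delta_q^{(k)}(\lambda)=\lambda\binom{0}{k}_q t^{-k}=0$ for $\lambda\in C$ and $k\ge 1$ (part $2$ of Proposition \ref{prop:binform}). Conversely, let $f\in C(F)$ and write $f=P/Q$ with $P,Q\in C[t]$ coprime, $Q$ monic of degree $e$, and $P\ne 0$. Applying the Leibniz rule to $P=f\cdot Q$ and using $\delta_q^{(j)}(f)=0$ for $j\ge 1$ kills all terms except $j=0$, giving $\delta_q^{(e)}(P)=\sigma_q^e(f)\,\delta_q^{(e)}(Q)$; only the leading monomial $t^e$ of $Q$ contributes to $\delta_q^{(e)}(Q)$, so $\delta_q^{(e)}(Q)=\binom{e}{e}_q=1$. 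Hence $\sigma_q^e(P)=\sigma_q^e(Q)\,\delta_q^{(e)}(P)$ with $\delta_q^{(e)}(P)\in C[t]$, so $\sigma_q^e(Q)$ divides $\sigma_q^e(P)$; since $\sigma_q^e$ is a degree-preserving automorphism of $C[t]$ it preserves coprimality, which forces $\sigma_q^e(Q)$ to be a unit, i.e. $e=0$. Thus $f\in C[t]$, and if $d=\deg f\ge 1$ then $\delta_q^{(d)}(f)$ has nonzero leading coefficient $\binom{d}{d}_q\cdot(\text{leading coefficient of }f)$, contradicting $\delta_q^{(d)}(f)=0$; so $f\in C$ and $C(F)=C$. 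The only slightly non-obvious point here is the observation $\delta_q^{(\deg Q)}(Q)=1$ for monic $Q$, which with coprimality excludes any genuine denominator.
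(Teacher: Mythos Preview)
Your proof is correct and, in fact, supplies exactly what the paper omits: the proposition is stated there without any proof, presumably as an immediate consequence of Definition \ref{defin:itrat} and Proposition \ref{prop:qder}. Your three-step breakdown---Leibniz on $C[t]$ via the $q$-Vandermonde identity of Proposition \ref{prop:binform}(4), well-definedness of the extension to $C(t)$ via the twisted power-series homomorphism, and the computation $C(F)=C$ via $\delta_q^{(\deg Q)}(Q)=1$ for monic $Q$---is complete and carefully argued.

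Two remarks on how your write-up relates to the surrounding paper. First, your step (ii) essentially anticipates Proposition \ref{prop:ext} (extension of an iterative $q$-difference operator to a localization by a $\sigma_q$-stable multiplicative set), specialised to $S=C[t]\setminus\{0\}$; the paper presumably intends the reader to take this for granted in Definition \ref{defin:itrat}, but you are right that it needs an argument, and the Taylor-map packaging you use is exactly the device the paper formalises later in Proposition \ref{prop:tay}. Second, your constants argument is sharper than what the bare statement ``$\delta_q^{(n)}(t^n)=1$'' immediately gives: that equality alone only shows $t\notin C(F)$, whereas your coprimality trick with $\sigma_q^e(P)=\sigma_q^e(Q)\,\delta_q^{(e)}(P)$ cleanly rules out any non-constant rational function. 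This is a genuine (if small) addition over what the paper records.
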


\textbf{Tensor product of $ID_q$-rings}\\

\begin{lemma}\label{lemma:com}
Let $(R_1,\delta_{R_1}^*)$ and $(R_2,\delta_{R_2}^*)$ be two
iterative $q$-difference rings. We have
\begin{equation}\label{eqn:com}\sum_{i+j=k}\sigma_q^i(\delta_{R_1}^{(j)}(a))\otimes
\delta_{R_2}^{(i)}(b)= \sum_{i+j=k} \delta_{R_1}^{(j)}(a)\otimes
\sigma_q^j( \delta_{R_2}^{(i)}(b))\end{equation} for all $k \in
\mathbb{N}$, $(a,b) \in R_1 \times R_2$.
\end{lemma}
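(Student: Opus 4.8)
The plan is to prove the identity by showing that both sides of Equation (\ref{eqn:com}) arise as the degree-$k$ components of a single multiplicativity-type identity, once one records how $\sigma_q$ and the $\delta^{(j)}$ interact via Lemma \ref{lemma: ech}. Concretely, I would think of the left-hand side as the ``$\delta_{R_1\otimes R_2}^{(k)}$ of $a\otimes b$'' for the natural iterative $q$-difference structure induced componentwise on the tensor product $R_1\otimes_C R_2$ (this is exactly the formula in part (4) of Definition \ref{defin:qder} applied with the first factor playing the role of $a$ and the second that of $b$, using $\sigma_q=\sigma_{R_1}\otimes\sigma_{R_2}$), and the right-hand side as the same thing computed with the roles reversed, i.e. with $b\otimes a$ read in $R_2\otimes R_1$ and transported back by the flip. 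So morally the statement says the product in the tensor algebra is commutative; the only content is that the two bookkeeping conventions for $\sigma_q^i(\delta^{(j)}a)$ versus $\sigma_q^j(\delta^{(i)}b)$ agree.

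The key computational step is to rewrite one side into the other using only Lemma \ref{lemma: ech}, namely $\sigma_q^m\delta_R^{(\ell)}=q^{-m\ell}\delta_R^{(\ell)}\sigma_q^m$, equivalently $\delta_R^{(\ell)}\sigma_q^m=q^{m\ell}\sigma_q^m\delta_R^{(\ell)}$. Starting from the left-hand side, in the $(i,j)$ term one has $\sigma_q^i(\delta_{R_1}^{(j)}(a))\otimes\delta_{R_2}^{(i)}(b)$. I would like to move the $\sigma_q^i$ off the first factor and an appropriate power of $\sigma_q$ onto the first factor of the target term; the bookkeeping is cleaner if instead one observes that the right-hand side term $\delta_{R_1}^{(j)}(a)\otimes\sigma_q^j(\delta_{R_2}^{(i)}(b))$ can be written, using Lemma \ref{lemma: ech} on the second tensor factor, as $\delta_{R_1}^{(j)}(a)\otimes q^{-ij}\sigma_q^j\delta_{R_2}^{(i)}(b) \cdot q^{ij}$... — more precisely one checks term by term that applying $\sigma_q^{-i}\otimes\sigma_q^{-j}$ (or rather comparing after multiplying through by suitable $q$-powers) turns each summand of the left side into the corresponding summand of the right side, the scalar factors $q^{ij}$ and $q^{-ij}$ cancelling because $\sigma_q$ acts diagonally on the tensor product and $\sigma_q(x\otimes y)=\sigma_q(x)\otimes\sigma_q(y)$. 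Thus the identity reduces to the elementary fact $\sigma_q^i(u)\otimes v = (\sigma_q^i\otimes\sigma_q^i)(u\otimes\sigma_q^{-i}v)$ combined with $\delta^{(j)}\sigma_q^{?}$-swaps, summed over $i+j=k$.

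I would carry this out in the following order: first state the swap rule $\delta_R^{(\ell)}\sigma_q^m=q^{m\ell}\sigma_q^m\delta_R^{(\ell)}$ as an immediate consequence of Lemma \ref{lemma: ech}; second, fix $k$ and a summand index $(i,j)$ with $i+j=k$, and transform the right-hand summand $\delta_{R_1}^{(j)}(a)\otimes\sigma_q^j(\delta_{R_2}^{(i)}(b))$ by applying the swap on the $R_2$-factor to pull $\sigma_q^j$ inward past $\delta_{R_2}^{(i)}$, producing $q^{ij}$ times $\delta_{R_1}^{(j)}(a)\otimes\delta_{R_2}^{(i)}(\sigma_q^j b)$; third, do the analogous manipulation on the left-hand summand to bring it to a comparable normal form, picking up a compensating power $q^{-ij}$ (or no power, depending on the chosen normal form) and a $\sigma_q^i$ that can be absorbed since $\sigma_q$ is an automorphism of the tensor product; fourth, observe that after these normalizations the two families of summands coincide term by term, so the sums are equal. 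The main obstacle I anticipate is purely one of organization: getting the powers of $q$ to match requires being scrupulous about whether $\sigma_q^i$ acts on $a$, on $b$, or diagonally on $a\otimes b$, and it is easy to be off by a sign in the exponent; the cleanest fix is to prove the auxiliary statement that for every $m$, $(\sigma_q^m\otimes\sigma_q^m)\bigl(\sum_{i+j=k}\delta_{R_1}^{(j)}(a)\otimes\delta_{R_2}^{(i)}(b)\bigr)$ has a symmetric expansion, reducing the whole lemma to the $k$-independent identity $\sum_{i+j=k}\sigma_q^i(\delta_{R_1}^{(j)}(a))\otimes\delta_{R_2}^{(i)}(b) = \sum_{i+j=k}\delta_{R_1}^{(j)}(a)\otimes\sigma_q^j(\delta_{R_2}^{(i)}(b))$ checked on pure tensors via Lemma \ref{lemma: ech} alone.
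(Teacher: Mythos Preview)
Your approach has a genuine gap: the identity \emph{cannot} be proved term by term via Lemma \ref{lemma: ech} alone, because the individual summands on the two sides are simply not equal. Take $k=1$. On the left the $(i,j)=(1,0)$ term is $\sigma_q(a)\otimes\delta_{R_2}^{(1)}(b)$; on the right the corresponding term (with $j=0$) is $a\otimes\delta_{R_2}^{(1)}(b)$. No amount of swapping $\sigma_q$ past $\delta^{(\ell)}$ turns $\sigma_q(a)$ into $a$. What actually happens for $k=1$ is a \emph{cancellation across the sum}: rearranging, one needs
\[
\delta_{R_1}^{(1)}(a)\otimes\bigl(b-\sigma_q(b)\bigr)=\bigl(a-\sigma_q(a)\bigr)\otimes\delta_{R_2}^{(1)}(b),
\]
and this holds only because $\sigma_q-\mathrm{id}=(q-1)t\,\delta^{(1)}$ (part 2 of Definition \ref{defin:qder}) \emph{and} the tensor product is over $F$, so the scalar $(q-1)t$ can cross. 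Your proposal uses neither of these two ingredients; in fact you write $R_1\otimes_C R_2$, and over $C$ the identity is already false for $k=1$. Lemma \ref{lemma: ech} by itself is too weak: it only relates $\sigma_q^m\delta^{(\ell)}$ to $\delta^{(\ell)}\sigma_q^m$ up to a power of $q$, and never produces the relation $\sigma_q=\mathrm{id}+(q-1)t\,\delta^{(1)}$ that drives the cancellation.

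The paper's proof reflects this. It establishes $k=1$ directly from the formula for $\delta^{(1)}$, then uses the rule $\delta^{(i)}\circ\delta^{(j)}=\binom{i+j}{i}_q\delta^{(i+j)}$ to propagate: if the identity holds for $k$ and for $l$, composing gives $\binom{k+l}{k}_q$ times the identity for $k+l$, so whenever $\binom{k+l}{k}_q\neq 0$ one can divide. If $q$ is not a root of unity this induction reaches all $k$. If $q$ is a primitive $n$-th root of unity, the $q$-binomials vanish at multiples of $n$ and the induction stalls; the paper then treats $k=n$ (and more generally $k\in n\mathbb{N}$) by a separate explicit computation expanding $\delta^{(j)}$ for $0<j<n$ in terms of powers of $\sigma_q$. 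So a correct proof must (i) use the explicit form of $\delta^{(1)}$ and $F$-bilinearity for the base case, and (ii) deal with the root-of-unity obstruction when the inductive step degenerates. Your plan addresses neither.
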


\begin{proof}
The formula (\ref{eqn:com}) is obviously true for $k=1$, using the
definition of $\delta^{(1)}$. If (\ref{eqn:com}) holds for $k$ and
$l$ in $\mathbb{N}$, we have

$$\binom{k+l}{k}_q \sum_{i+j=k+l}\sigma_q^i(\delta_{R_1}^{(j)}(a))\otimes
\delta_{R_2}^{(i)}(b)=\left(\sum_{r+s=l}\sigma_q^r(\delta_{R_1}^{(s)})\otimes
\delta_{R_2}^{(r)}\right)\left(\sum_{i+j=k}\sigma_q^i(\delta_{R_1}^{(j)}(a))\otimes
\delta_{R_2}^{(i)}(b)\right)$$ that is $$
(\sum_{r+s=l}\sigma_q^r(\delta_{R_1}^{(s)})\otimes
\delta_{R_2}^{(r)})\left(\sum_{i+j=k}\sigma_q^i(\delta_{R_1}^{(j)}(a))\otimes
\delta_{R_2}^{(i)}(b)\right)= \left(\sum_{r+s=l}
\delta_{R_1}^{(r)}\otimes \sigma_q^r(
\delta_{R_2}^{(s)})\right)\left( \sum_{i+j=k}
\delta_{R_1}^{(j)}(a)\otimes \sigma_q^j(
\delta_{R_2}^{(i)}(b))\right)$$ and thus
$$\binom{k+l}{k}_q \sum_{i+j=k+l}\sigma_q^i(\delta_{R_1}^{(j)}(a))\otimes
\delta_{R_2}^{(i)}(b)=\binom{k+l}{k}_q \sum_{i+j=k+l}
\delta_{R_1}^{(j)}(a)\otimes \sigma_q^j( \delta_{R_2}^{(i)}(b)).$$
Then, if $\binom{k+l}{k}_q \neq 0$, the formula (\ref{eqn:com})
holds for $k+l$. If $q$ is not a root of unity, we can conclude by
induction.\\
 Assume now that $q^n=1$. It remains to show that
Formula (\ref{eqn:com}) holds for $k \in
n\mathbb{N}$. We will first prove it for $k=n$.\\
Because $\sum_{i+j=n}\sigma_q^i(\delta_{R_1}^{(j)}(a))\otimes
\delta_{R_2}^{(i)}(b)=\delta_{R_1}^{(n)}(a) \otimes b + a \otimes
\delta_{R_2}^{(n)}(b) +
\sum_{i=1}^{n-1}\sigma_q^i(\delta_{R_1}^{(n-i)}(a))\otimes
\delta_{R_2}^{(i)}(b)$, the proof for $k=n$ will be complete if we
show that
\begin{equation}\label{eqn:com1}\sum_{i=1}^{n-1}\sigma_q^i(\delta_{R_1}^{(n-i)}(a))\otimes
\delta_{R_2}^{(i)}(b) = \sum_{i=1}^{n-1}\delta_{R_1}^{(i)}(a)\otimes
\sigma_q^i(\delta_{R_2}^{(n-i)}(b)).\end{equation} We have
$\delta^{(k)}=\frac{(\delta^{(1)})^k}{[k]_q!}$ and
$$(\delta^{(1)})^k= \frac{(-1)^k}{((q-1)t)^k}\sum_{j=0}^k
(-1)^j\binom{k}{j}_{q^{-1}}q^{-\frac{j(j-1)}{2}}\sigma_q^j=\frac{1}{((q-1)t)^k}\sum_{j=0}^k
a_{j,k} \sigma_q^j$$ for $0<k<n$ (see \cite{Lu}, Lemma $1.1.10$).
Then,
\begin{equation} \label{eqn:com2}\sum_{i=1}^{n-1}\sigma_q^i(\delta_{R_1}^{(n-i)}(a))\otimes
\delta_{R_2}^{(i)}(b) = \frac{1}{((q-1)t)^n} \sum_{l=1}^n
\sum_{k=0}^l  \sigma_q^l(a) \otimes \sigma_q^k(b) (\sum_{i=k, i \neq
0}^{i=l, i\neq n}
\frac{a_{l-i,n-i}a_{k,i}q^{-i(n-i)}}{[n-i]_q![i]_q!}).
\end{equation}
If $l\neq n$, $k \neq 0$ and $l \neq k$, we have
$$\sum_{i=k, i \neq
0}^{i=l, i\neq n}
\frac{a_{l-i,n-i}a_{k,i}q^{-i(n-i)}}{[n-i]_q![i]_q!} =
\frac{(-1)^{l+n}q^{-\frac{n(n-1)}{2}}}{[n-l]_{q^{-1}}![k]_{q}![l-k]_{q}!}
\sum_{i=0}^{l-k} (-1)^i \binom{l-k}{i}_q q^{\frac{i(i-1)}{2}} =0$$
(expand $(1;q)_{l-k}$). If $l=n$, then
$$\sum_{i=k, i \neq
0}^{i=n, i\neq n}
\frac{a_{n-i,n-i}a_{k,i}q^{-i(n-i)}}{[n-i]_q![i]_q!} =
\frac{(-1)^{n+k+1}q^{-\frac{n(n-1)}{2}}}{[k]_q![n-k]_{q^{-1}}}=
\sum_{i=0, i \neq 0}^{i=k, i\neq n}
\frac{a_{k-i,n-i}a_{0,i}q^{-i(n-i)}}{[n-i]_q![i]_q!}$$ (expand
$(1;q)_k$ and $(1;q)_{n-k}$). Because $\sigma_q^n=id$ , it follows
that the equation (\ref{eqn:com2}) is symmetric in $a$ and $b$. Thus
the formula (\ref{eqn:com1}) holds
and the equation (\ref{eqn:com}) is true for $k=n$.\\
For $k=2n$, we have
$$\sum_{i+j=n}\sigma_q^i(\delta_{R_1}^{(j)}(a))\otimes
\delta_{R_2}^{(i)}(b)=\delta_{R_1}^{(2n)}(a) \otimes b + a \otimes
\delta_{R_2}^{(2n)}(b) +$$ $$
\sum_{i=1}^{n-1}\sigma_q^i(\delta_{R_1}^{(2n-i)}(a))\otimes
\delta_{R_2}^{(i)}(b)+\delta_{R_1}^{(n)}(a)
\otimes\delta_{R_2}^{(n)}(b) +
\sum_{i=n+1}^{2n-1}\sigma_q^i(\delta_{R_1}^{(2n-i)}(a))\otimes
\delta_{R_2}^{(i)}(b).$$ Because $\delta^{(2n-i)}
=\delta^{(n-i)}\circ \delta^{(n)}$ for all $i=1,...,n-1$, we obtain
by (\ref{eqn:com1})
$$\sum_{i=1}^{n-1}\sigma_q^i(\delta_{R_1}^{(2n-i)}(a))\otimes
\delta_{R_2}^{(i)}(b)=\sum_{i=1}^{n-1}\sigma_q^i(\delta_{R_1}^{(
n-i)}( \delta_{R_1}^{(n)}(a)))\otimes
\delta_{R_2}^{(i)}(b)=\sum_{i=1}^{n-1} \delta_{R_1}^{(n+i)}(a)
\otimes \sigma_q^i(\delta_{R_2}^{(n-i)}(b))$$
$$=\sum_{i=n+1}^{2n-1}\delta_{R_1}^{(i)}(a) \otimes
\sigma_q^i(\delta_{R_2}^{(2n-i)}(b)).$$ We also have
$$\sum_{i=n+1}^{2n-1}\sigma_q^i(\delta_{R_1}^{(2n-i)}(a))\otimes
\delta_{R_2}^{(i)}(b)= \sum_{i=1}^{n-1}\delta_{R_1}^{(i)}(a) \otimes
\sigma_q^i(\delta_{R_2}^{(2n-i)}(b)).$$ This concludes the proof for
$k=2n$. The same arguments gives the other cases.
\end{proof}

\begin{prop}[Definition]\label{prop:tens}
Let $(R_1,\delta_{R_1}^*)$ and $(R_2,\delta_{R_2}^*)$ be two
iterative $q$-difference rings. We define a collection of maps
$(\delta_{R_1 \otimes R_2}^{(k)})_{k \in \mathbb{N}}$  from $R_1
\otimes_F R_2$ to $R_1 \otimes_F R_2$ as follows :
$$\delta_{R_1\otimes R_2}^{(k)}(r_1 \otimes r_2):=
\sum_{i+j=k}\sigma_q^i(\delta_{R_1}^{(j)}(r_1))\otimes
\delta_{R_2}^{(i)}(r_2) \ \mbox{ for all} \ k \in \mathbb{N}, r_1
\in R_1 \ \mbox{and} \ r_2 \in R_2.$$ Then $(R_1 \otimes_F
R_2,\delta_{R_1 \otimes R_2}^*)$ is an iterative $q$-difference
ring.
\end{prop}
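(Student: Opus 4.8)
The plan is to run through the five clauses of Definition~\ref{defin:qder} for $(R_1\otimes_F R_2,\delta^*_{R_1\otimes R_2})$, after two preliminaries. First, $R_1\otimes_F R_2$ is again a finitely generated $C(t)$-algebra, and as automorphism extending $\sigma_q$ on $C(t)$ one takes $\sigma_q\otimes\sigma_q$; this is well defined because $\sigma_q$ on $R_1$ and on $R_2$ restrict to the same $\sigma_q$ on $F=C(t)$. Second, and this is where the work lies, one must check that each $\delta^{(k)}_{R_1\otimes R_2}$ is well defined, i.e.\ that the biadditive map $(r_1,r_2)\mapsto\sum_{i+j=k}\sigma_q^i(\delta_{R_1}^{(j)}(r_1))\otimes\delta_{R_2}^{(i)}(r_2)$ is $F$-balanced. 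Granting this, clause~$3$ (additivity) is automatic, and clauses~$1$ and~$2$ are immediate: for $k=0$ only the term $(i,j)=(0,0)$ contributes; for $k=1$ the formula gives $\delta_{R_1}^{(1)}(r_1)\otimes r_2+\sigma_q(r_1)\otimes\delta_{R_2}^{(1)}(r_2)$, which equals $\bigl((\sigma_q\otimes\sigma_q)-id\bigr)(r_1\otimes r_2)/((q-1)t)$ once one writes $\sigma_q(r_1)\otimes\sigma_q(r_2)-r_1\otimes r_2=\sigma_q(r_1)\otimes(\sigma_q-id)(r_2)+(\sigma_q-id)(r_1)\otimes r_2$ and uses clause~$2$ in $R_1$ and in $R_2$.

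For $F$-balancedness fix $f\in F$, $r_1\in R_1$, $r_2\in R_2$ and set $f^{(\nu)}:=\delta_q^{(\nu)}(f)\in F$ (the $ID_q$-structures of $R_1,R_2$ extend the canonical one $\delta_q^*$ on $F$). Expanding $\delta_{R_1}^{(j)}(fr_1)$ by the Leibniz rule in $R_1$, moving all the resulting $F$-coefficients freely across $\otimes_F$, and then grouping the terms by the value of $\nu$ — the power of $\sigma_q$ accompanying $f^{(\nu)}$ being forced to $\sigma_q^{\,k-\nu}$ throughout such a group — one obtains
\[
\delta^{(k)}_{R_1\otimes R_2}(fr_1\otimes r_2)=\sum_{\nu=0}^{k}\sigma_q^{\,k-\nu}(f^{(\nu)})\,\delta^{(k-\nu)}_{R_1\otimes R_2}(r_1\otimes r_2).
\]
To get the corresponding identity for $r_1\otimes fr_2$ one first rewrites $\delta^{(k)}_{R_1\otimes R_2}(r_1\otimes fr_2)$ in the symmetric shape $\sum_{i+j=k}\delta_{R_1}^{(j)}(r_1)\otimes\sigma_q^j(\delta_{R_2}^{(i)}(fr_2))$ using Lemma~\ref{lemma:com}, then expands $\delta_{R_2}^{(i)}(fr_2)$ by Leibniz in $R_2$ and groups by $\nu$ exactly as above, and finally applies Lemma~\ref{lemma:com} once more to each group; the result is the same right-hand side, so $\delta^{(k)}_{R_1\otimes R_2}(fr_1\otimes r_2)=\delta^{(k)}_{R_1\otimes R_2}(r_1\otimes fr_2)$ and the maps are well defined. (This already establishes clause~$4$ in the case where the first argument is a scalar $f\otimes1$.)

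It remains to check clauses~$4$ and~$5$ in general, and by additivity it suffices to test them on simple tensors. For clause~$5$ one applies the defining formula for $\delta^{(i)}_{R_1\otimes R_2}$ term-by-term to $\delta^{(j)}_{R_1\otimes R_2}(r_1\otimes r_2)=\sum_{a+b=j}\sigma_q^a(\delta_{R_1}^{(b)}(r_1))\otimes\delta_{R_2}^{(a)}(r_2)$; commuting $\delta_{R_1}^{(d)}$ past $\sigma_q^{a}$ by Lemma~\ref{lemma: ech} (this contributes a factor $q^{ad}$) and using clause~$5$ in $R_1$ and in $R_2$, the composite $\delta^{(i)}_{R_1\otimes R_2}\circ\delta^{(j)}_{R_1\otimes R_2}(r_1\otimes r_2)$ becomes $\sum_{m+n=i+j}\bigl(\sum_{c+d=i}q^{d(m-c)}\binom{m}{c}_q\binom{n}{d}_q\bigr)\,\sigma_q^{m}(\delta_{R_1}^{(n)}(r_1))\otimes\delta_{R_2}^{(m)}(r_2)$, and the inner coefficient equals $\binom{m+n}{i}_q=\binom{i+j}{i}_q$ by part~(4) of Proposition~\ref{prop:binform} (take its $r,s,k$ to be $m,n,i$ and match $j'=c$, $i'=d$), yielding exactly $\binom{i+j}{i}_q\,\delta^{(i+j)}_{R_1\otimes R_2}(r_1\otimes r_2)$. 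For clause~$4$ one takes $X=a_1\otimes a_2$, $Y=b_1\otimes b_2$, expands $\delta^{(k)}_{R_1\otimes R_2}(XY)$ through the Leibniz rules in $R_1$ and $R_2$ and $\sum_{i+j=k}\sigma_q^i(\delta^{(j)}_{R_1\otimes R_2}X)\,\delta^{(i)}_{R_1\otimes R_2}Y$ through the defining formula and the multiplication of simple tensors, and reconciles the two using Lemmas~\ref{lemma: ech} and~\ref{lemma:com} to relocate the powers of $\sigma_q$; the identification is cleanest if the family $(\delta^{(k)}_R)_k$ is recorded as a single ``twisted Taylor'' homomorphism $R\to R[[T]]$ into the skew power-series ring with $Tr=\sigma_q(r)T$, under which the asserted formula for $\delta^*_{R_1\otimes R_2}$ is just the product of the Taylor homomorphisms of $R_1$ and $R_2$.

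The main obstacle is the well-definedness of the maps $\delta^{(k)}_{R_1\otimes R_2}$, and with it the Leibniz rule~$4$: the defining formula is visibly asymmetric in the two tensor factors — the twist $\sigma_q^i$ is attached only to the $R_1$-side — so neither of these can be verified term-by-term, and both rest essentially on the symmetrization provided by Lemma~\ref{lemma:com}. Everything else is formal, clause~$5$ using in addition the $q$-Vandermonde identity of part~(4) of Proposition~\ref{prop:binform}.
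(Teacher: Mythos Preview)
Your argument is correct and, for clauses~4 and~5, follows essentially the same route as the paper: both verifications rest on the symmetrization identity of Lemma~\ref{lemma:com} (to relocate the $\sigma_q$-twists) and on the $q$-Vandermonde identity of Proposition~\ref{prop:binform}(4), respectively. One point worth noting is that you explicitly address the $F$-balancedness of the defining biadditive map, i.e.\ the well-definedness of $\delta^{(k)}_{R_1\otimes R_2}$ on $R_1\otimes_F R_2$; the paper does not mention this, taking it for granted, whereas your derivation of $\delta^{(k)}(fr_1\otimes r_2)=\delta^{(k)}(r_1\otimes fr_2)$ via Lemma~\ref{lemma:com} is a genuine and necessary step. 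Your alternative suggestion of packaging clause~4 through the twisted Taylor homomorphism (Proposition~\ref{prop:tay}) is also valid and arguably cleaner than the paper's index manipulation, though the paper develops that machinery only afterwards.
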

\begin{proof}
 It is obvious that the family $(\delta_{R_1\otimes
R_2}^{(k)})_{k \in \mathbb{N}}$ satisfies the three first parts of
Definition \ref{defin:qder}. By Lemma \ref{lemma:com} we have
$$\delta_{R_1\otimes R_2}^{(k)}(r_1 \otimes r_2) =
\sum_{i+j=k}\sigma_q^i(\delta_{R_1}^{(j)}(r_1))\otimes
\delta_{R_2}^{(i)}(r_2)= \sum_{i+j=k} \delta_{R_1}^{(j)}(r_1)\otimes
\sigma_q^j( \delta_{R_2}^{(i)}(r_2))$$ for all $k \in \mathbb{N}$.
 Let $(a,c) \in R_1^2$ and $(b,d) \in
R_2^2$. We have
$$ \delta_{R_1 \otimes R_2}^{(k)}((a\otimes b)(c \otimes d))=
\sum_{i+j=k} \sigma_q^{i}(\delta_{R_1}^{(j)}(ac)) \otimes
\delta_{R_2}^{(j)}(bd),$$

 $$\delta_{R_1 \otimes R_2}^{(k)}((a\otimes b)(c \otimes
 d))=\sum_{i_1+i_2+j_1+j_2=k}\sigma_q^{i_1+i_2+j_1}(\delta_{R_1}^{(j_2)}(a))\sigma_q^{i_1+i_2}(\delta_{R_1}^{(j_1)}(c))
 \otimes
 \sigma_q^{i_1}(\delta_{R_2}^{(i_2)}(b))\delta_{R_2}^{(i_2)}(d),$$
 and thus,
 $$\delta_{R_1 \otimes R_2}^{(k)}((a\otimes b)(c \otimes
 d))=\sum_{i_1+j_2+i=k} \sigma_q^{i_1 + i}(\delta_{R_1}^{(j_2)}(a))
 \otimes \delta_{R_2}^{(i_1)}(d) ( \sigma_q^{i_1}(\delta_{R_1
 \otimes R_2}^{(i)}(c \otimes b))).$$
This gives
$$\delta_{R_1 \otimes R_2}^{(k)}((a\otimes b)(c \otimes
 d))=\sum_{i_1+i_2+j_1+j_2=k}\sigma_q^{i_1+i_2+j_1}(\delta_{R_1}^{(j_2)}(a))\sigma_q^{i_1}(\delta_{R_1}^{(i_2)}(c))
 \otimes
 \sigma_q^{i_1+i_2}(\delta_{R_2}^{(j_1)}(b))\delta_{R_2}^{(i_1)}(d),$$
and thus
$$\delta_{R_1 \otimes R_2}^{(k)}((a\otimes b)(c \otimes
 d))= \sum_{i+j=k} \sigma_q^i(\delta_{R_1 \otimes R_2}^{(j)}(a \otimes
 b ) )\delta_{R_1 \otimes R_2}^{(i)}(c \otimes d).$$
 This is part $4$ of Definition \ref{defin:qder}.\\
 We now prove part $5$. Let $(k,l) \in \mathbb{N}^2$ and $(a,b) \in
 R_1 \times R_2$. We have
 $$\delta_{R_1 \otimes R_2}^{(k)} \circ \delta_{R_1 \otimes
 R_2}^{(l)}(a \otimes b)= \sum_{i+j=l, i_1+j_1=k}q^{ij_1}
 \binom{j_1+j}{j_1}_q
 \binom{i_1+i}{i}_q\sigma_q^{i_1+i}(\delta_{R_1}^{(j_1+j)}(a))
 \otimes \delta_{R_2}^{(i_1+i)}(b),$$
that is
$$\delta_{R_1 \otimes R_2}^{(k)} \circ \delta_{R_1 \otimes
 R_2}^{(l)}(a \otimes b)= \sum_{r+s=k+l}
 \sigma_q^r(\delta_{R_1}^{(s)}(a)) \otimes \delta_{R_2}^{(r)}(b) (\sum_{i+j=k, i\leq s, j\leq r}
\binom{r}{j}_q\binom{s}{i}_q q^{i(r-j)}).$$ Using part $5$ of
Proposition \ref{prop:binform}, we obtain
$$\delta_{R_1 \otimes R_2}^{(k)} \circ \delta_{R_1 \otimes
 R_2}^{(l)}(a \otimes b)=  \binom{k+l}{k}_q\sum_{r+s=k+l}
 \sigma_q^r(\delta_{R_1}^{(s)}(a)) \otimes \delta_{R_2}^{(r)}(b),$$
 that is $$\delta_{R_1 \otimes R_2}^{(k)} \circ \delta_{R_1 \otimes
 R_2}^{(l)}(a \otimes b)=  \binom{k+l}{k}_q \delta_{R_1 \otimes
 R_2}^{(k+l)}(a \otimes b).$$
\end{proof}

\subsection{Twisted ring of formal power series}

This paragraph is devoted to the relations between $ID_q$-rings and
rings of formal power series. By encoding all properties of an
iterative $q$-difference operator into twisted formal power series,
Property \ref{prop:tay}   provides us with a very powerful tool for
the proofs to come.\\
This kind of twisted ring appears already in the work of Yves Andr\'{e} (see \cite{And} $1.4.2.1$). 

\begin{defin}\label{defin:tw}
Let $(R,\delta_R^*)$ be an iterative $q$-difference ring. The
twisted ring $R^{\sigma_q}[[T]]$ of formal series with coefficients
in $R$ is defined as follows : the additive  structure of
$R^{\sigma_q}[[T]]$ is the same as the one of $R[[T]]$, the
multiplicative  structure is given by
$$ \lambda T^r * \mu T^k:= \sigma_q^r(\mu)\lambda T^{r+k}$$
and extended by distributivity to $R[[T]]$.
\end{defin}

We will denote by "." the usual multiplication law on $R[[T]]$.

\begin{lemma}
 The twisted ring
$(R^{\sigma_q}[[T]],+,* )$ as  in Definition \ref{defin:tw}  is a
non commutative  ring with unity.
\end{lemma}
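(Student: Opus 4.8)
The plan is to verify the ring axioms directly from Definition \ref{defin:tw}, reducing every identity to the monomial case and then propagating it to formal power series. Since the additive structure of $R^{\sigma_q}[[T]]$ is by definition that of $R[[T]]$, the pair $(R^{\sigma_q}[[T]],+)$ is already an abelian group, so the only real content is the behaviour of the twisted product $*$, and the fact that $1=1\cdot T^0$ is a two-sided unit for it.

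First I would check that $*$ is well defined on power series, not just on polynomials. Writing $f=\sum_{r\ge 0}a_rT^r$ and $g=\sum_{s\ge 0}b_sT^s$, the defining rule $\lambda T^r*\mu T^k=\sigma_q^r(\mu)\lambda T^{r+k}$ together with bilinearity forces $f*g=\sum_{n\ge 0}c_nT^n$ with $c_n=\sum_{r+s=n}\sigma_q^r(b_s)a_r$; each $c_n$ is a finite sum of elements of $R$, so $f*g$ is a genuine element of $R[[T]]$. Two-sided distributivity of $*$ over $+$ then holds at the level of monomials because each $\sigma_q^r$ is additive (for instance $\lambda T^r*(\mu+\nu)T^s=\sigma_q^r(\mu+\nu)\lambda T^{r+s}=\sigma_q^r(\mu)\lambda T^{r+s}+\sigma_q^r(\nu)\lambda T^{r+s}$, and symmetrically on the other side), and extends to arbitrary $f,g\in R[[T]]$ thanks to the finiteness of the sums defining $c_n$.

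Next I would prove associativity; by distributivity it suffices to treat three monomials $\lambda T^r$, $\mu T^s$, $\nu T^k$. On one hand $(\lambda T^r*\mu T^s)*\nu T^k=(\sigma_q^r(\mu)\lambda\,T^{r+s})*\nu T^k=\sigma_q^{r+s}(\nu)\sigma_q^r(\mu)\lambda\,T^{r+s+k}$, and on the other hand $\lambda T^r*(\mu T^s*\nu T^k)=\lambda T^r*(\sigma_q^s(\nu)\mu\,T^{s+k})=\sigma_q^r\bigl(\sigma_q^s(\nu)\mu\bigr)\lambda\,T^{r+s+k}$; these agree because $\sigma_q$ is a ring homomorphism, so $\sigma_q^r(\sigma_q^s(\nu)\mu)=\sigma_q^{r+s}(\nu)\sigma_q^r(\mu)$. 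For the unit, $1\cdot T^0*\mu T^k=\sigma_q^0(\mu)\,T^k=\mu T^k$ and $\mu T^k*1\cdot T^0=\sigma_q^k(1)\mu\,T^k=\mu T^k$ since $\sigma_q(1)=1$; extending by additivity and the above finiteness argument shows $1\cdot T^0$ is a two-sided identity on all of $R^{\sigma_q}[[T]]$.

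Finally, non-commutativity: the structure map $C(t)\to R$ is injective (the kernel is an ideal of the field $C(t)$, and $R$ is nontrivial), so the image of $t$ in $R$, still written $t$, is nonzero and satisfies $\sigma_q(t)=qt\neq t$ because $q\neq 1$. Then $(t\cdot T^0)*(1\cdot T^1)=\sigma_q^0(1)\,t\,T=tT$ whereas $(1\cdot T^1)*(t\cdot T^0)=\sigma_q(t)\cdot 1\cdot T=qt\,T\neq tT$, so $*$ is not commutative. The only point requiring any care is the bookkeeping that lets the monomial computations pass to arbitrary formal power series, i.e. checking that the relevant coefficient sums are finite; all the remaining verifications are the routine axiom checks indicated above.
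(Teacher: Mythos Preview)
Your proof is correct and follows the same direct verification of ring axioms on monomials that the paper uses for associativity and the unit. In fact your argument is more complete: the paper's proof checks only the unit and associativity explicitly, leaving distributivity, well-definedness of $*$ on infinite series, and the actual non-commutativity implicit, whereas you spell all of these out (and your non-commutativity witness via $t$ and $\sigma_q(t)=qt$ is exactly the right one). Note that the paper's ``proof'' also goes on to equip $R^{\sigma_q}[[T]]$ with a $\sigma_q$-action and an iterative $q$-difference operator $\delta_T^*$, which is preparation for Proposition~\ref{prop:tay} rather than part of the lemma itself.
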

\begin{proof}
We have :$$ \lambda T^r*1=\lambda T^r*T^0=\sigma_q^r(1)\lambda
T^{r+0}=\lambda T^r=1 *\lambda T^r=\sigma_q^0(\lambda )T^r=\lambda
T^r.$$ Thus $1$ is a neutral
element for the twisted multiplication $*$. \\
Let us prove then, that $*$ is  associative.
$$\nu T^s *(\lambda T^r * \mu T^k)=  \nu T^s*(\sigma_q^r(\mu)\lambda
T^{r+k})=\sigma_q^{r+s}(\mu)\sigma_q^s(\lambda)\nu T^{r+s+k}$$ and
$$(\nu T^s *\lambda T^r) * \mu T^k=(\sigma_q^s(\lambda)\nu
T^{r+s})* \mu T^k= \sigma_q^{r+s}(\mu)\sigma_q^{s}(\lambda) \nu
T^{r+s+k}$$ give
$$\nu T^s *(\lambda T^r * \mu T^k)=(\nu T^s *\lambda T^r) * \mu
T^k.$$ The product $*$ is therefore associative.\\
Now, we want to introduce  an iterative $q$-difference operator on
$(R^{\sigma_q}[[T]],+,.)$, that is to say, a collection of maps
$\delta_T^*$ which  satisfies all the properties of Definition
\ref{defin:qder}.\\
First we need an automorphism  $\sigma_q$ on
$(R^{\sigma_q}[[T]],+,.)$ such that $ (R^{\sigma_q}[[T]],+,.)$ is a
$q$-difference ring extension of $F$. We put $\sigma_q(aT^i):=
\sigma_q(a)q^iT^i$  for all $i \in \mathbb{N}$ and $ a \in R $. By
extending  this definition $R$-linearly, $R^{\sigma_q}[[T]]$ becomes
a $q$-difference ring extension
of $F$.\\
 We put
$ \delta_T^{(k)}(T^r):=\binom{r}{k}_qT^{r-k}$ for all $(k,r) \in
\mathbb{N}^2$  and extend this definition by $R$-linearity.
Obviously $(\delta_T^{(k)})_{k \in \mathbb{N}}$ is an iterative
$q$-difference operator over $(R^{\sigma_q}[[T]],+,.)$ (see
Definition
\ref{defin:qder}).\end {proof}

\begin{defin}
 For all $a \in R$, $$ \bold{T}_a(T):=\sum_{k \in \mathbb{N}} \delta_R^{(k)}(a)T^k .$$
 is called the $q$-\textbf{iterative Taylor
series} of $a$. We define the map $\bold{T} : R \rightarrow R^{\sigma_q}[[T]]$ where $\bold{T}(a):= \bold{T}_a(T)$.\\\

\end{defin}

\begin{prop}\label{prop:tay}

Let $R$ be a $q$-difference ring extension of $F$ and let
$\delta_R^*=(\delta_R^{(k)})_{ k \in \mathbb{N}}$ be a sequence of
maps from $R$ to $R$. Let $\delta_T^*$ be the iterative
$q$-difference operator of $(R^{\sigma_q}[[T]],+,.)$ defined
previously, and let $\bold{I}$ denote the map
$$\xymatrix{ \bold{I}: & R^{\sigma_q}[[T]] \ar[r] & R,
   & \sum_{k \in \mathbb{N}}a_k T^k \ar@{|->}[r] & a_0}.$$
Then  $\delta_R^*$ is an iterative $q$-difference  operator for $R$
if and only if
\begin{enumerate}
\item $\bold{T}$ is a ring homomorphism from $R$ to $ (R^{\sigma_q}[[T]],+,* )$     , with $\bold{I} \circ
\bold{T}=id_R$,
\item $\delta_T^{(k)}\circ \bold{T} =\bold{T} \circ \delta_R^{(k)}$
for all $k \in \mathbb{N}$.
\end{enumerate}
\end{prop}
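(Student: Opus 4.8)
The plan is to prove the two directions separately, using the twisted power series identities that encode the axioms of Definition \ref{defin:qder}.

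First, suppose $\delta_R^*$ is an iterative $q$-difference operator. I would first check $\bold{I}\circ\bold{T}=\mathrm{id}_R$: this is immediate since the constant term of $\bold{T}_a(T)$ is $\delta_R^{(0)}(a)=a$ by part $1$ of Definition \ref{defin:qder}. Next, additivity of $\bold{T}$ follows termwise from part $3$. The key computation is multiplicativity with respect to $*$: I would compute
$$\bold{T}_a(T)*\bold{T}_b(T)=\Bigl(\sum_j \delta_R^{(j)}(a)T^j\Bigr)*\Bigl(\sum_i \delta_R^{(i)}(b)T^i\Bigr)=\sum_k\Bigl(\sum_{i+j=k}\sigma_q^j(\delta_R^{(i)}(b))\,\delta_R^{(j)}(a)\Bigr)T^k,$$
where the twist $\sigma_q^j$ on the second factor comes precisely from the definition $\lambda T^r*\mu T^k=\sigma_q^r(\mu)\lambda T^{r+k}$ in Definition \ref{defin:tw}. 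To see this equals $\sum_k \delta_R^{(k)}(ab)T^k=\bold{T}_{ab}(T)$, I would observe that this reorganised sum is exactly the right-hand side of the Leibniz rule of Definition \ref{defin:qder} part $4$ after the symmetry provided by Lemma \ref{lemma:com} (applied with $R_1=R_2=R$), or more directly: $\delta_R^{(k)}(ab)=\sum_{i+j=k}\sigma_q^i(\delta_R^{(j)}(a))\delta_R^{(i)}(b)$, and one matches the two expressions using Lemma \ref{lemma: ech} to move the $\sigma_q$-powers onto the appropriate factor. For condition $2$, I would apply $\delta_T^{(k)}$ to $\bold{T}_a(T)=\sum_r\delta_R^{(r)}(a)T^r$ using $\delta_T^{(k)}(T^r)=\binom{r}{k}_q T^{r-k}$, getting $\sum_r\binom{r}{k}_q\delta_R^{(r)}(a)T^{r-k}=\sum_s\binom{s+k}{k}_q\delta_R^{(s+k)}(a)T^s$, and compare with $\bold{T}\circ\delta_R^{(k)}(a)=\sum_s\delta_R^{(s)}(\delta_R^{(k)}(a))T^s$; these agree term-by-term exactly by part $5$ of Definition \ref{defin:qder}.

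Conversely, suppose conditions $1$ and $2$ hold; I must recover the five axioms. From $\bold{I}\circ\bold{T}=\mathrm{id}_R$ we get $\delta_R^{(0)}=\mathrm{id}$, which is part $1$. For part $2$, I would extract the coefficient of $T^1$ in the ring-homomorphism identity $\bold{T}_a(T)*\bold{T}_b(T)=\bold{T}_{ab}(T)$ combined with the fact that $\bold{T}$ is a homomorphism of $q$-difference rings (one needs $\bold{T}\circ\sigma_q=\sigma_q\circ\bold{T}$ on $R^{\sigma_q}[[T]]$, which should follow from condition $2$ with $k=1$ together with $\delta_T^{(1)}=\frac{\sigma_q-\mathrm{id}}{(q-1)T}$ on the twisted ring); alternatively part $2$ is essentially forced by the requirement that $\bold{T}$ land in $R^{\sigma_q}[[T]]$ compatibly with its own iterative operator $\delta_T^*$. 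Part $3$ (additivity) is the additivity half of $\bold{T}$ being a ring homomorphism, read off coefficientwise. Part $4$ (Leibniz) is the coefficient of $T^k$ in $\bold{T}_a*\bold{T}_b=\bold{T}_{ab}$, unwound through Definition \ref{defin:tw}. Part $5$ is the coefficient of $T^s$ in condition $2$: $\delta_T^{(k)}\bold{T}_a=\bold{T}\delta_R^{(k)}(a)$ gives $\binom{s+k}{k}_q\delta_R^{(s+k)}(a)=\delta_R^{(s)}(\delta_R^{(k)}(a))$ for all $s$, which is precisely $\delta_R^{(s)}\circ\delta_R^{(k)}=\binom{s+k}{s}_q\delta_R^{(s+k)}$.

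The main obstacle I anticipate is the bookkeeping of the $\sigma_q$-twists in the multiplicativity step: one must be scrupulous about which variable $\sigma_q^j$ acts on in $\lambda T^r*\mu T^k=\sigma_q^r(\mu)\lambda T^{r+k}$, and reconciling the resulting expression with the asymmetric Leibniz rule of Definition \ref{defin:qder} part $4$ genuinely requires invoking the commutation Lemma \ref{lemma: ech} (to handle $\sigma_q^j\delta_R^{(i)}$ versus $\delta_R^{(i)}\sigma_q^j$) and/or the symmetrisation Lemma \ref{lemma:com}. I would handle this by writing out one direction of the Leibniz identity in full, performing the change of summation index $k=i+j$, and checking that the twist powers match on the nose; once that single identity is verified, both implications for part $4$ follow by reading the same equation forwards and backwards. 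The remaining verifications (parts $1$, $2$, $3$, $5$ and conditions $1$, $2$) are routine coefficient extractions with no real difficulty.
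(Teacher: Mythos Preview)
Your approach is essentially the same as the paper's: both identify the correspondences $\bold{I}\circ\bold{T}=\mathrm{id}_R\leftrightarrow$ axiom~1, additivity of $\bold{T}\leftrightarrow$ axiom~3, multiplicativity of $\bold{T}$ with respect to $*\leftrightarrow$ axiom~4, and condition~2 $\leftrightarrow$ axiom~5, then read each equivalence in both directions.

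One simplification: you do not need Lemma~\ref{lemma: ech} or Lemma~\ref{lemma:com} for the multiplicativity step. Your computation
\[
\bold{T}_a(T)*\bold{T}_b(T)=\sum_k\Bigl(\sum_{i+j=k}\sigma_q^j(\delta_R^{(i)}(b))\,\delta_R^{(j)}(a)\Bigr)T^k
\]
is, after the relabelling $i\leftrightarrow j$, exactly the Leibniz expansion of $\delta_R^{(k)}(ba)$ from axiom~4. Since $R$ is commutative this equals $\delta_R^{(k)}(ab)$, and the identification with $\bold{T}_{ab}(T)$ is immediate. (Lemma~\ref{lemma:com} is a statement about tensor products $R_1\otimes_F R_2$, so invoking it with $R_1=R_2=R$ is not quite what you want anyway.) The paper's proof does precisely this direct match in one line.

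Your hesitation about recovering axiom~2 in the converse direction is legitimate: the paper's own proof does not address it either, and conditions~1 and~2 of the proposition do not obviously encode the specific relation $\delta_R^{(1)}=\frac{\sigma_q-\mathrm{id}}{(q-1)t}$. This appears to be a minor imprecision in the statement rather than a defect in your argument.
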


\begin{proof}
The fact that $\bold{T}$ is additive is equivalent to statement $3$
in Definition \ref{defin:qder}. The compatibility of $\bold{T}$ with
the multiplication law in $R$ and the twisted law $*$ in
$R^{\sigma_q}[[T]]$, in the case where $\delta_R^*$ is an iterative
$q$-difference operator comes from
 the equations $$ \bold{T}_{ab}(T):=\sum_{k\in \mathbb{N}} \delta_R^{(k)}(ab)T^k
=
 \sum_{k\in \mathbb{N}}(\sum_{i+j=k} \sigma_q^i(\delta_R^{(j)}(a))
\delta_R^{(i)}(b))T^k = \bold{T}_{a}(T)*\bold{T}_{b}(T).$$ The
second  property is equivalent to the property $5$ of the same
definition.
\end{proof}
\subsection{Iterative $q$-difference morphisms and iterative
$q$-difference ideals}

\begin{defin}
Let $(R,\delta_R^*)$ and $(S,\delta_S^*)$ be two iterative
$q$-difference rings. We  say that a  ring morphism $\phi$ from $R$
to $S$ is an \textbf{iterative $q$-difference morphism} if and only
if $ \delta_S^{(k)} \circ \phi =\phi \circ \delta_R^{(k)}$ for all
$k \in \mathbb{N}$.
\end{defin}
The set of all iterative $q$-difference morphisms from $R$ to $S$ is
denoted by $Hom_{ID_q}(R,S)$.\\
An iterative $q$-difference ideal $I \subset R$ ($ID_q$-ideal) is an
ideal of $R$ stable by $\delta_R^{(k)}$ for all $k \in \mathbb{N}$.

\begin{lemma}\label{lemma:rad}

  Let $I$ be an
$ID_q$-ideal of an iterative $q$-difference ring $R$, that is to say
that $I$ is stable under the action of $\delta_R^*$. Then the
radical of $I$ is a $ID_q$-ideal.
\end{lemma}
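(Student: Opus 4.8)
The statement to prove is that if $I$ is an $ID_q$-ideal of an iterative $q$-difference ring $R$, then $\sqrt{I}$ is again an $ID_q$-ideal. Since $\sqrt{I}$ is already an ideal, the only thing to check is that $\delta_R^{(k)}(\sqrt{I}) \subseteq \sqrt{I}$ for every $k \in \mathbb{N}$. The plan is to argue by induction on $k$, using the iterative Leibniz rule (part $4$ of Definition \ref{defin:qder}): for $a \in \sqrt{I}$, say $a^m \in I$, one wants to show that some power of $\delta_R^{(k)}(a)$ lies in $I$. The natural strategy is to expand $\delta_R^{(k)}(a^m) = \delta_R^{(k)}(a \cdots a)$ by the Leibniz formula and isolate the term involving $\delta_R^{(k)}(a)$.

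First I would treat $k=1$ as the base case. Because $\delta_R^{(1)}$ is an honest twisted derivation (it satisfies $\delta_R^{(1)}(ab) = \sigma_q(a)\delta_R^{(1)}(b) + \delta_R^{(1)}(a) b$), the classical argument applies: differentiating $a^m \in I$ gives, after collecting terms, an expression of the form $(\text{unit-like power of } a \text{ and its } \sigma_q\text{-shifts}) \cdot \delta_R^{(1)}(a) \in I$ plus terms already in $\sqrt I$, whence a suitable power of $\delta_R^{(1)}(a)$ lies in $I$. One must be slightly careful that $\sigma_q$ maps $\sqrt I$ into itself — this follows since $\sigma_q = \mathrm{id} + (q-1)t\,\delta_R^{(1)}$ up to the relation in part $2$, or more directly since $I$ is $\delta_R^{(1)}$-stable and $\sigma_q(a)^m = \sigma_q(a^m) \in \sigma_q(I) \subseteq I$ (the ideal $I$ need not be $\sigma_q$-stable a priori, but $\sqrt I$ is, because $\sigma_q$ is an automorphism and $\sqrt I = \sqrt{\sigma_q^{-1}(I)} \cap \cdots$ — I would instead simply use that $\delta_R^{(1)}$-stability of $I$ forces $\sigma_q$-stability of $\sqrt I$ via the base case applied iteratively).

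For the inductive step, suppose $\delta_R^{(j)}(\sqrt I) \subseteq \sqrt I$ for all $j < k$, and let $a \in \sqrt I$ with $a^m \in I$. Apply $\delta_R^{(k)}$ to $a^m$: by the iterated Leibniz rule, $\delta_R^{(k)}(a^m)$ is a sum over compositions of $k$ into $m$ parts, of products of terms $\sigma_q^{\bullet}(\delta_R^{(j_s)}(a))$. Every summand in which some part $j_s$ satisfies $0 < j_s < k$ contains a factor $\delta_R^{(j_s)}(a) \in \sqrt I$ by the induction hypothesis, hence lies in $\sqrt I$. The remaining summands are those where each part is either $0$ or $k$; since $k > 0$ and the parts sum to $k$, exactly one part equals $k$ and the rest are $0$, giving $m$ terms of the shape $\sigma_q^{e}(\delta_R^{(k)}(a)) \cdot (\text{product of } m-1 \text{ shifted copies of } a)$. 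Thus modulo $\sqrt I$ we obtain $\sum_{i=0}^{m-1} \sigma_q^{i}(\delta_R^{(k)}(a)) \cdot a^{m-1-i}\sigma_q^{m-i}(\cdots) \equiv 0$; multiplying this congruence by appropriate powers of $a$ and using $a \in \sqrt I$ to kill the unwanted shifted $a$-factors, one extracts that $\sigma_q^{?}(\delta_R^{(k)}(a))^{N} \in I$ for some $N$, and since $\sigma_q$ is an automorphism this yields $\delta_R^{(k)}(a) \in \sqrt I$.

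The main obstacle I anticipate is bookkeeping with the $\sigma_q$-twists: unlike the untwisted case, the Leibniz expansion of $\delta_R^{(k)}(a^m)$ carries powers of $\sigma_q$ in front of the various $\delta_R^{(j)}(a)$, so one cannot simply factor out $\delta_R^{(k)}(a)$ cleanly. The cleanest way around this is probably to pass through the twisted Taylor morphism $\mathbf{T}\colon R \to R^{\sigma_q}[[T]]$ of Proposition \ref{prop:tay}: the condition $a \in \sqrt I$ translates to $\mathbf{T}(a)$ having constant term in $\sqrt I$, and one shows $\mathbf{T}(\sqrt I)$ is contained in the set of power series all of whose coefficients lie in $\sqrt I$, by noting that $\mathbf{T}(a)^m = \mathbf{T}(a^m)$ has all coefficients in $I$ and running a coefficient-by-coefficient radical-ideal argument in the twisted ring, where the twist is absorbed into the ring structure rather than fought term by term. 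Either route works; I would present the direct induction but flag the Taylor-series reformulation as the conceptual reason it goes through.
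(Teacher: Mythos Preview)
Your inductive step for $k>1$ has a genuine gap. When you apply $\delta_R^{(k)}$ to $a^m$ and expand by the iterated Leibniz rule, the ``boundary'' summands --- those with one index equal to $k$ and the rest equal to $0$ --- each contain $m-1$ factors of the form $\sigma_q^{\bullet}(a)$, which already lie in $\sqrt I$. So \emph{every} term of the expansion lies in $\sqrt I$, and the fact that the total $\delta_R^{(k)}(a^m)\in I$ puts no constraint whatsoever on $\delta_R^{(k)}(a)$. Your suggestion to ``multiply by appropriate powers of $a$ to kill the unwanted shifted $a$-factors'' cannot work: multiplying by elements of $\sqrt I$ only pushes things further into $\sqrt I$, it does not cancel them. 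The Taylor-series reformulation you mention is a literal restatement of the same problem (coefficientwise) and does not circumvent it.

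The paper's argument is structurally different. For $k<n$ (with $q$ a primitive $n$-th root of unity) it does not use Leibniz at all: one first observes that $I$ being $\delta_R^{(1)}$-stable is equivalent to $I$ being $\sigma_q$-stable, whence $\sqrt I$ is $\sigma_q$-stable (since $\sigma_q$ is a ring automorphism), whence $\sqrt I$ is $\delta_R^{(1)}$-stable; then the composition rule $\delta_R^{(k)}=[k]_q^{-1}\,\delta_R^{(1)}\circ\delta_R^{(k-1)}$ with $[k]_q\neq 0$ handles $1<k<n$. For $k=n$, where this fails, the key trick is to apply $\delta_R^{(nm)}$ (not $\delta_R^{(n)}$) to $a^m$: in the Leibniz expansion over $i_1+\cdots+i_m=nm$, the single term with all $i_j=n$ equals exactly $(\delta_R^{(n)}(a))^m$ --- the $\sigma_q$-twists disappear because $\sigma_q^n=\mathrm{id}$ --- while every other term has some $i_j<n$ and hence a factor already shown to be in $\sqrt I$. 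Since $\delta_R^{(nm)}(a^m)\in I$, one gets $(\delta_R^{(n)}(a))^m\in\sqrt I$, so $\delta_R^{(n)}(a)\in\sqrt I$. The remaining $k$ are handled by combining these two mechanisms. The idea you are missing is precisely this: apply a \emph{high enough} operator so that the Leibniz diagonal produces a genuine $m$-th power of $\delta_R^{(k)}(a)$, rather than a single copy of it multiplied by factors of $a$.
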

\begin{proof}
Assume that $q$ is a $n$-th primitive root of unity. From
$\delta_R^{(1)}=\frac{\sigma_q -id}{(q-1)t}$, we get
$$  \sigma_q(a)=(q-1)t (\delta_R^{(1)}(a) -a), \ \mbox{for all} \  a \in I.$$
This shows that $\sigma_q(a) \in I$ for all $a \in I$. Thus $I$ is a
$\sigma_q$-ideal. Conversely, if $I$ is a $\sigma_q$-ideal then it
is a $\delta_R^{(1)}$-ideal. Now, let us consider $a \in \sqrt{I}$.
There exists $m \in \mathbb{N}$ such that $a^m \in I$. But,
$\sigma_q(a^m)=(\sigma_q(a))^m \in I$. Thus $\sigma_q(a) \in
\sqrt{I}$.\\
Now, we will prove by induction that for all $i<n$,
$\delta_R^{(i)}$ stabilizes $\sqrt{I}$.\\
It is true for $i=1$. If it is true for $k<n-1$, then $k<n$ and we
have :$$\delta_R^{(1)} \circ \delta_R^{(k-1)}=\binom{k}{1}_q
\delta_R^{(k)}$$ where $\binom{k}{1}_q \neq 0$ because $k<n$. We
have that $\delta_R^{(1)}$ and $\delta_R^{(k-1)}$ stabilize
$\sqrt{I}$ (by first step and by inductive assumption). Thus
$\delta_R^{(k)}$ stabilizes $\sqrt{I}$. This concludes the proof by
induction.\\
It remains to consider the case where $k=n$. Let $a \in \sqrt{I}$
and $m \in \mathbb{N}$ such that $a^m \in I$. We have:
\begin{equation} \label{eqn:recn} \delta_R^{(nm)}(a^m)= \sum_{i_1+...+i_m=nm}
\sigma_q^{i_2+...+i_m}(\delta_R^{(i_1)}(a))...\sigma_q^{i_m}(\delta_R^{(i_{m-1})}(a))\delta_R^{(i_m)}(a)
.\end{equation} Because $\sigma_q^n=id$, we can rewrite the equation
(\ref{eqn:recn}) as follows $ \delta_R^{(nm)}(a^m)=
(\delta_R^{(n)}(a))^m + B$ \\
 with $$B= \sum_{i_1+...+i_m=nm}^*
\sigma_q^{i_2+...+i_m}(\delta_R^{(i_1)}(a))...\sigma_q^{i_m}(\delta_R^{(i_{m-1})}(a))\delta_R^{(i_m)}(a)$$
where $\sum_{i_1+...+i_m=nm}^*$ means that we only consider  the
$(i_1,...,i_m)$ such that there exists at least one $j$ with $i_j
<n$. We have already proved by induction that $\sqrt{I}$ is stable
by $\sigma_q$ and by $\delta_R^{(i)}$ for $i<n$. This implies that
$B \in \sqrt{I}$. Then $(\delta_R^{(n)}(a))^m $ belongs to
$\sqrt{I}$ since $ \delta_R^{(nm)}(a^m)\in I$ because $I$ itself  is
an $ID_q$-ideal. It follows
$\delta_R^{(n)}(a) \in \sqrt{I}$.\\

So we have proved that $\sqrt{I}$ is stable under $\delta_R^{(k)}$
for all $k \leq n$.  Using the formula $\delta_R^{(i)} \circ
\delta_R^{(k-i)}=\binom{k}{i}_q \delta_R^{(k)}$ and an inductive
proof, we easily show that $\sqrt{I}$ is stable under
$\delta_R^{(k)}$ for all $k \notin n \mathbb{N}$. The proof for  $k
\in n\mathbb{N}$ is an analogue of the case $k=n$. Therefore
$\sqrt{I}$ is an $ID_q$-ideal. \end{proof}

\begin{remark}[Classical case]
For $q$ not equal to a root of unity, the proof of the previous
lemma is more elementary (see Lemma 1.7 in \cite{VPS2}). The reason
is that if $I$ is a $\sigma_q$-ideal then its radical is obviously a
$\sigma_q$-ideal because $\sigma_q$ is an automorphism.
\end{remark}

\subsubsection{Extending iterative $q$-difference operator}

\begin{prop}\label{prop:ext}
Let $R$ be an integral domain, and let $S \subset R$ be a
multiplicatively closed subset of $R$ \textbf{stable under the
action of }$\sigma_q$ such that $0 \notin S$. Let $\delta_R^*$ be an
iterative $q$-difference operator  on $R$. Then there exists a
unique iterative $q$-difference operator $\delta_{S^{-1}R}^*$
extending $\delta_R^*$ to $S^{-1}R$.
\end{prop}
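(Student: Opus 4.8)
The plan is to extend each $\delta_R^{(k)}$ to $S^{-1}R$ by the only formula that the product rule (part $4$ of Definition \ref{defin:qder}) permits, then check the axioms. The starting point is the relation forced on us: for $s \in S$, applying $\delta_R^{(k)}$ to $s \cdot \tfrac{1}{s} = 1$ and using part $4$ gives, exactly as in Equation (\ref{eqn:nul}), a recursion expressing $\delta_{S^{-1}R}^{(k)}(s^{-1})$ in terms of $\delta_R^{(j)}(s)$, $\sigma_q$ (already known on $S^{-1}R$ since $S$ is $\sigma_q$-stable, by \cite{VPS2}) and the lower $\delta_{S^{-1}R}^{(j)}(s^{-1})$ for $j<k$. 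So first I would define $\delta_{S^{-1}R}^{(k)}(s^{-1})$ inductively on $k$ by this forced recursion; this simultaneously proves uniqueness, since any extension must satisfy it. Then for a general element $\tfrac{r}{s}= r\cdot s^{-1}$ I would be forced to set $\delta_{S^{-1}R}^{(k)}(\tfrac{r}{s}) := \sum_{i+j=k}\sigma_q^i(\delta_R^{(j)}(r))\,\delta_{S^{-1}R}^{(i)}(s^{-1})$. Before anything else I must check this is well-defined, i.e. independent of the representative $\tfrac{r}{s}=\tfrac{ra}{sa}$ for $a\in S$; this follows once the product rule is known, so the cleanest route is to prove well-definedness and the product rule together, or to verify directly that the two formulas for $\tfrac{r}{s}$ and $\tfrac{ra}{sa}$ agree using the already-established recursion for $\delta^{(k)}(s^{-1})$ and $\delta^{(k)}((sa)^{-1})$.

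The verification of the five axioms of Definition \ref{defin:qder} for $\delta_{S^{-1}R}^*$ then splits as follows. Parts $1$ and $2$ are immediate: $\delta^{(0)}=\mathrm{id}$ is visible from the definition, and $\delta^{(1)}=\tfrac{\sigma_q-\mathrm{id}}{(q-1)t}$ holds on $S^{-1}R$ because it holds on $R$, $\sigma_q$ extends, and both sides are determined by their values on $R$ together with the quotient structure — concretely one checks $\delta^{(1)}(s^{-1})$ from the $k=1$ case of the recursion matches $\tfrac{\sigma_q(s^{-1})-s^{-1}}{(q-1)t}$. Part $3$ (additivity) is a short direct computation bringing two fractions $\tfrac{r_1}{s_1},\tfrac{r_2}{s_2}$ to the common denominator $s_1 s_2$ and using additivity on $R$. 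Part $4$ (the product rule) is the crux: one must show $\delta_{S^{-1}R}^{(k)}(xy)=\sum_{i+j=k}\sigma_q^i(\delta^{(j)}(x))\delta^{(i)}(y)$ for $x,y\in S^{-1}R$. Writing $x=r_1 s_1^{-1}$, $y=r_2 s_2^{-1}$, this reduces — using associativity and commutativity of the target ring and the already-verified product rule for products of the form (element of $R$)$\cdot s^{-1}$ — to the single identity $\delta_{S^{-1}R}^{(k)}((s_1 s_2)^{-1}) = \sum_{i+j=k}\sigma_q^i(\delta^{(j)}(s_1^{-1}))\,\delta^{(i)}(s_2^{-1})$; this is itself an induction on $k$ feeding off the defining recursion for each of $(s_1s_2)^{-1}$, $s_1^{-1}$, $s_2^{-1}$, exactly analogous to (and no harder than) the manipulation in the proof of Lemma \ref{lemma: ech}. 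Part $5$ ($\delta^{(i)}\circ\delta^{(j)}=\binom{i+j}{i}_q\delta^{(i+j)}$) I would handle last, again reducing via the product rule to checking it on $s^{-1}$, and there proceeding by induction on the superscripts using part $4$ to move the composition past the fraction.

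The main obstacle I anticipate is precisely the well-definedness together with part $4$: everything hinges on showing the inductively-defined $\delta^{(k)}(s^{-1})$ behave multiplicatively, i.e. that $\delta^{(k)}((s_1s_2)^{-1})$ is obtained from $\delta^{(\bullet)}(s_1^{-1})$ and $\delta^{(\bullet)}(s_2^{-1})$ by the $*$-convolution. A slick way to organize this, and to avoid a tangle of nested inductions, is to invoke Proposition \ref{prop:tay}: the claim "$\delta^*$ is an iterative $q$-difference operator" is equivalent to "$\mathbf{T}: S^{-1}R \to (S^{-1}R)^{\sigma_q}[[T]]$ is a ring homomorphism intertwining $\delta_T^*$". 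Since $\mathbf{T}$ is already a ring homomorphism on $R$, and the Taylor series $\mathbf{T}_{s^{-1}}(T)$ is forced to be the multiplicative inverse (for $*$) of $\mathbf{T}_s(T)$ — which exists uniquely in $(S^{-1}R)^{\sigma_q}[[T]]$ because $\mathbf{T}_s(T)$ has the unit $s$ as constant term — one gets the extension of $\mathbf{T}$ to $S^{-1}R$ as a ring homomorphism for free, hence $\delta^*$ as an iterative $q$-difference operator, with uniqueness coming from uniqueness of $*$-inverses. I would present the proof in this Taylor-series language, noting that $\delta_T^{(k)}\circ\mathbf{T}=\mathbf{T}\circ\delta^{(k)}$ transports automatically since it holds on $R$ and both sides are ring-theoretic.
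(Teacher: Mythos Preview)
Your proposal is correct and, after the preliminary discussion of the direct inductive route, lands on exactly the approach the paper takes: extend the Taylor map $\mathbf{T}$ to $S^{-1}R$ by sending $\tfrac{a}{b}$ to the $*$-quotient $\mathbf{T}_a(T)*\mathbf{T}_b(T)^{-1}$ (which exists since $\mathbf{T}_b$ has unit constant term), read off $\delta_{S^{-1}R}^{(k)}$ as the $T^k$-coefficient, and invoke Proposition~\ref{prop:tay} for both existence and uniqueness. The paper skips your direct-induction discussion entirely and goes straight to the Taylor-series argument, so your final paragraph is the proof.
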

\begin{proof}
Because $\delta_R^*$ is an iterative $q$-difference operator, the
application  $\bold{T}:R \mapsto (R^{\sigma_q}[[T]],+,*)$ defined by
$a \mapsto \bold{T}_a(T)$ is a ring homomorphism (see
\ref{prop:tay}). Since $R$ is commutative, we  have
$$\bold{T}_{ab}(T)= \bold{T}_a(T)* \bold{T}_b(T)=
\bold{T}_b(T)* \bold{T}_a(T) \ \mbox{for all} \ a,b \in R.$$ This
allows us to define the quotient
${\frac{\bold{T}_a(T)}{\bold{T}_b(T)}}^*$ of $\bold{T}_a(T)$ by
$\bold{T}_b(T)$ with respect to the multiplication $*$ for all
$(a,b) \in R \times R^*$. Thereby, the map $\bold{T}$  uniquely
extends to a homomorphism $\tilde{\bold{T}}: S^{-1}R \mapsto
((S^{-1}R)^{\sigma_q}[[T]],+,*)$ via $\frac{a}{b} \mapsto
\tilde{\bold{T}}_{\frac{a}{b}}(T):=
{\frac{\bold{T}_a(T)}{\bold{T}_b(T)}}^*$.   Define
$\delta_{S^{-1}R}^{(k)}(\frac{a}{b})$ to be the coefficient of $T^k$
in $\tilde{\bold{T}}_{\frac{a}{b}}(T)$. Then the collection of maps
$(\delta_{S^{-1}R}^{(k)})_{k \in \mathbb{N}}$ of $S^{-1}R$ to itself
 satisfy  conditions  $1$ and $2$ of Proposition
\ref{prop:tay}. Thus $(\delta_{S^{-1}R}^{(k)})_{k \in \mathbb{N}}$
is an iterative $q$-difference operator for $S^{-1}R$. We also have
$$\tilde{\bold{T}}_{\delta_{S^{-1}R}^{(k)}(a)}(T)=\delta_T^{(k)}(\tilde{\bold{T}}_a(T)) \ \mbox{for all} \  a \in R, \ k \in
\mathbb{N}.$$ The Taylor series  associated to both sides of the
previous equation extend uniquely to $(S^{-1}R)^{\sigma_q}[[T]]$ and
since they coincide on $R^{\sigma_q}[[T]]$, they have to be equal.
Then
$$\tilde{\bold{T}}_{\delta_{S^{-1}R}^{(k)}(a)}(T)=\delta_T^{(k)}(\tilde{\bold{T}}_a(T)) \ \mbox{for all} \  a \in S^{-1}R, \ k \in
\mathbb{N}.$$ By Proposition \ref{prop:tay}, we get that
$(\delta_{S^{-1}R}^{(k)})_{k \in \mathbb{N}}$ is an iterative
$q$-difference operator of $S^{-1}R$ which uniquely extends
$(\delta_R^{(k)})_{k \in \mathbb{N}}$. \end{proof}

\begin{remark}
Let $(R,\delta_R^*)$ be an integral iterative $q$-difference ring.
It is obvious that the set $S$ of  non zero divisors of $R$ is a
multiplicatively closed set and moreover stable under the action of
$\sigma_q$.
\end{remark}
\begin{remark}

In this paragraph we did not mention the possibilities of extending
an iterative $q$-difference operator over a field $K$ to a finitely
generated  separable field extension $E/K$. In fact, this problem
appears already in the classical $q$-difference Galois theory :
extending $\sigma_q$ to an algebraic extension gives rise to
uniqueness problems. Here is an example. Consider a difference field
$(K,\sigma_q)$, where $\sigma_q$ is the identity on some
algebraically closed field $C$ containing $\mathbb{Q}$,  $K$
contains a solution $y$ of $\sigma(x)=cx$, where $c \in C$ is
non-zero and is not a root of unity. Moreover assume that $K$ does
not contain the $n$-th roots of $y$ for some $n>1$. Consider the
extension of $K$ given by $b^n=y$. Then $\sigma(b)=rb$, where
$r^n=c$. The possible choices for $\sigma$ on $K(b)$ depend on the
choices of $r$, and there are $n$ possibilities, which give rise to
$n$ non-isomorphic difference field extensions of $K$.\\
But by chance, we will not have to  handle  such kind of extension
till the end of the paper.
\end{remark}

\subsection{The Wronskian determinant}
In  classical Galois theory of $q$-difference equations, there
exists an analogue of the Wronskian called the $q$-Wronskian or the
Casoratian. If we consider a $\sigma_q$-module $\mathcal{M}$ over a
field $K$ and a family $\mathcal{F} :=\lbrace y_1,...,y_m \rbrace$
of elements of $\mathcal{M}$, we will define the $q$-Wronskian of
the family $\mathcal{F}$ as
$$W_q(y_1,...,y_m):=det ( (\sigma_q^{i-1}(y_j))_{1 \leq i,j \leq
m}).$$ The nullity of the $q$-Wronskian gives a criterion for linear
independence of the $y_i$'s (see for instance \cite{Lu} $1.2$). But when
$q$  is a root of unity, the $q$-Wronskian could vanish for other
 reasons (for instance because $ \sigma_q^n =id$). Thus, we have to change the notion of
 $q$-Wronskian for iterative $q$-difference operators in order to get a
 similar criterion to the one in the classical theory.

\begin{theorem}\label{theorem:indT}
Let $(K,\delta_K^*)$ be an iterative $q$-difference field with field
of constants $C$. Then for any elements  $x_1,...,x_r$ of $K$
linearly independent over $C$, the iterative Taylor series
$\bold{T}_{x_1},...,\bold{T}_{x_r}$ are linearly independent over
$K$.
\end{theorem}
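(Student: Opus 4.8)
The plan is to argue by contradiction, taking a counterexample of minimal length. Suppose $x_1,\dots,x_r \in K$ are linearly independent over $C$ but the Taylor series $\mathbf{T}_{x_1},\dots,\mathbf{T}_{x_r}$ are linearly dependent over $K$; choose such a relation $\sum_{i=1}^r a_i \mathbf{T}_{x_i} = 0$ with $a_i \in K$, not all zero, and with the number of nonzero $a_i$ as small as possible. After reindexing we may assume $a_1,\dots,a_s$ are all nonzero with $s$ minimal, and after dividing by $a_1$ we may take $a_1 = 1$. Note $s \geq 2$: if $s=1$ then $\mathbf{T}_{x_1}=0$, but the constant term of $\mathbf{T}_{x_1}$ is $\delta_K^{(0)}(x_1)=x_1 \neq 0$ since the $x_i$ are linearly independent over $C$ hence nonzero.

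The key is to apply $\delta_T^{(k)}$ to the relation and use Proposition \ref{prop:tay}, together with the fact that $\delta_K^{(k)}$ is only "almost" a derivation. Concretely, coefficientwise the relation $\sum_i a_i \mathbf{T}_{x_i}=0$ says $\sum_i a_i \delta_K^{(k)}(x_i) = 0$ for every $k \in \mathbb{N}$. Now fix $k \geq 1$ and consider the element $\sum_i a_i \mathbf{T}_{\delta_K^{(k)}(x_i)}$. Using Proposition \ref{prop:tay} part 2, $\mathbf{T} \circ \delta_K^{(k)} = \delta_T^{(k)} \circ \mathbf{T}$, so this element equals $\delta_T^{(k)}\bigl(\sum_i a_i \mathbf{T}_{x_i}\bigr)$ — but one must be careful, because $\delta_T^{(k)}$ is not $K$-linear: it interacts with the twisted multiplication and with $\sigma_q$ via the Leibniz rule of Definition \ref{defin:qder} part 4 applied in $K^{\sigma_q}[[T]]$. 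The right approach is to compute $\delta_T^{(1)}$ of the relation first (so one controls the arithmetic factors), obtaining a new relation $\sum_i a_i' \mathbf{T}_{x_i} + (\text{terms involving } \mathbf{T}_{\delta_K^{(1)}(x_i)}) = 0$ and, crucially, manufacture from it a \emph{shorter} relation: subtract a suitable $K$-multiple of the original relation so that the coefficient of $\mathbf{T}_{x_1}$ (i.e.\ of $\mathbf{T}_{x_s}$) is killed. By minimality of $s$, the resulting relation must be trivial, which forces the $a_i$ to satisfy $\delta_K^{(1)}(a_i) = 0$, and then (iterating with all $\delta_K^{(k)}$, or invoking Lemma \ref{lemma: ech} to handle the $\sigma_q$-twists) that all $a_i \in C(K) = C$. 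A relation $\sum_i a_i \mathbf{T}_{x_i} = 0$ with $a_i \in C$ then gives, reading the constant ($T^0$) coefficient, $\sum_i a_i x_i = 0$ with $a_i \in C$ not all zero, contradicting the linear independence of the $x_i$ over $C$.

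The main obstacle, and the place requiring genuine care rather than routine bookkeeping, is the non-linearity of $\delta_T^{(k)}$ over $K$: because of the twist by $\sigma_q$ in the Leibniz rule, applying $\delta_T^{(1)}$ to $a_i \mathbf{T}_{x_i}$ produces $\sigma_q(a_i)\,\delta_T^{(1)}(\mathbf{T}_{x_i}) + \delta_T^{(1)}(a_i \text{ as a constant series})\cdot \mathbf{T}_{x_i}$-type terms, and one has to track how $\sigma_q$ acts on the $T$-grading (recall $\sigma_q(aT^i) = \sigma_q(a)q^i T^i$) so that the degree-shifting is handled consistently. One clean way to organize this is to work entirely inside $K^{\sigma_q}[[T]]$: the hypothesis says the left ideal generated by $\sum_i a_i \mathbf{T}_{x_i}$ in that twisted ring is nonzero-but-annihilated in a controlled way, and one shows the "leading behavior" in $T$ cannot be cancelled unless the $a_i$ are constants. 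Alternatively — and perhaps more transparently — one can run the classical $q$-Wronskian/Casoratian argument in the twisted setting: form the matrix $\bigl(\delta_T^{(j)}(\mathbf{T}_{x_i})\bigr)$ (an analogue of the Casoratian), show its determinant's vanishing forces a shorter $C$-relation exactly as above, using Theorem-level input that $C(K)=C$ plus Lemma \ref{lemma: ech}. Either way the skeleton is: minimal counterexample $\Rightarrow$ differentiate once $\Rightarrow$ shorter relation $\Rightarrow$ coefficients are constants $\Rightarrow$ read off $T^0$-coefficient $\Rightarrow$ contradiction.
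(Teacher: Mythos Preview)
Your skeleton --- minimal counterexample, differentiate, obtain a shorter relation, conclude the coefficients are constants, read off the $T^0$-coefficient --- is exactly what the paper does (phrased as induction on $r$). But two concrete points in your execution are off and would prevent the argument from going through as written.

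First, the operator you propose to apply is the wrong one. By definition (see the paragraph before Proposition~\ref{prop:tay}), $\delta_T^{(k)}$ is extended $R$-linearly from $\delta_T^{(k)}(T^r)=\binom{r}{k}_q T^{r-k}$; in particular it \emph{is} $K$-linear, and $\delta_T^{(1)}(a_i)=0$ for $a_i\in K$ viewed as a constant series. So applying $\delta_T^{(1)}$ to $\sum_i a_i\mathbf{T}_{x_i}=0$ simply returns $\sum_i a_i\mathbf{T}_{\delta_K^{(1)}(x_i)}=0$, i.e.\ the original coefficient relations shifted by one index with a $q$-binomial factor --- no new information, and certainly no $a_i'$-terms. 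The Leibniz twist you are worried about lives in $\delta_K$, not $\delta_T$. What the paper actually does is apply $\delta_K^{(1)}$ (equivalently $\sigma_q$) to each coefficient equation $\sum_j a_j\,\delta_K^{(k)}(x_j)=\delta_K^{(k)}(x_r)$; after using Lemma~\ref{lemma: ech} this produces the relation $\sum_{j=1}^{r-1}(\sigma_q(a_j)-a_j)\,\mathbf{T}_{\sigma_q(x_j)}=0$, which is the genuinely shorter relation (length $r-1$, among the $\sigma_q(x_j)$'s, which are again $C$-independent). Induction then gives $\sigma_q(a_j)=a_j$.

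Second, your ``then iterate with all $\delta_K^{(k)}$'' glosses over the hardest step. When $q$ is a primitive $n$-th root of unity, knowing $\sigma_q(a_j)=a_j$ (equivalently $\delta_K^{(1)}(a_j)=0$) does \emph{not} by itself yield $\delta_K^{(n)}(a_j)=0$; one cannot climb from level $1$ to level $n$ via $\delta^{(1)}\circ\delta^{(n-1)}=[n]_q\,\delta^{(n)}$ because $[n]_q=0$. The paper spends most of the proof on exactly this: using the identity $\delta^{(i)}\circ\delta^{(k)}=\binom{i+k}{i}_q\delta^{(i+k)}$ together with the already-established $\sigma_q$-invariance, it derives by a careful double induction (on $i$, for all $k$ simultaneously) that $\sum_j \delta^{(i)}(a_j)\,\delta^{(k)}(x_j)=0$ for every $i\ge1$ and $k\ge0$, i.e.\ $\sum_j\delta^{(i)}(a_j)\,\mathbf{T}_{x_j}=0$; only then does the inductive hypothesis force $\delta^{(i)}(a_j)=0$ for all $i\ge1$. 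This computation is where the work is, and your proposal does not supply it.
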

\begin{proof}
 This statement is obviously true for $r=1$. We will
proceed by induction on $r$. Let $(H_r)$ be the hypothesis of
induction, i.e., for any elements  $x_1,...,x_r$ of $K$ linearly
independent over $C$, the iterative Taylor series
$\bold{T}_{x_1},...,\bold{T}_{x_n}$ are linearly independent over
$K$. Suppose that $(H_{r-1})$ is true and let  $x_1,...,x_r \in K$
be  linearly independent over $C$. Assume that
$\bold{T}_{x_1},...,\bold{T}_{x_r}$ are linearly dependent over $K$,
i.e. :
$$ \bold{T}_{x_r} =\sum_{j=1}^{r-1} a_j \bold{T}_{x_j}$$ where $a_j
\in K$ not all equal to zero. This relation implies  that
\begin{equation}\label{eqn:r}\delta^{(k)}(x_r)=\sum_{j=1}^{r-1} a_j \delta^{(k)}(x_j) \ \mbox{for all} \ k \in \mathbb{N} \end{equation}

We will prove that $ \sigma_q(a_j)=a_j$ for all $1 \leq j \leq r-1$.
First of all, let us remark that if $x_1,...,x_{r-1} \in K$ are
linearly independent over $C$ then
$\sigma_q(x_1),...,\sigma_q(x_{r-1}) \in
K$ are linearly independent over $C$.\\
Because of $\delta^{(1)}=\frac{\sigma_q -id}{(q-1)t}$ and from
Equation (\ref{eqn:r}), we have :
$$ \sigma_q(\delta^{(k)}(x_r)) -\delta^{(k)}(x_r)=\sum_{j=1}^{r-1} a_j
\sigma_q(\delta^{(k)}(x_j)) - \sum_{j=1}^{r-1} a_j
\delta^{(k)}(x_j)$$ and
$$\sigma_q(\delta^{(k)}(x_r))=\sum_{j=1}^{r-1} \sigma_q(a_j)
\sigma_q(\delta^{(k)}(x_j)).$$ We also obtain that
$$ \sum_{j=1}^{r-1} (\sigma_q(a_j)-a_j)
\sigma_q(\delta^{(k)}(x_j))=0$$ for all $k\in \mathbb{N}$. Because
$\sigma_q(\delta^{(k)}(x_j))=\frac{1}{q^k}\delta^{(k)}(\sigma_q(x_j))$,
we get $$ \sum_{j=1}^{r-1} (\sigma_q(a_j)-a_j)
(\delta^{(k)}(\sigma_q(x_j)))=0$$ for all $k\in \mathbb{N}$. This
means that $\sum_{j=1}^{r-1}
(\sigma_q(a_j)-a_j)\bold{T}_{\sigma_q(x_j)}=0$. Since
$x_1,...,x_{r-1} \in K$ are linearly independent over $C$,
$\sigma_q(x_1),...,\sigma_q(x_{r-1}) \in K$ are linearly independent
over $C$. Thus we can apply the induction hypothesis $(H_{r-1})$ to
the set of elements $\sigma_q(x_1),...,\sigma_q(x_{r-1})$ of $K$ and
so $\sigma_q(a_j)=a_j$ for $ 1 \leq j \leq r-1$ as desired. \\
For all $k,i \in \mathbb{N}$, we have
$$\binom{i+k}{k}_q
\delta^{(i+k)}(x_r)=\delta^{(i)}\delta^{(k)}(x_r)=\sum_{j=1}^{r-1}\sum_{l=0}^i
\sigma_q^{i-l}(\delta^{(l)}(a_j))\binom{i+k-l}{k}_q\delta^{(i+k-l)}(x_j)$$
and $$\binom{i+k}{k}_q \delta^{(i+k)}(x_r)=\binom{i+k}{k}_q
\sum_{j=1}^{r-1} a_j \delta^{(k)}(x_j).$$ Because
$\sigma_q(a_j)=a_j$ for $ 1 \leq j \leq r-1$, the term for $l=0$ on
the right hand side is equal to the left hand side, thus
\begin{equation}\label{eqn:rec} \sum_{j=1}^{r-1}\sum_{l=1}^i
\sigma_q^{i-l}(\delta^{(l)}(a_j))\binom{i+k-l}{k}_q\delta^{(i+k-l)}(x_j)=0.
\end{equation}
For $i=1$, we deduce from equation (\ref{eqn:rec}) that
$$\sum_{j}^{r-1}\delta^{(1)}(a_j)\delta^{(k)}(x_j)=0.$$
 By applying $\delta^{(1)}$, we obtain :
$$\sum_{j}^{r-1}\sigma_q(\delta^{(1)}(a_j))\delta^{(1)}(\delta^{(k)}(x_j)) + \sum_{j}^{r-1}\delta^{(1)}(\delta^{(1)}(a_j))\delta^{(k)}(x_j)=0,$$
i.e., since $\sigma_q^r \delta^{(s)}= \frac{1}{q^{rs}}\delta_R^{(s)}
\sigma_q^r$ for all $r,s \in \mathbb{N}$, and the $a_j$'s are fixed
by $\sigma_q$,
$$\sum_{j}^{r-1}\frac{q(q^{k+1}-1)}{q-1}\delta^{(1)}(a_j)\delta^{(k+1)}(x_j) + \sum_{j}^{r-1}(q+1)(\delta^{(2)}(a_j))\delta^{(k)}(x_j)=0.$$
For $i=2$, we deduce from equation (\ref{eqn:rec}) that
$$\sum_{j}^{r-1} \sigma_q(\delta^{(1)}(a_j))\binom{k+l}{k}_q\delta^{(k+1) }(x_j) +\sum_{j}^{r-1}
\delta^{(2)}(a_j)\delta^{(k)}(x_j)=0.$$ By subtracting this from the
equality above, we find :
$$\sum_{j}^{r-1}
\delta^{(2)}(a_j)\delta^{(k)}(x_j)=0.$$ By induction, the same
arguments yields

$$\sum_{j}^{r-1}
\delta^{(i)}(a_j)\delta^{(k)}(x_j)=0 \ \mbox{ for} \ k \geq 0 \
\mbox{and} \  i \geq 1.$$
 This leads to

$$\sum_{j}^{r-1}
\delta^{(i)}(a_j)\bold{T}_{x_j}=0.$$ By  hypothesis of induction
$(H_{r-1})$, this implies that $\delta^{(i)}(a_j)=0$ for all $i \geq
1$ and all $1 \leq j \leq r-1$. Hence all the $a_j$'s are constants
and lie in $C$. But we have $x_n = \sum_{j=1}^{r-1}a_j x_j$ (see
Equation (\ref{eqn:rec}) for $k=0$) and thus by assumption of
$C$-linearly independence of $x_1,...,x_r$, we get that $a_j=0$ for
all $1 \leq j \leq r-1$. \end{proof}

\begin{coro}\label{coro:indt}
Let $x_1,...,x_r \in K$ linearly independent over $C$, there exist numbers
$d_1,...,d_r \in \mathbb{N}$ such that
$$det ((\delta^{(d_i)}(x_j))_{i,j=1,...,r})\neq 0.$$
\end{coro}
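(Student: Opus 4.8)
The plan is to deduce Corollary \ref{coro:indt} from Theorem \ref{theorem:indT} by a standard linear-algebra argument about the infinite matrix of iterative Taylor coefficients. First I would record the reformulation: the Taylor series $\bold{T}_{x_j} = \sum_{k\in\mathbb{N}} \delta^{(k)}(x_j)T^k$ are elements of $K^{\sigma_q}[[T]]$, whose underlying additive structure is just $K[[T]]$; so viewing them as column vectors indexed by $k\in\mathbb{N}$, with $j$-th column $v_j := (\delta^{(k)}(x_j))_{k\in\mathbb{N}}$, Theorem \ref{theorem:indT} asserts precisely that $v_1,\dots,v_r$ are $K$-linearly independent as vectors in $K^{\mathbb{N}}$. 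The corollary asks for a choice of $r$ row indices $d_1,\dots,d_r$ such that the corresponding $r\times r$ minor of the (infinite) matrix $M := (\delta^{(k)}(x_j))_{k\in\mathbb{N},\, 1\le j\le r}$ is nonzero.

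The key step is the elementary fact that an $\mathbb{N}\times r$ matrix over a field $K$ whose $r$ columns are linearly independent has an $r\times r$ submatrix (on some $r$ rows) with nonzero determinant; equivalently, the matrix has rank $r$, and rank can be computed from finitely many rows. I would prove this by induction on $r$ (or cite it as standard). For the induction: since $v_1,\dots,v_r$ are independent, in particular $v_1,\dots,v_{r-1}$ are independent, so by induction there are rows $d_1,\dots,d_{r-1}$ with $\det\big((\delta^{(d_i)}(x_j))_{1\le i,j\le r-1}\big)\neq 0$. Now consider, for each $k\in\mathbb{N}$, the $r\times r$ determinant $D(k)$ of the matrix with rows indexed by $d_1,\dots,d_{r-1},k$ and columns by $x_1,\dots,x_r$. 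Expanding $D(k)$ along its last row gives $D(k) = \sum_{j=1}^{r} c_j\, \delta^{(k)}(x_j)$ where the cofactors $c_j\in K$ do not depend on $k$, and $c_r = \det\big((\delta^{(d_i)}(x_j))_{1\le i,j\le r-1}\big)\neq 0$. If $D(k)=0$ for every $k$, then $\sum_j c_j v_j = 0$ is a nontrivial $K$-linear relation among $v_1,\dots,v_r$ (nontrivial because $c_r\neq 0$), contradicting Theorem \ref{theorem:indT}. Hence $D(d_r)\neq 0$ for some $d_r\in\mathbb{N}$, and $\{d_1,\dots,d_{r-1},d_r\}$ are the desired indices (after relabeling so they are increasing, or just noting the determinant statement is insensitive to order up to sign).

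I do not anticipate a genuine obstacle here; the only thing to be careful about is that the $d_i$ in the inductive hypothesis need not be distinct from the new $d_r$ — but if $d_r$ coincided with one of $d_1,\dots,d_{r-1}$ the determinant $D(d_r)$ would have a repeated row and vanish, so automatically $d_r\notin\{d_1,\dots,d_{r-1}\}$, and the $d_i$ are pairwise distinct as required for a genuine $r\times r$ minor. One should also note that the base case $r=1$ is immediate: $x_1\neq 0$ gives $\delta^{(0)}(x_1)=x_1\neq 0$, so $d_1=0$ works. The whole argument is purely formal once Theorem \ref{theorem:indT} is in hand, so the corollary is really just the translation of "linearly independent Taylor series" into "some Wronskian-type minor is nonzero."
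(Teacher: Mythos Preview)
Your argument is correct. The paper states this corollary without proof, treating it as an immediate consequence of Theorem \ref{theorem:indT}; your induction on $r$ via cofactor expansion is precisely the standard way to make explicit the passage from ``the columns $(\delta^{(k)}(x_j))_{k\in\mathbb{N}}$ are $K$-linearly independent'' to ``some $r\times r$ minor is nonzero,'' and there is nothing to add.
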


\begin{defin}
Let $(K,\delta_K^*)$ be an $ID_q$ field with $C(K)=C$ and let
$x_1,...,x_r \in K$ be  linearly independent over $C$. The smallest
numbers $d_1,...,d_r \in \mathbb{N}$ (in lexicographical order) such
that $det ((\delta^{(d_i)}(x_j))_{i,j=1}^r)\neq 0$ (which exist by
Corollary \ref{coro:indt}) are called the  \textbf{difference
orders} of $x_1,...,x_r$. The determinant
$$ wr(x_1,...,x_r):=det ((\delta^{(d_i)}(x_j))_{i,j=1}^r)$$
is called the \textbf{Wronskian determinant} of $x_1,...,x_r$.
\end{defin}

\section{Iterative $q$-difference modules and Equations}
Until the end of this article, we will  assume that $q$ is a $n$-th
primitive root of unity contained in an algebraically closed field
$C$. But we do not
make any assumption about the characteristic of the field $C$.\\
In Section $2$, we have defined iterative $q$-difference rings.
Following the classical way, we extend this concept to modules, in
order to get a suitable notion of iterative $q$-difference equations
associated to these modules.

\begin{defin}\label{defin:mod}
Let $(R,\delta_R^*)$ be an iterative $q$-difference ring.
 Let $M$ be a free $R$-module of finite type over $R$. We will say that $(M, \delta_M^*)$ is an
\textbf{iterative $q$-difference module} if there exists a family of
maps $\delta_M^* =(\delta_M^{(k)})_{k \in \mathbb{N}}$, such that
for all $i,j,k \in \mathbb{N}$
\begin{enumerate}
\item $\delta_M^{(0)}=id_M$,
\item $\phi_M := (q-1)t \delta_M^{(1)} +id_M$ is a bijective map from $M$ to $M$,
\item $\delta_M^{(k)}$ is an additive map from $M$ to $M$,
\item $\delta_M^{(k)}(am) = \sum_{i+j=k} \sigma_q^i(\delta_R^{(j)}(a))
\delta_M^{(i)}(m)$ for $a \in R$ and $m \in M$,
\item $\delta_M^{(i)} \circ \delta_M^{(j)}=\binom{i+j}{i}_q
\delta_M^{(i+j)}$.
\end{enumerate}
The set of all iterative $q$-difference modules over $R$ is denoted
by $IDM_q(R)$.

\end{defin}
\begin{remark}[Classical case]\label{remark:qdiff}

 If $q$ is not a root of unity, it is easy to see that
$\phi_M(am)= \sigma_q(a) \phi_M(m)$ for all $a \in R$ and $m \in M$.
Moreover, $\delta_M^{(k)}=\frac{{\delta_M^{(1)}}^k}{[k]_q!}$. Thus,
in the case where $q$ is not a root of unity,  an $ID_q$-module is
nothing else than a $q$-difference module in the sense of
\cite{VPS2} $1.4$.
\end{remark}
As in \ref{remark:ech}, we easily show that we have for all $j,i \in
\mathbb{N}$,
\begin{equation}\label{eqn:sd1}\phi_M^j \delta_M^{(i)}=
\frac{1}{q^{ji}}\delta_M^{(i)} \phi_M^j .\end{equation}

\begin{defin}
Let $(M, \delta_M^*)$ and $(N, \delta_N^*)$ be two iterative
$q$-difference modules over $R$ and let $\phi \in Hom_R(M,N)$. We
will say that $\phi$ is an \textbf{iterative $q$-difference
homomorphism} if $ \delta_N^{(k)}\circ \phi=\phi \circ
\delta_M^{(k)}$ for all $k \in \mathbb{N}$.
\end{defin}
\begin{defin}
Let $(R,\delta_R^*)$ be an iterative $q$-difference ring.
 Let $(M, \delta_M^*)$ be an
iterative $q$-difference module over $R$. The $C(R)$-module
$$V_M:= \bigcap_{k \in \mathbb{N}} Ker(\delta_M^{(k)})$$ is called the
\textbf{solution space} of the iterative $q$-difference module $M$.
We will say that   $M$ is a \textbf{trivial} iterative
$q$-difference module if $M \simeq V_M \otimes_{C(R)}R$.
\end{defin}

\begin{theorem}\label{theorem:tannak}
Let $(L,\delta_L^*)$ be an iterative $q$-difference field. Let us
denote by $IDM_q(L)$ the category with objects  the iterative
$q$-difference modules over $L$ and  morphisms  the iterative
$q$-difference morphisms. Then $IDM_q(L)$ is a neutral Tannakian
category over $C(L)$. The unit object is $(L, \delta_L^{*})$.
\end{theorem}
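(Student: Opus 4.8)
The plan is to verify that $IDM_q(L)$ satisfies the axioms of a neutral Tannakian category over $C(L)$. First I would check that $IDM_q(L)$ is an abelian category: given an iterative $q$-difference morphism $\phi\colon (M,\delta_M^*)\to(N,\delta_N^*)$, one checks that $\ker\phi$, $\operatorname{coker}\phi$, $\operatorname{im}\phi$ all inherit iterative $q$-difference structures. Since $\phi$ commutes with each $\delta^{(k)}$, these submodules/quotients are stable under the $\delta^{(k)}$, and over a field $L$ all the relevant modules are free (so free of finite type is preserved); condition (2) of Definition \ref{defin:mod}, that $\phi_M$ is bijective, passes to subquotients because $\phi_M$ restricts to a bijection on a stable submodule and descends to a bijection on the quotient. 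One should also note $\operatorname{Hom}_{ID_q}(M,N)$ is a $C(L)$-module, where $C(L)$ acts because constants commute with all $\delta^{(k)}$.

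Next I would equip the category with a tensor structure. For $M,N\in IDM_q(L)$ one sets $\delta_{M\otimes N}^{(k)}(m\otimes n):=\sum_{i+j=k}\sigma_q^i(\delta_M^{(j)}(m))\otimes\delta_N^{(i)}(n)$ (or the symmetric version), exactly mirroring Proposition \ref{prop:tens} for rings; the same computation — which hinges on Lemma \ref{lemma:com} and part 5 of Proposition \ref{prop:binform} — shows the axioms of Definition \ref{defin:mod} hold, with $\phi_{M\otimes N}=\phi_M\otimes\phi_N$ bijective. The unit object is $(L,\delta_L^*)$, whose solution space is $C(L)$; associativity, commutativity and unit constraints are inherited from the underlying category of $L$-vector spaces and are compatible with the iterative structure by a routine check. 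The tensor product is $L$-bilinear and right exact, so $IDM_q(L)$ is a tensor abelian category with $\operatorname{End}(\mathbf 1)=C(L)$.

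Then I would produce duals and internal Homs. For $M$ of rank $r$, define $\delta^{(k)}$ on $M^\vee=\operatorname{Hom}_L(M,L)$ by the formula forced by requiring the evaluation pairing $M\otimes M^\vee\to\mathbf 1$ to be a morphism; concretely, using $\phi_M$ and Equation (\ref{eqn:sd1}) one solves for $\delta_{M^\vee}^{(k)}$ inductively in $k$ from the Leibniz-type identity $0=\delta^{(k)}(\langle m,\lambda\rangle)=\sum_{i+j=k}\sigma_q^i(\delta_M^{(j)}(m))\,\delta_{M^\vee}^{(i)}\lambda$ — the top term $\delta_{M^\vee}^{(k)}$ appears with invertible coefficient $\phi_M^{\,0}=\operatorname{id}$ paired against $m$, so it is determined. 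One then checks axiom (5) for $M^\vee$ (this is the one genuine calculation, using part 5 of Proposition \ref{prop:binform} again) and that $M$ is rigid, i.e. the natural map $M\to M^{\vee\vee}$ is an isomorphism of iterative $q$-difference modules and $\underline{\operatorname{Hom}}(M,N)\cong M^\vee\otimes N$. Rank is additive in exact sequences and multiplicative in tensor products, which combined with rigidity gives that $IDM_q(L)$ is a rigid abelian tensor category, $C(L)$-linear, with $\operatorname{End}(\mathbf 1)=C(L)$.

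Finally I would exhibit a fiber functor. The obvious candidate is $\omega\colon IDM_q(L)\to\operatorname{Vect}_{C(L)}$, $M\mapsto V_M=\bigcap_k\ker\delta_M^{(k)}$, which is exact and faithful and tensor-compatible, but the main obstacle is that $V_M$ need not have dimension $r=\operatorname{rank}_L M$ over $C(L)$ unless $M$ is trivial — in general $V_M$ can be too small — so $\omega$ need not be exact/faithful nor have values landing us in the neutral case without a Picard–Vessiot extension. The clean route, and the one I expect the paper to take, is instead the forgetful-plus-base-change functor: by the general Tannakian recognition theorem (Deligne–Milne), a rigid abelian $C(L)$-linear $\otimes$-category with $\operatorname{End}(\mathbf 1)=C(L)$ in which every object has integer rank $\geq 0$ is the category of finite-dimensional representations of an affine group scheme over $C(L)$, i.e. is Tannakian; to see it is \emph{neutral} one invokes a Picard–Vessiot / universal solution ring $R$ for $IDM_q(L)$ (constructed from the classical theory as announced in the introduction) with $C(R)=C(L)$, and sets $\omega(M):=V_{M\otimes_L R}=(M\otimes_L R)^{\delta^*}$, which is free of rank $r$ over $C(L)$ because $M$ becomes trivial over $R$. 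Checking $\omega$ is a faithful exact tensor functor is then formal: exactness because $R$ is faithfully flat over $L$ and solution spaces of trivial modules have the right dimension, tensor-compatibility because $(M\otimes N)\otimes R\cong(M\otimes R)\otimes_R(N\otimes R)$ and the $\delta^*$-invariants multiply. Thus $IDM_q(L)$ is a neutral Tannakian category over $C(L)$ with unit $(L,\delta_L^*)$. I would flag that the one nontrivial input here is the existence of such a Picard–Vessiot-type ring $R$, which I would cite from the later sections; everything else is the standard verification carried out in parallel with Matzat–van der Put's iterative differential case.
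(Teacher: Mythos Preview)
Your treatment of the abelian structure, tensor product, and duals runs parallel to the paper's, though the paper is more explicit: rather than defining $\delta_{M^*}^{(k)}$ inductively from the requirement that evaluation be a morphism, it writes down the closed formula
\[
\delta_{M^*}^{(k)}(f)=\sum_{i+j=k}(-1)^i q^{i(i+1)/2}\,\sigma_q^i(\delta_L^{(j)})\circ f \circ \delta_M^{(i)} \circ \phi_M^{-i}
\]
and then verifies directly that $\epsilon$ and $\eta$ are $ID_q$-morphisms satisfying the rigidity identities. One small slip in your tensor formula: you write $\sigma_q^i(\delta_M^{(j)}(m))$, but $\sigma_q$ does not act on the abstract module $M$; the correct twist is by $\phi_M$, as in the paper's $\sum_{i+j=k}\phi_M^{j}(\delta_M^{(i)}(x))\otimes\delta_N^{(j)}(y)$.

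The substantive divergence is the fiber functor. The paper's proof in fact stops at $End_{IDM_q(L)}(\mathbf{1})=C(L)$ and does not exhibit a fiber functor at all; it is tacitly relying on the parallel argument in \cite{M}. Your attempt to fill this in is reasonable, but the route you propose has a genuine gap: there is no ``universal Picard--Vessiot ring $R$'' for the whole category $IDM_q(L)$. The later sections (Theorem~\ref{theorem:pv}) construct a PV ring only for a \emph{single} $ID_q$-module, and different modules give different, generally incomparable, PV rings; one cannot take a union or limit without controlling the constants along the way, which is exactly the hard part. So the functor $M\mapsto (M\otimes_L R)^{\delta^*}$ is not well-defined as you have written it. If you want neutrality over $C(L)$, the honest options are either to restrict to the subcategory $\langle M\rangle_\otimes$ generated by a fixed object (where the PV ring for $M$ does the job), or to accept---as the paper implicitly does---that the full neutrality claim is being imported from the iterative differential analogue rather than proved from scratch here.
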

\begin{proof}
We refer to \cite{M} Theorem 2.5. for the fact that $IDM_q(L)$ is an abelian category, the case for iterative differential modules
being the same as the one of iterative $q$-difference modules. For
$M$ and $N$ two objects of  $IDM_q(L)$, we define the tensor product
$M\otimes N:= M \otimes_L N$ by the usual tensor product as
$L$-modules and turn it to an $ID_q$-module via

$$\delta_{M \otimes N}^{(k)}(x \otimes y)= \sum_{i+j=k}
\phi_M^{j}(\delta_M^{(i)}(x))\otimes \delta_N^{(j)}(y)$$ for all $x
\in M, y \in N$. The proof that $(\delta_{M\otimes N}^{(k)})_{k \in
\mathbb{N}}$ is an iterative $q$-difference operator on $M \otimes
N$ is   analogous to
the proof of  Proposition \ref{prop:tens}.\\
The dual of an object $M$ of $IDM_q(L)$ is then given by
$M^*=Hom_L(M,L)$ together with
$$
\delta_{M^*}^{(k)}(f)=\sum_{i+j=k}(-1)^i
q^{\frac{i(i+1)}{2}}\sigma_q^i(\delta_L^{(j)})\circ f \circ
\delta_M^{(i)} \circ \phi_M^{-i}$$ for all $f \in M^*$. The proof
that $(M, \delta_{M^*}^{*})$ is
an iterative $q$-difference module is left to the reader. We just recall that if $(M, \phi_M)$ is a $q$-difference module in the sense of \cite{VPS2}, then
 $M^*$ is endowed with a $q$-difference module structure via
 $$ \phi_{M^*}(f) := \sigma_q \circ f \circ \phi_M^{-1}.$$\\
The evaluation map $\epsilon: M\otimes M^* \rightarrow
\bold{1}_{IDM_q(L)}=L$ sends $x\otimes f$ to $f(x)$, and the
coevaluation map $\eta: L \rightarrow M^* \otimes M$ is defined by
mapping $1$ to $\sum_{i=1}^n x_i^* \otimes x_i$, where $\lbrace x_i
\rbrace _{i=1}^n$ denotes an $L$-basis of $M$ and $\lbrace x_i^*
\rbrace _{i=1}^n$ the associated dual basis of $M^*$. Note that the
definition of $\eta$ does not depend on the  chosen basis. It
remains to show that $\epsilon$ and $\eta$ are $ID_q$-homomorphism
and that they satisfy $(\epsilon \otimes id_M) \circ (id_M \otimes
\eta)=id_M$ and $(id_{M^*}\otimes \epsilon) \circ (\eta \otimes
id_{M^*})=id_{M^*}$ for all objects $M$ of $IDM_q(L)$. We have
$$\begin{array}{lll} \epsilon \circ \delta_{M \otimes M^*}^{(k)}(x
\otimes f)& =&\epsilon\left(\sum_{i+j=k} \delta_M^{(i)}(x) \otimes
\phi_{M*}^i(\delta_{M^*}^{(j)}(f))\right)=\sum_{i+j=k}\phi_{M^*}^i(\delta_{M^*}^{(j)}(f))(\delta_M^{(i)}(x))\\
& = &\sum_{i+j=k}\sum_{l=0}^j(-1)^l
q^{l(l+1)/2}\sigma_q^{i+l}(\delta_L^{(j-l)})\circ f \circ
\delta_M^{(l)}\circ \phi_M^{-(i+l)}(\delta_M^{(i)}(x))\\
& = & \sum_{i+j=k}\sum_{l=0}^j(-1)^l q^{l(l+1)/2}
\sigma_q^{l+i}(\delta_L^{(j-l)})\circ f \circ
q^{i(i+l)}\binom{i+l}{i}_q \delta_M^{(i+l)}(
\phi_M^{-(i+l)}(x))\end{array}$$ and thus
$$\epsilon \circ \delta_{M \otimes M^*}^{(k)}(x \otimes
f)=\sum_{i_*+j_* = k} \sigma_q^{i_*}(\delta_L^{(j_*)})\circ f \circ
\delta_M^{(i_*)}( \phi_M^{-i_*}(x)) \left(\sum_{i=0}^{i_*} (-1)^i
q^{i(i-1)/2} \binom{i_*}{i}_q \right).$$ By expanding $(1;q)_{i_*}$,
we see that the inner sum equals zero if and only if $i_* \neq 0$.
We thus get
$$\epsilon \circ \delta_{M \otimes M^*}^{(k)}(x \otimes
f)=\delta_L^{(k)} (f (x)) = \delta_L^{(k)} \circ \epsilon(x \otimes
f).$$ The proof for $\eta$ is analogous.

Let $x=\sum_{i=1}^n a_ix_i  \in M$, then $(\epsilon \otimes id_M)
\circ (id_M \otimes \eta)(x)=\epsilon \otimes id_M(x \otimes
(\sum_{i=1}^n x_i^* \otimes x_i))=\epsilon \otimes id_M(
\sum_{i=1}^n(x \otimes x_i^*)\otimes x_i)=\sum_{i=1}^n
x_i^*(x)\otimes x_i =\sum_{i=1}^n a_ix_i=x.$ Again, the second
statement is proved analogously. Finally, we note that

$$End_{IDM_q(L)}(\bold{1}_{IDM_q(L)})=End_{ID_q}(L)=C(L),$$
finishing the proof.\end{proof}

\textbf{Link with the iterative differential modules}\\

In this paragraph, we will show that iterative $q$-difference
operators  and iterative derivations are closely related.
\begin{prop}\label{prop:compder}
Let $q$ be a primitive $n$-th root of unity. Let $(L,\delta_L^*)$ be an
iterative $q$-difference field and let $(M,\delta_M^*)$ be an
iterative $q$-difference module over $L$.
 Set $L_0 = \cap_{j \notin n \mathbb{N}}
Ker(\delta_L^{(j)})$ and $M_0 = \cap_{j \notin n \mathbb{N}}
Ker(\delta_M^{(j)})$. Then $(M_0, (\partial_M^{(k)}:=\delta_M^{(nk)})_{k \in
\mathbb{N}})$ is an iterative differential module over the iterative differential field
$(L_0, (\partial^{(k)}:= \delta_L^{(nk)})_{k \in \mathbb{N}})$)
 (see
\cite{M} Definition $2.1$).
\end{prop}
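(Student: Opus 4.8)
The goal is to verify that the pair $(M_0,(\partial_M^{(k)}))$ satisfies the five axioms of an iterative differential module over $(L_0,(\partial^{(k)}))$ in the sense of \cite{M}, Definition $2.1$. The heart of the matter is to understand how the subfamily $(\delta_L^{(nk)})_{k}$ and $(\delta_M^{(nk)})_{k}$ behave, and in particular to see that the $q$-twists and the $q$-binomial coefficients collapse to their classical counterparts when indices are restricted to multiples of $n$. I would organize the proof around three reductions: first, that $L_0$ and $M_0$ are genuinely closed under the operators in question; second, that the Leibniz rule (axiom $4$) of Definition \ref{defin:qder} restricts to the classical iterative Leibniz rule; and third, that the composition rule (axiom $5$) restricts via Proposition \ref{prop:binform}(3).

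\textbf{Step 1: $L_0$ is an iterative differential field and $M_0$ an $L_0$-module.} First I would check that $\sigma_q$ acts as the identity on $L_0$ (resp. $\phi_M$ on $M_0$): indeed $\delta_L^{(1)}=\frac{\sigma_q-\mathrm{id}}{(q-1)t}$ and $1\notin n\mathbb{N}$, so elements of $L_0$ are $\sigma_q$-invariant, and likewise $\phi_M=\mathrm{id}$ on $M_0$. Next, for $j,k$ with $j\notin n\mathbb{N}$, the identity $\delta_L^{(j)}\circ\delta_L^{(nk)}=\binom{j+nk}{j}_q\delta_L^{(j+nk)}$ together with the fact (from the proof of Lemma \ref{lemma:rad}, or directly from Proposition \ref{prop:binform}) that $\binom{j+nk}{j}_q\neq 0$ whenever $j\notin n\mathbb{N}$ — because writing $j=an+r$ with $0<r<n$ forces the $q$-binomial not to vanish — shows $\delta_L^{(nk)}$ maps $L_0$ into $L_0$. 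Here I should be a little careful about characteristic $p$: Remark \ref{remark:pnul} shows $q$-binomials of the form $\binom{np^i}{nj}_q$ may vanish, but those have \emph{both} entries in $n\mathbb{N}$, so they do not obstruct stability of $L_0$; I would spell this out. The same argument with $\delta_M$ in place of $\delta_L$ gives that $\delta_M^{(nk)}$ preserves $M_0$, and that $M_0$ is an $L_0$-module (using axiom $4$: for $a\in L_0$, $m\in M_0$, all the $\sigma_q^i$ and $\delta_L^{(j)}$, $\delta_M^{(i)}$ with $i$ or $j$ not a multiple of $n$ drop out).

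\textbf{Step 2: the classical iterative axioms.} Axioms $1$ and $3$ of Definition $2.1$ of \cite{M} (namely $\partial_M^{(0)}=\mathrm{id}$ and additivity) are immediate from the corresponding parts of Definition \ref{defin:mod}. For the Leibniz rule, I evaluate $\delta_M^{(nk)}(am)=\sum_{i+j=nk}\sigma_q^i(\delta_L^{(j)}(a))\delta_M^{(i)}(m)$ for $a\in L_0$, $m\in M_0$: a term survives only if both $i\in n\mathbb{N}$ and $j\in n\mathbb{N}$, so writing $i=n\ell$, $j=n(k-\ell)$, and using that $\sigma_q^{n\ell}=\mathrm{id}$ (as $q^n=1$), we get $\partial_M^{(k)}(am)=\sum_{\ell=0}^k\partial^{(k-\ell)}(a)\,\partial_M^{(\ell)}(m)$, which is exactly the iterative Leibniz rule. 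For the composition rule, from axiom $5$ of Definition \ref{defin:mod} we have $\delta_M^{(ni)}\circ\delta_M^{(nj)}=\binom{ni+nj}{ni}_q\delta_M^{(n(i+j))}$, and Proposition \ref{prop:binform}(3) gives $\binom{n(i+j)}{ni}_q=\binom{i+j}{i}$, so $\partial_M^{(i)}\circ\partial_M^{(j)}=\binom{i+j}{i}\partial_M^{(i+j)}$ — the classical iteration rule. The same computation in $L$ itself shows $(L_0,(\partial^{(k)}))$ is an iterative differential field, so the ground object is legitimate. One should also note that axiom $2$ of Definition $2.1$ of \cite{M} (the analogue of $\partial^{(1)}$ being an actual derivation, or rather the compatibility making the family ``iterative'') is subsumed in what I have just checked; if \cite{M} phrases it as $\partial^{(1)}$ being a derivation with $\partial^{(1)}\partial^{(1)}=2\partial^{(2)}$ etc., this is the $j=i=1$ case above.

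\textbf{Main obstacle.} The routine parts — additivity, $\delta^{(0)}=\mathrm{id}$, the collapse of $\sigma_q$-twists — are bookkeeping. The one genuinely delicate point is the \emph{non-vanishing} $\binom{j+nk}{j}_q\neq 0$ for $j\notin n\mathbb{N}$, uniformly in all characteristics, since this is exactly what makes $L_0$ and $M_0$ stable; I would derive it cleanly from the factorization $[m]_q=[m \bmod n]_q\cdot(\text{unit})$-type identities underlying Proposition \ref{prop:binform}, or simply cite the argument already used in the proof of Lemma \ref{lemma:rad}, where the same non-vanishing for $k<n$ was needed. Everything else then follows by the index-restriction mechanism described above, and I would close by remarking that the construction $M\mapsto M_0$ is evidently functorial in $ID_q$-morphisms, since those commute with every $\delta^{(k)}$ and hence preserve the kernels defining $L_0$ and $M_0$.
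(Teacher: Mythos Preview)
Your Step~2 is correct and is exactly the argument the paper gives: the Leibniz rule collapses because terms with $i\notin n\mathbb{N}$ or $j\notin n\mathbb{N}$ vanish on $L_0\times M_0$ and $\sigma_q^{n\ell}=\mathrm{id}$, and the composition rule follows from $\binom{n(i+j)}{ni}_q=\binom{i+j}{i}$ (Proposition~\ref{prop:binform}(3)). The paper's ``hint of proof'' treats only these two points.

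There is, however, a genuine error in your Step~1. Your non-vanishing claim $\binom{j+nk}{j}_q\neq 0$ for $j\notin n\mathbb{N}$ is \emph{false} in positive characteristic: with $j=n+1$ and $k=p-1$, the $q$-Lucas identity gives $\binom{np+1}{n+1}_q=\binom{p}{1}\binom{1}{1}_q=p=0$. So the ``delicate point'' you flag as the main obstacle does not hold, and Remark~\ref{remark:pnul} is not the only source of vanishing. Fortunately the stability of $L_0$ and $M_0$ under $\delta^{(nk)}$ does not need it: from $\delta_L^{(j)}\circ\delta_L^{(nk)}=\binom{j+nk}{j}_q\delta_L^{(j+nk)}$ and the observation that $j\notin n\mathbb{N}$ forces $j+nk\notin n\mathbb{N}$, you get $\delta_L^{(j+nk)}(a)=0$ for $a\in L_0$ directly, regardless of the coefficient. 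The same remark applies to $M_0$. So your Step~1 conclusion is right, but the argument should be replaced by this simpler one; the non-vanishing discussion should be deleted.
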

\begin{proof}[Hint of proof] For instance, we will prove point $2$ of definition $2.1$ in \cite{M}, that is
$$\partial_M^{(k)}(am)=\sum_{i+j=k}\partial^{(i)}(a)\partial_M^{(j)}(m) \ \mbox{with} \ (a,m) \in L_0 \times M_0.$$
We have
$$\partial_M^{(k)}(am):=\delta_M^{(nk)}(am)=\sum_{i+j=nk}\sigma_q^j(\delta_L^{(i)}
(a)) \delta_M^{(j)}(m).$$
Because $(a,m) \in L_0 \times M_0$ we have that $\delta_L^{(i)}
(a)=0= \delta_M^{(j)}(m)$ for all $ i \notin n\mathbb{N} \ \mbox{and} \ j \notin n\mathbb{N}$. then,
$$\partial_M^{(k)}(am):=\delta_M^{(nk)}(am)=\sum_{ni+nj=nk}\sigma_q^{nj}(\delta_L^{(ni)}
(a)) \delta_M^{(nj)}(m)=\sum_{i+j=k}\partial^{(i)}(a)\partial_M^{(j)}(m).$$ The last inequality comes from the
fact that $\sigma_q^n=id$. To prove point $3$ of definition $2.1$ in \cite{M}, we use the same facts that in point $2$ and the formula
$\binom{in}{jn}_q=\binom{i}{j}$.
\end{proof}
Therefore, one could hope, as in \cite{M} Theorem $2.8$ (or in \cite{MP} section $5$), to construct projective
systems deeply related to our iterative $q$-difference module in order to obtain a suitable notion
 of iterative $q$-difference equations. These projective systems could be perhaps seen as some kind of jet spaces
 for the iterative $q$-difference operator.\\
  But
 our situation is slightly different as the one considered in \cite{MP} because we treat simultaneously fields of positive and zero characteristic.\\

  In the case of characteristic zero, we
may regain all the iterative $q$-difference operators only with the
knowledge of $\delta_M^{(1)}$ and $\delta_M^{(n)}$. This is due to
the formula $( \delta_M^{(n)})^{n^{k-1}}=(n^{k-1})!
\delta_M^{(n^k)}$ and to the fact that  the family $\lbrace
\delta_M^{(1)},(\delta_M^{(n^k)})_{k \in \mathbb{N}}\rbrace$
generates the iterative $q$-difference operator.  Therefore
 we will only obtain degenerated projective systems but  this
 is not  a hindrance to the construction of iterative
$q$-difference equations in characteristic $0$ (see section \ref{subsec:car0}).\\
In positive
characteristic,  the whole family $\lbrace
\delta_M^{(1)}, (\delta_M^{(np^k)})_{k \in \mathbb{N}}\rbrace$ and not less is necessary to
recover the iterative $q$-difference operator. In this situation, we will show  that  the category
of iterative $q$-difference modules is
equivalent to the category of some specific projective systems (see section \ref{subsec:carp}).
 This is a very nice tool because it allows us to translate
our computations from the non commutative world of iterative
$q$-difference modules to the world of linear algebra, via the
vector spaces associated to the projective systems.\\
This comparison between iterative differential modules and specific
projective systems already  appears  in the work of B.H. Matzat and M.
van der Put. But  to obtain an equivalence of category between the one of
projective systems linked to iterative derivations and the one
associated to iterative $q$-difference, we need to have $q^p=1$ and
this assumption makes no sense. A hope for realizing this
equivalence will be perhaps to rebuild both theories over
non-algebraically closed base rings, such as
$\mathbb{Z}/p^m\mathbb{Z}$ and try to reach
the Witt vectors. But this is a future research topic.\\

\subsection{Case of characteristic $p$}\label{subsec:carp}
\subsubsection{Projective systems}
~~\\
 Let $(L,\delta_L^*)$ be an
iterative $q$-difference field  of characteristic $p$ and let
$(M,\delta_M^*)$ be an iterative $q$-difference module over $L$. In
positive characteristic, we have the exact analogue of the
equivalence of categories obtained by Matzat in
\cite{M} Theorem $2.8$.\\
 Put  $L_{k+1} = \cap_{0
\leq j <k}
Ker(\delta_L^{(np^j)})\cap Ker(\delta_L^{(1)})$ for $k >0 $ and $L_0=L$.\\
Put   $M_{k+1} = \cap_{0 \leq j <k}
Ker(\delta_M^{(np^j)}) \cap Ker(\delta_M^{(1)})$  for all  $k >0$ and $M_0=M$.\\
\begin{prop}\label{prop:projp}
We have,
\begin{enumerate}
\item $M_k$ is an $L_k$-vector space of finite dimension.
\item  The inclusion $\phi_k : M_{k+1} \hookrightarrow M_k$ is $L_{k+1}$-linear and defines
a projective system $(M_k,\phi_k)_{k \in \mathbb{N}}$.
\item The map $\phi_k$ extends to an isomorphism of $L_k$-vector-spaces from $M_{k+1} \otimes L_k$ to $M_k$. \end{enumerate}
\end{prop}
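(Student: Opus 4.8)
The plan is to prove parts~1 and~3 simultaneously by induction on $k$, with part~2 a formal consequence, and to identify part~3 with Galois descent when $k=0$ and Cartier descent when $k\geq 1$. Before the induction I would record a reformulation of $L_k$ and $M_k$: combining the compositional law $\delta^{(i)}\circ\delta^{(j)}=\binom{i+j}{i}_q\delta^{(i+j)}$ of Definitions~\ref{defin:qder} and~\ref{defin:mod}, the identity $(\delta^{(1)})^r=[r]_q!\,\delta^{(r)}$ (so $\delta^{(r)}$ is recovered from $\delta^{(1)}$ when $0<r<n$, while $(\delta^{(1)})^n=0$), and the $q$-arithmetic facts $\binom{an}{bn}_q=\binom{a}{b}$ (Proposition~\ref{prop:binform}(3)), $\binom{an}{s}_q=0$ for $n\nmid s$ (expand $(t;q)_{an}=(1-t^n)^a$), and Lucas' congruence for ordinary binomials (compare Remark~\ref{remark:pnul}), one checks that for $k\geq 1$
\begin{gather*}
L_k=\{a\in L:\ \delta_L^{(s)}(a)=0\ \text{for all}\ 0<s<np^{k-1}\},\\
M_k=\{m\in M:\ \delta_M^{(s)}(m)=0\ \text{for all}\ 0<s<np^{k-1}\}.
\end{gather*}
Given this, part~4 of Definition~\ref{defin:mod} shows that $M_k$ is an $L_k$-submodule of $M$, and since the operators $\delta^{(s)}$ pairwise commute (part~5 together with $\binom{i+j}{i}_q=\binom{i+j}{j}_q$) each $\delta_M^{(s)}$ maps $M_k$ into itself and each $\delta_L^{(s)}$ preserves $L_k$; in particular part~2 will be immediate, the inclusions $\phi_k\colon M_{k+1}\hookrightarrow M_k$ being $L_{k+1}$-linear and composing to a chain.

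For the base case $k=0$ ($M_0=M$, $L_0=L$): from $(\delta_M^{(1)})^n=[n]_q!\,\delta_M^{(n)}=0$, $\delta_M^{(1)}=\frac{\phi_M-\mathrm{id}}{(q-1)t}$ and $\phi_M(tm)=qt\,\phi_M(m)$ one gets $\prod_{i=0}^{n-1}(\phi_M-q^i)=\phi_M^{\,n}-\mathrm{id}=0$, hence $\phi_M^{\,n}=\mathrm{id}$; the same applied in $L$ gives $\sigma_q^{\,n}=\mathrm{id}$, so $\sigma_q$ has order exactly $n$ on $L\supseteq C(t)$ and $L/L_1$ is cyclic Galois with group $\langle\sigma_q\rangle$. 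A short computation from Definition~\ref{defin:mod}(4) and $(q-1)t\,\delta_L^{(1)}(a)=\sigma_q(a)-a$ gives $\phi_M(am)=\sigma_q(a)\phi_M(m)$, so $(M,\phi_M)$ is a $\sigma_q$-semilinear descent datum for $L/L_1$; Galois descent then yields $M_1\otimes_{L_1}L\xrightarrow{\ \sim\ }M$, which is part~3 at level $0$, and in particular $M_1=\ker\delta_M^{(1)}$ is finite-dimensional over $L_1$ with $\dim_{L_1}M_1=\dim_L M$.

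For the inductive step with $k\geq 1$, assuming $M_k$ finite-dimensional over $L_k$: put $d:=\delta_L^{(np^{k-1})}|_{L_k}$ and $D:=\delta_M^{(np^{k-1})}|_{M_k}$. By the reformulation, part~4 of the two definitions, and $\sigma_q^{\,np^{k-1}}=\mathrm{id}$, only the two extreme Leibniz terms survive, so $d$ is a derivation of $L_k$ and $D$ is a $d$-derivation of $M_k$. Iterating part~5 gives $(\delta^{(np^{k-1})})^p=\bigl(\prod_{j=2}^{p}\binom{jp^{k-1}}{p^{k-1}}\bigr)\delta^{(np^k)}$ with $\binom{p^k}{p^{k-1}}\equiv0\pmod p$, so $d^{\,p}=0$ and $D^{\,p}=0$; moreover $x:=t^{np^{k-1}}$ lies in $L_k$ (by $\delta_q^{(s)}(t^{np^{k-1}})=\binom{np^{k-1}}{s}_qt^{np^{k-1}-s}=0$ for $0<s<np^{k-1}$, from the same $q$-arithmetic facts) and $d(x)=\binom{np^{k-1}}{np^{k-1}}_q=1$, so $d\neq0$; hence (Jacobson) $L_k/L_{k+1}$ is purely inseparable of degree $p$ with $L_{k+1}=\ker d$ and $L_{k+1}$-basis $1,x,\dots,x^{p-1}$. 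Since $\ker D=M_k\cap\ker\delta_M^{(np^{k-1})}=M_{k+1}$, Cartier descent applies: concretely the projector $C:=\sum_{i=0}^{p-1}\tfrac{(-x)^i}{i!}D^i\colon M_k\to M_{k+1}$ is well defined ($D^p=0$), $L_{k+1}$-linear, and the identity on $M_{k+1}$, and together with the Vandermonde argument obtained by applying $D$ repeatedly to a relation it shows that multiplication $M_{k+1}\otimes_{L_{k+1}}L_k\to M_k$ is bijective. This is part~3 at level $k$; counting dimensions over $L_k$, $M_{k+1}$ is finite-dimensional over $L_{k+1}$ with $\dim_{L_{k+1}}M_{k+1}=\dim_{L_k}M_k$.

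Running the induction gives that every $M_k$ is a finite-dimensional $L_k$-vector space with $\dim_{L_k}M_k=\dim_L M$ (part~1), and parts~2 and~3 have been established along the way. I expect the genuinely technical point to be not the two descent theorems, which are classical, but the preliminary reformulation and the verification that $\delta^{(np^{k-1})}$ has exactly the needed formal properties ($p$-nilpotence, Leibniz rule and prescribed kernel on $M_k$, non-triviality): these all come down to the $q$-arithmetic of $\binom{r}{k}_q$ at a primitive $n$-th root of unity in characteristic $p$, together with the bookkeeping required to keep the two descriptions of $M_k$ (via the $\delta^{(np^j)}$ versus via all $\delta^{(s)}$ with $s<np^{k-1}$) consistent.
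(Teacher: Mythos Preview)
Your proof is correct, and it takes a genuinely different route from the paper's.

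The paper argues by a bare dimension count: for each $k$ it uses that $\delta_M^{(np^{k-1})}$ (resp.\ $\delta_M^{(1)}$ for $k=0$) is an $L_{k+1}$-linear endomorphism of $M_k$ of nilpotence order exactly $p$ (resp.\ $n$), so its kernel $M_{k+1}$ has $L_{k+1}$-dimension at least $\tfrac{1}{p}\dim_{L_{k+1}}M_k=\dim_{L_k}M_k$ (the equality coming from the corresponding statement for $L_k/L_{k+1}$); combined with the ``obvious'' inequality $\dim_{L_{k+1}}M_{k+1}\le\dim_{L_k}M_k$ the paper concludes. You instead identify the passage $M_k\rightsquigarrow M_{k+1}$ with classical descent: Galois descent for $L/L_1$ (via the $\sigma_q$-semilinear $\phi_M$ with $\phi_M^{\,n}=\mathrm{id}$) at level $0$, and Cartier descent (via the $p$-nilpotent connection $\delta^{(np^{k-1})}$ restricted to $M_k$) at the higher levels, supplying the explicit projector $\sum_{i<p}\tfrac{(-x)^i}{i!}D^i$ and the Vandermonde injectivity argument. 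Your approach is more constructive---it produces an explicit $L_k$-basis of $M_k$ inside $M_{k+1}$ and makes the isomorphism $\tilde\phi_k$ manifest rather than inferring it from equal dimensions---and it cleanly justifies the step the paper treats as obvious, namely that an $L_{k+1}$-basis of $M_{k+1}$ remains $L_k$-independent in $M_k$. The paper's approach is shorter; both ultimately rest on the same $q$-arithmetic input, namely that $\delta^{(1)}$ and $\delta^{(np^{k-1})}$ have nilpotence order exactly $n$ and $p$ on the relevant pieces. Your preliminary reformulation of $L_k$, $M_k$ as simultaneous kernels of all $\delta^{(s)}$ with $0<s<np^{k-1}$ is the right way to see why the Leibniz rule collapses to an honest derivation at each stage; the paper never states this explicitly but uses it implicitly when asserting the exact nilpotence order.
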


\begin{proof}
The two first statements are obvious. Let us prove the third one.
For all $k \in \mathbb{N}$, $L_k \otimes_{L_{k+1}} M_{k+1} \subset M_k$, thus  $dim_{L_{k+1}}(M_{k+1}) \leq
dim_{L_{k}}(M_{k})$. On the other hand, $M_k$ is an $L_k$-vector space and hence an $L_{k+1}$-vector  space since $L_{k+1} \subset L_k$.
For  $k \geq 1$, the application
$\delta_M^{(np^{k-1})}$ is $L_{k+1}$-linear on  $M_{k}$  and
$(\delta_M^{(np^{k-1})})^p=0$, so
$dim_{L_{k+1}}(M_{k+1})=dim_{L_{k+1}}(\rm{Ker}(\delta_M^{(np^{k-1})})|_{M_{k}})\geq
\frac{1}{p}dim_{L_{k+1}}(M_{k})\geq dim_{L_{k}}(M_{k})$, where
the last inequality comes
 from the fact that $\delta_L^{(np^{k-1})}$ is an $L_{k+1}$-linear endomorphism
 of $L_{k}$ of order of nilpotence $p$. \\
 For $k=0$,
we   have $(\delta_M^{(1)})^n=0$.
 Therefore,
 $dim_{L_1}(M_1)=dim_{L_1}(Ker(\delta_M^{(1)}|_{M})\geq
 \frac{1}{n}dim_{L_1}(M) \geq dim_L(M)$, where  the last inequality comes
 from the fact that $\delta_L^{(1)}$ is an $L_1$-linear endomorphism
 of $L$ of order of nilpotence $n$ ($q$ is a $n$-th primitive root of unity).\end{proof}
\subsubsection{Equivalence of categories}
\begin{Not}\label{Not:projp}
 Let $(L,\delta_L^*)$ be an iterative $q$-difference field of characteristic $p$. Let us denote by $Proj_q(L)$ the category of projective systems $(N_k,\psi_k)_{k \in
\mathbb{N}}$ over $L$ with the properties:
\begin{enumerate}
\item $N_k$ is an $L_k$-vector space of finite dimension and
$\psi_k$ is $L_{k+1}$-linear,
\item each $\psi_k$ uniquely extends to an $L_k$-isomorphism
$$ \xymatrix{ \tilde{\psi}_k: L_k \otimes_{L_{k+1}} N_{k+1} \ar[r] & N_k.}$$
\end{enumerate}
\end{Not}

\begin{theorem}\label{theorem:equip}
Let $(L,\delta_L^*)$ be an iterative $q$-difference field of
 positive characteristic. Then the category $Proj_q(L)$ is equivalent to
the category $IDM_q(L)$.

\end{theorem}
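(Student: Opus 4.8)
The plan is to construct a pair of quasi-inverse functors between $IDM_q(L)$ and $Proj_q(L)$, following the template of \cite{M} Theorem $2.8$. In one direction, given an iterative $q$-difference module $(M,\delta_M^*)$, I send it to the projective system $(M_k,\phi_k)_{k\in\mathbb{N}}$ built in Proposition \ref{prop:projp}, where $M_k=\cap_{0\leq j<k-1}Ker(\delta_M^{(np^j)})\cap Ker(\delta_M^{(1)})$ for $k\geq 1$, $M_0=M$, and $\phi_k:M_{k+1}\hookrightarrow M_k$ is the inclusion; Proposition \ref{prop:projp} already guarantees this lies in $Proj_q(L)$. A morphism of $ID_q$-modules $f:M\to M'$ commutes with all $\delta_M^{(k)}$, hence restricts to $L_k$-linear maps $M_k\to M'_k$ compatible with the inclusions, so this assignment is functorial. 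The harder direction is to reconstruct an iterative $q$-difference module from a projective system $(N_k,\psi_k)$: I set $N:=N_0$ as the underlying $L$-vector space, and I must define operators $\delta_N^{(k)}$ for every $k\in\mathbb{N}$ satisfying Definition \ref{defin:mod}.

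The key step is that a projective system in $Proj_q(L)$ encodes exactly the data needed to rebuild the whole family $\delta_N^*$. By the discussion preceding Section \ref{subsec:carp}, in positive characteristic the family $\{\delta_M^{(1)},(\delta_M^{(np^k)})_{k\in\mathbb{N}}\}$ generates the entire iterative $q$-difference operator via the relations $\delta_M^{(i)}\circ\delta_M^{(j)}=\binom{i+j}{i}_q\delta_M^{(i+j)}$ together with $p$-adic digit expansions of indices (using Remark \ref{remark:pnul}, which kills the mixed binomial terms, so that the composition rule becomes a clean product over base-$p$ digits, twisted by powers of $n$). So I would first define, for each $k$, a map $D_k:N_0\to N_0$ playing the role of $\delta_N^{(np^{k-1})}$: using the isomorphism $\tilde\psi_{k}:L_{k}\otimes_{L_{k+1}}N_{k+1}\xrightarrow{\sim}N_{k}$ and iterating, one obtains $N_0\cong L_0\otimes_{L_{k}}N_{k}$, and one transports the "differentiation in the $L_{k-1}$-direction" (i.e. $\delta_L^{(np^{k-1})}$ acting on the first tensor factor, extended by the Leibniz rule with the constant $N_k$) to an operator on $N_0$; similarly $\delta_L^{(1)}$ on $L=L_0$ viewed through $N_0\cong L_0\otimes_{L_1}N_1$ gives $\delta_N^{(1)}$, whose bijectivity of $\phi_N:=(q-1)t\delta_N^{(1)}+id$ follows because $\sigma_q$ acts invertibly. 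Then I define $\delta_N^{(k)}$ for general $k$ by the forced formula coming from the composition law applied to the base-$p$ expansion of $k$ (with the leading "$n$-part" and the digit in $\{0,\dots,n-1\}$ handled by $\delta_N^{(1)}$, exactly as in the relation $(\delta_M^{(n)})^{n^{k-1}}=(n^{k-1})!\,\delta_M^{(n^k)}$ and its analogues), and check Definition \ref{defin:mod} parts $1$--$5$ hold; parts $1$,$3$ are immediate, part $2$ is the bijectivity just noted, and parts $4$,$5$ reduce to the $q$-binomial identities of Proposition \ref{prop:binform} together with the commutation rule \ref{eqn:sd1}, in the same spirit as the verifications in Proposition \ref{prop:tens}.

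Having the two functors, the last step is to verify they are quasi-inverse. Starting from $(M,\delta_M^*)$, forming $(M_k,\phi_k)$ and then reconstructing, one recovers $M$ as $L$-module (Proposition \ref{prop:projp} part $3$ gives $M\cong M_k\otimes_{L_k}L$ for each $k$, so the reconstructed operators, built from the $\delta_L^{(np^{k-1})}$-directions, agree with the original $\delta_M^{(np^{k-1})}$ on the nose, and hence all $\delta_M^{(k)}$ agree by the generation statement). Conversely, starting from $(N_k,\psi_k)$, building the module $(N,\delta_N^*)$ and then taking its kernels $\cap Ker(\delta_N^{(np^j)})\cap Ker(\delta_N^{(1)})$ returns $N_k$ with its inclusion maps, because the operators were defined precisely so that the "constants in the $k$ first directions" are $N_k$; here the dimension count in the proof of Proposition \ref{prop:projp} (the nilpotence orders $n$ and $p$ forcing equalities) is what makes the kernels have the right size rather than merely containing $N_k$. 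The main obstacle I anticipate is purely bookkeeping: writing down the reconstruction formula for $\delta_N^{(k)}$ for arbitrary $k$ in a way that manifestly satisfies parts $4$ and $5$ of Definition \ref{defin:mod}, since the twist by powers of $\sigma_q$ (equivalently by powers of $q$, via \ref{eqn:sd1}) interacts with the $p$-adic digit decomposition and must be tracked carefully; the cleanest route is probably to phrase everything through the twisted Taylor series $\mathbf{T}$ of Proposition \ref{prop:tay}, so that the composition law becomes multiplication in $N^{\sigma_q}[[T]]$ and the digit decomposition becomes a factorization of $\mathbf{T}$, mirroring how \cite{M} handles the differential case.
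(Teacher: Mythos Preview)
Your proposal is correct in spirit and would work, but the paper's reconstruction of the $ID_q$-module from a projective system is substantially simpler than what you sketch. You propose to first define only the ``generators'' $\delta_N^{(1)}$ and $\delta_N^{(np^{j})}$ by transporting $\delta_L^{(1)}$, $\delta_L^{(np^{j})}$ through the tensor isomorphism $N_0\cong L\otimes_{L_k}N_k$, and then to manufacture the remaining $\delta_N^{(k)}$ via the digit decomposition of $k$ and the composition rule $\delta^{(i)}\circ\delta^{(j)}=\binom{i+j}{i}_q\delta^{(i+j)}$. This forces you into the bookkeeping you anticipate: checking that the composites are well-defined and that parts $4$ and $5$ of Definition \ref{defin:mod} hold, tracking the $\sigma_q$-twists throughout.

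The paper bypasses all of this. Having embedded the $N_k$ as a nested chain $M_{k+1}\subset M_k\subset\cdots\subset M_0=N_0$ inside $N_0$, it picks an $L_k$-basis $B_k$ of $M_k$ (which is automatically an $L$-basis of $M_0$), writes any $x\in M_0$ as $x=\sum_i\lambda_i b_i$ with $\lambda_i\in L$, and sets
\[
\delta_M^{(j)}(x):=\sum_i \delta_L^{(j)}(\lambda_i)\,b_i \qquad\text{for every } j<np^{k-1}.
\]
So \emph{all} $\delta_M^{(j)}$ with $j<np^{k-1}$ are defined simultaneously, not just the generators. Independence of the choice of $B_k$ holds because change-of-basis matrices lie in $GL(L_k)$ and $L_k\subset\bigcap_{0<l<np^{k-1}}Ker(\delta_L^{(l)})$; consistency as $k$ grows follows since $B_{k'}$ is in particular an admissible $L_k$-basis for $k'<k$. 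The payoff is that parts $1$--$5$ of Definition \ref{defin:mod} are inherited \emph{for free} from the corresponding properties of $\delta_L^*$, because $\delta_M^{(j)}$ is literally $\delta_L^{(j)}$ applied coefficientwise against a constant basis. No composition, no digit expansion, no Taylor-series repackaging is needed. Your approach reaches the same destination, but the paper's route avoids precisely the obstacle you identified as the main one.
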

\begin{proof}
We already saw in Proposition \ref{prop:projp} how an object of
$IDM_q(L)$ leads to  an object of $Proj_q(L)$. Conversely, let us
consider $(N_k,\psi_k)_{k \in \mathbb{N}}$ in the category
$Proj_q(L)$.  We will now construct its associated iterative
$q$-difference
module.\\
Put $M_0:=N_0$ and define $M_k:=\psi_0 \circ \psi_1\circ
...\psi_{k-1}(N_k)$. Then $M_{k+1} \subset M_{k} \subset ...\subset
M_0$. Let $B_k=\lbrace b_1,...,b_m \rbrace$ be an $L_k$-basis for
$M_k$, then by property $2$ of Notations \ref{Not:projp}, $B_k$ is an $L$-basis of $M=M_0$. Let $x
\in M$, there exits $(\lambda_i)_{i=1,...,m} \in L^m$ such that
$x=\sum_{i=1}^m \lambda_i b_i$. Then, for all $ j < n p^{k-1}$, set
$$\delta_M^{(j)}(x):=\sum_{i=1}^m \delta_L^{(j)}(\lambda_i) b_i.$$
This is possible because we want  $B_k$ to lie in the kernel of
$\delta_M^{(j)}$ for
$j<np^{k-1}$. Because all change of basis are with coefficients in $L_k$, this definition is independent
of the choice of the $L_k$-basis of $M_k$. Therefore,   $(M_0,\delta_{M_0}^*)$ is an object $IDM_q(L)$.\\
Let us consider two objects $\mathcal{M}:=(M_k,\phi_k)_{k \in
\mathbb{N}}$ and $ \mathcal{N}:=(N_k,\psi_k)_{k \in \mathbb{N}}$ of
$Proj_q(L)$ and $\alpha$ a morphism from $\mathcal{M}$ to
$\mathcal{N}$ in the category $Proj_q(L)$, i.e. $\alpha_k$ is $L_k$
linear and the diagram
$$\xymatrix{
M_k \ar[r]^{\alpha_k} & N_k \\
M_{k+1} \ar[u]^{\phi_k} \ar[r]^{\alpha_{k+1}} & N_{k+1}
\ar[u]^{\psi_k} }$$ is commutative. Then we have $\delta_N^* \circ
\alpha_0= \alpha_0 \circ \delta_M^*$. Also, with this property, it
is then easy to verify that
$$\xymatrix{
Proj_q(L) \ar[r]  & IDM_q(L) \\
(M_k,\phi_k) \ar@{|->}[r] & (M_0,\delta_{M_0}^*)}$$ (with
$\delta_{M_0}^*$ as defined  above) is in fact an equivalence of
categories.
\end{proof}
\subsubsection{Iterative $q$-difference equations in positive characteristic}
~~\\
As we expect from standard $q$-difference Galois theory, any
iterative $q$-difference module should give rise to an iterative
$q$-difference equation consisting of a family of equations.
Proposition \ref{prop:qeq}  shows how to
obtain this equation from a given $ID_q$-module over a field of positive characteristic.

\begin{prop}
Let $(L,\delta_L^*)$ be an iterative $q$-difference field of
characteristic $p$ and let $(M, \delta_M^*)$ be an object of
$IDM_q(L)$. Let us consider the  projective system
$(M_k,\phi_k)_{k \in \mathbb{N}}$ associated to $M$ as in $4.1.1$. For all $k \in
\mathbb{N}$, let us denote by $B_k$ a basis of $M_k$ as an  $L_k$-vector space (written as a row) and let
 $D_k \in Gl_n(L_k)$ (with $n=dim_L M$) be the matrix
of $\phi_k$ with respect to that basis, i.e., $ B_kD_k=B_{k+1}.$\\

Then, for any $l\in \mathbb{N}^*$ and for any $X \in L^n$, we have :
\begin{enumerate}
\item $B_0 X= B_l X_l$ where $X_l=D_{l-1}^{-1}...D_0^{-1}X$,
\item $\delta_M^{(k)}(B_0X)=B_l\delta_L^{(k)}(X_l)=B_0D_0...D_{l-1} \delta_L^{(k)}(D_{l-1}^{-1}...D_0^{-1}X)$ for $0<k< n p^{l-1}$.
\end{enumerate}

\end{prop}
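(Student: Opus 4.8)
The plan is to read off the matrix identity hidden in $B_kD_k=B_{k+1}$, and then combine it with the Leibniz rule of Definition~\ref{defin:mod} and the observation that the basis $B_l$ of $M_l$ lies in the kernel of \emph{every} low‑order operator $\delta_M^{(k)}$, $0<k<np^{l-1}$. For the first part, recall that $D_k\in Gl_n(L_k)$, which is exactly part~3 of Proposition~\ref{prop:projp}: since $\phi_k$ extends to an $L_k$‑isomorphism from $L_k\otimes_{L_{k+1}}M_{k+1}$ to $M_k$, the tuple $B_{k+1}$, viewed inside $M_k$, is again an $L_k$‑basis of $M_k$. Hence $B_k=B_{k+1}D_k^{-1}$, and iterating,
$$B_0=B_1D_0^{-1}=B_2D_1^{-1}D_0^{-1}=\cdots=B_l\,D_{l-1}^{-1}\cdots D_1^{-1}D_0^{-1},$$
so that $B_0X=B_l\,(D_{l-1}^{-1}\cdots D_0^{-1}X)=B_lX_l$; equivalently $B_l=B_0D_0D_1\cdots D_{l-1}$.

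The heart of the proof is the claim that for every $l\ge 1$ one has $M_l\subseteq\bigcap_{0<k<np^{l-1}}Ker(\delta_M^{(k)})$, i.e. every element of $M_l$ is killed by all $\delta_M^{(k)}$ with $0<k<np^{l-1}$, and not merely by the operators $\delta_M^{(1)}$ and $\delta_M^{(np^j)}$, $0\le j\le l-2$, that occur in the very definition of $M_l$. I would prove this by strong induction on $k$, repeatedly using the composition rule $\delta_M^{(i)}\circ\delta_M^{(k-i)}=\binom{k}{i}_q\delta_M^{(k)}$. The case $k=1$ is the definition of $M_l$. For $1<k<np^{l-1}$ and $x\in M_l$: if there is an index $0<i<k$ with $\binom{k}{i}_q\ne 0$, then $\delta_M^{(k)}(x)=\binom{k}{i}_q^{-1}\delta_M^{(i)}\big(\delta_M^{(k-i)}(x)\big)=0$ by the inductive hypothesis applied to $k-i$. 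Such an $i$ exists unless $k$ has the form $np^j$: if $n\nmid k$ take $i=1$, since $\binom{k}{1}_q=[k]_q\ne 0$; if $k=k_1n$ with $k_1$ not a power of $p$, take $i=np^{v}$ with $v=v_p(k_1)$, so that $0<i<k$ and, by Proposition~\ref{prop:binform}(3) and Lucas' theorem, $\binom{k}{i}_q=\binom{k_1}{p^{v}}$ is nonzero in characteristic $p$ (its residue mod $p$ is the $v$‑th base‑$p$ digit of $k_1$, nonzero by the choice of $v$). Finally, if $k=np^j$, then $k<np^{l-1}$ forces $j\le l-2$, and $\delta_M^{(k)}(x)=0$ by the definition of $M_l$.

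Granting the claim, the second part is a direct computation. Write $B_0X=B_lX_l=\sum_{i=1}^n y_ib_i$, where $y_1,\dots,y_n\in L$ are the entries of $X_l$ and $b_1,\dots,b_n\in M_l$ the entries of $B_l$. For $0<k<np^{l-1}$, additivity and the Leibniz rule (part~4 of Definition~\ref{defin:mod}) give
$$\delta_M^{(k)}(B_0X)=\sum_{i=1}^n\sum_{s+t=k}\sigma_q^{s}\big(\delta_L^{(t)}(y_i)\big)\,\delta_M^{(s)}(b_i),$$
and by the claim every term with $s\ge 1$ vanishes (there $1\le s\le k<np^{l-1}$ and $b_i\in M_l$), so only $s=0$ survives:
$$\delta_M^{(k)}(B_0X)=\sum_{i=1}^n\delta_L^{(k)}(y_i)\,b_i=B_l\,\delta_L^{(k)}(X_l).$$
Substituting $B_l=B_0D_0\cdots D_{l-1}$ and $X_l=D_{l-1}^{-1}\cdots D_0^{-1}X$ gives the second displayed equality of part~2. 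The main obstacle is the claim, and inside it the verification that the relevant $q$‑binomial coefficients do not vanish in characteristic $p$; this is exactly where the identity $\binom{an}{bn}_q=\binom{a}{b}$ of Proposition~\ref{prop:binform}(3), Lucas' theorem, and the nonvanishing of $[j]_q$ for $0<j<n$ enter.
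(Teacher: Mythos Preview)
Your proof is correct and follows the same route as the paper's: rewrite $B_0X$ in the basis $B_l$ and then use that the entries of $B_l$ lie in the kernel of every $\delta_M^{(k)}$ with $0<k<np^{l-1}$, so that the Leibniz rule collapses to $B_l\,\delta_L^{(k)}(X_l)$. The paper's proof is a one-liner that simply invokes ``the definition of $B_l$'' (equivalently, the construction in Theorem~\ref{theorem:equip}, where $\delta_M^{(j)}$ is \emph{built} so that $B_l$ lies in its kernel for $j<np^{l-1}$); your contribution is to spell out and justify this kernel claim directly from the composition rule $\delta_M^{(i)}\circ\delta_M^{(k-i)}=\binom{k}{i}_q\delta_M^{(k)}$ together with Proposition~\ref{prop:binform}(3) and Lucas' theorem, which is a welcome clarification rather than a different argument.
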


\begin{proof}
Part $1$ is obvious by definition. For part $2$ we have
$$\delta_M^{(k)}(B_0X)=\delta_M^{(k)}(B_lX_l)=B_l\delta_L^{(k)}(X_l)=B_0D_0...D_{l-1} \delta_L^{(k)}(D_{l-1}^{-1}...D_0^{-1}X)
\ \mbox{for} \ 0< k<np^{l-1},$$ using the definition of $B_l$
\end{proof}

\begin{prop}\label{prop:qeq}
 Let $\bold{y} \in L^n$ and  $B_0=\lbrace b_1,...,b_n
\rbrace$ be a basis of $M$. The following statements are equivalent
\begin{enumerate}
\item Let $\bold{y} \in L^n$.  $B_0\bold{y}=\sum_{i=1}^n y_i b_i \in V_M=\cap_{k \in
\mathbb{N}} M_k$.
\item For all $l \in \mathbb{N}^*$, we have $\delta_L^{(k)}(\bold{y}_l)=0$ for $0 < k<np^{l-1}$, where
$\bold{y}_l=D_{l-1}^{-1}...D_0^{-1}\bold{y}$.

\item $$\delta_L^{(np^k)}(\bold{y})=\tilde{A}_{k} \bold{y}, \ \mbox{  for all} \  k \geq 0$$ where $\tilde{A}_{k}=
\delta_L^{(np^k)}(D_0...D_{k+1})(D_0...D_{k+1})^{-1}$ and
$\delta_L^{(1)}(\bold{y})=A_1 \bold{y}$ where $A_1=
\delta_L^{(1)}(D_0)(D_0)^{-1}$.
\end{enumerate}
\end{prop}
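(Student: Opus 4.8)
The plan is to prove the chain of equivalences $(1)\Leftrightarrow(2)$ and $(2)\Leftrightarrow(3)$, using the previous proposition as the main bridge between the module-theoretic picture and the explicit matrix equations. For $(1)\Leftrightarrow(2)$, I would argue as follows. By the previous proposition, $B_0\mathbf{y}=B_l\mathbf{y}_l$ with $\mathbf{y}_l=D_{l-1}^{-1}\cdots D_0^{-1}\mathbf{y}$, and for $0<k<np^{l-1}$ one has $\delta_M^{(k)}(B_0\mathbf{y})=B_l\delta_L^{(k)}(\mathbf{y}_l)$. Now $B_0\mathbf{y}\in V_M=\cap_{k\in\mathbb{N}}M_k$ means precisely that $B_0\mathbf{y}$ lies in every $M_k$, equivalently $\delta_M^{(j)}(B_0\mathbf{y})=0$ for all $j\in\mathbb{N}^*$ (recall $M_{k+1}=\cap_{0\le j<k}\mathrm{Ker}(\delta_M^{(np^j)})\cap\mathrm{Ker}(\delta_M^{(1)})$, and using part $5$ of Definition \ref{defin:qder} together with the fact that $\{\delta_M^{(1)},(\delta_M^{(np^k)})_{k}\}$ generates the whole operator in characteristic $p$, vanishing on these generators forces vanishing on all $\delta_M^{(j)}$, $j\ge 1$). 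Since $B_l$ is a basis, $B_l\delta_L^{(k)}(\mathbf{y}_l)=0$ iff $\delta_L^{(k)}(\mathbf{y}_l)=0$; letting $l$ range over $\mathbb{N}^*$ and $k$ over $0<k<np^{l-1}$ exhausts all positive indices, giving $(1)\Leftrightarrow(2)$.

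For $(2)\Leftrightarrow(3)$, the idea is to differentiate the relation $\mathbf{y}=D_0\cdots D_{l-1}\mathbf{y}_l$. Applying $\delta_L^{(np^k)}$ and using the product formula (part $4$ of Definition \ref{defin:qder}, in its $R$-module shape) to $\mathbf{y}=(D_0\cdots D_{l-1})\,\mathbf{y}_l$: when $np^k<np^{l-1}$, i.e. $k\le l-2$, condition $(2)$ says all the terms involving $\delta_L^{(j)}(\mathbf{y}_l)$ with $0<j\le np^k$ vanish, so only the term with $\mathbf{y}_l$ itself survives, yielding $\delta_L^{(np^k)}(\mathbf{y})=\delta_L^{(np^k)}(D_0\cdots D_{l-1})\mathbf{y}_l=\delta_L^{(np^k)}(D_0\cdots D_{l-1})(D_0\cdots D_{l-1})^{-1}\mathbf{y}=\tilde A_k\mathbf{y}$ (one must check the twist factors $\sigma_q^i$ that appear in part $4$ collapse correctly because the relevant $\delta_L^{(i)}(\mathbf{y}_l)$ are zero, leaving only $i=0$). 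The case $k=0$, $\delta_L^{(1)}(\mathbf{y})=A_1\mathbf{y}$, is handled identically with $l=1$. Conversely, given $(3)$, I would reconstruct $\delta_L^{(j)}(\mathbf{y}_l)=0$ for $0<j<np^{l-1}$ by downward bookkeeping: expand $\mathbf{y}_l=D_{l-1}^{-1}\cdots D_0^{-1}\mathbf{y}$, apply $\delta_L^{(j)}$, substitute the equations from $(3)$ for $\delta_L^{(np^k)}(\mathbf{y})$ and $\delta_L^{(1)}(\mathbf{y})$, and use that $\{\delta_L^{(1)},(\delta_L^{(np^k)})_k\}$ generates all the $\delta_L^{(j)}$ so that every $\delta_L^{(j)}(\mathbf{y})$ is expressible via these, forcing the combination to telescope to $0$.

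The main obstacle I expect is the careful handling of the $\sigma_q$-twists in the Leibniz rule of part $4$ of Definition \ref{defin:qder} when expanding $\delta_L^{(np^k)}$ of a product of matrices $D_0\cdots D_{l-1}$ times the vector $\mathbf{y}_l$: one needs that the surviving cross-terms are exactly those where the vector factor is hit by $\delta_L^{(0)}=\mathrm{id}$, and that requires knowing in advance (from $(2)$) that $\delta_L^{(i)}(\mathbf{y}_l)=0$ for all $0<i\le np^k$, which is only available when $np^k<np^{l-1}$ — hence the role of taking $l$ large relative to $k$. Once this indexing is set up correctly, the computation is the same mechanical manipulation as in the analogous differential statement, and the equivalence follows. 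The cleanest write-up is probably to prove $(1)\Leftrightarrow(2)$ first via the previous proposition, then $(2)\Rightarrow(3)$ by the differentiation argument above, and finally $(3)\Rightarrow(2)$ by the generation argument.
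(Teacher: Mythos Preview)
Your proposal is correct and follows essentially the same route as the paper: $(1)\Leftrightarrow(2)$ via the previous proposition, and $(2)\Rightarrow(3)$ by applying $\delta_L^{(np^k)}$ to $\mathbf{y}=D_0\cdots D_{l-1}\mathbf{y}_l$ with $l=k+2$ so that all cross-terms from the twisted Leibniz rule vanish (the paper writes this with two surviving terms and then kills the second via $\delta_L^{(np^k)}(\mathbf{y}_{k+2})=0$, which amounts to the same thing). Your observation that $\sigma_q^{np^k}=\mathrm{id}$ makes the remaining twist disappear is exactly the point; the paper leaves $(3)\Rightarrow(2)$ implicit just as you do, so your ``downward bookkeeping'' sketch is at the same level of detail as the original.
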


\begin{proof}
First, we show that statements $1$ and $2$ are equivalent :
$B_0\bold{y} \in V_M$ if and only if $\delta_M^{(k)}(B_0\bold{y})=0$
for all $k \in \mathbb{N}^*$. The claim is obvious by using the
equation
$$\delta_M^{(k)}(B_0\bold{y})=B_l\delta_L^{(k)}(\bold{y}_l)$$ which holds for
$0<k<np^{l-1}$ (see the Proposition $3.10$).\\
Finally, the equivalence of $2$ and $3$ is obtained using:

$$
\delta_L^{(np^l)}(\bold{y})=\delta_L^{(np^l)}(D_0...D_{l+1}
\bold{y}_{l+2})=\delta_L^{(np^l)}(D_0...D_{l+1}) \bold{y}_{l+2} +
D_0...D_{l+1}\delta_L^{(np^l)}(\bold{y}_{l+2})$$ $$=\delta_L^{(np^l)}(D_0...D_{l+1})(D_0...D_{l+1})^{-1} \bold{y}=
\tilde{A}_{l}\bold{y}$$ where $\delta^{(np^l)}(\bold{y}_{l+2})=0$ and
$$
\delta_L^{(1)}(\bold{y})=\delta_L^{(1)}(D_0
\bold{y}_{1})=\delta_L^{(1)}(D_0) \bold{y}_{1} +
\sigma_q(D_0)\delta_L^{(1)}(\bold{y}_{1})=A_1\bold{y}.$$
\end{proof}
\begin{defin}

The family of equations $\lbrace \delta_L^{(1)}(\bold{y})=A_1
\bold{y},    \delta_L^{(np^k)}(\bold{y})=\tilde{A}_k \bold{y} \rbrace_{  k
 \geq 0}$ related to the $IDM_q$-module $(M, \delta_M^*)$ by Proposition
\ref{prop:qeq} is called an \textbf{ iterative $q$-difference
equation} ($ID_qE$).
\end{defin}

We  give below  some examples of iterative $q$-difference equations over
fields of  positive characteristic.
\begin{ex}
Let $p$ be a prime number, let $C=\overline{\mathbb{F}_p}$ be an
algebraic closure of $\mathbb{F}_p$  and let $F=C(t)$ be the
rational function field with coefficients in $C$. Let $(a_l)_{l \geq
0}$ be a set of elements in $C$. Let $M =Fb_1$. Suppose that,
 $D_{l+1}=(t^{a_lnp^l}) \in Gl_1(F_{l+1})$ for $l\in \mathbb{N}$ and
 $D_0=(1)$. We have

$$\tilde{A}_{k}=\delta_L^{(np^k)}(D_0...D_{k+1})(D_0...D_{k+1})^{-1}=\delta_L^{(n^k)}(t^{\sum_{l=0}^k
a_lnp^l})t^{-\sum_{l=0}^k a_lnp^l}=\frac{a_k}{t^{np^k}}$$ because
$\binom{\sum_{j=0}^ka_jnp^j}{np^k}_q=a_k$. Hence $
\delta_M^{(np^k)}(y)= \frac{a_k}{ t^{np^k}}y$  for all $k \in
\mathbb{N}$.
\end{ex}
\begin{ex}
Let $p$ be a prime number, let $C=\overline{\mathbb{F}_p}$ be an
algebraically  closure of $\mathbb{F}_p$  and let $F=C(t)$ be the
rational function field with coefficients in $C$. Let $(a_l)_{l \geq
0}$ be a set of elements in $C$. Let $M =Fb_1 \oplus F b_2$. Suppose
that,

  $$D_{l+1}:=\left( \begin{array}{cc} 1 & a_lt^{np^l} \\
0 &1 \end{array} \right) \mbox{for all} \ l \in \mathbb{N}$$ and
$$D_{0}:=\left( \begin{array}{cc} 1 & 0 \\
0 &1 \end{array} \right) .$$
Using Proposition \ref{prop:qeq} $3)$, we obtain,\\

$$\tilde{A}_{k}=\left( \begin{array}{cc} 0 & a_k \\
0 &0 \end{array} \right)$$ and $A_1=0$.
 So, the associated $ID_qE$ associated
to $M$ is

$$ \delta^{(np^k)}(Y)= \tilde{A}_k Y= \left( \begin{array}{cc}
0 & a_k \\
0 & 0 \end{array} \right) Y \mbox{ for all} \ k \in \mathbb{N}.$$

\end{ex}

\subsection{Case of characteristic $0$}\label{subsec:car0}
\subsubsection{Projective systems}
~~\\
Let $(L,\delta_L^*)$ be an iterative $q$-difference field  of zero
characteristic and let $(M,\delta_M^*)$ be an iterative
$q$-difference module over $L$. Put, for all  $k \in \mathbb{N}^*$,
$L_k = \cap_{0 \leq j <k}
Ker(\delta_L^{(n^j)})$ and $L_0=L$.\\
Put, for all  $k \in \mathbb{N}^*$, $M_k = \cap_{0 \leq j <k}
Ker(\delta_M^{(n^j)})$ and $M_0=M$.\\
\begin{prop}\label{prop:proj}
\begin{enumerate}
\item $M_k$ is a $L_k$-vector space of finite dimension.
\item $M_k=M_2$ for all $k\geq 2$.
\item  Let $\phi_1$ be  the
embedding $M_{2} \hookrightarrow M_1$. Then the map $\phi_1$
extends to a monomorphism  of $L_1$-vector-spaces from $M_{2}
\otimes L_1$ to $M_1$.
\item Let $\phi_0$ be the
embedding $M_{1} \hookrightarrow M_0$. Then the map $\phi_0$
extends to an isomorphism  of $L$-vector-spaces from $M_{1} \otimes
L$ to $M_0$.

\end{enumerate}
\end{prop}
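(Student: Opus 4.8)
The plan is to mimic, in the characteristic-zero setting, the dimension-counting argument used in Proposition \ref{prop:projp}, while exploiting the special collapsing phenomenon: in characteristic $0$ the operators $\delta_L^{(n^j)}$ with $j\ge 1$ essentially reduce to powers of $\delta_L^{(n)}$, so the tower of kernels stabilizes after the second step. First I would record the relations that drive everything. Part 5 of Definition \ref{defin:qder} gives $(\delta^{(n)})^{m}=\left(\prod_{j=1}^{m}\binom{jn}{n}_q\right)\delta^{(mn)}$; combined with part 3 of Proposition \ref{prop:binform}, namely $\binom{jn}{n}_q=\binom{j}{1}=j$, this yields $(\delta^{(n)})^{m}=m!\,\delta^{(mn)}$, and more generally $(\delta^{(n)})^{n^{j-1}}=(n^{j-1})!\,\delta^{(n^{j})}$ (this is exactly the formula quoted in the text just before section \ref{subsec:carp}). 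Since the characteristic is $0$, the factorials are invertible, so $\mathrm{Ker}(\delta^{(n^{j})})=\mathrm{Ker}\big((\delta^{(n)})^{n^{j-1}}\big)$ for every $j\ge 1$, both for $L$ and for $M$.

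Part 1 is then immediate: $M_k$ is an $L_k$-submodule of the finite free $L$-module $M$, hence of finite $L_k$-dimension (constants of a field form a field, Remark \ref{remark:ech}, so each $L_k$ is a field). For part 2, let $d$ be the order of nilpotence of $\delta_L^{(n)}$ acting on $L$ (it is nilpotent because $L=C(t)$-type fields have $\delta_L^{(n^j)}$ eventually annihilating any given element — more carefully, for $f\in L$, $\delta^{(n)}f$ drops the "relevant degree", so some power kills $f$; one argues this on $t$-monomials as in Proposition \ref{prop:qder} and extends by the multiplication rule). The point is that once $j$ is large enough that $n^{j-1}\ge$ the nilpotence order of $\delta_L^{(n)}$ on $L$ and of $\delta_M^{(n)}$ on $M$, the conditions $\delta^{(n^{j})}=0$ become vacuous, so $M_k=\bigcap_{1\le j\le 1}\mathrm{Ker}(\delta_M^{(n^j)})\cap M_1$ stabilizes; but in fact the cleaner statement is that $M_k$ for $k\ge 2$ is cut out inside $M_1$ by $\delta_M^{(n)}(m)=0$ together with $(\delta_M^{(n)})^{i}(m)=0$ for $i<n^{k-2}$, and since $\delta_M^{(n)}$ restricted to $M_1$ is an $L_2$-linear operator whose kernel already is $M_2$, all higher kernels coincide with $M_2$. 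Here one uses Lemma \ref{lemma: ech}-style commutation ($\phi_M^{j}\delta_M^{(i)}=q^{-ji}\delta_M^{(i)}\phi_M^{j}$, Equation (\ref{eqn:sd1})) to see $\delta_M^{(n)}$ is indeed defined over and linear for the ring of constants of $\delta^{(1)}$, i.e.\ $L_1$-semilinear issues do not arise because $\sigma_q^n=\mathrm{id}$.

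Parts 3 and 4 are the dimension inequalities, and they go exactly as in Proposition \ref{prop:projp}. For part 4: $\phi_0$ is the inclusion $M_1\hookrightarrow M_0=M$, which is $L_1$-linear; it extends to an $L$-linear map $M_1\otimes_{L_1}L\to M$, and this is injective since any $L_1$-linearly independent subset of $M_1$ stays $L$-linearly independent in $M$ (standard: an $L$-linear relation among elements of $M_1$ with minimal length, hit with $\delta^{(1)}$ and subtracted, lands back in $M_1$; using $\sigma_q$-invariance of the would-be coefficients one descends to $L_1$). For surjectivity, count: $\delta_M^{(1)}$ is $L_1$-linear on $M$ with $(\delta_M^{(1)})^{n}=0$ (part 2 of Definition \ref{defin:mod} gives $\phi_M=(q-1)t\delta_M^{(1)}+\mathrm{id}$ bijective and $\phi_M^{n}$ is a unit scaling so the binomial expansion forces $(\delta_M^{(1)})^n$ to vanish — this is the $n$-th-root-of-unity input), so $\dim_{L_1}M_1=\dim_{L_1}\mathrm{Ker}(\delta_M^{(1)}|_M)\ge \tfrac1n\dim_{L_1}M\ge \dim_L M$, the last inequality because $\delta_L^{(1)}$ is an $L_1$-linear nilpotent endomorphism of $L$ of nilpotence order exactly $n$, giving $\dim_{L_1}L = n$. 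Combined with the trivial $\dim_{L_1}M_1\le\dim_L M$ (from injectivity of $M_1\otimes_{L_1}L\to M$) this forces equality of dimensions, hence $\phi_0$ extends to an isomorphism. Part 3 is the same computation one step up, but with $\delta_M^{(n)}$ in place of $\delta_M^{(1)}$: it is $L_2$-linear on $M_1$, nilpotent (of some order $\le n$), so $\dim_{L_2}M_2\ge \tfrac1{(\text{ord})}\dim_{L_2}M_1$, which gives injectivity of $M_2\otimes_{L_2}L_1\to M_1$ but not necessarily surjectivity — hence only a monomorphism, as stated.

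The main obstacle I anticipate is part 2, specifically pinning down why the tower genuinely stabilizes at $M_2$ rather than merely eventually: one must show that $\mathrm{Ker}\big((\delta_M^{(n)})^{i}|_{M_1}\big)$ is independent of $i\ge 1$. This is where the characteristic-zero hypothesis is essential and where one needs the factorial-invertibility to pass between $\delta_M^{(n^j)}$ and powers of $\delta_M^{(n)}$, together with the observation that $\delta_M^{(n)}$ already annihilates a complement-free chunk of $M_1$; I would prove it by showing $\delta_M^{(n)}(M_1)\subseteq M_1$ (using $\delta^{(1)}\circ\delta^{(n)}=\binom{n+1}{1}_q\delta^{(n+1)}$ and that $\binom{n+1}{1}_q\ne 0$, plus part 5 relations, so that $\delta^{(n)}$ preserves $\mathrm{Ker}\,\delta^{(1)}$) and that $\delta_M^{(n)}$ restricted to $M_2$ is zero, so by the rank-nullity bootstrapping the kernels of its powers cannot grow beyond $M_2$ once we are inside $M_1$. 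Everything else is linear algebra over fields together with the $q$-binomial identities of Proposition \ref{prop:binform} and the commutation relation (\ref{eqn:sd1}).
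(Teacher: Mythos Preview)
Your approach is essentially the paper's, and parts 1, 3, 4 match the paper's argument (which is even terser: it calls parts 1 and 3 ``obvious'' and does exactly your nilpotence/dimension count for part 4).

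The one place you go astray is part 2, where you manufacture a difficulty that is not there. You already have the key relation $(\delta_M^{(n)})^{n^{j-1}}=(n^{j-1})!\,\delta_M^{(n^{j})}$ with invertible factorial, hence $\mathrm{Ker}\,\delta_M^{(n^{j})}=\mathrm{Ker}\big((\delta_M^{(n)})^{n^{j-1}}\big)$. From this the stabilization is a one-liner: by definition $M_2\subseteq\mathrm{Ker}\,\delta_M^{(n)}$, and trivially $\mathrm{Ker}\,\delta_M^{(n)}\subseteq\mathrm{Ker}\big((\delta_M^{(n)})^{n^{j-1}}\big)=\mathrm{Ker}\,\delta_M^{(n^{j})}$ for every $j\ge 1$; therefore $M_2\subseteq M_k$ for all $k\ge 2$, while $M_k\subseteq M_2$ is immediate from $M_k$ being a larger intersection. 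There is no need to show that the kernels of the powers of $\delta_M^{(n)}$ all coincide, nor to invoke any nilpotence of $\delta_L^{(n)}$ or $\delta_M^{(n)}$ (indeed $\delta_L^{(n)}$ is only locally nilpotent on $L$, not nilpotent as an operator, so that part of your sketch would not go through as written). The paper's proof of part 2 is precisely this trivial containment argument, stated in one sentence.
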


\begin{proof}
The first statement is obvious. Because $
(\delta_M^{(n)})^{n^{k-1}}=(n^{k-1})! \delta_M^{(n^k)}$ for all $k
\geq 1$ (see part $4$ of Proposition \ref{prop:binform}), we have
$M_k=M_1$ for all $k\geq 2$. The third statement is obvious.\\ We
now prove  the fourth statement. We have $M_1\otimes_{L_1}L \subset M_0$ so $dim_{L_{1}}(M_{1})
\leq dim_{L}(M)$.

Conversely, from  $(\delta_M^{(1)})^n=0$ and $(\delta_L^{(1)})^n=0$
follows
 $$dim_{L_1}(M_1)=dim_{L_1}(Ker(\delta_M^{(1)}|_{M}))\geq
 \frac{1}{n}dim_{L_1}(M) \geq dim_L(M).$$
\end{proof}

\subsubsection{Iterative $q$-difference equations in characteristic zero}
~~\\
 The restriction of $\delta_M^{(n)}$ to the $L_1$-vector space $M_1$ behaves like a connection (see Proposition
 \ref{prop:compder}), i.e. to be more precise
 $$\delta_M^{(n)}(\lambda x)=\lambda \delta_M^{(n)}(x) +\delta_L^{(n)}(\lambda) x \ \mbox{for all} \ (\lambda,x) \in L_1
 \times M_1.$$
 This observation allows us to consider the matrix of $\delta_{M}^{(n)}|_{M_1}$ with respect to an $L_1$-basis of $M_1$ and
 thus to set the following notations.

\begin{Not}\label{Not:qit}
Let  $B_1$ (resp. $B_0$) be a
$L_1$-basis of $M_1$ (resp. a $L_0$-basis of $M_0$). Let $n=dim_L M$.\\
\begin{enumerate}
\item Because of
Proposition \ref{prop:proj}, we have $M_1 \otimes L \simeq M$. Now
let us denote by $D_0 \in Gl_n(L)$ the matrix of $\phi_0$ with
respect to the basis $B_{1}$ and $B_0$, i.e., $ B_0D_0=B_{1}$.
\item Let
$C_n$ be  the matrix of $\delta_{M}^{(n)}|{M_1}$ with respect to the basis
$B_1$, i.e. $$\forall X \in L_1^n, \delta_M^{(n)}(B_1 X)=B_1C_nX + B_1 \delta_L^{(n)}X.$$
 \item Set $A_0:=Id_n, A_1:=\delta_L^{(1)}(D_0)D_0^{-1}$ and define inductively $A_k$ for all $0 < k< n-1$ with
 $$A_{k+1}=\frac{(q-1)(\delta_L^{(1)}(A_k) +\sigma_q(A_k)A_1)}{(q^{k+1}-1)}$$ and $$A_n:=-D_0C_nD_0^{-1}
 -\sum_{k=0}^{n-1}D_0\sigma_q^k(\delta_L^{(n-k)}(D_0^{-1}))A_k.$$
\end{enumerate}
\end{Not}

\begin{prop}\label{prop:qeq0}
Using the previous notation, the following statements are
equivalent:
\begin{enumerate}
\item $B_0\bold{y}=\sum_{i=1}^n y_i b_i \in V_M=\cap_{k \in
\mathbb{N}} M_k=M_1 \cap M_2$.

\item $\delta_L^{(1)}(\bold{y})=A_1
\bold{y} \ \mbox{and} \ \delta_L^{(n)}(\bold{y})=A_n
\bold{y} $, with $A_1, A_n $ defined in Notation \ref{Not:qit}.
\end{enumerate}
\end{prop}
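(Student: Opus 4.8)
The plan is to translate, via the change of basis $B_0D_0=B_1$ of Notation \ref{Not:qit}, the two conditions cutting out $V_M$ into the announced matrix equations on $\mathbf{y}$. Write $v:=B_0\mathbf{y}=B_1\mathbf{y}_1$ with $\mathbf{y}_1:=D_0^{-1}\mathbf{y}$. First I would reduce $V_M$ to two conditions on the single element $v$: by Proposition \ref{prop:proj}(2) one has $M_k=M_2$ for all $k\ge2$, so $\bigcap_k M_k=M_2=Ker(\delta_M^{(1)})\cap Ker(\delta_M^{(n)})$; moreover, in characteristic $0$ one has $(\delta_M^{(1)})^i=[i]_q!\,\delta_M^{(i)}$ with $[i]_q!\neq0$ for $i<n$ and $(\delta_M^{(n)})^{n^{j-1}}=(n^{j-1})!\,\delta_M^{(n^j)}$, so together with the fact (recalled before section \ref{subsec:carp}) that $\{\delta_M^{(1)},(\delta_M^{(n^k)})_k\}$ generates the whole iterative operator, $v\in V_M$ is equivalent to $\delta_M^{(1)}(v)=0$ and $\delta_M^{(n)}(v)=0$. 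So it suffices to show (a) $\delta_M^{(1)}(v)=0\iff\delta_L^{(1)}(\mathbf{y})=A_1\mathbf{y}$, and (b), granting (a), $\delta_M^{(n)}(v)=0\iff\delta_L^{(n)}(\mathbf{y})=A_n\mathbf{y}$.

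For (a): since the entries of $B_1$ lie in $M_1=Ker(\delta_M^{(1)})$, part $4$ of Definition \ref{defin:mod} with $k=1$ gives $\delta_M^{(1)}(B_1\mathbf{y}_1)=B_1\,\delta_L^{(1)}(\mathbf{y}_1)$ for every $\mathbf{y}_1\in L^n$; as $B_1$ is an $L$-basis of $M$ (Proposition \ref{prop:proj}(4)) this vanishes iff $\delta_L^{(1)}(\mathbf{y}_1)=0$, i.e.\ $\mathbf{y}_1\in L_1^n$. Applying $\delta_L^{(1)}$ to $D_0D_0^{-1}=Id$ yields $\delta_L^{(1)}(D_0^{-1})=-\sigma_q(D_0^{-1})A_1$, hence $\delta_L^{(1)}(\mathbf{y}_1)=\sigma_q(D_0^{-1})\bigl(\delta_L^{(1)}(\mathbf{y})-A_1\mathbf{y}\bigr)$; invertibility of $\sigma_q(D_0^{-1})$ gives (a) and shows in particular that $\delta_M^{(1)}(v)=0$ forces $\mathbf{y}_1\in L_1^n$.

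For (b) I would assume $\delta_L^{(1)}(\mathbf{y})=A_1\mathbf{y}$ (so $\mathbf{y}_1\in L_1^n$ by the above). An induction using part $5$ of Definition \ref{defin:qder} shows $\delta_L^{(i)}(\mathbf{y})=A_i\mathbf{y}$ for $0\le i\le n-1$: indeed $\delta_L^{(1)}\circ\delta_L^{(k)}=[k+1]_q\,\delta_L^{(k+1)}$ with $[k+1]_q\neq0$ for $k\le n-2$, while $\delta_L^{(1)}(A_k\mathbf{y})=\bigl(\delta_L^{(1)}(A_k)+\sigma_q(A_k)A_1\bigr)\mathbf{y}$, so $\delta_L^{(k+1)}(\mathbf{y})=[k+1]_q^{-1}\bigl(\delta_L^{(1)}(A_k)+\sigma_q(A_k)A_1\bigr)\mathbf{y}=A_{k+1}\mathbf{y}$, which is precisely the recursion of Notation \ref{Not:qit}(3) since $[k+1]_q^{-1}=(q-1)/(q^{k+1}-1)$. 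Now $\mathbf{y}_1\in L_1^n$ lets me apply the connection formula of Notation \ref{Not:qit}(2): $\delta_M^{(n)}(v)=B_1\bigl(C_n\mathbf{y}_1+\delta_L^{(n)}(\mathbf{y}_1)\bigr)$, which vanishes iff $C_n\mathbf{y}_1+\delta_L^{(n)}(\mathbf{y}_1)=0$. Expanding $\delta_L^{(n)}(\mathbf{y}_1)=\delta_L^{(n)}(D_0^{-1}\mathbf{y})$ by the $q$-Leibniz rule (Definition \ref{defin:qder}, part $4$), splitting off the $i=0$ and $i=n$ terms, substituting $\delta_L^{(i)}(\mathbf{y})=A_i\mathbf{y}$ for $i\le n-1$ and using $\sigma_q^n=id$, gives $\delta_L^{(n)}(\mathbf{y}_1)=D_0^{-1}\delta_L^{(n)}(\mathbf{y})+\sum_{i=0}^{n-1}\sigma_q^i\bigl(\delta_L^{(n-i)}(D_0^{-1})\bigr)A_i\mathbf{y}$. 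Multiplying the vanishing condition on the left by $D_0$ and reading off the definition of $A_n$ in Notation \ref{Not:qit}(3) turns it into exactly $\delta_L^{(n)}(\mathbf{y})=A_n\mathbf{y}$. Assembling (a) and (b) completes the proof.

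The conceptual content is light — this is a pure coordinatization — so the hard part will be the bookkeeping. The step I expect to be the most delicate is the expansion of $\delta_L^{(n)}(D_0^{-1}\mathbf{y})$: the $\sigma_q^i$-twists in the $q$-Leibniz rule must be tracked term by term so that the resulting sum matches the definition of $A_n$, and, relatedly, one has to check that the recursion defining the $A_k$ for $k<n$ is exactly what $\delta_L^{(1)}\circ\delta_L^{(k)}=[k+1]_q\,\delta_L^{(k+1)}$ produces. Everything else follows routinely from the Leibniz rules of Definitions \ref{defin:qder} and \ref{defin:mod} and the invertibility of $D_0$ and $\sigma_q(D_0)$.
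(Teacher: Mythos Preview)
Your proof is correct and follows essentially the same route as the paper's own argument: reduce $V_M$ to the two conditions $\delta_M^{(1)}(B_0\mathbf{y})=0$ and $\delta_M^{(n)}(B_0\mathbf{y})=0$, pass to the basis $B_1$ via $\mathbf{y}_1=D_0^{-1}\mathbf{y}$, use the induction on $\delta_L^{(1)}\circ\delta_L^{(k)}=[k+1]_q\delta_L^{(k+1)}$ to obtain $\delta_L^{(i)}(\mathbf{y})=A_i\mathbf{y}$ for $i<n$, and expand $\delta_L^{(n)}(D_0^{-1}\mathbf{y})$ by the $q$-Leibniz rule to recover $A_n$. Your version is in fact slightly more careful than the paper's in one respect: you explicitly verify $\mathbf{y}_1\in L_1^n$ before invoking the connection formula of Notation~\ref{Not:qit}(2), which is stated only for $X\in L_1^n$, whereas the paper applies this formula without comment.
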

\begin{proof}
 $B_0\bold{y}=\sum_{i=1}^n y_i b_i \in V_M$ if and only if  for all $0 < k\leq n$ we
 have $\delta_M^{(k)}(B_0\bold{y})=0$ (remember that it is sufficient in the case of a base field of characteristic zero to
 consider only the iterative $q$-difference of order $1$ and $n$).\\
Let us first consider the case $k=1$. We have as in Proposition \ref{prop:qeq}

\begin{equation}\label{eqn:rg1} \delta_L^{(1)}(\bold{y})= \delta_L^{(1)}(D_0D_0^{-1} \bold{y})
 = \delta_L^{(1)}(D_0) D_0^{-1}\bold{y}=A_1 \bold{y}.\end{equation}
For $0<j<n-1$, we  proceed by induction and  using Equation (\ref{eqn:rg1}) and $\delta_L^{(1)}\circ \delta_L^{(j)}=\frac{q^{j+1}-1}{q-1} \delta_L^{(j+1)}$
 we have
$\delta_L^{(j+1)}(\bold{y})=A_j \bold{y}$  for all $j<n$ where $A_j=\frac{(q-1)(\delta_L^{(1)}(A_j)
+\sigma_q(A_j)A_1)}{(q^{j+1}-1)}.$ \\
By assumption
$$\delta_M^{(n)}(B_0\bold{y})=0=\delta_M^{(n)}(B_1D_0^{-1}\bold{y})=B_1C_nD_0^{-1}\bold{y}
+B_1\delta_L^{(n)}(D_0^{-1}\bold{y}).$$
and thus $$0=D_0 C_nD_0^{-1}\bold{y} +D_0\sum_{j=0}^{n}
\sigma_q^j(\delta_L^{(n-j)}(D_0^{-1}))\delta_L^{(j)}(\bold{y})= D_0C_nD_0^{-1}\bold{y}
 +\sum_{k=0}^{n-1}D_0\sigma_q^k(\delta_L^{(n-k)}(D_0^{-1}))A_k\bold{y} + \delta_L^{(n)}(\bold{y}).$$ This gives
 $\delta_L^{(n)}(\bold{y})= A_n\bold{y}.$

Hence the first statement implies the second. By going through the
computation backwards, we obtain the equivalence between the two
statements.\end{proof}
\begin{defin}

The family of equations $\lbrace \delta_L^{(1)}(\bold{y})=A_1
\bold{y}, \delta_L^{(n)}(\bold{y})=A_n
\bold{y} \rbrace$ related to the $IDM_q$-module
$(M, \delta_M^*)$ by Proposition \ref{prop:qeq0} is called an
\textbf{ iterative $q$-difference equation}($ID_qE$).
\end{defin}

\begin{ex}\label{ex:car0}
Let $L=\mathbb{C}(t)$ and let $q$ be a $n$-th primitive root of
unity. Let $M =Fb_1$ be a rank one $IDM_q(L)$-module and suppose
that $\Phi(b_1)=b_1$. Then an easy computation leads to  $C_j=0$ for
all $1 \leq j < n$ and $A_j=0$ for $1 \leq j <n$. Now, let $a_1$ be
an integer and set $C_n=\frac{a_1}{t^n}$. Then
$A_n=\frac{-a_1}{t^n}$.
\end{ex}

\section{Iterative $q$-difference Picard-Vessiot extensions}
In this section, we develop a Picard-Vessiot theory for iterative
$q$-difference equations. We build the Picard-Vessiot ring inspired
by the usual construction, but we have to adapt our construction to
a infinite set of variables, and thus some modifications are
necessary.
\subsection{Iterative Picard-Vessiot rings}

\begin{Not}
Let $(L,\delta_L^*)$ be an iterative $q$-difference field. If,
\begin{enumerate}
\item the characteristic of the constants field $C$ of $L$ is zero
then let us denote by $(k_C)$ the family $\{1,n \}$,
\item the characteristic of the constants field $C$ of $L$ is
positive equal to $p$ then let us denote by $(k_C)$ the family
$\lbrace 1,(np^k)_{k \in \mathbb{N}} \rbrace$.
\end{enumerate}
\end{Not}
\begin{remark}[Classical case]\label{remark:eqcl}

As mentioned before, when $q$ is not a root of unity, an iterative
$q$-difference module is the same object as  a $q$-difference
module. Moreover, in this case the iterative $q$-difference equation
is just obtained  by considering the equation of level  $1$ and if
there exists $Y \in Gl_n(R)$ such that $\delta_L^{(1)}(Y)=A_1 Y$
then  for all $k \in \mathbb{N}$  we have $\delta_L^{(k_C)}(Y)=A_k
Y$. Thereby, when $q$ is not a root of unity, an iterative
$q$-difference equation is simply a $q$-difference equation in the
sense of \cite{VPS2} p.$5$.
\end{remark}

\begin{defin}\label{defin:pv}
Let $(L,\delta_L^*)$ be an iterative $q$-difference field, let $(M,
\delta_M^*)$ be an object of $IDM_q(L)$, and let $\lbrace
\delta_L^{(k_C)}(\bold{y})=A_k \bold{y} \rbrace_{  k \in
\mathbb{N}}$ be  an \textbf{ iterative $q$-difference equation}
related to the
$IDM_q$-module $(M, \delta_M^*)$, denoted by $ID_qE(M)$.\\
Let $(R,\delta_R^*)$ be an iterative $q$-difference extension of
$(L,\delta_L^*)$. A matrix $Y \in Gl_n(R)$ is called a
\textbf{fundamental solution  matrix} for $ID_qE(M)$ if
$\delta_R^{(k_C)}(Y)=A_kY, \ \mbox{for all} \  k \in \mathbb{N}$.\\
The ring $R$ is called an \textbf{ iterative $q$-difference
Picard-Vessiot ring} for $ID_qE(M)$ ($IPV_q$-ring for short) if it
fulfills the following conditions :
\begin{enumerate}
\item $R$ is a simple $ID_q$-ring (that means that
$ R$ contains no proper iterative $q$-difference ideal ),
\item $ID_qE(M)$ has a fundamental solution matrix $Y$ with coefficients
in $R$,
\item $R$ is generated by the coefficients of $Y$ and $det(Y)^{-1}$,
\item $C(R)=C(L)$.
\end{enumerate}
\end{defin}
\begin{remark}[Classical case]
As in Remark \ref{remark:eqcl}, we easily see that if $q$ is not a
root of unity, the notion of an iterative Picard-Vessiot ring is
exactly the same as the notion of Picard-Vessiot ring in the sense
of Singer, van der Put (\cite{VPS2} $1.1$).
\end{remark}

\begin{prop}\label{prop:const}
Let  $(L,\delta_L^*)$ be an iterative $q$-difference field, with
algebraically closed field of constants $C(L)$, and let $R/L$ be a
simple $ID_q$-ring. Then $R$ is a reduced $ID_q$-ring. Moreover, if
$R$ is finitely generated over $L$, we have $C(L)=C(E)$ where $E$
denotes the localization of $R$ by its set of non zeros divisors.
\end{prop}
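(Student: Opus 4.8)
I would treat the two assertions separately. For reducedness, note that the zero ideal is trivially an $ID_q$-ideal of $R$, so by Lemma \ref{lemma:rad} its radical --- the nilradical $\mathfrak{N}$ of $R$ --- is again an $ID_q$-ideal. A simple $ID_q$-ring is in particular nonzero, so $1 \notin \mathfrak{N}$ and hence $\mathfrak{N} \neq R$; simplicity then forces $\mathfrak{N} = (0)$, i.e. $R$ is reduced.

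For the statement about constants, recall first that $C(L) \subseteq C(E)$ is automatic and that $E$ carries an iterative $q$-difference structure extending that of $R$, exactly as in Proposition \ref{prop:ext} (the only additional point being that the twisted Taylor series of a non-zero-divisor $b$ has invertible constant term, namely $b$ itself, and is therefore a unit of the twisted power series ring). The first step is to bring $C(E)$ back inside $R$: for $c \in C(E)$ the denominator ideal $I := \{\, r \in R : rc \in R \,\}$ is nonzero, and since $c$ is a constant the Leibniz rule of Definition \ref{defin:qder} degenerates to $\delta_R^{(k)}(rc) = \delta_R^{(k)}(r)\, c$, so $I$ is stable under every $\delta_R^{(k)}$. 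By simplicity $I = R$, hence $c \in R$, and thus $C(E) \subseteq C(R)$. It remains to prove $C(R) \subseteq C(L)$, and this is where finite generation is used.

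So let $c \in C(R)$ and suppose $c \notin C := C(L)$. For each $\lambda \in C$, $c - \lambda$ is a nonzero constant, so the same degenerate Leibniz rule makes $(c - \lambda)R$ a nonzero $ID_q$-ideal; by simplicity $(c-\lambda)R = R$, i.e. $c - \lambda$ is a unit of $R$. Next I would show $c$ is algebraic over $L$: if not, then $L[c] \cong L[X]$ sits inside the finitely generated $L$-algebra $R$, so the finite-type morphism $\mathrm{Spec}\, R \to \mathbb{A}^1_L = \mathrm{Spec}\, L[c]$ is dominant, and by Chevalley's theorem its image contains all but finitely many closed points of $\mathbb{A}^1_L$; choosing $\lambda_0 \in C$ (an infinite set) with $(X - \lambda_0)$ in the image produces a prime of $R$ containing the unit $c - \lambda_0$, which is absurd. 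Hence $c$ is algebraic over $L$; let $P = X^d + a_{d-1}X^{d-1} + \cdots + a_0 \in L[X]$ be its minimal polynomial. Applying $\delta_R^{(k)}$ with $k \geq 1$ to $P(c) = 0$ and using that $\delta_R^{(j)}(c^i) = 0$ for $i, j \geq 1$ (again because $c$ is a constant), one gets $\sum_{i=0}^{d-1} \delta_L^{(k)}(a_i)\, c^i = 0$; the $L$-linear independence of $1, c, \dots, c^{d-1}$ forces $\delta_L^{(k)}(a_i) = 0$ for all $k \geq 1$, so $P \in C[X]$. Then $c$ is algebraic over the algebraically closed field $C$, hence $c \in C$, contradicting $c \notin C$. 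Therefore $C(R) = C(L)$, and together with $C(L) \subseteq C(E) \subseteq C(R)$ this gives $C(E) = C(L)$.

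The steps I expect to be routine --- reducedness, the descent of $C(E)$ into $R$, and the minimal-polynomial computation --- are all formal consequences of Lemma \ref{lemma:rad} and the collapse of the Leibniz rule on constants. The genuinely substantive step, and the only place where the finite-generation hypothesis is indispensable, is ruling out a transcendental constant: that requires a geometric input (Chevalley's theorem, or equivalently generic flatness) to force one of the units $c - \lambda$ into a prime ideal of $R$. This is the step I would expect to be the main obstacle.
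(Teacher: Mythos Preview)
Your proof is correct and follows essentially the same route as the paper: reducedness via Lemma~\ref{lemma:rad}, descent of constants into $R$ by the denominator-ideal trick, the observation that $c-\lambda$ is a unit for every $\lambda\in C(L)$, a Chevalley-type argument on the morphism $\mathrm{Spec}(R)\to\mathbb{A}^1_L$ to force algebraicity, and finally the minimal-polynomial computation. The only cosmetic differences are that the paper phrases the geometric step as ``the image misses all $C(L)$-points, hence contains no open set, hence is finite'' while you argue contrapositively via dominance, and that the paper records $\sigma_q^k(c)=c$ before writing $\delta^{(k)}(ac)=c\,\delta^{(k)}(a)$ whereas you obtain this directly by placing $c$ in the second slot of the Leibniz rule.
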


\begin{proof}
The fact that $R$ is a reduced $ID_q$-ring is a consequence of Lemma
\ref{lemma:rad} where it is shown that if  $I$ is an $ID_q$-ideal
the same is true for its radical. For the second statement, let us
assume that $R$ is finitely generated over $L$. Let $c$ be a non
zero constant of $E$ and put $J=\lbrace a \in R| a.c \in R \rbrace$.
First of all, because $\delta_E^{(1)}=\frac{\sigma_q -id}{(q-1)t}$,
we have that $\sigma_q^k(c)=c$ for all  $k \in \mathbb{N}$. It is
then quite clear that $J$ is an $ID_q$-ideal of $R$ because of
$\delta_R^{(
k)}(a.c)=\sigma_q^k(c).\delta_R^{(k)}(a)=c.\delta_R^{(k)}(a)$ for
all $k \in \mathbb{N}$. Since $R$ is simple, and $J$ is a non
trivial $ID_q$-ideal, we have $J=R$, and thus $1.c=c \in R$. Suppose
that $c \notin C(L)$. Thus for all $d \in C(L)$ the ideal $(c-d)R$
is a non trivial $ID_q$-ideal in $R$ and also equal to $R$. This
means
that $(c-d) \in R^*$ for all $d \in C(L)$.\\
Let $\bold{\phi}_c: \ Spec(R) \mapsto \mathbb{A}^1_L$ be the
morphism induced by $$\xymatrix{ \phi : & L[T] \ar[r] & R,
                                          & T \ar@{|->}[r] & c .}$$
Since $Im(\bold{\phi_c)}\cap \mathbb{A}^1_L(C(L))$ is empty,
$Im(\bold{\phi}_c)$ does not contain any open subset of
$\mathbb{A}^1_L$. Therefore the image of  $\bold{\phi}_c$ in
$\mathbb{A}^1_L$ is finite and closed. This implies that $c$ is
algebraic over $L$. Let $P \in L[X]$ be the minimal monic polynomial
annihilating $c$. We have
$\delta_L^{(k)}(P(c))=P^{\delta_L^{(k)}}(c)=0$ where
$P^{\delta_L^{(k)}}$ denotes the element of $L[X]$ obtained from $P$
by applying $\delta_L^{(k)}$ on the coefficients of $P$. By
minimality of $P$ we conclude that $P\in C(L)[X]$. Because $C(L)$ is
algebraically closed, we then have $c \in C(L)$. This is a
contradiction!\end{proof}

\begin{prop}\label{prop:matsol}
Let $(L,\delta_L^*)$ be an $ID_q$-field and $(R,\delta_R^*)$ be an
$ID_q$-ring with $q$-difference operator extending the one given on
$L$. Let $Y$ and $\tilde{Y}$ be two elements of $Gl_n(R)$,
fundamental matrices of solutions for the $ID_qE$,
$\delta_R^{(k_C)}(\bold{y})=A_k\bold{y}$. Then, there exists a
 matrix $P \in Gl_n(C(R))$ such that $\tilde{Y}=YP$. Moreover, if
 both $L$ and $R$ satisfy the conditions of Proposition
 \ref{prop:const} then $P \in Gl_n(C(L))$.
 \end{prop}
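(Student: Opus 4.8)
The plan is to mimic the classical proof that two fundamental solution matrices of a $q$-difference equation differ by a constant matrix, adapting it to the infinite family of operators $\delta_R^{(k_C)}$. Since $Y,\tilde Y \in Gl_n(R)$ are both fundamental solutions, set $P := Y^{-1}\tilde Y \in Gl_n(R)$; I must show every entry of $P$ is a constant, i.e. annihilated by $\delta_R^{(k)}$ for all $k \in \mathbb{N}^*$. First I would compute $\delta_R^{(1)}$ of $P$. Writing $\phi_R := (q-1)t\,\delta_R^{(1)} + \mathrm{id}$ for the underlying automorphism, the equation $\delta_R^{(1)}(Y) = A_1 Y$ translates to $\phi_R(Y) = \big((q-1)tA_1 + \mathrm{id}\big)Y$, and likewise for $\tilde Y$; since $\phi_R$ is a ring homomorphism, $\phi_R(Y^{-1}) = \phi_R(Y)^{-1}$, so $\phi_R(P) = \phi_R(Y)^{-1}\phi_R(\tilde Y) = Y^{-1}\big((q-1)tA_1+\mathrm{id}\big)^{-1}\big((q-1)tA_1+\mathrm{id}\big)\tilde Y = Y^{-1}\tilde Y = P$. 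Hence $\delta_R^{(1)}(P) = \frac{\phi_R(P)-P}{(q-1)t} = 0$.

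Next I would propagate this to the higher operators in the family $(k_C)$. The key identity is the Leibniz rule (part 4 of Definition \ref{defin:qder}): for a product of a matrix by its inverse, $\delta_R^{(k)}(\mathrm{Id}) = 0$ expands into a recursion expressing $\delta_R^{(k)}(Y^{-1})$ in terms of $\delta_R^{(j)}(Y^{-1})$ and $\delta_R^{(i)}(Y)$ for smaller indices, exactly as in Equation (\ref{eqn:nul}). Combining this with $\delta_R^{(k_C)}(Y) = A_{k}Y$ and $\delta_R^{(k_C)}(\tilde Y) = A_{k}\tilde Y$, together with Lemma \ref{lemma: ech} (the commutation $\sigma_q^j\delta_R^{(i)} = q^{-ji}\delta_R^{(i)}\sigma_q^j$) to move $\sigma_q$-powers past the $\delta$'s, I would show by induction on $k$ that $\delta_R^{(k_C)}(P) = 0$. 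In characteristic $0$ it suffices to check $k=1$ and $k=n$ (the family $(k_C) = \{1,n\}$ generates all $\delta_R^{(k)}$ via part 5 of Definition \ref{defin:qder} and the formula $(\delta^{(n)})^{n^{k-1}} = (n^{k-1})!\,\delta^{(n^k)}$); in characteristic $p$, one checks the whole family $\{1,(np^k)_{k\in\mathbb{N}}\}$ and invokes the same generation statement. In either case, once $\delta_R^{(k)}(P) = 0$ for all $k$ in the generating family, parts 3 and 5 of Definition \ref{defin:qder} force $\delta_R^{(k)}(P) = 0$ for every $k$, so $P \in Gl_n(C(R))$.

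For the refinement, suppose $L$ and $R$ both satisfy the hypotheses of Proposition \ref{prop:const}: then $C(R) = C(E) = C(L)$ (here $E$ is the total ring of fractions of $R$, and $P$, having entries in $R \subseteq E$ with those entries constant, lies in $Gl_n(C(E))$), hence $P \in Gl_n(C(L))$ directly.

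The main obstacle I anticipate is the inductive step in positive characteristic: the naive recursion from the Leibniz rule degenerates whenever a $q$-binomial coefficient $\binom{i+j}{i}_q$ vanishes (which happens precisely at the "carrying" indices, cf. Remark \ref{remark:pnul}), so one cannot simply divide through. This is the same phenomenon handled in Lemma \ref{lemma:rad}: at the critical levels $k \in n p^{\mathbb{N}}$ one must argue directly, using $\sigma_q^n = \mathrm{id}$ and the identity $\binom{np^j}{np^i}_q = \binom{p^j}{p^i} = 0$ for $i<j$ in characteristic $p$, to isolate the top-order term $\delta_R^{(np^k)}(P)$ and show it vanishes. Carefully organizing this induction — so that the generating set $\{1,(np^k)_k\}$ is exactly what is needed and no more — is where the real work lies; the rest is the routine manipulation already rehearsed in Lemma \ref{lemma: ech} and Proposition \ref{prop:tens}.
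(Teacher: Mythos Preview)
Your plan is sound and would eventually work, but it is considerably more laborious than the paper's argument, and the obstacle you anticipate is an artifact of the way you set things up rather than a genuine feature of the problem.

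The paper does not expand $\delta_R^{(k)}(P)=\delta_R^{(k)}(Y^{-1}\tilde Y)$. Instead it applies the Leibniz rule to the relation $\tilde Y = YP$ and runs the induction over \emph{all} $k\in\mathbb N^*$, not only over $(k_C)$. Assuming $\delta_R^{(i)}(P)=0$ for $1\le i<k$, every cross term in
\[
\delta_R^{(k)}(\tilde Y)=\sum_{i+j=k}\sigma_q^{\,i}\bigl(\delta_R^{(j)}(Y)\bigr)\,\delta_R^{(i)}(P)
\]
vanishes, leaving $\delta_R^{(k)}(\tilde Y)=\delta_R^{(k)}(Y)\,P+\sigma_q^{\,k}(Y)\,\delta_R^{(k)}(P)$. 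If $k\in(k_C)$ the first two quantities are $A_k\tilde Y$ and $A_kYP$, so $\sigma_q^{\,k}(Y)\,\delta_R^{(k)}(P)=0$ and one divides by the invertible $\sigma_q^{\,k}(Y)$. If $k\notin(k_C)$ one simply invokes part~5 of Definition~\ref{defin:qder}: there exist $0<i,j<k$ with $\binom{k}{i}_q\neq 0$, so $\delta_R^{(k)}(P)=\binom{k}{i}_q^{-1}\delta_R^{(i)}\delta_R^{(j)}(P)=0$ by the inductive hypothesis. No recursion for $\delta_R^{(k)}(Y^{-1})$ is ever needed, and the vanishing $q$-binomials cause no trouble because they only occur at indices already in $(k_C)$, where the equation itself supplies $A_k$.

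What your route buys is nothing extra here; it merely transports the cancellation $A_kYP=A_k\tilde Y$ into an equivalent but opaque telescoping identity among the $\delta_R^{(j)}(Y^{-1})$. The paper's choice of applying Leibniz to $YP$ rather than to $Y^{-1}\tilde Y$ is exactly the simplification that makes the ``real work'' you anticipate disappear. Your treatment of the final clause via Proposition~\ref{prop:const} is the same as the paper's.
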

 \begin{proof}

 It is obvious that there exists $P \in Gl_n(R)$ such that
 $\tilde{Y}=YP$. We want to show by induction that for all $k \in
 \mathbb{N}^*$, we have $\delta_R^{(k)}(P)=0$. For $k=1$ we obtain  $$\delta_R^{(1)}(\tilde{Y})=\delta_R^{(1)}(Y) P + \sigma_q(Y)
 \delta_R^{(1)}(P)=A_1\tilde{Y} + \sigma_q(Y) \delta_R^{(1)}(P).$$
Thus, $\delta_R^{(1)}(P)=0$ (because $\sigma_q$ is an automorphism
of $Gl_n(R)$). Using the formula
$$\delta_R^{(k)}(\tilde{Y}) = \sum_{i+j=k} \sigma_q^i(\delta_R^{(j)}(Y))
\delta_R^{(i)}(P),$$ we get by induction that $\delta_R^{(k)}(P)=0$
for all $k \in \mathbb{N}^*$. This implies that $ P \in Gl_n(C(R))$.
\end{proof}

\begin{theorem}\label{theorem:pv}
Let $(L,\delta_L^*)$ be an iterative $q$-difference field with
$C(L)$ algebraically closed and let $(M,\delta_M^*)$ be an object of
$IDM_q(L)$ with iterative $q$-difference equation
$\delta_L^{(k_C)}(\bold{y})=A_k\bold{y}$ ($ID_qE(M)$). Then there
exists an iterative $q$-difference Picard-Vessiot ring for the
iterative $q$-difference equation which is unique up to iterative
$q$-difference isomorphism.
\end{theorem}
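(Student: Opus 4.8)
The plan is to mimic the classical construction of the Picard-Vessiot ring (as in \cite{VPS2}, $1.1$), adapted to the infinite family of equations $\{\delta_L^{(k_C)}(\bold{y})=A_k\bold{y}\}$. First I would form the polynomial ring $L[X_{ij},\det(X)^{-1}]$ in $n^2$ indeterminates (with $X=(X_{ij})$), and extend the iterative $q$-difference structure to it by \emph{declaring} $\phi(X)=X+(q-1)tA_1X$, i.e. $\delta^{(1)}(X)=A_1X$, and $\delta^{(k_C)}(X)=A_{k}X$; one then checks that the compatibility relations of Definition \ref{defin:qder} (parts $4$ and $5$) are satisfied on the generators --- this is where Proposition \ref{prop:binform} and the relations among the $A_k$ (for instance $A_{k+1}=\frac{(q-1)(\delta_L^{(1)}(A_k)+\sigma_q(A_k)A_1)}{q^{k+1}-1}$ in characteristic zero, and the analogous closure in characteristic $p$) guarantee consistency. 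Having produced an $ID_q$-ring $U:=L[X_{ij},\det(X)^{-1}]$ with fundamental matrix $X$, I would choose a \emph{maximal} $ID_q$-ideal $\mathfrak{m}\subset U$ (which exists since $U$ is Noetherian; note the unit ideal is excluded because $U\neq 0$) and set $R:=U/\mathfrak{m}$. By construction $R$ is a simple $ID_q$-ring, it contains the fundamental solution matrix $Y=\bar X$, and it is generated over $L$ by the entries of $Y$ and $\det(Y)^{-1}$, so conditions $1$, $2$, $3$ of Definition \ref{defin:pv} hold. Condition $4$, $C(R)=C(L)$, is then exactly Proposition \ref{prop:const} applied to the simple, finitely generated $ID_q$-ring $R/L$ (together with Remark \ref{remark:ech} to see $C(R)$ is a field once $R$ is reduced and finitely generated, which it is by Proposition \ref{prop:const}).

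\medskip

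\noindent The one subtle point in the existence half is that in positive characteristic the family $(k_C)=\{1,(np^k)_{k\in\mathbb{N}}\}$ is genuinely infinite, so the "declaration" $\delta^{(np^k)}(X)=\tilde A_k X$ must be made consistently for all $k$ at once; I would handle this by invoking the projective-system description of Section $4.1$ --- the module $M$ already packages all the $\tilde A_k$ coherently via the matrices $D_k$ (Proposition \ref{prop:qeq}), so the ring $U$ can be built compatibly level by level, and the verification that $(\delta_U^{(k)})_{k\in\mathbb{N}}$ is an honest iterative $q$-difference operator reduces, via Proposition \ref{prop:tay}, to checking that the Taylor map $\bold{T}$ is multiplicative on generators, which is a finite computation at each level.

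\medskip

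\noindent \textbf{Uniqueness.} Let $R_1,R_2$ be two $IPV_q$-rings for $ID_qE(M)$. I would form the tensor product $R_1\otimes_L R_2$, which is an $ID_q$-ring by Proposition \ref{prop:tens}, pick a maximal $ID_q$-ideal $\mathfrak{n}$ in it, and set $R_3:=(R_1\otimes_L R_2)/\mathfrak{n}$, a simple $ID_q$-ring. The two canonical maps $R_i\to R_3$ are $ID_q$-morphisms; they are injective because each $R_i$ is $ID_q$-simple (the kernel is an $ID_q$-ideal). Inside $R_3$ we then have two fundamental solution matrices $Y_1,Y_2$ of the \emph{same} equation, so by Proposition \ref{prop:matsol} there is $P\in Gl_n(C(R_3))$ with $Y_2=Y_1P$; and since $R_3$ is simple and finitely generated over $L$, Proposition \ref{prop:const} gives $C(R_3)=C(L)$, hence $P\in Gl_n(C(L))$. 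Therefore the image of $R_2$ in $R_3$ equals the image of $R_1$ (both are $L[\,Y_1{}_{ij},\det(Y_1)^{-1}]=L[\,Y_2{}_{ij},\det(Y_2)^{-1}]$), so the inclusions $R_1\hookrightarrow R_3\hookleftarrow R_2$ identify $R_1\cong R_2$ as $ID_q$-rings.

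\medskip

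\noindent The main obstacle I anticipate is not the uniqueness argument (which is the standard "compare inside a simple tensor product" trick and goes through verbatim once Propositions \ref{prop:tens}, \ref{prop:matsol}, \ref{prop:const} are in hand) but the bookkeeping in the existence half: making sure the iteration relations $\delta^{(i)}\circ\delta^{(j)}=\binom{i+j}{i}_q\delta^{(i+j)}$ and the twisted Leibniz rule are \emph{simultaneously} consistent for the prescribed values $\delta^{(k_C)}(X)=A_kX$ on the free generating set, for the full infinite family $(k_C)$ in characteristic $p$. I expect the cleanest route is to define $U$ not by generators-and-relations on the $\delta$'s directly but as (a localization of) the symmetric algebra attached to the projective system of Section $4.1$, where the coherence is built in, and then \emph{derive} the formulas $\delta_U^{(k_C)}(X)=A_kX$ a posteriori --- exactly paralleling how Proposition \ref{prop:qeq} extracts the equation from the module.
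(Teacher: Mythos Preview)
Your proposal is correct and follows essentially the same route as the paper's proof: build $U=L[X_{ij},\det(X)^{-1}]$ with the $ID_q$-structure prescribed by $\delta^{(k_C)}(X)=A_kX$, quotient by a maximal $ID_q$-ideal for existence, and for uniqueness compare two $IPV_q$-rings inside a simple quotient of their tensor product using Propositions \ref{prop:tens}, \ref{prop:matsol}, and \ref{prop:const}. You are in fact more careful than the paper on two points --- you explicitly invoke Proposition \ref{prop:const} for condition~4 of Definition \ref{defin:pv}, and you flag the consistency check for the infinite family $(k_C)$ in positive characteristic --- whereas the paper simply asserts the $ID_q$-structure on $U_0$ and passes to the localization via Proposition \ref{prop:ext}; your projective-system alternative is a reasonable way to organize that verification but is not the route the paper takes.
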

\begin{proof}
Let $m$  be the dimension of $M$ over $L$  and set
$U=L[x_{(i,j)},det(x_{(i,j)})^{-1}]$. The algebra
 $U_0:=L[x_{(i,j)}]$ is given a structure of $q$-difference extension of $L$  via     $\sigma_q(X):=\frac{A_1 }{(q-1)t}X
+X$ where $X=(x_{(i,j)})_{(i,j)}$. Because $\sigma_q$ is a
ring-automorphism, we have that the ideal $S$ generated in $U_0$ by
$det(x_{i,j})$ is a $\sigma_q$-ideal and a multiplicatively closed
set. $U_0$ has  a non trivial  $ID_q$-structure via
$$\delta_{U_0}^{*}:= \  \delta_P^{(k_C)}(X)=A_k X, \ \mbox{for all} \  k \in \mathbb{N}.$$
Because $S$ satisfies the condition of Proposition \ref{prop:ext},
there exists a unique iterative $q$-difference operator
$\delta_{S^{-1}U_0}^*$ extending $\delta_{U_0}^*$ on $U= S^{-1}U_0$.
Let $P \subset U$ be a maximal $ID_q$-ideal of $U$. Then $R:=U/P$ is
a simple $ID_q$-ring and $Y:= \overline{X}$, the image of $X$ under
the projection of $U$ to $R$,  is a fundamental solution matrix of
$ID_qE(M)$. Moreover $R/L$ is generated by the coefficients of $Y$
and $det(Y)^{-1}$. Thus $R$ is an iterative $q$-difference Picard-Vessiot ring.\\
Assume that $(R_1,\delta_{R_1}^*)$ and $(R_2,\delta_{R_2}^*)$ are
two iterative $q$-difference Picard-Vessiot rings for $M$ with
fundamental solution matrix $Y_1$ (resp. $Y_2$) in $R_1$ (resp.
$R_2$). Put $N=R_1 \otimes_L R_2$. As in Proposition \ref{prop:tens}
we endow $N$ with an $ID_q$-structure. Let $P \subset N$ be a
maximal $ID_q$-ideal, then $R':=N/P$ is a simple $ID_q$ ring. The
two maps :
$$\xymatrix{ \phi_1: & R_1 \ar[r] & R',
      & r_1 \ar@{|->}[r] & \overline{(r_1\otimes 1)}}$$
and
$$\xymatrix{ \phi_2: & R_2 \ar[r] & R',
      & r_2 \ar@{|->}[r] & \overline{(1\otimes r_2)}}.$$
induced by the natural inclusions are $ID_q$-monomorphisms, and
$\phi_1(Y_1)$ and $\phi_2(Y_2)$ are two fundamental matrix solutions
for $M$ in $R'$. By Proposition \ref{prop:matsol}, there exists $P
\in Gl_n(C(L))$ such that $\phi_1(Y_1)=\phi_2(Y_2)P$
($C(L)=C(R_1)=C(R_2)=C(R')$), which implies that $\phi_1(R_1) \simeq
\phi_2(R_2)$. This concludes the proof.
\end{proof}

\subsection{The iterative $q$-difference Galois group}
In this  section, we will define the iterative $q$-difference Galois
group associated to an iterative $q$-difference module. The way of
describing such a group is the exact translation in the
$q$-difference world of the work of A. Roescheisen (see \cite{Ro})
in the case of iterative differential Galois theory. Until the end
of this section, $(L,\delta_L^*)$ will be an iterative
$q$-difference field with  algebraically closed field of constants
$C$, $(R, \delta_R^*)$  an iterative $q$-difference Picard-Vessiot
ring for the iterative $q$-difference equation $\lbrace
\delta_L^{(k_C)}Y=A_k Y, \ k \in \mathbb{N}\rbrace$ defined over
$L$.
\begin{Not}
Let $S$ be a ring. We denote by $Loc(S)$ its localization by its set
of non-zero divisors.
\end{Not}

\subsubsection{Functorial definition}
First of all, let us remark that, given an algebra $A$ over $C$ and
an iterative $q$-difference ring $(S, \delta_S^*)$, we  define an
iterative $q$-difference operator on $S \otimes_C A$ by setting
$\delta_{S\otimes_C A}^{(k)}(s \otimes f):= \delta_S^{(k)}(s)
\otimes f$ for all $k \in \mathbb{N}$. As in \cite{Ro} Definition $10.4$, we say that
$\delta_S^*$ is \textbf{extended trivially} to $S\otimes_C A$.

\begin{defin}\label{defin:galois}
 Let us define the functor
$$\xymatrix{
\underline{Aut}(R/L): & (Algebras/C) \ar[r] & (Groups), & A
\ar@{|->}[r] & Aut_{ID_q}(R \otimes_C A / L \otimes_C A) }$$ where
$\delta_R^*$ (resp. $\delta_L^*$) is extended trivially to $R
\otimes_C A$ (resp. $L \otimes_C A$).
\end{defin}
In the following, we will show that the functor
$\underline{Aut}(R/L)$ is representable by a certain $C$-algebra of
finite type and hence is an affine group-scheme of finite type over
$C$.

\begin{lemma}\label{lemma:idea}
Let  $R$ be a simple $ID_q$-ring with $C(R)=C$, let $A$ be a
finitely generated $C$-algebra and $R_A:=R\otimes_C A $ with
$ID_q$-structure trivially extended from $R$. Then there is a
bijection
$$\xymatrix{
\mathcal{I}(A) \ar[r] & \mathcal{I}_{ID_q}(R_A)\ar[l],\\
 I \ar@{|->}[r] & R_A(1\otimes_C I)=R \otimes_C I,\\
 J\cap(1\otimes_C A) & J \ar@{|->}[l] }$$
 between the ideals of $A$ and the $ID_q$-ideals of $R_A$.

\end{lemma}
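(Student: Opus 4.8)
The plan is to prove that the two maps are well-defined and mutually inverse. First I would check that the map sending an ideal $I \subset A$ to $R \otimes_C I$ indeed produces an $ID_q$-ideal of $R_A$: since the $ID_q$-structure on $R_A$ is extended trivially from $R$, we have $\delta_{R_A}^{(k)}(r \otimes a) = \delta_R^{(k)}(r) \otimes a$, so $\delta_{R_A}^{(k)}(R \otimes_C I) \subseteq R \otimes_C I$, and similarly it is an ordinary ideal. Conversely, for an $ID_q$-ideal $J \subset R_A$, the set $J \cap (1 \otimes_C A)$, identified with a subset of $A$, is clearly an ideal of $A$. One composite is immediate: starting from $I$, we get $R \otimes_C I$, and intersecting with $1 \otimes_C A$ recovers $I$, because $R$ is faithfully flat over $C$ (indeed $C$ is a field and $R \neq 0$), so $(R \otimes_C I) \cap (1 \otimes_C A) = I$.

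The substance of the proof is the other composite: for an $ID_q$-ideal $J \subset R_A$, one must show $J = R \otimes_C (J \cap (1 \otimes_C A))$, i.e. that $J$ is generated, as an ideal of $R \otimes_C A$, by its intersection with the coefficient algebra. Write $I_0 := J \cap (1 \otimes_C A)$; the inclusion $R \otimes_C I_0 \subseteq J$ is trivial, so everything comes down to the reverse inclusion. The key point is that $R$ is a \emph{simple} $ID_q$-ring with $C(R) = C$, and $C$ is a field. I would argue as follows: fix a $C$-basis $(r_\lambda)_{\lambda}$ of $R$ and write an arbitrary element of $R_A$ as a finite sum $\sum_\lambda r_\lambda \otimes a_\lambda$ with $a_\lambda \in A$. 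For an element $z = \sum_{\lambda} r_\lambda \otimes a_\lambda \in J$ with the fewest nonzero $a_\lambda$, I want to show that already each $1 \otimes a_\lambda$ lies in $J$ (hence in $I_0$), which forces $z \in R \otimes_C I_0$ and completes the argument. After normalizing so that one of the basis vectors appearing is $r_{\lambda_0} = 1$, one applies the operators $\delta_{R_A}^{(k)}$ to $z$: since $\delta_{R_A}^{(k)}(r_\lambda \otimes a_\lambda) = \delta_R^{(k)}(r_\lambda) \otimes a_\lambda$ and $J$ is an $ID_q$-ideal, all these images stay in $J$; because $R$ is $ID_q$-simple and $C(R)=C$, the only way a nonzero element can be annihilated by all $\delta_R^{(k)}$ is to be a nonzero constant, i.e. in $C^\times$. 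This minimality-plus-$ID_q$-simplicity argument is the standard device (used for Picard--Vessiot rings, cf. the proof that a simple difference ring tensored with $A$ has ideals coming from $A$) and lets one conclude that the coefficients $a_\lambda$ are, up to units of $A$, forced to coincide, so that $z$ is an $A$-combination of a single element of $1 \otimes_C A$ lying in $J$.

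More precisely, I would run the minimal-counterexample argument: suppose $z \in J$ is not in $R \otimes_C I_0$, chosen with a minimal number of nonzero terms $a_\lambda$ in its expansion over the $C$-basis; WLOG $r_{\lambda_0} = 1$ occurs. For each $k$, the element $\delta_{R_A}^{(k)}(z) = \sum_\lambda \delta_R^{(k)}(r_\lambda) \otimes a_\lambda$ lies in $J$; expanding $\delta_R^{(k)}(r_\lambda)$ back in the basis $(r_\mu)$ and subtracting a suitable $A$-multiple of $z$ kills the $r_{\lambda_0}$-component and yields an element of $J$ with strictly fewer terms, hence (by minimality) lying in $R \otimes_C I_0$. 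Working out what this says about the coefficients, and using $C(R)=C$ together with the simplicity of $R$ to see that the "correcting" scalars are forced, I would deduce that every $1 \otimes a_\lambda \in J$, so $z \in R \otimes_C I_0$, a contradiction. The main obstacle is precisely getting this descent step clean: one must be careful that the $ID_q$-structure is only semilinear (there are $\sigma_q$-twists in the Leibniz rule), so when manipulating $\delta_{R_A}^{(k)}(z - (1 \otimes a_{\lambda_0})z')$ one should work with the \emph{trivially extended} structure, where the twist acts only through the $R$-factor, keeping the bookkeeping over $A$ purely $A$-linear; this is exactly why the lemma is stated with the trivial extension and why it goes through. Once the descent is established, both composites are identities and the stated bijection follows.
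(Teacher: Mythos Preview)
Your overall architecture is correct and matches the paper: show the two maps are well defined and mutually inverse, with the only nontrivial step being $J \subseteq R\otimes_C(J\cap(1\otimes A))$, handled by a minimal-support argument exploiting $ID_q$-simplicity of $R$. The first composite and the well-definedness are fine.

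However, there is a genuine gap in your execution of the hard direction: you expand elements over a $C$-basis $(r_\lambda)$ of $R$, whereas the argument only works cleanly if you expand over a $C$-basis $(e_i)$ of $A$. The point is that the trivially extended operators satisfy $\delta^{(k)}(\sum_i s_i\otimes e_i)=\sum_i \delta_R^{(k)}(s_i)\otimes e_i$, so the \emph{support in the $A$-basis is preserved}. This is exactly what lets the paper define, for a finite index set $N_0$ and $i_0\in N_0$, the set $Ann_{N_0,i_0}=\{r\in R:\exists\, g=\sum_{i\in N_0}s_i\otimes e_i\in J,\ s_{i_0}=r\}$ and observe it is an $ID_q$-ideal of $R$, hence $0$ or $R$; minimality of $N_0$ then forces the normalized element $g$ to be constant, i.e.\ $g\in J\cap(1\otimes A)$, and induction on the support size finishes.

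In your version, applying $\delta^{(k)}$ to $z=\sum_\lambda r_\lambda\otimes a_\lambda$ and re-expanding $\delta_R^{(k)}(r_\lambda)$ in the basis $(r_\mu)$ can \emph{enlarge} the $R$-support, so the claim ``subtracting a suitable $A$-multiple of $z$ \ldots\ yields an element of $J$ with strictly fewer terms'' is not justified: the $r_{\lambda_0}$-component of $\delta^{(k)}(z)$ is a $C$-linear combination of the $a_j$ with $j\neq\lambda_0$, and there is in general no $b\in A$ with $(1\otimes b)z$ matching it, nor any reason the remaining support shrinks. What \emph{is} true is that $\delta^{(k)}(z)$ lies in $R\otimes_C\mathrm{span}_C(a_\lambda:\lambda\neq\lambda_0)$, i.e.\ its tensor rank drops; but exploiting that amounts precisely to switching to an expansion over a basis of $A$, which is the paper's approach. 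So the fix is simple: run your minimality argument with the roles of $R$ and $A$ swapped in the expansion.
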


\begin{proof}
Obviously, the two maps are well defined, and  we only have to prove
that they are inverse to each other.
\begin{enumerate}
\item We will prove that for $I \in \mathcal{I}(A)$, we have $(R \otimes_C
I) \cap (1\otimes_C A) =I$. It is obvious that $I$ is contained in
the ideal on  the left side. Now let us consider   a $C$-basis
$\lbrace e_i| i \in \tilde{N} \rbrace$ of $I$ ; then $R \otimes_C I$
is a free $R$-module with  basis $\lbrace 1\otimes e_i| i \in
\tilde{N} \rbrace$  and an element $f=\sum_{i \in \tilde{N}} r_i
\otimes e_i \in R \otimes_C I$ is  constant if and only if all the
$r_i$'s are constants, i.e., if $f \in I$.
\item Conversely we have to prove that for $J \in
\mathcal{I}_{ID_q}(R_A)$, we have $R \otimes_C J\cap(1\otimes_C
A)=J$. It is clear that $J$ contains the ideal on the left side.
Now, let $\lbrace e_i| i \in N \rbrace$ a $C$-basis of $A$, where
$N$ denotes an index set. Then, $\lbrace 1 \otimes e_i| i \in N
\rbrace$ is also a basis
for the free $R$-module $R_A$.\\
For any subset $N_0$ of $N$ and $i_0 \in N_0$, let $Ann_{N_0,i_0}$
be the ideal   of all $r \in R$ such that there exists an element
$g=\sum_{i \in N_0} s_i \otimes e_i \in J$ with $s_{i_0}=r$. Since
the iterative $q$-difference operator of $R_A$ acts trivially on $A$
and $J$ is an $ID_q$-ideal, it is clear that $Ann_{N_0,i_0}$ is an
$ID_q$-ideal. Because $R$ is
simple, $Ann_{N_0,i_0}$ is equal to $(0)$ or $R$.\\
Now, let $N_0 \subset N$ be  minimal for  the property that
$Ann_{N_0,i_0}\neq (0)$ for at least one index $i_0 \in N_0$
(minimal in the lattice of subsets). So there exists $g=\sum_{i \in
N_0} s_i \otimes e_i \in J$ with $s_{i_0}=1$ and by minimality of
$N_0$ we conclude that for all $k \in \mathbb{N}^*$,
$\delta^{(k)}(g)=\sum_{i \in N_0, i \neq i_0} \delta_R^{(k)}(s_i)
\otimes e_i=0$. This implies $g \in J\cap(1\otimes_C A)$. Now let
$g=\sum_{i \in N} s_i \otimes e_i \in J$ be an arbitrary element and
denote by $N_1$ the set of indices $i$ with $s_i \neq 0$. It follows
from the definition that $Ann_{N_1,i} \neq (0)$ for all $i \in N_1$.
Hence there exists $N_0 \subset N_1$ minimal as above, $i_0 \in N_0$
and $f=\sum_{i \in N_0} r_i \otimes e_i \in J\cap(1\otimes_C A)$
with $r_{i_0}=1$. By induction on the cardinality of $N_1$, we may
assume that $g-s_{i_0}f \in R \otimes_C J\cap(1\otimes_C A) \subset
J$. Therefore $g=g-s_{i_0}f + s_{i_0}f \in  R \otimes_C
J\cap(1\otimes_C A)$ and hence $  R \otimes_C J\cap(1\otimes_C
A)=J$. \end{enumerate}
\end{proof}

\begin{prop}\label{prop:carac}
Let $R/L$ be an  iterative $q$-difference Picard-Vessiot ring
associated to an iterative $q$-difference equation and let $T$ be a
$ID_q$-simple ring containing $L$ with $C(T)=C=C(L)$ such that there
exists a  fundamental matrix of solutions $Y \in Gl_n(T)$. Then
there exists a finitely generated $C$-algebra $U$ (with trivial
$ID_q$-structure) and a $T$-linear $ID_q$-isomorphism
$$\xymatrix{ \gamma_T :T \otimes_L R \ar[r] & T \otimes_C U.}$$
where the $ID_q$-structure is extended trivially to  $T \otimes_C
U$.\\
 (Actually $U$ is isomorphic to the ring of constants of $T
\otimes_L R$.)
\end{prop}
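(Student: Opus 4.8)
The plan is to compare, inside $N:=T\otimes_L R$, the fundamental solution matrix coming from $T$ with the one coming from $R$; their ``quotient'' will be a matrix with constant entries, and the $C$-algebra it generates will play the role of $U$.

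First I would record that, by the definition of an $IPV_q$-ring (Definition \ref{defin:pv}), $R$ is generated over $L$ by the entries of a fundamental solution matrix $Z\in Gl_n(R)$ and by $\det(Z)^{-1}$, and that $C(R)=C$. Endow $N=T\otimes_L R$ with the $ID_q$-structure constructed exactly as in Proposition \ref{prop:tens} (this is legitimate in the same way that $R_1\otimes_L R_2$ was used in the proof of Theorem \ref{theorem:pv}); then $\delta_N^{(k)}(t\otimes 1)=\delta_T^{(k)}(t)\otimes 1$ and $\delta_N^{(k)}(1\otimes r)=1\otimes\delta_R^{(k)}(r)$, so both $Y\otimes 1$ and $1\otimes Z$ belong to $Gl_n(N)$ and are fundamental solution matrices for $\delta_N^{(k_C)}(\mathbf{y})=A_k\mathbf{y}$. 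Applying Proposition \ref{prop:matsol} to the $ID_q$-ring $N$, the matrix
$$P:=(Y\otimes 1)^{-1}(1\otimes Z)\in Gl_n(N)$$
has all its entries in $C(N)$.

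Next, set $U:=C[P_{ij},\det(P)^{-1}]\subseteq C(N)$; this is a finitely generated $C$-algebra, all of whose elements are constants of $N$. Let $\mu\colon T\otimes_C U\to N$ be the $T$-algebra map $t\otimes u\mapsto (t\otimes 1)\,u$, where $T\otimes_C U$ carries the $ID_q$-structure trivially extended from $T$. Since every element of $U$ is a constant of $N$, a one-line computation shows $\mu$ is an $ID_q$-morphism. It is surjective: writing $1\otimes Z=(Y\otimes 1)P$ with $Y\otimes 1$ invertible over the image of $T$ in $N$ and $P\in Gl_n(U)$, the entries of $1\otimes Z$ and $\det(1\otimes Z)^{-1}$ lie in $\mathrm{Im}(\mu)$, and these generate $N$ over the image of $T$. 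For injectivity, observe that $\ker\mu$ is an $ID_q$-ideal of $T\otimes_C U$; as $T$ is $ID_q$-simple with $C(T)=C$ and $U$ is finitely generated over $C$, Lemma \ref{lemma:idea} yields $\ker\mu=(T\otimes_C U)(1\otimes_C I)$ with $I=\ker\mu\cap(1\otimes_C U)$, while the restriction of $\mu$ to $1\otimes_C U$ is the inclusion $U\hookrightarrow N$ and is therefore injective; hence $I=(0)$ and $\ker\mu=(0)$. Thus $\mu$ is a $T$-linear $ID_q$-isomorphism, and $\gamma_T:=\mu^{-1}$ does the job.

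Finally, since $\gamma_T$ is an $ID_q$-isomorphism it maps $C(N)$ isomorphically onto $C(T\otimes_C U)$, and the latter equals $C(T)\otimes_C U=U$ because the structure on $T\otimes_C U$ is trivially extended from $T$; this gives $U\cong C(T\otimes_L R)$. The delicate point is the injectivity of $\mu$: it is entirely carried by Lemma \ref{lemma:idea}, so the work lies in checking that its hypotheses genuinely hold here — $T$ being $ID_q$-simple with constant ring exactly $C$ (part of the hypothesis on $T$) and $U$ being a finitely generated $C$-algebra (true by construction) — together with the harmless observation that $\mu$ intertwines the $ID_q$-operators precisely because $U$ consists of constants.
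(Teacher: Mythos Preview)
Your proof is correct and follows essentially the same approach as the paper: both arguments compare the two fundamental matrices $Y\otimes 1$ and $1\otimes Z$ inside $T\otimes_L R$, use that their quotient $P=(Y\otimes 1)^{-1}(1\otimes Z)$ is constant, and then invoke Lemma \ref{lemma:idea} at the key step. The only difference is packaging: the paper first writes down the isomorphism $T\otimes_L L[X_{i,j},\det(X)^{-1}]\to T\otimes_C C[Z_{i,j},\det(Z)^{-1}]$, $X\mapsto (Y\otimes 1)(1\otimes Z)$, at the level of polynomial rings and then transports the defining ideal $P$ of $R$ through it (using Lemma \ref{lemma:idea} to see the image is $T\otimes I$), whereas you work directly inside $N=T\otimes_L R$ and build $U$ as the explicit subring $C[P_{ij},\det(P)^{-1}]$ of $C(N)$, using Lemma \ref{lemma:idea} instead for injectivity of $\mu$.
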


\begin{proof}
$R$ is obtained as a quotient of $L[X_{i,j},(det(X))^{-1}]$ with
iterative $q$-difference operator given by $\delta^{(k)}(X)=A_k X \
\mbox{for all} \ k \in \mathbb{N}$  by a maximal $ID_q$-ideal $P
\subset L[X_{i,j},(det(X))^{-1}]$. We then define a $T$-linear
homomorphism
$$\xymatrix{
\gamma_T : T \otimes_L L[X_{i,j},det(X)^{-1}] \ar[r] & T \otimes_C
C[Z_{i,j},det(Z)^{-1}]}$$ by  $X_{i,j}\mapsto \sum_{k=1}^n Y_{i,k}
\otimes Z_{k,j}$. The morphism $\gamma_T$ is indeed a $T$-linear
isomorphism and if we extend the $ID_q$-structure trivially to
$L[Z_{i,j},(det(Z))^{-1}]$, $\gamma_T$  induces an
$ID_q$-isomorphism.\\
By the previous lemma, the $ID_q$-ideal $\gamma_T(T \otimes P)$ is
equal to $T \otimes I$ for an ideal $I \subset
C[Z_{i,j},(det(Z))^{-1}]$. So for $U:= C[Z_{i,j},(det(Z))^{-1}]/I$,
$\gamma_T$ induces an $ID_q$-isomorphism
$$\xymatrix{ \gamma_T : T\otimes_L R \ar[r] & T \otimes_C U.}$$
\end{proof}

\begin{theorem}\label{theorem:fix}
Let $R/L$ be an iterative $q$-difference  Picard-Vessiot ring. Then
 the group functor $\underline{Aut}(R/L)$ is representable by the
finitely generated  $C$-algebra $U=C(R\otimes_L R)$, i.e.,
$\underline{Aut}(R/L)$ is an affine group-scheme of finite type over
$C$.

\end{theorem}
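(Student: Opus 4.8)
The plan is to exhibit a natural isomorphism of set‑valued functors $\mathrm{Hom}_{C\text{-}alg}(U,-)\xrightarrow{\ \sim\ }\underline{Aut}(R/L)$, which then upgrades to an isomorphism of group functors and identifies $\underline{Aut}(R/L)$ with the affine group‑scheme $\mathrm{Spec}(U)$. First I would record that $U=C(R\otimes_L R)$ really is a finitely generated $C$‑algebra: apply Proposition \ref{prop:carac} with $T:=R$, which is legitimate since $R$ is $ID_q$‑simple, contains $L$, has $C(R)=C(L)=C$ and carries a fundamental solution matrix $Y\in Gl_n(R)$ by Definition \ref{defin:pv}. This yields a finitely generated $C$‑algebra — isomorphic, by the parenthetical remark in Proposition \ref{prop:carac}, to $C(R\otimes_L R)=U$ — together with an $R$‑linear $ID_q$‑isomorphism $\gamma_R\colon R\otimes_L R\xrightarrow{\ \sim\ }R\otimes_C U$, where $R\otimes_C U$ carries the trivially extended $ID_q$‑structure.

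The next, structural, step is to observe that with the operator of Proposition \ref{prop:tens} the pair $\rho_1,\rho_2\colon R\to R\otimes_L R$ makes $R\otimes_L R$ the coproduct of $R$ with itself in the category of commutative $ID_q$‑$L$‑algebras: for any commutative $ID_q$‑$L$‑algebra $S$ and $ID_q$‑$L$‑homomorphisms $f,g\colon R\to S$, the ring homomorphism $h(r_1\otimes r_2)=f(r_1)g(r_2)$ is the unique $ID_q$‑$L$‑homomorphism with $h\rho_1=f$, $h\rho_2=g$, the verification that $h$ commutes with the $\delta^{(k)}$ being a one‑line computation from the formula in Proposition \ref{prop:tens} together with the facts that $f,g$ commute with $\sigma_q$ and with all $\delta^{(k)}$. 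Composing $\rho_2$ with $\gamma_R$ gives the ``universal automorphism'' $\mu:=\gamma_R\circ\rho_2\colon R\to R\otimes_C U$, an $L$‑algebra $ID_q$‑homomorphism, while $\gamma_R\circ\rho_1$ is the standard inclusion $R\hookrightarrow R\otimes_C U$; since $R\otimes_C U$ is generated as a ring by $R\otimes 1$ and $1\otimes U$, it follows that $R\otimes_L R$ is generated by $\rho_1(R)$ together with its ring of constants $U$.

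I would then set up the two natural transformations. For $\phi\in\mathrm{Hom}_{C\text{-}alg}(U,A)$, let $\sigma_\phi$ be the $A$‑linear extension of $(\mathrm{id}_R\otimes\phi)\circ\mu\colon R\to R\otimes_C A$; one checks directly that $\sigma_\phi$ is an $ID_q$‑endomorphism of $R\otimes_C A$ fixing $L\otimes_C A$ pointwise. Conversely, given $\sigma\in\underline{Aut}(R/L)(A)=\mathrm{Aut}_{ID_q}(R\otimes_C A/L\otimes_C A)$, the pair $(\iota,\sigma\circ\iota)$ with $\iota\colon R\to R\otimes_C A$, $r\mapsto r\otimes 1$, induces by the coproduct property a unique $ID_q$‑$L$‑homomorphism $\theta_\sigma\colon R\otimes_L R\to R\otimes_C A$; since $\theta_\sigma$ carries constants to constants and $C(R\otimes_C A)=A$ (the $C$‑basis argument already used in the proof of Lemma \ref{lemma:idea}), the restriction $\phi_\sigma:=\theta_\sigma|_U\colon U\to A$ is a $C$‑algebra homomorphism. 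Mutual inverseness and naturality in $A$ then reduce to checking the identity $\theta_\sigma=(\mathrm{id}_R\otimes\phi_\sigma)\circ\gamma_R$ on the two kinds of generators $\rho_1(R)$ and $U$ of $R\otimes_L R$, together with the analogous check $\phi_{\sigma_\phi}=\phi$, using that $\gamma_R$ restricts on constants to the canonical identification of $C(R\otimes_L R)$ with $C(R\otimes_C U)=U$.

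The step I expect to be the main obstacle — and the one requiring genuine input rather than formal manipulation — is checking that $\sigma_\phi$, produced a priori only as an $ID_q$‑endomorphism, actually lies in $\underline{Aut}(R/L)(A)$, i.e.\ is bijective; equivalently, that every $ID_q$‑endomorphism $\sigma$ of $R\otimes_C A$ over $L\otimes_C A$ is an automorphism. For surjectivity I would argue that $\sigma(Y\otimes 1)\in Gl_n(R\otimes_C A)$ is again a fundamental solution matrix for $ID_qE(M)$ (because $\sigma$ commutes with the $\delta^{(k_C)}$ and fixes the matrices $A_k\in L\otimes_C A$), whence $\sigma(Y\otimes 1)=(Y\otimes 1)M$ for some $M\in Gl_n(C(R\otimes_C A))=Gl_n(A)$ by Proposition \ref{prop:matsol}; since then $Y\otimes 1=\sigma(Y\otimes 1)M^{-1}$ lies in the image and, by Definition \ref{defin:pv}, the entries of $Y$ together with $\det(Y)^{-1}$ generate $R$ over $L$, $\sigma$ is surjective. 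For injectivity I would invoke Lemma \ref{lemma:idea}: $\ker\sigma=R\otimes_C I$ for an ideal $I\subseteq A$, and $I=\ker\sigma\cap(1\otimes A)=0$ because $\sigma$ is the identity on $1\otimes A\subseteq L\otimes_C A$. Finally, $U$ being finitely generated over $C$ makes $\underline{Aut}(R/L)=\mathrm{Spec}(U)$ an affine group‑scheme of finite type over $C$, the group structure being the one already carried by the functor in Definition \ref{defin:galois}.
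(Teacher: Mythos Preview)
Your proposal is correct and follows essentially the same approach as the paper: both rest on the isomorphism $\gamma_R$ of Proposition~\ref{prop:carac}, both show that every $L_A$-linear $ID_q$-endomorphism of $R_A$ is an automorphism via the identical argument (injectivity through Lemma~\ref{lemma:idea}, surjectivity through the fundamental solution matrix and Proposition~\ref{prop:matsol}), and your explicit coproduct construction of $\sigma_\phi$ and $\phi_\sigma$ is exactly what the paper compresses into the chain $\mathrm{Aut}^{ID_q}(R_A/L_A)=\mathrm{Hom}_{L_A}^{ID_q}(R_A,R_A)\simeq\mathrm{Hom}_{R_A}^{ID_q}(R_A\otimes_L R,R_A)\simeq\mathrm{Hom}_{R_A}^{ID_q}(R_A\otimes_C U,R_A)\simeq\mathrm{Hom}_C(U,A)$. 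Your version unpacks the adjunctions where the paper leaves them implicit, but the substance is the same.
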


\begin{defin}

We call the affine group scheme $\underline{Aut}(R/L)$ the
\textbf{Galois group scheme} $\underline{Gal}(R/L)$   of $R$ over
$L$.
\end{defin}

\begin{proof}
\textbf{Proof of theorem \ref{theorem:fix}}\\
First we will show that for every $C$-algebra $A$  any $L_A$-linear
$ID_q$-homomorphism \\
$\xymatrix{f:R_A \ar[r] & R_A}$ is an isomorphism. The kernel of
such a homomorphism $f$ is an $ID_q$-ideal of $R_A$. So by Lemma
\ref{lemma:idea}, it is generated by constants, i.e., elements in
$1\otimes A$. But $f$ is $A$-linear so its kernel is zero. If $X \in
Gl_n(R)$ is a fundamental solution matrix, then $f(X)\in Gl_n(R_A)$
is also a fundamental solution matrix and so there exists a matrix
$D\in Gl_n(C_{R_A})=Gl_n(A)$ such that $X=f(X)D=f(XD)$. Hence
$X_{i,j},det(X)^{-1} \in Im(f)$
 and since $R$ is generated
by $X_{i,j},det(X)^{-1}$  over $L$, the homomorphism $f$ is also surjective.\\
Using the isomorphism $\gamma:=\gamma_R$ of Proposition
\ref{prop:carac}, for a $C$-algebra $A$, we obtain a chain of
isomorphisms
$$\begin{array}{ccc}
Aut^{ID_q}(R_A/L_A) &=  Hom_{L_A}^{ID_q}(R_A,R_A)& \simeq  Hom_{R_A}^{ID_q}(R_A \otimes_L R,R_A)\\
 &   & \\
  \simeq  Hom_{R_A}^{ID_q}(R_A \otimes_C U,R_A)& \simeq
Hom_{C}^{ID_q}(U,R_A) & \simeq  Hom_{C}(U,A).
\end{array}$$
Hence $U$  represents the functor $\underline{Aut}(R/L)$.
\end{proof}

\begin{remark}\label{remark: action}
By taking a closer look on the isomorphisms in the previous proof,
we see that the universal object $id_U \in Hom_C(U,U)$ corresponds to
the $ID_q$-automorphism $\xymatrix{\rho \otimes id_U: R\otimes_C
U \ar[r] & R\otimes_C U}$ where $\xymatrix{\rho=\gamma_R \circ(1
\otimes id_R): R \ar[r] & R \otimes_L R \ar[r] & R \otimes_C U}$.
Therefore the action of $g \in \underline{Aut}(R/L)(A)=Hom_C(U,A)$ on $r \in R$ is given by
$$g.r=(id_R \otimes g)(\gamma_R(1 \otimes r)) \in R \otimes_C A.$$
\end{remark}

\begin{coro}\label{coro:tor}
Let $R/L$ be an iterative $q$-difference  Picard-Vessiot ring over
$L$ and $\mathcal{G}:=\underline{Gal}(R/L)$ the Galois group scheme
of $R$. Then $Spec(R)$ is a $\mathcal{G}_L$-torsor.
\end{coro}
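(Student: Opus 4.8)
The plan is to show that the natural map $R \otimes_L R \to R \otimes_C U$ (coming from Proposition \ref{prop:carac} with $T = R$) realizes $\mathrm{Spec}(R)$ as a torsor under $\mathcal{G}_L = \mathcal{G} \times_C L$. Recall that $\mathcal{G} = \underline{Aut}(R/L)$ is represented by $U = C(R \otimes_L R)$ (Theorem \ref{theorem:fix}), and that $\gamma_R : R \otimes_L R \xrightarrow{\sim} R \otimes_C U$ is an $R$-linear $ID_q$-isomorphism. The torsor condition for the $\mathcal{G}_L$-action on $X := \mathrm{Spec}(R)$ amounts to: $X$ is faithfully flat over $L$ (equivalently nonempty, since everything in sight is affine over a field) and the canonical map $X \times_L \mathcal{G}_L \to X \times_L X$, $(x,g) \mapsto (x, xg)$, is an isomorphism. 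On coordinate rings the latter is exactly the assertion that $R \otimes_L R \to R \otimes_L (L \otimes_C U) = R \otimes_C U$ given by $r_1 \otimes r_2 \mapsto (r_1 \otimes 1)\cdot \rho(r_2)$ is an isomorphism — and this is precisely $\gamma_R$ by Remark \ref{remark: action}.

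First I would spell out the $\mathcal{G}_L$-action on $\mathrm{Spec}(R)$ concretely: by Remark \ref{remark: action}, $g \in \mathcal{G}(A) = \mathrm{Hom}_C(U,A)$ acts on $r \in R$ by $g.r = (\mathrm{id}_R \otimes g)(\gamma_R(1 \otimes r))$, which dualizes to the coaction $\mu : R \to R \otimes_C U$, $\mu = \gamma_R \circ (1 \otimes \mathrm{id}_R)$, i.e. $\rho$ in the notation of that remark. One checks that $\mu$ is a ring homomorphism making $R$ a $U$-comodule algebra over $L$ (the comodule axioms follow from $\gamma_R$ being a morphism of $ID_q$-rings together with the Hopf algebra structure on $U$ induced by group multiplication in $\underline{Aut}(R/L)$; alternatively this is formal from the functorial description). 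Then the map defining the torsor structure, $R \otimes_L R \to R \otimes_C U$, $a \otimes b \mapsto (a \otimes 1)\mu(b)$, is visibly $R$-linear, sends $1 \otimes b$ to $\mu(b) = \gamma_R(1 \otimes b)$, hence agrees with $\gamma_R$ on the generators $1 \otimes R$ of the left-hand side as an $R$-algebra; by $R$-linearity it equals $\gamma_R$ everywhere. Since $\gamma_R$ is an isomorphism by Proposition \ref{prop:carac}, so is this map.

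It remains to verify faithful flatness of $R$ over $L$. Here I would argue: $R$ is a finitely generated $L$-algebra which is an integral domain after localization (by Proposition \ref{prop:const}, $\mathrm{Loc}(R)$ has the same constants as $L$, and $R$ is reduced; in fact $R$ is a domain because a maximal $ID_q$-ideal in the polynomial construction of Theorem \ref{theorem:pv} need not be prime a priori, but one shows $R$ is a domain, or more simply works with each component). Flatness over the field $L$ is automatic. Faithful flatness then follows since $R \neq 0$ — indeed $R$ contains $L$. So $\mathrm{Spec}(R) \to \mathrm{Spec}(L)$ is faithfully flat, and together with the isomorphism $X \times_L \mathcal{G}_L \xrightarrow{\sim} X \times_L X$ just established, this is exactly the definition of a $\mathcal{G}_L$-torsor (in the fpqc, hence fppf since finite type, topology over the point $\mathrm{Spec}(L)$).

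The main obstacle I expect is bookkeeping rather than anything deep: one must carefully check that the abstract $R$-linear $ID_q$-isomorphism $\gamma_R$ of Proposition \ref{prop:carac} genuinely coincides with the torsor-defining map $a \otimes b \mapsto (a\otimes 1)\cdot(\text{coaction on } b)$, and that the coaction $\mu$ really does satisfy the comodule-algebra axioms (coassociativity and counit), so that "$\mathcal{G}_L$-torsor" is meaningful. Both of these are consequences of tracing through the chain of isomorphisms $Aut^{ID_q}(R_A/L_A) \simeq \mathrm{Hom}_{R_A}^{ID_q}(R_A \otimes_L R, R_A) \simeq \mathrm{Hom}_{R_A}^{ID_q}(R_A \otimes_C U, R_A) \simeq \mathrm{Hom}_C(U,A)$ used in the proof of Theorem \ref{theorem:fix}, but it does require some care because of the trivial-extension conventions for the $ID_q$-structure on tensor products over $C$. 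Once that identification is in hand, the result is immediate. (This is the exact translation of the classical fact, cf. \cite{VPS2}, that a Picard-Vessiot ring is a torsor under its Galois group scheme, and of \cite{Ro} in the iterative differential setting.)
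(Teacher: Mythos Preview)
Your proposal is correct and follows essentially the same approach as the paper: both use the $R$-linear $ID_q$-isomorphism $\gamma_R$ from Proposition~\ref{prop:carac} together with Remark~\ref{remark: action} to identify $\gamma_R$ with the torsor-defining map, so that $\mathrm{Spec}(\gamma_R)$ yields the required isomorphism $\mathrm{Spec}(R)\times_L \mathcal{G}_L \xrightarrow{\sim} \mathrm{Spec}(R)\times_L \mathrm{Spec}(R)$. The paper's proof is more terse (it does not spell out faithful flatness or the comodule-algebra axioms), but the substance is identical; your parenthetical worry about $R$ being a domain is unnecessary since faithful flatness over the field $L$ only needs $R\neq 0$.
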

\begin{proof}
The isomorphism $\gamma:=\gamma_R$ of proposition \ref{prop:carac},
determines an isomorphism of schemes
$$\xymatrix{Spec(\gamma):Spec(R)\times_L \mathcal{G}_L =Spec(R) \times_C
\mathcal{G} \ar[r] & Spec(R) \times_L Spec(R).}$$ By  the previous
remark and $R$-linearity of $\gamma$, the composition of
$Spec(\gamma)$
 with the projection on the second factor  gives the action of $\mathcal{G}_L$ on
 $Spec(R)$ and the composition with the  projection on the first factor
 equals
  the map  $Spec(R) \times_L \mathcal{G}_L \rightarrow
  Spec(R)$. In other words, $Spec(R)$ is a $\mathcal{G}_L$-torsor.
\end{proof}
\subsubsection{Galois correspondence}

\begin{prop}[Structure of the iterative $q$-difference
ring]\label{prop:prod}
 Let $R/L$ be an iterative $q$-difference
Picard-Vessiot ring over $L$. Then, there exist idempotents
$e_1,..,e_s \in R$ such that
\begin{enumerate}
\item $R=R_1 \oplus...\oplus R_s$ where $R_i=e_iR$ and is a domain,
\item The direct sum $E$ of the fraction fields of the $R_i$'s is an
iterative $q$-difference ring. $E$ is  called \textit{the total
iterative $q$-difference Picard-Vessiot extension of $R$}.
\end{enumerate}
\end{prop}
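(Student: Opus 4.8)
The plan is to extract the idempotents $e_i$ from the torsor structure of $Spec(R)$ established in Corollary \ref{coro:tor}, and then to transport the iterative $q$-difference operator to the ring of fractions $E$ by Proposition \ref{prop:ext}.

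First I would observe that $R$ is a finitely generated $L$-algebra (Definition \ref{defin:pv}), hence Noetherian, and that it is reduced by Proposition \ref{prop:const}. Consequently $R$ has only finitely many minimal prime ideals $\mathfrak p_1,\dots,\mathfrak p_s$, and $\mathfrak p_1\cap\cdots\cap\mathfrak p_s=(0)$. So it suffices to prove that the $\mathfrak p_i$ are pairwise comaximal: the Chinese Remainder Theorem then yields an isomorphism $R\cong\prod_{i=1}^{s}R/\mathfrak p_i$, the elements $e_i$ corresponding to the standard idempotents of this product lie in $R$, and $R_i:=e_iR\cong R/\mathfrak p_i$ is a domain, which is assertion~1. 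Assertion~2 is then immediate: by the remark following Proposition \ref{prop:ext} the set of non-zero divisors of $R$ is a multiplicatively closed subset stable under $\sigma_q$, so Proposition \ref{prop:ext} endows $E:=Loc(R)$ with a unique iterative $q$-difference operator extending $\delta_R^{*}$, making $(E,\delta_E^{*})$ an $ID_q$-ring; and localizing $R=\bigoplus_i R_i$ at the non-zero divisors gives $E=\bigoplus_i Loc(R_i)$, the direct sum of the fraction fields of the $R_i$.

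For the comaximality, the key input is Corollary \ref{coro:tor}: $Spec(R)$ is a torsor under $\mathcal G_L$ with $\mathcal G=\underline{Gal}(R/L)$. Passing to an algebraic closure $\bar L$ of $L$, the scheme $Spec(R)\times_L\bar L$ is a torsor under $\mathcal G_{\bar L}=\mathcal G\times_C\bar L$ over the algebraically closed field $\bar L$; being a non-empty affine $\bar L$-scheme of finite type it has a $\bar L$-rational point, and a torsor admitting a rational point is trivial, so $Spec(R)\times_L\bar L\cong\mathcal G_{\bar L}$. Now a group scheme of finite type over a field is, topologically, the disjoint union of its finitely many connected components, each of which is irreducible because a connected group scheme over a field is irreducible; hence $Spec(R)\times_L\bar L$ is the disjoint union of its irreducible components. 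A faithfully flat descent argument along the flat, spectrum-surjective map $R\to R\otimes_L\bar L$ (going-down sends minimal primes of $R\otimes_L\bar L$ onto minimal primes of $R$ and produces, below any common specialization of two components of $Spec(R)$, a common specialization of two distinct components of $Spec(R)\times_L\bar L$) then forces distinct irreducible components of $Spec(R)$ to be disjoint, i.e. the $\mathfrak p_i$ are pairwise comaximal. As a corroboration — which moreover recovers the fact that $\sigma_q$ permutes the $R_i$ transitively — one can argue directly on $E$: it is a finite product of fields, $\sigma_q$ permutes its idempotents, and an induction using the Leibniz rule of Definition \ref{defin:qder} shows that every $\sigma_q$-invariant idempotent $\varepsilon$ of $E$ satisfies $\delta_E^{(k)}(\varepsilon)=0$ for all $k\ge1$, hence lies in $C(E)=C(L)=C$ by Proposition \ref{prop:const} and so equals $0$ or $1$.

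The delicate point is the comaximality step, and the difficulty is concentrated in positive characteristic. In characteristic zero it can be shortcut, for instance because $\mathcal G$ is then smooth, so $Spec(R)$ is smooth, hence $R$ is a normal Noetherian ring and thus a finite product of normal domains. In characteristic $p$, however, $\mathcal G$ need not be reduced and $L=C(t)$ is imperfect, so $R$ need not be geometrically reduced over $L$; one must then genuinely use that the torsor becomes \emph{trivial} over $\bar L$, together with the scheme-theoretic facts that torsors under affine group schemes over an algebraically closed field are trivial and that a finite-type group scheme over a field is topologically a disjoint union of irreducible components, in order to descend the decomposition to $R$. The remaining verifications are routine.
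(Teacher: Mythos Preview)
Your proof is correct and follows the same strategy as the paper's: both use the $\mathcal{G}_L$-torsor structure of $Spec(R)$ from Corollary~\ref{coro:tor}, trivialize after base change to $\overline{L}$, observe that the irreducible components of a group scheme over a field are pairwise disjoint, and descend this to $R$; part~2 is handled identically via Proposition~\ref{prop:ext} applied to the set of non-zero divisors. Your treatment is in fact more careful in positive characteristic than the paper's, which invokes ``smoothness'' of the torsor (citing \cite{Dem}) where you argue directly with the topological fact that a connected group scheme of finite type over a field is irreducible---a statement that remains valid even when $\mathcal{G}$ is non-reduced.
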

\begin{proof}
Here, we give a partial analogue of Corollary $1.16$ of \cite{VPS2}.
We will thus follow the proof of Singer, van der Put. But because we
work in any characteristic, it will be necessary to
appeal to the book of Demazure, Gabriel (\cite{Dem}) to assure smoothness.\\
Let $\overline{L}$ be an algebraic closure of $L$ and
$R=O(\mathcal{Z})$ for some $\mathcal{G}_L$-torsor $\mathcal{Z}$.
Since $\mathcal{G}_L(\overline{L})$ acts transitively on
$\mathcal{Z}(\overline{L})$, this latter algebraic subset must be
smooth (\cite{Dem} $4.2$). Therefore the $L$-irreducible components
$\mathcal{Z}_1,...,\mathcal{Z}_s$ must be disjoint. Thus
$O(\mathcal{Z})$ is equal to the product of the integral domains
$R_i=O(\mathcal{Z}_i)$. Now let us consider the set $S$  of non zero
divisors in $R$. It is a multiplicatively closed set which does not
contain $0$, stable under the action of $\sigma_q$. By Proposition
\ref{prop:ext}, the ring $RS^{-1}$ is endowed with an iterative
$q$-difference structure and it is obvious that
$RS^{-1}=\bigoplus_{i=1}^s Frac(R_i)$ where $Frac(R_i)$ denotes the
fraction field of $R_i$.
\end{proof}

The next proposition shows that to be  a torsor for an $ID_q$-simple
ring means, roughly speaking, to be an iterative $q$-difference
Picard-Vessiot ring.

\begin{prop}\label{prop:caracpi} Let  $R/L$ be  a simple
$ID_q$-ring with algebraically closed field of constants $C(R)=C$.
Further let $\mathcal{G} \subset Gl_{n,C}$ be an affine group scheme
over $C$. Assume that $Spec(R)$ is a $\mathcal{G}_L$-torsor such
that the corresponding isomorphism $\gamma : R\otimes_L R
\rightarrow R \otimes_C C[\mathcal{G}]$ is an $ID_q$-isomorphism.
Then $R$ is an iterative $q$-difference Picard-Vessiot ring over
$L$.
 \end{prop}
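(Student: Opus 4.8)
The plan is to verify directly the four conditions defining an iterative $q$-difference Picard--Vessiot ring (Definition~\ref{defin:pv}). Condition~(1) is among the hypotheses, and condition~(4) is immediate: by the standing assumptions $C(L)=C$, while $C(R)=C$ by hypothesis, so $C(R)=C(L)$. It therefore remains to exhibit an $IDM_q$-module $M$ over $L$ with an associated equation $ID_qE(M)$, together with a fundamental solution matrix $Y\in Gl_n(R)$ for it such that $R=L[Y_{i,j},\det(Y)^{-1}]$.

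To produce $Y$, I would exploit the embedding $\mathcal{G}\subset Gl_{n,C}$ given with the data. Forming the contracted product $\mathcal{P}:=Spec(R)\wedge^{\mathcal{G}_L}Gl_{n,L}$ (with $\mathcal{G}$ acting on $Gl_{n,L}$ by left translation through $\mathcal{G}\subset Gl_n$) yields a right $Gl_{n,L}$-torsor over the field $L$, hence a trivial one; composing the canonical $\mathcal{G}_L$-equivariant morphism $Spec(R)\to\mathcal{P}$ with a trivialization gives a $\mathcal{G}_L$-equivariant closed immersion $Spec(R)\hookrightarrow Gl_{n,L}$, that is, a surjection of $L$-algebras $L[Z_{i,j},\det(Z)^{-1}]\twoheadrightarrow R$ compatible with the coactions of $C[\mathcal{G}]$. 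Taking $Y\in M_n(R)$ to be the image of the generic matrix $(Z_{i,j})$, its determinant is a unit so $Y\in Gl_n(R)$, and $R=L[Y_{i,j},\det(Y)^{-1}]$ because the map is onto. Moreover, writing $\overline{Z}\in Gl_n(C[\mathcal{G}])$ for the universal matrix and $\rho:=\gamma\circ(r\mapsto 1\otimes r):R\to R\otimes_C C[\mathcal{G}]$ for the comodule map of the torsor, the compatibility with the coactions gives $\rho(Y_{i,j})=\sum_k Y_{i,k}\otimes\overline{Z}_{k,j}$.

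The heart of the argument — and the only place where the hypothesis is used essentially — is that $\rho$ is an $ID_q$-morphism: the map $r\mapsto 1\otimes r$ is an $ID_q$-morphism of $R$ into $R\otimes_L R$ with its tensor $ID_q$-structure (Proposition~\ref{prop:tens}), while $\gamma$ is by hypothesis an $ID_q$-isomorphism onto $R\otimes_C C[\mathcal{G}]$ with the trivially extended structure $\delta^{(k)}(r\otimes c)=\delta_R^{(k)}(r)\otimes c$. Applying $\delta_R^{(k)}$ to the coaction formula and using that $\rho$ is a ring homomorphism with $\rho(Y)$ invertible, one computes $\rho\bigl(\delta_R^{(k)}(Y)\,Y^{-1}\bigr)=\delta_R^{(k)}(Y)\,Y^{-1}\otimes 1$ entrywise. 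Since a torsor is faithfully flat over its base, $\{r\in R:\rho(r)=r\otimes 1\}=L$ by descent along $Spec(R)\to Spec(L)$ (using $\gamma(r\otimes1)=r\otimes1$); hence $A_k:=\delta_R^{(k)}(Y)\,Y^{-1}\in M_n(L)$ for all $k\in\mathbb{N}$. In particular $\sigma_q(Y)Y^{-1}=(q-1)t\,A_1+I\in Gl_n(L)$, and $\delta_R^{(k_C)}(Y)=A_kY$ with $A_k\in M_n(L)$. Finally, transporting the trivial $ID_q$-module structure of $R^n$ along the $R$-automorphism $Y$ produces an $ID_q$-module over $R$ which restricts to an $ID_q$-module $M$ on the $L$-subspace $L^n$: the restriction is well defined since all of its structure constants lie in $L$, being assembled by $\sigma_q$ and matrix multiplication from the $A_k\in M_n(L)$ and from the matrices $Y\sigma_q^m(Y^{-1})\in Gl_n(L)$ (indeed $Y\sigma_q^m(Y^{-1})=(\sigma_q^{m-1}(B)\cdots B)^{-1}$ with $B:=\sigma_q(Y)Y^{-1}\in Gl_n(L)$); and by Propositions~\ref{prop:qeq} and~\ref{prop:qeq0} the equation $ID_qE(M)$ is precisely $\{\delta_L^{(1)}(\mathbf{y})=A_1\mathbf{y},\ \delta_L^{(k_C)}(\mathbf{y})=A_k\mathbf{y}\}$, with $Y$ as a fundamental solution matrix in $R$.

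Putting the pieces together, $R$ is $ID_q$-simple, contains the fundamental solution matrix $Y$ for $ID_qE(M)$, is generated over $L$ by the $Y_{i,j}$ and $\det(Y)^{-1}$, and satisfies $C(R)=C(L)$; so $R$ is an iterative $q$-difference Picard--Vessiot ring over $L$. I expect the main obstacle to be the bookkeeping in the middle steps: passing from the abstract $\mathcal{G}_L$-torsor to an honest coordinate matrix $Y$ while preserving $\mathcal{G}$-equivariance, so that the coaction formula for $\rho(Y)$ genuinely holds, and then checking that the $Y$-twist of the trivial module is defined over $L$. Once $\rho$ is known to be an $ID_q$-morphism, the descent identity $\{r:\rho(r)=r\otimes1\}=L$ and the conclusion that the logarithmic derivatives $\delta_R^{(k)}(Y)Y^{-1}$ lie in $M_n(L)$ are purely formal.
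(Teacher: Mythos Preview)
Your proof is correct and follows essentially the same route as the paper: both form the contracted product with $Gl_{n,L}$, trivialize the resulting $Gl_{n,L}$-torsor (Hilbert~90) to obtain a $\mathcal{G}_L$-equivariant closed immersion $Spec(R)\hookrightarrow Gl_{n,L}$ and hence a matrix $Y\in Gl_n(R)$ generating $R$, and then use the compatibility of the $\mathcal{G}$-action with $\delta_R^*$ to force $\delta_R^{(k)}(Y)Y^{-1}\in M_n(L)$. Your formulation via the comodule map $\rho$ and faithfully flat descent is simply the Hopf-algebraic rendering of the paper's phrase ``$\delta^{(k)}(Y)Y^{-1}$ is $\mathcal{G}$-invariant, hence lies in $L$''; the two arguments are the same in substance.

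One point worth noting: you go slightly further than the paper by actually exhibiting the $ID_q$-module $M$ over $L$ whose associated $ID_qE$ has $Y$ as fundamental solution matrix. The paper simply declares that $Y$ solves the system $\{\delta^{(k)}(Y)=A_kY\}$ and calls $R$ a Picard--Vessiot ring, tacitly assuming the existence of such an $M$. Your construction---twisting the trivial module on $R^n$ by $Y$ and checking that all structure matrices descend to $L$---makes this step explicit and is a genuine (if routine) addition.
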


\begin{proof}
Since $Spec(R)$ is a $\mathcal{G}_L$-torsor, the fiber product
$Spec(R)\times_{\mathcal{G}_L}Gl_{n,L}$ is a $Gl_{n,L}$-torsor.\\
$Spec(R)\times_{\mathcal{G}_L}Gl_{n,L}$ is obtained as the quotient
of the direct  product by the $\mathcal{G}_L$-action given by
$(x,h).g:=(xg,g^{-1}h)$ and is a right $Gl_{n,L}$-scheme acting on
the second factor. By Hilbert's Theorem $90$, every
$Gl_{n,L}$-torsor is trivial, i.e., we have an
$Gl_{n,L}$-equivariant isomorphism
$$\xymatrix{Spec(R)\times_{\mathcal{G}_L}Gl_{n,L} \ar[r] &
Gl_{n,L}}.$$ Then the closed embedding $\xymatrix{Spec(R) \ar[r] &
Spec(R)\times_{\mathcal{G}_L}Gl_{n,L} \ar[r] & Gl_{n,L}}$ leads to
an epimorphism $\xymatrix{L[X_{i,j},(det(X))^{-1}] \ar[r] & R}$,
which is $\mathcal{G}_L $-equivariant. Denote the image of $X$ by
$Y$. Then we obtain  that the action of $\mathcal{G}$ on $Y$ is
given by $Y \mapsto Yg$ for any $L$-valued point  $g \in
\mathcal{G}_L(L)$. Since by assumption for every $C$-algebra $A$
with trivial $ID_q$-structure, the action of $\mathcal{G}(A)$
commutes with the iterative $q$-difference operator
$\delta^{(k)}(Y).Y^{-1}$ is $\mathcal{G}$-invariant for all $k \in
\mathbb{N}$. So $\delta^{(k)}(Y).Y^{-1} =A_k$ belongs to $ Gl_n(L)$
and $Y$ is a fundamental solution matrix for the equation $\lbrace
\delta^{(k)}(Y).Y^{-1} \rbrace_{k \in \mathbb{N}}$. Hence $R$
is an $ID_q$-Picard-Vessiot ring.\end{proof}

In order to get a convenient Galois correspondence, we are obliged
to define the notion of an invariant in a functorial way. Let $S$ be
a $C$-algebra and $\mathcal{H}/C$ be a subgroup functor of the
functor $\underline{Aut}(S/C)$, i.e., for every $C$-algebra $A$, the
set $\mathcal{H}(A)$ is a group acting on $S_A$ and this action is
functorial. An element $s \in S$ is called invariant if for all $A$,
the element $s\otimes 1 \in S_A$ is invariant under
$\mathcal{H}(A)$. The ring of invariants is denoted by
$S^{\mathcal{H }}$. Let $E=Loc(S)$ be the localization of $S$ by all
non zero-divisors. We call an element $e=\frac{r}{s} \in E$
invariant under $\mathcal{H}$, if for each $C$-algebra $A$ and all
$h \in \mathcal{H}(A)$,

$$h.(r\otimes 1 ).(s \otimes 1)=(r \otimes 1).h.(s \otimes 1).$$
 $E^{\mathcal{H}}$ denotes the ring of invariants (for the
independence of this definition of the choice of representation of
$e$ see \cite{Ro} section $11$ or \cite{Jan},$I.2.10$).\\

\begin{lemma}
Let $R/L$ be an iterative $q$-difference  Picard-Vessiot ring over
$L$, let $E$ denote its total iterative $q$-difference
Picard-Vessiot extension and  $\mathcal{G}:=\underline{Gal}(R/L)$
the Galois group scheme of $R$. Let $\mathcal{H} \subset
\mathcal{G}$ be a closed subgroup-scheme. Denote by
$\xymatrix{\pi_{\mathcal{H}}^{\mathcal{G}} :C[\mathcal{G}]\ar[r] &
C[\mathcal{H}]}$ the epimorphism corresponding to the inclusion
$\xymatrix{\mathcal{H} \ar@{^{(}->}[r] & \mathcal{G}}$. Then an
element of $ \frac{r}{s} \in E$ is invariant under the action of
$\mathcal{H}$ if and only if $r\otimes s -s \otimes r$ is in the
kernel of the map

$$\xymatrix{(id_R \otimes \pi_{\mathcal{H}}^{\mathcal{G}}) \circ \gamma :R
\otimes_L R \ar[r] & R \otimes_C C[\mathcal{H}].}$$
\end{lemma}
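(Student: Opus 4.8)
The plan is to unwind the functorial definition of invariance and translate it, via the torsor isomorphism $\gamma$, into a condition on $R\otimes_L R$. Recall from Remark \ref{remark: action} that the action of a point $g\in\mathcal{G}(A)=\mathrm{Hom}_C(C[\mathcal{G}],A)$ on $r\in R$ is given by $g.r=(id_R\otimes g)(\gamma(1\otimes r))\in R\otimes_C A$. Write $\gamma(1\otimes r)=\sum_i r_i\otimes c_i$ with $r_i\in R$ and $c_i\in C[\mathcal{G}]$; then for $h\in\mathcal{H}(A)$, viewed as a homomorphism $C[\mathcal{H}]\to A$ precomposed with $\pi_{\mathcal{H}}^{\mathcal{G}}$, we have $h.r=\sum_i r_i\otimes h(\pi_{\mathcal{H}}^{\mathcal{G}}(c_i))$. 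The key point is that the universal case $A=C[\mathcal{H}]$, $h=id_{C[\mathcal{H}]}$, already controls every other case: an element is $\mathcal{H}$-invariant if and only if it is invariant under this universal point, because any $h$ factors through it.

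First I would treat the case $s=1$, i.e. $r\in R$: here $\frac r s=r$ is $\mathcal{H}$-invariant iff $(id_R\otimes\pi_{\mathcal{H}}^{\mathcal{G}})(\gamma(1\otimes r))=r\otimes 1$ in $R\otimes_C C[\mathcal{H}]$. Since $\gamma$ is $R$-linear and $\gamma(1\otimes r)$ together with $r\otimes 1=\gamma(r\otimes 1)$ (using that $\gamma$ restricted along $1\otimes\cdot$ sends $1\otimes 1\mapsto 1\otimes 1$ and is $R$-linear in the first factor), this says exactly that $(id_R\otimes\pi_{\mathcal{H}}^{\mathcal{G}})\circ\gamma$ kills $r\otimes 1-1\otimes r$. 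That is the claimed condition in the special case. Next I would pass to the general element $\frac r s\in E$: by the definition of invariance in $E$ recalled just before the lemma, $\frac r s$ is $\mathcal{H}$-invariant iff for all $A$ and all $h\in\mathcal{H}(A)$ one has $h.(r\otimes 1).(s\otimes 1)=(r\otimes 1).h.(s\otimes 1)$ in $R\otimes_C A$, and again it suffices to check this on the universal point. Expanding $h.(r\otimes 1)$ and $h.(s\otimes 1)$ through $\gamma$ as above and using that $R\otimes_C C[\mathcal{H}]$ — being a localization/quotient setup with $R$ a finite product of domains — has predictable zero-divisor structure, the equality $h.r\cdot(s\otimes 1)=(r\otimes1)\cdot h.s$ becomes the statement that $(id_R\otimes\pi_{\mathcal{H}}^{\mathcal{G}})\circ\gamma$ annihilates the element $r\otimes s-s\otimes r\in R\otimes_L R$. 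I would write this out by choosing $\gamma(1\otimes r)=\sum_i r_i\otimes c_i$ and $\gamma(1\otimes s)=\sum_j s_j\otimes d_j$ and comparing $\sum_i r_i s\otimes\pi(c_i)$ with $\sum_j r s_j\otimes\pi(d_j)$; $R$-linearity of $\gamma$ identifies these with the images of $r\otimes s$ and $s\otimes r$ respectively.

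The main obstacle I anticipate is the bookkeeping needed to justify that checking invariance on the single universal point $id_{C[\mathcal{H}]}\in\mathcal{H}(C[\mathcal{H}])$ is equivalent to checking it for all $A$ and all $h$, together with the care required when $r,s$ are zero-divisors in $R$ (since $R=R_1\oplus\dots\oplus R_s$ is only a product of domains, not a domain). For the first point one uses the Yoneda-type argument: any $h\in\mathcal{H}(A)$ is $\phi_h\circ(\text{universal point})$ for the algebra map $\phi_h:C[\mathcal{H}]\to A$ corresponding to $h$, and $\phi_h$ applied entrywise in $R\otimes_C(-)$ transports the universal identity to the one for $h$; conversely specializing $A=C[\mathcal{H}]$ recovers the universal case. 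For the second point one works componentwise over the idempotent decomposition of $R$, or equivalently observes that the definition of invariance in $E$ via cross-multiplication is exactly designed to sidestep division, so no genuine difficulty arises — one only needs that $R\otimes_C C[\mathcal{H}]$ is reduced enough that the cross-multiplied identity detects $r\otimes s-s\otimes r\mapsto 0$. Assembling these, the two displayed conditions are seen to be equivalent, which completes the proof.
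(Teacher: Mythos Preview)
Your proposal is correct and follows essentially the same route as the paper: reduce to the universal point $\pi_{\mathcal{H}}^{\mathcal{G}}\in\mathcal{H}(C[\mathcal{H}])$ via Yoneda, then use $R$-linearity of $\gamma$ in the first factor to identify $(id_R\otimes\pi_{\mathcal{H}}^{\mathcal{G}})\circ\gamma(r\otimes s)$ with $(r\otimes 1)\cdot h.(s\otimes 1)$ and similarly for $s\otimes r$, so that the kernel condition is literally the cross-multiplied invariance identity. The paper's proof is just the compressed version of this, without the preliminary $s=1$ case.

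One remark: your anticipated obstacle about zero-divisors and needing $R\otimes_C C[\mathcal{H}]$ to be ``reduced enough'' is a red herring. No cancellation or division is ever performed; the definition of invariance for $\frac{r}{s}$ is \emph{already} stated as the cross-multiplied equality $h.(r\otimes 1)\cdot(s\otimes 1)=(r\otimes 1)\cdot h.(s\otimes 1)$, and this equality in $R\otimes_C C[\mathcal{H}]$ is exactly the vanishing of the image of $r\otimes s - s\otimes r$. So the idempotent decomposition of $R$ plays no role here and that paragraph can be dropped.
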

\begin{proof}
An element $\frac{r}{s} \in E$ is invariant under the action of $\mathcal{H}$ if and only if it is invariant under the universal element in $\mathcal{H}$, namely $ \pi_{\mathcal{H}}^{\mathcal{G}} \in \mathcal{G}(C[\mathcal{H}])$. By remark \ref{remark: action} and $R$-linearity of $\gamma$, we have
$$(id_R  \otimes  \pi_{\mathcal{H}}^{\mathcal{G}})(\gamma(r \otimes s))=(r \otimes 1).\pi_{\mathcal{H}}^{\mathcal{G}}(s \otimes 1) \in R \otimes_CC[\mathcal{H}].$$
Therefore $r\otimes s -s \otimes r$ is in the considered Kernel if and only if $\frac{r}{s}$ is invariant under $\mathcal{H}$.
\end{proof}

\begin{prop}\label{prop:inva}
For every closed subgroup scheme $\mathcal{H} \subset \mathcal{G}$,
the ring $E^{\mathcal{H}}$ is an $ID_q$-ring in which every non zero
divisor is a unit. Furthermore we have $E^{\mathcal{H}}=L$ if and
only if $\mathcal{H}=\mathcal{G}$.
\end{prop}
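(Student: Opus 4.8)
The plan is to prove the two assertions separately, relying on the torsor picture and the Galois correspondence machinery already in place. For the first assertion, that $E^{\mathcal{H}}$ is an $ID_q$-ring: the iterative $q$-difference operator of $E$ restricts to $E^{\mathcal{H}}$ because the $\mathcal{H}$-action commutes with $\delta_E^*$ (this commutation is exactly the condition built into the functorial definition, since $\delta_R^*$ is extended trivially to $R\otimes_C A$), so $\delta_E^{(k)}$ of an invariant is again invariant for every $k\in\mathbb{N}$. That every non-zero divisor of $E^{\mathcal{H}}$ is a unit follows from the corresponding fact for $E$, established in Proposition \ref{prop:prod}: if $e\in E^{\mathcal{H}}$ is a non-zero divisor then its inverse in $E$ is again fixed by $\mathcal{H}$ because $\mathcal{H}(A)$ acts by ring automorphisms of $E_A$. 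So the first part is essentially formal.

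The substantive content is the equivalence $E^{\mathcal{H}}=L\iff\mathcal{H}=\mathcal{G}$. One direction is immediate: if $\mathcal{H}=\mathcal{G}$ then $E^{\mathcal{G}}\supseteq L$, and the reverse inclusion $E^{\mathcal{G}}\subseteq L$ is the statement that the total Picard-Vessiot extension $E$ has exactly $L$ as its fixed ring under the full Galois group. I would prove $E^{\mathcal{G}}=L$ directly using the isomorphism $\gamma_R:R\otimes_L R\to R\otimes_C C[\mathcal{G}]$ of Proposition \ref{prop:carac} together with the description of invariants from the preceding lemma (with $\mathcal{H}=\mathcal{G}$, so $\pi_{\mathcal{H}}^{\mathcal{G}}=\mathrm{id}$): an element $\tfrac{r}{s}\in E$ is $\mathcal{G}$-invariant iff $r\otimes s-s\otimes r\in\ker\gamma_R=(0)$, i.e. iff $r\otimes s=s\otimes r$ in $R\otimes_L R$; a standard argument (choose a minimal-length representation of the tensor, or pass to a point) then forces $\tfrac{r}{s}\in L$. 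One must be a little careful that $E$, being a finite product of fraction fields of the $R_i$, has $r\otimes s=s\otimes r$ forcing $r/s\in L$ componentwise; this uses that the $R_i$ are linearly disjoint from themselves over $L$ only up to the torsor twist, so I would phrase it via $\gamma_R$ rather than naive linear disjointness.

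For the converse — $E^{\mathcal{H}}=L$ implies $\mathcal{H}=\mathcal{G}$ — I expect this to be the main obstacle, and the natural route is contrapositive. Suppose $\mathcal{H}\subsetneq\mathcal{G}$ is a proper closed subgroup scheme. Then the epimorphism $\pi_{\mathcal{H}}^{\mathcal{G}}:C[\mathcal{G}]\to C[\mathcal{H}]$ has non-trivial kernel, so the map $(\mathrm{id}_R\otimes\pi_{\mathcal{H}}^{\mathcal{G}})\circ\gamma_R:R\otimes_L R\to R\otimes_C C[\mathcal{H}]$ has a kernel strictly larger than $(0)$; by the lemma, this kernel detects the $\mathcal{H}$-invariants. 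The task is to extract from a non-zero element of this kernel an element of $E^{\mathcal{H}}\setminus L$. Concretely, pick $0\ne\xi\in\ker\bigl((\mathrm{id}\otimes\pi_{\mathcal{H}}^{\mathcal{G}})\circ\gamma_R\bigr)$; writing $\xi=\sum r_i\otimes s_i$ with the $s_i$ chosen $L$-linearly independent and not all $r_i\in L$ — which is possible precisely because $\xi\notin\ker\gamma_R$ when $\xi\ne0$, so $\xi$ is not of the "diagonal" form that would make all quotients land in $L$ — one produces a quotient $r_i/r_j$ (for a suitable pair, after localizing; here the product decomposition of $R$ from Proposition \ref{prop:prod} guarantees enough non-zero divisors) that lies in $E^{\mathcal{H}}$ but not in $L$. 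The delicate points are (i) making the "not all $r_i\in L$" reduction precise, which is where linear-disjointness-type arguments for the torsor $R$ enter, and (ii) checking the chosen quotient is genuinely a well-defined element of $E$, i.e. that the relevant denominator is a non-zero divisor — handled by working in one of the factor fields $\mathrm{Frac}(R_i)$. Modulo these bookkeeping steps, the equivalence follows; this is the exact $q$-difference analogue of the corresponding step in Roescheisen's iterative differential Galois correspondence (\cite{Ro}), and I would follow that template.
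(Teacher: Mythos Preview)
Your treatment of the first assertion is too informal given how invariants on $E$ are actually defined in the paper. The group functor $\mathcal{H}(A)$ acts on $R\otimes_C A$, not on $E\otimes_C A$; indeed $s\otimes 1$ may fail to be a non-zero-divisor in $R\otimes_C A$, so there is no honest $\mathcal{H}(A)$-action on $E_A$ by ring automorphisms to invoke. This is why invariance of a fraction $r/s$ is defined by the relation $h.(r\otimes 1).(s\otimes 1)=(r\otimes 1).h.(s\otimes 1)$, or equivalently (via the preceding lemma) by $r\otimes s - s\otimes r\in\ker\bigl((id_R\otimes\pi_{\mathcal H}^{\mathcal G})\circ\gamma\bigr)$. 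The paper therefore does not appeal to ``the action commutes with $\delta_E^*$'' but performs an explicit inductive computation: starting from $r\otimes s - s\otimes r$ in the kernel, it expands $\delta^{(k)}(r\otimes s - s\otimes r)\cdot(s^k\otimes s^k)$ via the twisted Leibniz rule and shows that the element witnessing invariance of $\delta^{(k)}(r/s)$ also lies in the kernel. Your formal argument points in the right direction but skips exactly this work.

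For the direction $\mathcal H=\mathcal G\Rightarrow E^{\mathcal G}=L$ you match the paper: $\pi_{\mathcal G}^{\mathcal G}=id$, the kernel is zero, so $r\otimes s=s\otimes r$ in $R\otimes_L R$ forces $r=cs$ with $c\in L$.

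The genuine gap is in your contrapositive for the converse. From a non-zero $\xi=\sum r_i\otimes s_i$ in $\ker\bigl((id\otimes\pi_{\mathcal H}^{\mathcal G})\circ\gamma_R\bigr)$ you assert that some quotient $r_i/r_j$ lies in $E^{\mathcal H}\setminus L$. But by the very lemma you are using, $r_i/r_j\in E^{\mathcal H}$ would require $r_i\otimes r_j - r_j\otimes r_i$ to lie in that kernel, and nothing in your argument produces such an element from $\xi$. Having an arbitrary non-zero tensor in the kernel is far from having one of the special antisymmetric form $r\otimes s - s\otimes r$ with $r/s\notin L$; you have not explained how to pass from the former to the latter, and the ``linear-disjointness-type arguments'' you allude to do not do this job.

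The paper avoids this difficulty entirely by a geometric argument: since $\mathcal Z=\mathrm{Spec}(R)$ is a $\mathcal G_L$-torsor and $\mathcal G_L/\mathcal H_L$ is a scheme, the quotient $\mathcal Z/\mathcal H_L$ is a scheme (citing \cite{Jan}, I.5.16). One then chooses an affine open $\overline U\subset\mathcal Z/\mathcal H_L$ whose preimage meets every irreducible component $\mathcal Z_i$, so that $\mathcal O_{\mathcal Z}(\mathcal U)^{\mathcal H}\subset E^{\mathcal H}$. If $E^{\mathcal H}=L$ then every such $\overline U$ satisfies $\mathcal O_{\mathcal Z/\mathcal H_L}(\overline U)=L$, forcing $\mathcal Z/\mathcal H_L=\mathrm{Spec}(L)$ and hence $\mathcal H=\mathcal G$. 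This quotient-scheme argument is what you should supply in place of the kernel-extraction sketch.
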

\begin{proof}
By the previous lemma, it is obvious that $E^{\mathcal{H}}$ is an
$ID_q$-ring in which every non-zero divisor is a unit. Next, let
$\frac{r}{s} \in E^{\mathcal{H}}$. Then for all $ k \in \mathbb{N}$,
we have
$$ \delta^{k}(r\otimes s -s \otimes r).(s^k \otimes s^k) =$$
$$\sum_{i_1+i_2+i_3=k} \sigma_q^{i_1+i_3} (\delta^{(i_2)}(\frac{r}{s}))s^k
\sigma_q^{i_3}(\delta^{(i_1)}(s))\otimes \delta^{(i_3)}(s)s^k -
\delta^{(i_1)}(s)s^k \otimes
\sigma_q^{i_1+i_3}(\delta^{(i_2)}(\frac{r}{s}))s^k \sigma_q^{i_1
}(\delta^{(i_3)}(s))$$ $$=\sum_{i_1+i_2+i_3=k}
(\sigma_q^{i_3}(\delta^{(i_1)}(s))
\otimes\delta^{(i_3)}(s))(\sigma_q^{i_1+i_3}(\delta^{(i_2)}(\frac{r}{s}))
s^k \otimes s^k) - $$ $$ \sum_{i_1+i_2+i_3=k} (\delta^{(i_1)}(s)
\otimes\sigma_q^{i_1}(\delta^{(i_3)}(s)))(     s^k
\otimes\sigma_q^{i_1+i_3}(\delta^{(i_2)}(\frac{r}{s})) s^k)=$$
$$\sum_{i+j=k} (\delta^{(i)}(s \otimes
s))(\sigma_q^i(\delta^{(j)}(\frac{r}{s}))s^k \otimes s^k -s^k
\otimes \sigma_q^i(\delta^{(j)}(\frac{r}{s}))s^k).
$$
The left hand side lies in $Ker(id_R \otimes
\pi_{\mathcal{H}}^{\mathcal{G}})$, since this kernel is an
$ID_q$-ideal . So by induction, we get that $(s\otimes
s)(\delta^{(k)}(\frac{r}{s})s^k \otimes s^k -s^k \otimes
\delta^{(k)}(\frac{r}{s})s^k)\in Ker(id_R \otimes
\pi_{\mathcal{H}}^{\mathcal{G}}) $ and hence
$\delta^{(k)}(\frac{r}{s}) \in E^{\mathcal{H}}$. \\
For the second statement : if $\mathcal{H}=\mathcal{G}$, then
$\pi_{\mathcal{H}}^{\mathcal{G}} =id_{C[\mathcal{G}]}$ and the
considered kernel is trivial. Hence $r\otimes s = s \otimes r \in
R\otimes_L R$ is trivial for all $\frac{r}{s} \in E^{\mathcal{G}}$.
Thus, there exists $c \in L$ such that $r=cs$, i.e., $\frac{r}{s} =c
\in L$.\\
Assume $\mathcal{H} \subsetneq \mathcal{G}$.Since
$\mathcal{Z}=Spec(R)$ is a $\mathcal{G}_L$-torsor, the quotient
scheme $\mathcal{Z}/\mathcal{G}_L$ is equal to $Spec(L)$, in
particular it is a scheme, and since $\mathcal{G}_L$ and
$\mathcal{H}_L$ are affine, $\mathcal{G}_L/\mathcal{H}_L$ also is a
scheme. So by \cite{Jan},$I.5.16.(1)$, $\mathcal{Z}/\mathcal{H}_L
\simeq \mathcal{Z}\times^{\mathcal{G}_L}
(\mathcal{G}_L/\mathcal{H}_L)$ is a scheme. According to Proposition
\ref{prop:prod}, $\mathcal{Z}$ is equal to the disjoint union of its
irreducible components $\lbrace \mathcal{Z}_i \rbrace _{i=1,...,s}$.
Let  $pr: \mathcal{Z} \mapsto \mathcal{Z}/\mathcal{H}_L$ denote the
canonical projection. Now let $\overline{U} \subseteq
\mathcal{Z}/\mathcal{H}_L$ be an  affine open subset such that its
inverse image $\mathcal{U}$ by $pr$ has a non empty intersection
with all the $\mathcal{Z}_i$. We have a monomorphism $pr_*
:\mathcal{O}_{\mathcal{Z}/\mathcal{H}_L}(\overline{U}) \rightarrow
\mathcal{O}_{\mathcal{Z}}(\mathcal{U})$ whose image is
$\mathcal{O}_{\mathcal{Z}}(\mathcal{U})^{\mathcal{H}}$. By
construction of $\overline{U}$, we have
$\mathcal{O}_{\mathcal{Z}}(\mathcal{U})^{\mathcal{H}} \subset
E^{\mathcal{H}}$. If $E^{\mathcal{H}}=L$, then also
$\mathcal{O}_{\mathcal{Z}}(\mathcal{U})^{\mathcal{H}}=L$. So, for
every  affine open subset  $\overline{U} \subseteq
\mathcal{Z}/\mathcal{H}_L$  such that its inverse image
$\mathcal{U}$ by $pr$ has a non empty intersection with all the
$\mathcal{Z}_i$, we have
$\mathcal{O}_{\mathcal{Z}/\mathcal{H}_L}(\overline{U})=L$, i.e.,
$\overline{U}\simeq Spec(L)$ is a single point. Hence $\mathcal{Z}/
\mathcal{H}_L = Spec(L)$, which contradicts the assumption
$\mathcal{H} \subsetneq \mathcal{G}$.
\end{proof}

\begin{theorem}[Galois correspondence]\label{theorem:galcor}
Let $R/L$ be an iterative $q$-difference  Picard-Vessiot ring over
$L$, let $E$ denotes its total iterative $q$-difference
Picard-Vessiot extension and let $\mathcal{G}:=\underline{Gal}(R/L)$
be the Galois group scheme of $R$. \begin{enumerate} \item Then
there is an anti-isomorphism of lattices between:
$$\mathfrak{H}:=\lbrace \mathcal{H}| \mathcal{H}\subset \mathcal{G}
\ \mbox{closed subgroup scheme of} \ \ \mathcal{G} \rbrace$$ and

$$\mathfrak{T}:= \lbrace T| L \subset T \subset E \ \ \mbox{intermediate} \ ID_q- \mbox{ring
s.t. any non zero divisor of T is a unit of T}\rbrace$$ given by
$\Psi: \mathfrak{H} \rightarrow \mathfrak{T}$, $\mathcal{H} \mapsto
E^{\mathcal{H}}$ and
$\Phi: \mathfrak{T} \rightarrow  \mathfrak{H}$, $T \mapsto \underline{Gal}(RT/T)$.\\
\item If $\mathcal{H} \subset \mathcal{G}$ is normal then
$R^{\mathcal{H}}$ is  an iterative $q$-difference  Picard-Vessiot
ring over $L$ and  $E^{\mathcal{H}}$ is  its total iterative
$q$-difference Picard-Vessiot extension; the Galois group scheme of
$R^{\mathcal{H}}$ over $L$ is isomorphic to $\mathcal{G}/
\mathcal{H}$.\\
\item For $\mathcal{H} \in \mathfrak{H}$, the extension
$E/E^{\mathcal{H}}$ is separable if and only if $\mathcal{H}$ is
reduced.
\end{enumerate}
\end{theorem}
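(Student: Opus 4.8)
As already announced before Definition~\ref{defin:galois}, this correspondence is the exact translation of the iterative differential Galois correspondence of \cite{Ro} (see also \cite{VPS2} for ordinary difference equations), so the plan is to set up the two maps, identify which of the preceding results do the work, and locate where the iterative structure forces extra care. First I check well-definedness. That $\Psi(\mathcal{H}):=E^{\mathcal{H}}$ lies in $\mathfrak{T}$ is precisely Proposition~\ref{prop:inva}. For $\Phi$, given $T\in\mathfrak{T}$ let $RT\subset E$ be the $ID_q$-subring generated by $R$ and $T$; since $R=L[(Y_{ij}),\det(Y)^{-1}]$ for a fundamental matrix $Y\in Gl_n(R)$, we get $RT=T[(Y_{ij}),\det(Y)^{-1}]$, which is finitely generated over $T$, admits $Y$ as fundamental solution matrix, has $C(RT)=C$ (because $RT\subset E$ and $C(E)=C$ by Proposition~\ref{prop:const}), and is $ID_q$-simple by the argument of \cite{VPS2}, \cite{Ro} adapted to the iterative setting through Lemma~\ref{lemma:rad} and Lemma~\ref{lemma:idea}; so $RT$ is an iterative $q$-difference Picard--Vessiot ring over $T$. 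Restriction of automorphisms yields a natural transformation $\underline{Gal}(RT/T)\to\mathcal{G}$: an $A$-point sends $Y$ to $Yg$ with $g\in Gl_n(C((RT)_A))=Gl_n(A)$, hence maps $R\otimes_C A$ into itself, and this transformation is a monomorphism (the automorphism of $(RT)_A$ is determined by $g$) and therefore a closed immersion. Thus $\Phi(T):=\underline{Gal}(RT/T)\in\mathfrak{H}$, and both maps are order-reversing by construction.

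It remains, for part (1), to show that the two compositions are the identity. The total iterative $q$-difference Picard--Vessiot extension of $RT$ is again $E=Loc(R)$ (since $R\subset RT\subset E$); applying Proposition~\ref{prop:inva} to the Picard--Vessiot ring $RT/T$, whose Galois group is $\Phi(T)$, gives $E^{\Phi(T)}=T$, i.e. $\Psi\circ\Phi=id$. For the reverse composition, $\mathcal{H}\subseteq\Phi(\Psi(\mathcal{H}))=\underline{Gal}(RE^{\mathcal{H}}/E^{\mathcal{H}})=:\mathcal{H}'$ is immediate. I then apply Proposition~\ref{prop:inva} once more, this time to the Picard--Vessiot ring $RE^{\mathcal{H}}/E^{\mathcal{H}}$ (Galois group $\mathcal{H}'$, total extension again $E$) and to the closed subgroup $\mathcal{H}\subseteq\mathcal{H}'$: since $\mathcal{H}'$ acts on $E$ by restriction of the $\mathcal{G}$-action along the closed immersion above, the $\mathcal{H}$-fixed ring inside $E$ is still $E^{\mathcal{H}}$, so the ``fixed ring equals the base $\iff$ the subgroup is the whole group'' clause of that proposition forces $\mathcal{H}=\mathcal{H}'$, i.e. $\Phi\circ\Psi=id$.

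For part (2), assume $\mathcal{H}$ is normal in $\mathcal{G}$; then $\mathcal{G}/\mathcal{H}$ is an affine group scheme over $C$ with $C[\mathcal{G}/\mathcal{H}]=C[\mathcal{G}]^{\mathcal{H}}$. Since $L$ is a field, taking $\mathcal{H}$-invariants (for the action on the right-hand tensor factor) in the $ID_q$-isomorphism $\gamma:R\otimes_L R\to R\otimes_C C[\mathcal{G}]$ of Proposition~\ref{prop:carac} yields an $ID_q$-isomorphism $R\otimes_L R^{\mathcal{H}}\to R\otimes_C C[\mathcal{G}/\mathcal{H}]$, and from here the normal-subgroup argument of \cite{Ro} applies: $R^{\mathcal{H}}$ is $ID_q$-simple over $L$ with $C(R^{\mathcal{H}})=C$, the quotient $Spec(R^{\mathcal{H}})=Spec(R)/\mathcal{H}_L$ is a $(\mathcal{G}/\mathcal{H})_L$-torsor carrying the $ID_q$-compatible trivialization demanded by Proposition~\ref{prop:caracpi}, so by that proposition $R^{\mathcal{H}}$ is an iterative $q$-difference Picard--Vessiot ring over $L$ with total extension $Loc(R^{\mathcal{H}})=E^{\mathcal{H}}$; finally the $\mathcal{G}$-action on $R$, which by normality restricts to $R^{\mathcal{H}}$ and factors through $\mathcal{G}/\mathcal{H}$, identifies $\underline{Gal}(R^{\mathcal{H}}/L)$ with $\mathcal{G}/\mathcal{H}$.

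For part (3), by part (1) we have $\Phi(E^{\mathcal{H}})=\mathcal{H}$, so $RE^{\mathcal{H}}$ is a Picard--Vessiot ring over $E^{\mathcal{H}}$ and, by Corollary~\ref{coro:tor}, $Spec(RE^{\mathcal{H}})$ is an $\mathcal{H}_{E^{\mathcal{H}}}$-torsor with $E=Loc(RE^{\mathcal{H}})$. Hence $E/E^{\mathcal{H}}$ is separable (componentwise) if and only if $RE^{\mathcal{H}}$ is geometrically reduced over $E^{\mathcal{H}}$; because a torsor becomes trivial, hence isomorphic to $\mathcal{H}$, after a faithfully flat base change, this happens if and only if $\mathcal{H}$ is geometrically reduced over $C$, equivalently, as $C$ is algebraically closed hence perfect, if and only if $\mathcal{H}$ is reduced. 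In characteristic $0$ both sides hold automatically (Cartier's theorem), so the real content is in characteristic $p$, where ``reduced'' coincides with ``smooth''. The step I expect to be the true obstacle is not conceptual but bookkeeping: verifying that $RT$ and $R^{\mathcal{H}}$ stay $ID_q$-simple with constant field exactly $C$ under the whole infinite family $(\delta^{(k)})_{k\in\mathbb{N}}$ of operators, which is where the iterative theory genuinely diverges from the classical one-operator case, and which must be carried out re-using Lemma~\ref{lemma:rad} and Lemma~\ref{lemma:idea} with care.
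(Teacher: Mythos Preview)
Your argument for parts (1) and (2) follows the paper's route almost verbatim: show $RT/T$ is Picard--Vessiot, embed $\underline{Gal}(RT/T)$ as a closed subgroup of $\mathcal{G}$ via the torsor isomorphism $\gamma$ (the paper does this by pushing the kernel of $RT\otimes_C C[\mathcal{G}]\to RT\otimes_T RT$ through Lemma~\ref{lemma:idea}, you do it by restricting automorphisms; these are the same map), and then invoke Proposition~\ref{prop:inva} twice to get both compositions equal to the identity. One point you pass over that the paper does spell out: in part~(2) the equality $Loc(R^{\mathcal{H}})=E^{\mathcal{H}}$ is not automatic. The paper sets $\tilde L:=Loc(R^{\mathcal{H}})$, $\tilde{\mathcal{G}}:=\underline{Gal}(R\tilde L/\tilde L)$, observes $\mathcal{H}\trianglelefteq\tilde{\mathcal{G}}$, and uses $(R\tilde L)^{\mathcal{H}}=R^{\mathcal{H}}\tilde L=\tilde L$ together with the torsor argument just established to force $\tilde{\mathcal{G}}=\mathcal{H}$, whence $E^{\mathcal{H}}=E^{\tilde{\mathcal{G}}}=\tilde L$. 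You should insert this step.

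Part (3) is where you genuinely diverge. The paper reduces to $\mathcal{H}=\mathcal{G}$ and argues in two directions via $\mathcal{G}_{red}$: if $\mathcal{G}$ is non-reduced, then $\mathcal{G}_{red}\trianglelefteq\mathcal{G}$, part~(2) gives an intermediate Picard--Vessiot extension $\tilde L=Loc(R^{\mathcal{G}_{red}})$ with infinitesimal Galois group scheme $\mathcal{G}/\mathcal{G}_{red}$, and then the paper quotes \cite{Ch}, Cor.~1.12, to conclude $\tilde L/L$ is purely inseparable; conversely, an inseparable $E/L$ produces $E\cap L^{1/p}\neq L$, a finite purely inseparable $ID_q$-subextension whose (infinitesimal) Galois group exhibits a non-reduced quotient of $\mathcal{G}$. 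Your route instead exploits the torsor structure directly: $Spec(RE^{\mathcal{H}})$ is an $\mathcal{H}_{E^{\mathcal{H}}}$-torsor, so it is smooth (equivalently geometrically reduced) over $E^{\mathcal{H}}$ iff $\mathcal{H}$ is smooth over $C$, which over the perfect field $C$ is the same as reduced; and geometric reducedness of $RE^{\mathcal{H}}$ over $E^{\mathcal{H}}$ is exactly componentwise separability of $E=Loc(RE^{\mathcal{H}})$ over $E^{\mathcal{H}}$. Your argument is shorter and avoids the external reference \cite{Ch}, at the cost of invoking fppf descent of smoothness for torsors; the paper's argument is more hands-on and, by passing through $\mathcal{G}_{red}$, makes the purely inseparable piece explicit.
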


\begin{proof}
\begin{enumerate}

\item Let $T \in \mathfrak{T}$ be an intermediate $ID_q$ ring such that
any non zero divisor of T is a unit of T. Then the compositum $RT
\subset E$ is a $ID_q$-Picard-Vessiot ring over $T$. Furthermore,
the canonical $ID_q$-epimorphism $RT \otimes_C C[\mathcal{G}]
\mapsto RT \otimes_T RT$ gives rise to an $ID_q$-epimorphism
$$ \xymatrix{
RT \otimes_C C[\mathcal{G}] \ar[r]^{ \gamma_{RT}^{-1}} & RT\otimes L
R \ar[r] & RT \otimes_T RT}.$$

By Lemma \ref{lemma:idea}, the kernel of this epimorphism is given
by $RT \otimes_C I$ for some ideal $I \subset C[\mathcal{G}]$.
Denote by $\mathcal{H}$ the closed sub-scheme of $\mathcal{G}$
defined by $I$, then $\gamma_{RT}$ induces a isomorphism
$$RT \otimes_T RT \simeq RT \otimes_C C[\mathcal{H}].$$
By construction, this isomorphism is the isomorphism for  the base
ring $T$, hence the sub-scheme $\mathcal{H}$ equals the Galois group
scheme $\underline{Gal}(RT/T)$. Thus $\underline{Gal}(RT/T)$ is
indeed a closed subgroup scheme  of $\mathcal{G}$.\\
Now let us apply Proposition \ref{prop:inva} to the extension $E/T$.
It follows that $E^{\underline{Gal}(RT/T)}=T$, so $\Psi \circ
\Phi=id_{\mathfrak{T}}$. On the other hand, for given $\mathcal{H}
\in \mathfrak{H}$ and $T:=E^{\mathcal{H}}$, we get an $ID_q$-
epimorphism $RT \otimes_T RT \mapsto RT \otimes_C C[\mathcal{H}]$
induced by $\gamma_{RT}$. This embeds  $\mathcal{H}$ as a closed
subgroup scheme in  $\underline{Gal}(RT/T)$. But the localization
$Loc(RT)$ of $RT$ by its set of non zero divisors is equal to $E$,
so $Loc(RT)^{\mathcal{H}}=E^{\mathcal{H}}=T$ and so by Proposition
\ref{prop:inva}, we have $\mathcal{H}=\underline{Gal}(RT/T)$.
Thereby $\Phi \circ \Psi =id_{\mathfrak{H}}$.
\item Let $\mathcal{H} \subset \mathcal{G}$ be normal. The
isomorphism $\gamma$ is $\mathcal{H}$-equivariant and hence we get
an $ID_q$-isomorphism
$$R \otimes_L R^{\mathcal{H}} \simeq R \otimes_C
C[\mathcal{G}]^{\mathcal{H}}.$$ Since $R$ is normal,
$\mathcal{G}/\mathcal{H}$ is an affine group scheme with
$C[\mathcal{G}/\mathcal{H}]=C[\mathcal{G}]^{\mathcal{H}}$
(\cite{Dem}, III, Sec. 3, Thm. 5.6). Again by taking invariants the
isomorphism above restricts to an isomorphism
$$R^{\mathcal{H}} \otimes_L R^{\mathcal{H}} \simeq R^{\mathcal{H}}
\otimes_C C[\mathcal{G}/\mathcal{H}].$$

 The ring $R^{\mathcal{H}}$ is
$ID_q$-simple, because for every $ID_q$-ideal $P \subset
R^{\mathcal{H}}$, the ideal $P.R \subset R$ is an $ID_q$-ideal,
hence equals $(0)$ or $R$ and so $P=(P.R)^{\mathcal{H}}$ is $(0)$ or
$R^{\mathcal{H}}$. Since $L \subset R^{\mathcal{H}} \subset R$, we
also have $C(R^{\mathcal{H}})=C$. So by proposition
\ref{prop:caracpi}, $R^{\mathcal{H}}$ is an $ID_q$ Picard-Vessiot
ring  over $L$ with Galois group scheme $\mathcal{G}/\mathcal{H}$.
It remains to show that $E^{\mathcal{H}}=Loc(R^{\mathcal{H}})$.\\
Let $\tilde{L}:=Loc(R^{\mathcal{H}})$ and
$\tilde{\mathcal{G}}:=\underline{Gal}(E/\tilde{L})$. Then
$\mathcal{H}$ is a normal subgroup of $\tilde{\mathcal{G}}$ and by
the previous $(R.\tilde{L})^{\mathcal{H}}$ is a
$\tilde{\mathcal{G}}/\mathcal{H}$-torsor. But
$(R.\tilde{L})^{\mathcal{H}}=R^{\mathcal{H}}.\tilde{L}=\tilde{L}$,
so $\tilde{\mathcal{G}}=\mathcal{H }$, and hence
$E^{\mathcal{H}}=E^{\tilde{\mathcal{G}}}=\tilde{L}=Loc(R^{\mathcal{H}})$.
\item Without loss of generality we may assume that
$\mathcal{H}=\mathcal{G}$. Let us denote by $\mathcal{G}_{red}
 \subset \mathcal{G}$ the closed reduced subgroup given by the
nilradical ideal . Since $\mathcal{G}_{red}$ is normal in
$\mathcal{G}$, by the second statement
$\tilde{L}:=Loc(R^{\mathcal{G}_{red}})$ is an $ID_q$ Picard-Vessiot
extension of $L$ with Galois group scheme
$\underline{Gal}(\tilde{L}/L)=\mathcal{G}_{red}$. But this group
scheme is infinitesimal and so by \cite{Ch}, Cor. 1.12,
$\tilde{L}/L$ is purely inseparable. On the other hand, if $E/L$ is
inseparable and $p=char(L)$, then $ \tilde{L}:= E \cap
L^{\frac{1}{p}} \neq L$ is a finitely purely inseparable $ID_q$-ring
extension of $L$. Since every such extension is an $ID_q$-
Picard-Vessiot ring with an infinitesimal  Galois group scheme,
$\mathcal{G}$ has a non reduced quotient and therefore $\mathcal{G}$
is not reduced.
\end{enumerate}
\end{proof}
\subsubsection{Examples of  Galois groups}

\paragraph{The Galois group $\mathbb{G}_m$ in characteristic $p$}
~~\\
Let us
denote by $C=\overline{\mathbb{F}_p}$ the algebraic closure of
$\mathbb{F}_p$, where $p$ is a prime number. Let $F=C(t)$ be a
rational function field with coefficients in $C$. Let $(a_l)_{l \geq
0}$ be a set of elements in $\mathbb{F}_p$. Let $M =Fb_1$ be the
$ID_q$-module with corresponding $ID_qE$:
$$ \delta_M^{(np^k)}(y)= \frac{a_k}{ t^{np^k}}y$$ where $k \in
\mathbb{N}$ and
$$\delta_M^{(1)}(y)=\frac{y}{t}.$$

\begin{theorem}\label{theorem:compm}
Let $M$ be as above with its associated $ID_qE$, and let $\alpha
=\sum_{l\geq 0}a_lp^l \in \mathbb{Q}_p$. Then for an iterative
Picard-Vessiot extension $E/F$ for $M$, we have

$\underline{\rm{Gal}}(E/F) \simeq \mathbb{Z}/m\mathbb{Z}$ for some
$m$ if $\alpha \in \mathbb{Q}$ and $\underline{\rm{Gal}}(E/F) \simeq
\mathbb{G}_m$ if $\alpha \notin \mathbb{Q}$.

\end{theorem}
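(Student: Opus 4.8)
The plan is to trade the $q$-difference problem for an iterative \emph{differential} one over $\overline{\mathbb{F}_p}(t^n)$, and then read the answer off the $p$-adic expansion of $\alpha$.

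\textbf{Step 1: passing to $u=t^n$.} Since $M$ has rank one, an $IPV_q$-ring $R/F$ for $M$ is $F[y,y^{-1}]$ for the fundamental solution $y$, with $\delta_R^{(1)}(y)=y/t$ and $\delta_R^{(np^k)}(y)=a_k t^{-np^k}y$. From $\delta_R^{(1)}=(\sigma_q-\mathrm{id})/((q-1)t)$ one reads $\sigma_q(y)=qy$, hence $w:=y/t$ satisfies $\sigma_q(w)=w$, i.e. $w\in R':=\ker\delta_R^{(1)}=R^{\langle\sigma_q\rangle}$. By Proposition~\ref{prop:compder} together with part $3$ of Proposition~\ref{prop:binform}, the family $\partial^{(k)}:=\delta_R^{(np^k)}$ restricts on $R'$ to an iterative differential operator over $C(u)$, with $\partial^{(1)}|_{C(u)}=\mathrm{d}/\mathrm{d}u$ and $\partial^{(p^k)}|_{C(u)}$ the $p^k$-th Hasse derivative in $u$. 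I would then compute—using $\delta^{(j)}(t)=0$ for $j\ge 2$, the identity $\binom{np^k-1}{n-1}_q=1$ from part $4$ of Proposition~\ref{prop:binform}, and the fact that $\delta^{(j)}$ preserves $R'$ when $n\mid j$ and kills it otherwise—that $w$ satisfies the iterative differential equation $\partial^{(p^k)}(w)=a_k u^{-p^k}w$ over $\overline{\mathbb{F}_p}(u)$. Finally $R'$ is $\partial$-simple (a $\partial$-ideal $I\subset R'$ generates the $ID_q$-ideal $IR$ of $R$, forcing $I\in\{0,R'\}$), has constant field $C$, and equals $C(u)[w,w^{-1}]$ since $y^n=u w^n$; thus $R'$ is an iterative differential Picard--Vessiot ring for that equation.

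\textbf{Step 2: identifying the two Galois group schemes.} By Lemma~\ref{lemma: ech}, $\sigma_q^{\,n}$ commutes with every $\delta_R^{(i)}$, so it is an $ID_q$-automorphism of $R$ over $F$; as it fixes $y$, it is the identity, and $\sigma_q$ has order exactly $n$ on $R$. Hence $[R:C(u)]=n[R':C(u)]$, so $R'$ and $F$ are linearly disjoint over $C(u)$ and $R\cong R'\otimes_{C(u)}F$ as $ID_q$-rings. Then restriction $g\mapsto g|_{R'}$ is an isomorphism $\underline{\mathrm{Gal}}(E/F)\cong\underline{\mathrm{Gal}}^{\partial}(R'/C(u))$: injectivity is clear as $R$ is generated over $F$ by $y=tw$, and surjectivity comes from extending $h$ to $h\otimes\mathrm{id}_F$, which one checks is $ID_q$ using $\delta^{(1)}(r'f)=r'\delta^{(1)}(f)$ and $\delta^{(np^k)}(r'f)=\sum_{i+j=p^k}\partial^{(j)}(r')\,\delta^{(ni)}(f)$ for $r'\in R'$, $f\in F$.

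\textbf{Step 3: the iterative differential computation.} What remains is the iterative differential Galois group of $\partial^{(p^k)}(w)=a_k u^{-p^k}w$ over $\overline{\mathbb{F}_p}(u)$—the iterative differential incarnation of ``$u^\alpha$ is algebraic over $C(u)$ iff $\alpha$ is rational''. Its fundamental solution is ``$w=u^\alpha$''. If $\alpha=c/d\in\mathbb{Q}$ in lowest terms (with $p\nmid d$, since $\alpha\in\mathbb{Z}_p$), then $w^d=u^{c}=u^{d\alpha}$, and as $C$ is algebraically closed with $\gcd(c,d)=1$, the polynomial $x^d-u^{c}$ is irreducible over $C(u)$ (Capelli); so $\operatorname{Spec}R'$ is a $\mu_d$-torsor, étale since $p\nmid d$, and $\underline{\mathrm{Gal}}^{\partial}(R'/C(u))\cong\mu_d\cong\mathbb{Z}/d\mathbb{Z}$ (compatibility with the equation uses $\binom{\alpha}{p^k}\equiv a_k\pmod p$, the $p$-adic Lucas theorem). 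If $\alpha\notin\mathbb{Q}$, i.e. the digit sequence $(a_l)$ is not eventually periodic, then $w$ is transcendental over $C(u)$: were $w^m\in C(u)$ for some minimal $m$, then $w^m$ would be rational with $D_u^{(1)}(w^m)/w^m=ma_0/u$ and, inductively, $D_u^{(p^k)}(w^m)/w^m=ma_k u^{-p^k}$, which forces $w^m=c\,u^N$ with $N\in\mathbb{Z}$ whose $p$-adic digits are those of $m\alpha$—impossible, since $N$ has eventually periodic digits while $(ma_l)$ does not; a parallel argument excludes any infinitesimal part. Hence $R'=C(u)[w,w^{-1}]$ and $\underline{\mathrm{Gal}}^{\partial}(R'/C(u))$ is a one-dimensional closed subgroup scheme of $\mathbb{G}_{m,C}$, so equal to $\mathbb{G}_m$. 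Combining Steps $2$ and $3$ proves the theorem, with $m=d$ in the rational case.

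\textbf{Main obstacle.} The substance is Step $3$: making the fundamental solution ``$u^\alpha$'' rigorous as an element of an iterative differential extension of $\overline{\mathbb{F}_p}(u)$, and showing that its degree of algebraicity over $C(u)$ is exactly the denominator of $\alpha$ when $\alpha\in\mathbb{Q}$ (transcendent otherwise). This is where the ``primitive root of unity'' obstruction of the original problem is converted into the arithmetic of base-$p$ expansions; the rest is bookkeeping with the $q$-binomial identities of Proposition~\ref{prop:binform} and the torsor/Tannakian formalism already in place.
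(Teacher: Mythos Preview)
Your proof is correct and takes a genuinely different route from the paper's. The paper argues directly inside the $ID_q$-framework: it first checks that $\tau(y)/y$ is a constant for any $\tau$, so $\underline{\mathrm{Gal}}(E/F)\subseteq\mathbb{G}_m$; then, for $\alpha=a/m\in\mathbb{Q}$, it verifies by hand (via the $q$-binomial identity $\binom{a}{np^k}_q=ma_k$) that $z=t^{a/m}$ solves the $ID_qE$, giving a cyclic group; and conversely, if $y$ is algebraic it expands $y^m\in F$ as a Laurent series in $t$ and compares coefficients to force $y^m=t^s$ and $\alpha=s/m$. Everything stays at the level of explicit $q$-difference computations.

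Your argument instead \emph{explains} the phenomenon structurally: you use Proposition~\ref{prop:compder} to pass from the $ID_q$-ring $R$ to the iterative differential ring $R'=R^{\sigma_q}=C(u)[w,w^{-1}]$ over $C(u)$, identify the two Galois group schemes via $R\cong R'\otimes_{C(u)}F$, and then reduce to Matzat's iterative differential example. This is more conceptual---it turns the paper's closing Remark (that the Galois groups coincide with those in \cite{M}, Examples~2.14--2.15) into an actual consequence of the comparison functor rather than a coincidence of parallel computations. The cost is that Step~3 still requires essentially the same arithmetic (Lucas-type congruences, transcendence of $u^\alpha$ for irrational $\alpha$), so the ``hard'' content is relocated rather than removed; and you must be a bit more careful than you are in Step~2 to check that $R'$ really is an iterative differential Picard--Vessiot ring (simplicity and the constant condition need the descent $R\cong R'\otimes_{C(u)}F$, which you do obtain from the explicit description $R'=C(u)[w,w^{-1}]$). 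The paper's approach is more elementary and self-contained; yours makes the link to the differential theory transparent.
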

\begin{proof}
First of all, let us show that $\underline{\rm{Gal}}(E/F)$ is a
subgroup of $\mathbb{G}_m$. Let $y$ be a  solution of the $ID_qE$
associated to $M$, then $E=F(y)$.   Let $\tau \in
\underline{\rm{Gal}}(E/F)$ and $l \in \mathbb{N}$, we have
$\delta^{(np^l)}(\frac{\tau(y)}{y})=0$ and
$\delta^{(1)}(\frac{\tau(y)}{y})=0$. Thus, there exist $c \in C^*$
such that $\tau(y)=cy$.
Therefore, $\underline{\rm{Gal}}(E/F) \subseteq \mathbb{G}_m$.\\

Let us assume that $\alpha=\frac{a}{m}$ where $(a,m) \in
\mathbb{Z}\times \mathbb{N}^*$. Put $z=t^{a/m}$. Because
$z=t^{\alpha}$, we have $\delta^{(j)}(z)=0$ if $j\neq n^k$. We have
$$\delta^{(np^k)}(z^m)=\sum_{i_1+...+i_m=np^k}
\sigma_q^{i_2+...+i_m}(\delta^{(i_1)}(z))...\sigma_q^{i_m}(\delta^{(i_{m-1})}(z))\delta^{(i_m)}(z).$$
If one of the $i_j$ is not equal to $np^k$, there exists $i_l$ such
that $i_l \neq p^j$ for $j \leq np^k$. Then, an easy computation
shows that
 for all $k \in
\mathbb{N}$, $$\delta^{(np^k)}(z^m)=mz^{m-1}\delta^{(np^k)}(z).$$
 It follows that
$$mz^{m-1}\delta^{(np^k)}(z)=\binom{a}{np^k}_q t^{a-np^k}.$$
By Proposition \ref{prop:binform}, we have $\binom{a}{np^k}_q=ma_k$
and thus $\delta_M^{(np^k)}(z)= \frac{a_k}{ t^{np^k}}z$. Because
$E=F(z)$ and $z^m \in F$, we get that $\underline{\rm{Gal}}(E/F)$ is
a cyclic group.

Conversely, suppose that $y$ is an algebraic solution of the $ID_qE$
associated to $M$, then $E=F(y)$ is algebraic over $F$ and
$\underline{\rm{Gal}}(E/F)(C) \subsetneq \mathbb{G}_m(C)$ is a
cyclic group of order $m$. So there exist  $s \in \mathbb{Z}$ and
$(b_i)_{i\geq s}$ with $b_s=1$ such that $y^m= \sum_{i\geq s}b_it^i
\in F$. Thus,
$$my^{m-1}\delta^{(n)}(y)=y^m\frac{a_0}{t^n}=\delta^{(n)}(y^m)=\sum_{i \geq s}
b_i\binom{i}{n}_qt^{i-n}.$$ By comparing the coefficient of  $t^l$,
we obtain
$$  \ ma_o=b_i\binom{i}{n}_q \mbox{for all} \ i \geq s.$$ Since $b_s=1$ and
because of the properties of  $q$-binomials coefficients, we obtain
\begin{enumerate}
\item $s=k_sn$ with $k_s \in \mathbb{Z}$ and $a_0=\frac{k_s}{m}$,
\item $b_i=0$ for all $i \neq 0$ mod $n$.
\end{enumerate}

Induction using the higher iterative differences shows that $b_i=0$
for all $i>s$ and hence that $y^m=t^s$. By an argument used in the
first part of the proof it follows that $\alpha =\frac{s}{m}$.
\end{proof}
\paragraph{The Galois group $\mathbb{G}_m$ in characteristic $0$} Let
$L=\mathbb{C}(t)$ and let $q$ be a $n$-th primitive root of unity.
Let $M =Fb_1$ be a rank one $IDM_q(L)$-module and suppose that
$\Phi(b_1)=b_1$. Let $a \in \mathbb{C}$. Then, let us consider the
$ID_qE$ associated to $M$, that is $\delta^{(1)}(\bold{y})=0$ and
$\delta^{(n)}(\bold{y})=\frac{a}{nt^n}\bold{y}$.

\begin{theorem}\label{theorem:compmO} Let $M$ be as above with its
associated $ID_qE$. Then for an iterative Picard-Vessiot extension
$E/F$ for $M$, we have

$\underline{\rm{Gal}}(E/F)$ is finite cyclic if $a \in  \mathbb{Q}$
and $\underline{\rm{Gal}}(E/F) \simeq \mathbb{G}_m$ if $ a \notin
\mathbb{Q}$.

\end{theorem}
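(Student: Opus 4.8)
The plan is to follow the proof of Theorem~\ref{theorem:compm} in the simpler characteristic-zero setting, where only the two levels $1$ and $n$ occur and $\mathbb{G}_m$ has no infinitesimal subgroups. Throughout, $R\subseteq E$ denotes the iterative $q$-difference Picard--Vessiot ring (so $E=Loc(R)$), $\mathcal{G}=\underline{\rm{Gal}}(E/F)$, and $y\in R$ a fundamental solution of $ID_qE(M)$, i.e.\ $\delta^{(1)}(y)=0$ and $\delta^{(n)}(y)=\frac{a}{nt^n}y$; note also $\delta^{(j)}(y)=0$ for $0<j<n$, either from Notation~\ref{Not:qit} or directly from $\delta^{(1)}(y)=0$ together with the relations of Definition~\ref{defin:qder} (for $0<j<n$ one has $\binom{j}{1}_q=[j]_q\neq0$, so $\delta^{(j)}$ is a nonzero multiple of $\delta^{(1)}\circ\delta^{(j-1)}$).

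\textbf{Step 1: $\mathcal{G}$ sits inside $\mathbb{G}_m$.} Since $M$ has rank one, $E=F(y)$ and $R$ is generated over $F$ by $y,y^{-1}$. For a $C$-algebra $A$ and $\tau\in\mathcal{G}(A)=\mathrm{Aut}_{ID_q}(R\otimes_C A/F\otimes_C A)$, the element $c:=\tau(y)/y$ is a unit of $R\otimes_C A$; expanding $\delta^{(1)}(\tau(y))$ and $\delta^{(n)}(\tau(y))$ by the product rule and using that $\tau$ commutes with the operators and that $\delta^{(j)}(y)=0$ for $0<j<n$, one finds $\delta^{(1)}(c)=\delta^{(n)}(c)=0$; since in characteristic zero $\delta^{(1)}$ and $\delta^{(n)}$ generate the whole iterative $q$-difference operator (see the discussion after Proposition~\ref{prop:compder}), $c$ is a constant, hence $c\in A^{\times}$ (the constants of $R\otimes_C A$ are $C\otimes_C A$, cf.\ Lemma~\ref{lemma:idea}). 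The natural transformation $\tau\mapsto c$ is a monomorphism (if $\tau(y)=y$ then $\tau=\mathrm{id}$), hence realizes $\mathcal{G}$ as a closed subgroup scheme of $\mathbb{G}_{m,C}$. Over the algebraically closed field $C$ of characteristic zero every such group scheme is reduced and is either $\mathbb{G}_m$ or a finite cyclic $\mu_m$; so it remains to decide, in terms of $a$, whether $y$ is algebraic over $F$.

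\textbf{Step 2: $a\in\mathbb{Q}$ gives a finite cyclic group.} Write $a/n=s/m$ with $m\in\mathbb{N}^{*}$ and let $z$ be a root of $Z^{m}-t^{ns}$ in an algebraic closure of $F$. As $\sigma_q(t^{ns})=q^{ns}t^{ns}=t^{ns}$, one may extend $\sigma_q$ to $F(z)$ by $\sigma_q(z)=z$, so $\delta^{(1)}(z)=\frac{\sigma_q(z)-z}{(q-1)t}=0$, whence $\delta^{(j)}(z)=0$ for $1\le j<n$ as above. In the $m$-fold product-rule expansion of $\delta^{(n)}(z^{m})$ every term containing some $\delta^{(j)}(z)$ with $0<j<n$ vanishes, and by Lemma~\ref{lemma: ech} and $q^{n}=1$ the surviving $\sigma_q$-twists act trivially, so $\delta^{(n)}(z^{m})=m\,z^{m-1}\,\delta^{(n)}(z)$. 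On the other hand $\delta^{(n)}(z^{m})=\delta^{(n)}(t^{ns})=\binom{ns}{n}_q t^{ns-n}=s\,t^{ns-n}$ by (\ref{eqn:mulb}) (also for $s\le0$, by a short computation from $\delta^{(1)}(t^{ns})=0$). Comparing gives $\delta^{(n)}(z)=\frac{s}{m\,t^n}z=\frac{a}{n\,t^n}z$, so $z$ solves $ID_qE(M)$; then $F(z)$ with this iterative $q$-difference structure satisfies the conditions of Definition~\ref{defin:pv} (it is a field, hence $ID_q$-simple; it is generated by $z,z^{-1}$; and $C(F(z))=C$ by Proposition~\ref{prop:const}), so $F(z)$ is the Picard--Vessiot extension for $M$ and $E=F(z)$ is finite over $F$. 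Thus $\mathcal{G}$ is a finite subgroup scheme of $\mathbb{G}_m$, i.e.\ finite cyclic.

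\textbf{Step 3: if $y$ is algebraic over $F$ then $a\in\mathbb{Q}$} (together with Step~1 this yields $\mathcal{G}\simeq\mathbb{G}_m$ for $a\notin\mathbb{Q}$). If $y$ is algebraic, $\mathcal{G}=\mu_m$ for some $m$, so with $c_\tau=\tau(y)/y$ we have $\tau(y^m)=c_\tau^{\,m}y^m=y^m$ for all $\tau$; hence $y^m\in R^{\mathcal{G}}\subseteq E^{\mathcal{G}}=F$ by Proposition~\ref{prop:inva}. From $\delta^{(1)}(y^m)=0$ we get $\sigma_q(y^m)=y^m$, so the rational function $y^m$ only involves exponents divisible by $n$: $y^m=\sum_j b_{nj}t^{nj}$. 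Applying $\delta^{(n)}$, on one side $\delta^{(n)}(y^m)=m\,y^{m-1}\delta^{(n)}(y)=\frac{ma}{n\,t^n}y^m$, and on the other $\delta^{(n)}(y^m)=\sum_j j\,b_{nj}t^{nj-n}$ by (\ref{eqn:mulb}); comparing coefficients of $t^{nj}$ yields $\frac{ma}{n}b_{nj}=j\,b_{nj}$ for every $j$. So $b_{nj}=0$ except for a single $j_0$ with $j_0=\frac{ma}{n}$, whence $a=\frac{nj_0}{m}\in\mathbb{Q}$.

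\textbf{Main obstacle.} The combinatorial bookkeeping in the product-rule expansion and the evaluation of $\binom{ns}{n}_q$ (including negative $s$) are routine. The genuinely delicate point is Step~2: the algebraic extension $F(z)/F$ is not covered by Proposition~\ref{prop:ext}, and a priori extending $\sigma_q$ to $F(z)$ is not unique (cf.\ the Remark following Proposition~\ref{prop:ext}); one must check that the choice $\sigma_q(z)=z$ indeed yields a well-defined iterative $q$-difference operator on $F(z)$ for which $z$ is a genuine solution and $C(F(z))=C$, so that $F(z)$ is legitimately an iterative $q$-difference Picard--Vessiot extension of $F$ for $M$.
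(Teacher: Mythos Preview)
Your proof is correct and follows essentially the same route as the paper's: embed $\mathcal{G}$ in $\mathbb{G}_m$ via $\tau\mapsto\tau(y)/y$, exhibit $z=t^{a}$ as an explicit algebraic solution when $a\in\mathbb{Q}$, and for the converse expand $y^m\in F$ and compare the two expressions for $\delta^{(n)}(y^m)$. Your write-up is in fact a bit more careful than the paper's---you work functorially in Step~1, you explicitly flag the extension-of-operator issue in Step~2 (which the paper passes over silently), and your Step~3 first uses $\sigma_q$-invariance to restrict to exponents in $n\mathbb{Z}$ rather than extracting this from the vanishing of the $q$-binomials; these are cosmetic differences, not a different strategy.
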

\begin{proof}
First of all, let us show that $\underline{\rm{Gal}}(E/F)$ is a
subgroup of $\mathbb{G}_m$. Let $y$ be a  solution of the $ID_qE$
associated to $M$, then $E=F(y)$. Let $\tau \in
\underline{\rm{Gal}}(E/F)$. Then, we have
\begin{enumerate}
\item $$\delta^{(1)}(\frac{\tau(y)}{y})=\sigma_q(\frac{1}{y})\tau(\delta^{(1)}y) +
\delta^{(1)}(\frac{1}{y})\tau(y)=0, \ (\delta^{(1)}(y)=0),$$
\item $$\delta^{(n)}(\frac{\tau(y)}{y})=(\frac{1}{y})\tau(\delta^{(n)}y) +
\delta^{(n)}(\frac{1}{y})\tau(y)=-\frac{a}{nt^n}\frac{\tau(y)}{y} +
\frac{1}{y} \tau(\frac{a}{nt^n}y)=0$$
\end{enumerate}
Thus, there exist $c \in C^*$ such that $\tau(y)=cy$.
Therefore, $\underline{\rm{Gal}}(E/F) \leq \mathbb{G}_m$.\\

Let us assume that $a=\frac{nb}{m}$ where $(b,m) \in
\mathbb{Z}\times \mathbb{N}^*$. Put $z=t^{nb/m}$. Because $z=t^{a}$,
we have $\delta^{(j)}(z)=0$ if $j\notin n\mathbb{N}$. We have
$$\delta^{(n)}(z^m)=\sum_{i_1+...+i_m=n}
\sigma_q^{i_2+...+i_m}(\delta^{(i_1)}(z))...\sigma_q^{i_m}(\delta^{(i_{m-1})}(z))\delta^{(i_m)}(z).$$
If one of the $i_j$ is not equal to $n$, there exists $i_l$ such
that $i_l \neq n$. Then, an easy computation  shows that

$$\delta^{(n)}(z^m)=mz^{m-1}\delta^{(n)}(z).$$
 It follows that,
$$mz^{m-1}\delta^{(n)}(z)=\binom{nb}{n}_q t^{nb-n}.$$
By Proposition \ref{prop:binform}, we have $\binom{nb}{n}_q= b = m
\frac{a}{n}$ and thus $\delta_M^{(n)}(z)=\frac{a}{nt^n} z$. Thus
$E=F(z)$ and $z^m \in F$. It follows  that
$\underline{\rm{Gal}}(E/F)$ is a finite cyclic group.

Conversely, suppose that $y$ is an algebraic solution of the $ID_qE$
associated to $M$, then $E=F(y)$ is algebraic over $F$ and
$\underline{\rm{Gal}}(E/F) \subsetneq \mathbb{G}_m$ is a cyclic
group of order $m$. So there exist  $s \in \mathbb{Z}$ and
$(b_i)_{i\geq s}$ with $b_s=1$ such that $y^m= \sum_{i\geq s}b_it^i
\in F$. Thus,
$$my^{m-1}\delta^{(n)}(y)=y^m\frac{a}{n t^n}=\delta^{(n)}(y^m)=\sum_{i \geq s}
b_i\binom{i}{n}_qt^{i-n}.$$ By comparing  the coefficient of  $t^l$,
we obtain that $  \frac{a}{n}= b_i\binom{i}{n}_q$ for all $i \geq
s$. Since $b_s=1$ and  because of  properties of the $q$-binomials
coefficients, we get that:
\begin{enumerate}
\item $s=k_sn$ with $k_s \in \mathbb{N}$ and $a=\frac{n k_s}{m}$;
\item $b_i=0$ for all $i \neq 0$ mod $n$.
\end{enumerate}

Induction using the higher iterative difference shows that $b_i=0$
for all $i>s$. It follows  that $y^m=t^s$ and  $a =\frac{n k_s}{m}$.
\end{proof}

\paragraph{The Galois group $\mathbb{G}_a$ in
 positive characteristic}

Let us denote by $C=\overline{\mathbb{F}_p}$ the algebraic closure
of $\mathbb{F}_p$, where $p$ is a prime number. Let $F=C(t)$ be a
rational function field with coefficients in $C$. Let $(a_l)_{l \geq
0}$ be a set of elements in $\mathbb{F}_p$. We choose $q \in C$ a
$n$-th primitive root of unity  with $n$ prime to $p$.\\

Let $M =Fb_1 \oplus F b_2$ be the $ID_q$-module with corresponding
$ID_qE$:
$$ \delta^{(np^k)}(Y)= A_k Y= \left( \begin{array}{cc}
0 & a_k \\
0 & 0 \end{array} \right) Y$$ for $k \in \mathbb{N}$.

\begin{theorem}\label{theorem:compa}
Let $M$ be as above with its associated $ID_qE$. Let $\alpha
=\sum_{l\geq 0}a_lp^l \in \mathbb{Q}_p$. Then for an iterative
Picard-Vessiot extension $E/F$ for $M$, we have

$\underline{\rm{Gal}}(E/F)$  is a  finite subgroup of order $r$ of
$\mathbb{G}_a$ if $\alpha \in \mathbb{Q}$ and
$\underline{\rm{Gal}}(E/F) \simeq \mathbb{G}_a$ if $\alpha \notin
\mathbb{Q}$.

\end{theorem}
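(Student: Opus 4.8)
The plan is to adapt the proof of Theorem \ref{theorem:compm} to the additive group, replacing the multiplicative solution $z=t^{a/m}$ used there by a solution of additive-polynomial (Artin--Schreier) type. First I would set things up as follows. Since the $ID_qE(M)$ has $A_1=0$ and each $A_k=\left(\begin{smallmatrix}0&a_k\\0&0\end{smallmatrix}\right)$ strictly upper triangular, one may take a fundamental solution matrix of the form $Y=\left(\begin{smallmatrix}1&y\\0&1\end{smallmatrix}\right)$ with $y\in R$ satisfying $\delta^{(np^k)}(y)=a_k$ for all $k$ and $\delta^{(1)}(y)=0$; as $\det Y=1$, this gives $E=F(y)$. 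For any $C$-algebra $A$ and $\tau\in\underline{\rm{Gal}}(E/F)(A)$ the element $\tau(y)-y$ lies in $C(R\otimes_CA)=A$, and $\tau\mapsto\tau(y)-y$ is a functorial monomorphism $\underline{\rm{Gal}}(E/F)\hookrightarrow\mathbb G_{a,C}$, so $\underline{\rm{Gal}}(E/F)$ is a closed subgroup scheme of $\mathbb G_{a,C}$; since $C[X]$ is a principal ideal domain, such a subgroup scheme is either finite or all of $\mathbb G_{a,C}$. Hence it suffices to show that $\underline{\rm{Gal}}(E/F)$ is finite exactly when $\alpha\in\mathbb Q$, and since $\alpha=\sum_la_lp^l\in\mathbb Z_p$ is rational precisely when $(a_l)_{l\ge0}$ is eventually periodic, both assertions follow. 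Two tools are used throughout: from $\delta^{(i-1)}\circ\delta^{(1)}=[i]_q\delta^{(i)}$ and $\delta^{(np^b)}\circ\delta^{(nm')}=\binom{m}{p^b}\delta^{(nm)}$ (with $m=p^b+m'$, $p^b$ the largest power of $p$ not exceeding $m$, so $\binom{m}{p^b}\ne0$), an element $\xi$ with $\delta^{(1)}(\xi)=0$ has $\delta^{(j)}(\xi)=0$ for all $j\notin n\mathbb N$ and $\delta^{(nm)}(\xi)=0$ whenever $m>0$ is not a power of $p$; and applying the iterative Taylor homomorphism of Proposition \ref{prop:tay} to a power of such a $\xi$, the twist in $*$ disappears (Lemma \ref{lemma: ech}), yielding in characteristic $p$ the power rule $\delta^{(np^a)}(\xi^{p^b})=(\delta^{(np^{a-b})}(\xi))^{p^b}$ for $a\ge b$ and $=0$ for $a<b$ (in particular $\delta^{(1)}(\xi^{p^b})=0$).

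\textbf{If $\alpha\in\mathbb Q$.} Say $a_{l+d}=a_l$ for all $l\ge l_0$. By the power rule and $a_j^{p^d}=a_j$ (as $a_j\in\mathbb F_p$), one has $\delta^{(np^k)}(y^{p^d}-y)=a_{k-d}-a_k=0$ for $k\ge l_0+d$, while for $k<l_0+d$ it is a constant of $C$, and $\delta^{(1)}(y^{p^d}-y)=0$. Thus there is an explicit polynomial $g\in F$ in the $t^{np^k}$ $(k<l_0+d)$ with $v:=y^{p^d}-y+g$ killed by $\delta^{(1)}$ and by all $\delta^{(np^k)}$, hence by every $\delta^{(j)}$, $j\ge1$; so $v\in C(E)=C$, and $y^{p^d}-y=v-g\in F$. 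Therefore $y$ is algebraic over $F$, the extension $E/F$ is finite and $\underline{\rm{Gal}}(E/F)$ is a finite group scheme, of some order $r$.

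\textbf{Conversely, if $\underline{\rm{Gal}}(E/F)$ is finite.} Write $\mathcal H:=\underline{\rm{Gal}}(E/F)\subset\mathbb G_{a,C}$; it is cut out by an additive polynomial $f(X)=\sum_{i=0}^De_iX^{p^i}\ne0$ with $e_D=1$. Since $f$ vanishes on $\mathcal H$, the element $w:=f(y)$ is $\mathcal H$-invariant, so by the Galois correspondence (Theorem \ref{theorem:galcor}) $w\in E^{\mathcal H}=F$. The power rule gives $\delta^{(1)}(w)=0$ and $\delta^{(np^k)}(w)=\sum_{i=0}^{\min(k,D)}e_ia_{k-i}=:\gamma_k\in C$ for all $k$. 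Now $\delta^{(1)}(w)=0$ forces $\sigma_q(w)=w$, i.e. $w\in C(t^n)$, and on $C(t^{np^k})$ the operator $\delta^{(np^k)}$ acts as $d/d(t^{np^k})$, whose kernel there is $C(t^{np^{k+1}})$; solving $\delta^{(np^k)}(w)=\gamma_k$ successively and using $\bigcap_kC(t^{np^k})=C$ forces all but finitely many $\gamma_k$ to vanish and $w=\gamma_{-1}+\sum_{k\ge0}\gamma_kt^{np^k}$ with $\gamma_{-1}\in C$. Hence $\sum_{i=0}^De_ia_{k-i}=0$ for all large $k$, a fixed finite-state recursion whose states $(a_k,\dots,a_{k-D+1})$ lie in $\mathbb F_p^{D}$; so $(a_l)$ is eventually periodic, i.e. $\alpha\in\mathbb Q$.

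\textbf{Main obstacle.} The hard part is the last paragraph: pinning down the exact shape of the invariant $w=f(y)\in F$ — proving that an element of $F$ killed by $\delta^{(1)}$ whose higher derivatives $\delta^{(np^k)}(w)$ are all constants must be a polynomial in the $t^{np^k}$ — together with keeping careful track of the composition law $\delta^{(i)}\circ\delta^{(j)}=\binom{i+j}{i}_q\delta^{(i+j)}$, of the role played by the order $n$ of $q$, and of the characteristic-$p$ power rule. By contrast, once the power rule is available the explicit Artin--Schreier solution in the $\alpha\in\mathbb Q$ case, and the subgroup-scheme dichotomy for $\mathbb G_a$, are routine.
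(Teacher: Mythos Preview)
Your proof is correct but takes a substantially different route from the paper's. The paper's argument is much shorter: it simply writes down the explicit formal solution $y_1=\sum_{l\ge0}a_lt^{np^l}\in C[[t]]$ (taking $y_2=1$), checks $E=F(y_1)$ and that $\underline{\rm Gal}(E/F)\hookrightarrow\mathbb G_a$ exactly as you do, and then invokes Lemma~\ref{lemma:crit} (quoted from \cite{M}) stating that this lacunary series is separably algebraic over $C(t)$ if and only if $(a_l)$ is eventually periodic, i.e.\ $\alpha\in\mathbb Q$. Transcendence of $y_1$ forces the Galois group to be all of $\mathbb G_a$; algebraicity forces it to be finite. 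Your approach is intrinsic: you never exhibit a concrete solution, working instead with the abstract $y$ in the Picard--Vessiot ring. The power rule $\delta^{(np^a)}(\xi^{p^b})=(\delta^{(np^{a-b})}(\xi))^{p^b}$ replaces the explicit series, letting you prove $y^{p^d}-y\in F$ directly (Artin--Schreier style) when $(a_l)$ has eventual period $d$; for the converse you exploit the classification of closed subgroup schemes of $\mathbb G_a$ by additive polynomials and analyze $w=f(y)\in F$. This is more work but stays inside the Picard--Vessiot formalism and avoids the external lemma on lacunary series. One small point to watch in your final step: the relation $\sum_{i=0}^De_ia_{k-i}=0$ is a \emph{forward} finite-state recursion on $\mathbb F_p^D$ only when $e_0\ne0$ (equivalently $\mathcal H$ reduced). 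If $e_0=0$ you still have backward determinism from $e_D=1$, and a sequence in a finite set with a deterministic predecessor map is eventually periodic (the $\phi$-orbit of any recurrent state is periodic and the reversed sequence sits inside it); alternatively, observe that $\sum_la_lx^l\in\mathbb F_p[[x]]\cap C(x)\subset\mathbb F_p(x)$ by a Hankel-determinant argument, whence eventual periodicity over $\mathbb F_p$.
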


For the proof, we need the following lemma.

\begin{lemma}\label{lemma:crit}
Let $(a_l)_{l \geq 0}$ be a sequence of elements in $\mathbb{F}_p$.
The following statements are equivalent :
\begin{enumerate}
\item The sequence  $(a_l)_{l \geq
0}$ is periodic from a certain rank;
\item $g=\sum_{l \in \mathbb{N}}a_lt^{np^l}  \in C((t))$ is
separable algebraic over $C(t)$.
\end{enumerate}
\end{lemma}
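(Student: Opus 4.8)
The plan is to prove the two implications separately. Note that $(2)$ is formally stronger than ``$g$ is algebraic over $C(t)$'', whereas for the direction $(2)\Rightarrow(1)$ plain algebraicity of $g$ will already be enough; so it suffices to establish $(1)\Rightarrow(2)$ together with ``$g$ algebraic over $C(t)$'' $\Rightarrow(1)$.

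For $(1)\Rightarrow(2)$, assume $a_{l+m}=a_l$ for all $l\geq N$ and split $g=Q(t)+h$ with $Q(t):=\sum_{l=0}^{N-1}a_lt^{np^l}\in C[t]$ and $h:=\sum_{l\geq N}a_lt^{np^l}$. Since each $a_l\in\mathbb{F}_p$ satisfies $a_l^{p^m}=a_l$, raising $h$ to the $p^m$-th power multiplies all exponents by $p^m$ without changing the coefficients, and the periodicity then yields $h^{p^m}=h-R(t)$ with $R(t):=\sum_{l=N}^{N+m-1}a_lt^{np^l}\in C[t]$. Thus $h$ is a root of $X^{p^m}-X+R(t)\in C(t)[X]$; the $X$-derivative of this polynomial is the unit $-1$, so it has no repeated roots and $h$ is separable algebraic over $C(t)$. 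As $g-h=Q(t)\in C(t)$ we have $C(t)(g)=C(t)(h)$, so $g$ is separable algebraic over $C(t)$ as well. (If $(a_l)$ is eventually $0$ this degenerates to the trivial case $g\in C[t]$.)

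For the converse I would argue via Christol's theorem on algebraic power series over finite fields. If $g$ is algebraic over $C(t)=\overline{\mathbb{F}_p}(t)$, then a nonzero algebraic relation for $g$ involves only finitely many coefficients, each contained in a finite subfield, so $g$ is already algebraic over $\mathbb{F}_q(t)$ for some $q=p^r$. Writing $g=\sum_{m\geq 0}c_mt^m$, we have $c_{np^l}=a_l$ for all $l\geq0$ and $c_m=0$ otherwise, so by Christol's theorem the sequence $(c_m)$ is $p$-automatic, i.e. its $p$-kernel is finite; in particular the dilated subsequences $\kappa_j:=(c_{p^jm})_{m\geq0}$, $j\geq0$, take only finitely many values. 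Using $\gcd(n,p)=1$ one checks that $c_{p^jm}$ is nonzero only when $m=np^{l'}$ for some $l'\geq0$, in which case $c_{p^jm}=c_{np^{j+l'}}=a_{j+l'}$; hence $\kappa_j$ is equivalent data to the shifted sequence $(a_{j+l'})_{l'\geq0}$. Finiteness of $\{\kappa_j:j\geq0\}$ therefore forces the shifts of $(a_l)$ to form a finite set, which means precisely that $(a_l)$ is ultimately periodic.

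The main obstacle is the converse implication, where the essential input is Christol's automatic-sequence criterion (plus the routine descent from $\overline{\mathbb{F}_p}(t)$ to a finite base field); everything else there is the bookkeeping that identifies the relevant part of the $p$-kernel of $(c_m)$ with the set of shifts of $(a_l)$. The forward implication is elementary once one spots the Artin--Schreier-type relation $h^{p^m}=h-R(t)$; the only points needing care are the separability of $X^{p^m}-X+R(t)$ and the fact that replacing $h$ by $g=h+Q(t)$ preserves separability.
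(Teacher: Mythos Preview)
Your proof is correct. The paper does not actually give an argument here; it simply refers to Matzat's preprint \cite{M}, p.~30, with the remark ``replace $t$ by $t^n$''. So your write-up is considerably more self-contained than what appears in the paper.

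For the forward direction your Artin--Schreier style relation $h^{p^m}=h-R(t)$ is exactly the standard trick, and you correctly note that separability is immediate from the $X$-derivative being $-1$. For the converse, invoking Christol's theorem is a clean and legitimate route: your descent from $\overline{\mathbb{F}_p}(t)$ to some $\mathbb{F}_q(t)$ is the right way to put yourself in the setting where Christol applies, and the identification of the subsequences $\kappa_j$ with the shifts of $(a_l)$ is accurate (here you are implicitly using the ambient hypothesis $\gcd(n,p)=1$ stated at the start of the $\mathbb{G}_a$ paragraph, which you flag). One could also argue the converse more directly without Christol, by manipulating a minimal algebraic relation for $g$ under the Frobenius, and that is presumably closer in spirit to Matzat's original argument; but your approach is equally valid and arguably more transparent.
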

\begin{proof}
 see \cite{M} p.30 and replace $t$ by $t^n$.\\
\textbf{Proof of Theorem \ref{theorem:compa}}\\
We start with the iterative differential equation,

$$ \delta^{(np^k)}(Y)= A_k= \left( \begin{array}{cc}
0 & a_k \\
0 & 0 \end{array} \right) Y$$ for $k \in \mathbb{N}$.

Writing $Y=\left(
\begin{array}{c}y_1 \\
y_2 \end{array}\right)$, we find that $\delta^{(k)}(y_2)=0$ for all
$k \in \mathbb{N}$, which implies $y_2 \in C$. Using this result we
obtain $ \delta^{(np^k)}(y_1)=a_ky_2$ for all $k \in \mathbb{N} \
\mbox{and} \ \delta^{(1)}(y_1)=a_{-1}y_2.$ Thus, the formal solution
$y_1$ is equal to $$y_1 = y_2(\sum_{l \in \mathbb{N}}a_lt^{np^l}
).$$Then $E=F(y_1,y_2)=F(y_1)$, and for any $\tau \in
\underline{\rm{Gal}}(E/F)$ we get
$$\delta^{(np^l)}(\tau(y_1)-y_1)=\tau(\delta^{(np^l)}(y_1))-\delta^{(np^l)}(y_1)=\tau(y_2a_l)-y_2a_l=0.$$
thus there exists $d \in c$ such that $\tau(y_1)=y_1 +d$. Therefore
$\underline{\rm{Gal}}(E/F)$ is a subgroup of $ \mathbb{G}_a$.

 Using  Lemma \ref{lemma:crit}, we obtain
 \begin{enumerate}
\item the solution $y_1$ is separable algebraic over $F$ if $\alpha \in
\mathbb{Q}$ (the sequence  $(a_l)_{l \geq 0}$ is periodic from a
certain index if and only if  $\alpha \in \mathbb{Q}$), so the
Galois group is actually finite.
\item If $\alpha \notin \mathbb{Q}$, then $y_1$ is transcendent
over $F$, and hence $E/F$ is purely transcendental of degree $1$,
showing that $\underline{\rm{Gal}}(E/F) \simeq \mathbb{G}_a$.
\end{enumerate}
\end{proof}
\begin{remark}
These examples of iterative $q$-difference equations are obtained by
$q$-deformation of the examples of B.H. Matzat in \cite{M} example
$2.14$ and $2.15$. The Galois groups obtained here are the same as
those obtained by Matzat. The fact that simple Galois groups such as
$\bold{G}_m$ and $\bold{G}_a$ do not degenerate by $q$-deformation
give us a nice hope for confluence studies.
\end{remark}

\section{An analogue of the Grothendieck-Katz conjecture}

In this section, we state an analogue of the Grothendieck-Katz
conjecture for iterative $q$-difference equations. In \cite{Lu}, L.
Di Vizio proves this conjecture for $q$-difference equations with
$q$ non equal to a root of unity and algebraic over $\mathbb{Q}$.
Briefly, she shows that given a $q$-difference equation,
$\mathcal{L}y=0$ with coefficients in $\mathbb{Q}(t)$, one can
describe the behavior of the solutions of $\mathcal{L}$ by
considering the reduction of $\mathcal{L}$ modulo
the prime numbers.\\
\begin{Not}
Let $K$ be a number field and $\mathcal{O}_K$ the ring of integers
of $K$. Let $q\in K^*$ a $n$-th root of unity. We denote by
$\Sigma_f$ the set of all finite places $v$ of $K$. The uniformizer
of the finite place $v$ is denoted by $\pi_v$ and $| . |_v$ denotes
the $v$-adic absolute value of $K$. We denote by $p_v$ be the
characteristic of the residue field $k_v$ of $\pi_v$.\end{Not}

 Let $(\mathcal{M}, \phi_M, \delta_M^*)$ be an iterative $q$-difference
module defined over $K(t)$. By point $5$ of Definition
\ref{defin:mod}, we get that
$(\delta_M^{(1)})^n=[n]_q!\delta_M^{(n)}=0$. By Proposition $2.1.2$
of \cite{Lu}, this implies that $\phi_M^n=id_M$ and that $M$ is
trivial as ordinary $q$-difference module. In that case, the $q$-analogous of
the Grothendieck-Katz conjecture of L. Di Vizio (see theorem $7.1.1$ of \cite{Lu}) is  trivially
satisfied.\\

This fact already appears in the work of Matzat-van der Put on
iterative differential equations (see \cite{MP} Remarks p.51). If one considers only the first
derivation $\partial_M^{(1)}$ of an iterative differential module $M$, it is a nilpotent operator of order $p$, the characteristic of the base field, and the iterative differential
module $M$ is always trivial regarded as differential module in the classical sense. This
observation emphasizes the fact that one has to consider the
operator, iterative difference or derivations of higher order, and
not simply its first rank  to characterize the behavior of the iterative module.\\

In our case, it means that all the information is encompassed in the
iterative $q$-difference of order $n$, i.e. $\delta_M^{(n)}$. If we
 change the basis of $M$  so that the action of $\phi_M$ on this new
 basis  is given by the identity map, one has

 $$\delta_M^{(n)}(\lambda e)= \delta^{(n)}(\lambda) e+ \lambda
 \delta_M^{(n)}(e) \ \mbox{for all} \ \lambda \in K(t), e \in
 \mathcal{B}.$$
 That is the operator $\delta_M^{(n)}$ behave quite like a
 connection. For differential equations over a zero characteristic base field,
  the Grothendieck conjecture can be restate in terms of $p$-curvatures, i.e.,\\
 \textit{A differential equation $Ly=0$ with $L \in \mathbb{Q}[\partial]$
has a full set of algebraic solutions if and only if for almost all
primes $p \in \mathbb{Z}$ the reduction modulo $p$ of $Ly=0$ has a
full set of solutions in  $\bold{F}_p(t)$ i.e. the $p$-curvature of
$L$, i.e. the $p$-iterate of the connection of the differential equation is equal to zero.}\\

  In analogous with this case, one may introduce the following definition.
\begin{defin}
Let  $(\mathcal{M}, \phi_M, \delta_M^*)$ be an iterative
$q$-difference module defined over $K(t)$. One defines  the $\pi_v$-curvature $\psi_{\pi_v}$ of $M$ as
the $p_v$-iterate, in the sense of the composition, of the operator $\delta_M^{(n)}$ i.e.
 $$\psi_{\pi_v}:=(\delta_M^{(n)})^{p_v}.$$
\end{defin}
Now, we are able to state our analogous of the Grothendieck-Katz conjecture for the iterative $q$-difference modules.

\begin{Conj}\label{Conj:luci}
Let  $(\mathcal{M}, \phi_M, \delta_M^*)$ be an iterative
$q$-difference module defined over $K(t)$. The iterative
$q$-difference module
 $\mathcal{M}$  is isotrivial, i.e. becomes trivial after a finite base field extension if and only if
 for almost all finite places $v$,
the $\pi_v$-curvature $\psi_{\pi_v}$ induces the zero map on the
reduction of $\mathcal{M}$ modulo $\pi_v$.
\end{Conj}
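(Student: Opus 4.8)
\noindent\textbf{Towards a proof of Conjecture \ref{Conj:luci}.} The plan is to transport the statement into the iterative \emph{differential} world via Proposition \ref{prop:compder}, to identify the $\pi_v$-curvature with a classical $p_v$-curvature after reduction modulo $\pi_v$, and then to invoke the known ``algebraic solutions'' case of the Grothendieck--Katz conjecture. \emph{Step 1 (passage to a differential module).} Write $L=K(t)$, set $L_0:=\bigcap_{j\notin n\mathbb{N}}\mathrm{Ker}(\delta_L^{(j)})$ and $M_0:=\bigcap_{j\notin n\mathbb{N}}\mathrm{Ker}(\delta_M^{(j)})$. Then $L_0=L^{\sigma_q}=K(t^n)$, and with $x:=t^n$ the operators $\partial^{(k)}:=\delta_L^{(nk)}$ are the Hasse derivatives of $K(x)$, so by Proposition \ref{prop:compder} the pair $\bigl(M_0,(\partial_M^{(k)}:=\delta_M^{(nk)})_k\bigr)$ is an iterative differential module over the standard iterative differential field $\bigl(K(x),(\partial^{(k)})_k\bigr)$. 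One checks that $\mathcal{M}$ is isotrivial as an $ID_q$-module over $L$ if and only if $M_0$ is isotrivial as an iterative differential module over $L_0$: indeed $V_{\mathcal M}=V_{M_0}$ and $\mathcal M\simeq M_0\otimes_{L_0}L$, and a finite $ID_q$-field extension of $L$ trivialising $\mathcal M$ restricts to a finite iterative differential extension of $L_0$ trivialising $M_0$, and conversely.

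\emph{Step 2 (the $\pi_v$-curvature is a $p_v$-curvature).} Fix a finite set $S$ of places of $K$ so that $\mathcal M$, $M_0$ and the relevant structure operators spread out to $\mathcal{O}_{K,S}$-lattices, and enlarge $S$ to contain the finitely many $v$ with $p_v\mid n$ where the residue of $q$ degenerates; for $v\notin S$ the residue of $q$ is a primitive $n$-th root of unity in $k_v$. As already noticed for iterative operators by Matzat--van der Put and in \cite{Lu}, the higher operators $\delta_M^{(np_v)},\delta_M^{(np_v^2)},\dots$ are \emph{not} $\pi_v$-integral: from $(\delta_M^{(n)})^{p_v}=p_v!\,\delta_M^{(np_v)}$ (part $5$ of Definition \ref{defin:mod} together with $\binom{mn}{n}_q=m$, Proposition \ref{prop:binform}) they carry a $\pi_v$-denominator that is exactly cancelled by the $p_v$-factorial. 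Hence the reduction $\overline{M_0}$ of $M_0$ modulo $\pi_v$ is naturally an \emph{ordinary} differential module over $k_v(x)$, with connection the reduction of $\partial_M^{(1)}=\delta_M^{(n)}$, and $\psi_{\pi_v}=(\delta_M^{(n)})^{p_v}$ reduces to precisely the classical $p_v$-curvature of $\overline{M_0}$.

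\emph{Step 3 (invoking Grothendieck--Katz).} By Steps 1 and 2, Conjecture \ref{Conj:luci} becomes the assertion that $M_0$ is isotrivial over $K(x)$ if and only if the $p_v$-curvature of $\overline{M_0}$ vanishes for almost all $v$. The direction ``isotrivial $\Rightarrow$ vanishing curvatures'' is the easy one: pulling back along a finite cover trivialising $M_0$ (whose ramification and bad reduction only enlarge $S$) reduces one to the zero connection, whose $p_v$-curvature vanishes, and vanishing of the $k_v(x)$-linear map $\psi_{\pi_v}$ descends along the finite base change. For the converse, the comparison of Matzat--van der Put (\cite{M}, \cite{MP}) between iterative and ordinary differential modules over $K(x)$ in characteristic zero shows that $M_0$ is isotrivial as an iterative differential module exactly when the associated ordinary differential module has finite differential Galois group, i.e.\ a full set of algebraic solutions; and the statement that the reductions of an ODE over a number field have vanishing $p$-curvature for almost all $p$ precisely when the ODE has a full set of algebraic solutions is the known case of the Grothendieck--Katz conjecture with finite (hence solvable) monodromy (results of Chudnovsky, Andr\'e and Bost).

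\emph{Main obstacle and an alternative.} The delicate part is to carry Step 3 out uniformly in $v$: one must make the reduction modulo $\pi_v$ functorial and compatible with Proposition \ref{prop:compder} --- a common $\mathcal{O}_{K,S}$-lattice for $\mathcal M$, $M_0$ and the trivialising cover, control of the $\pi_v$-denominators of the higher operators, and the verification that $\delta_M^{(n)}$ reduces to the connection of $\overline{M_0}$ --- and then apply the algebraic-solutions case of Grothendieck--Katz for all $v\notin S$ at once. An alternative route, closer in spirit to \cite{Lu}, would bypass differential modules altogether and argue $p$-adically with the operator $\delta_M^{(n)}$ itself: global nilpotence of the $q$-difference operator, size estimates for its $q$-Gevrey formal solutions, and a $q$-analogue of the Schwarz lemma, now with ``$p_v$-curvature'' read as ``$\pi_v$-curvature''. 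Since Proposition \ref{prop:compder} already performs most of the translation, I would pursue the first route.
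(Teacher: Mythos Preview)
The statement you address is stated in the paper as a \emph{conjecture}; the paper offers no proof, only a verification in the rank-one Example~\ref{ex:car0}. So your proposal is to be assessed on its own merits rather than compared with an existing argument.

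Your Steps~1 and~2 are reasonable and carry out a useful reduction. Via Proposition~\ref{prop:compder} the $ID_q$-module $(\mathcal M,\delta_M^*)$ over $K(t)$ yields an ordinary differential module $(M_0,\nabla)$ over $K(t^n)\simeq K(x)$ with connection $\nabla=\delta_M^{(n)}$; after spreading out over $\mathcal O_{K,S}$ and reducing modulo $\pi_v$, the operator $\psi_{\pi_v}=(\delta_M^{(n)})^{p_v}$ is exactly the classical $p_v$-curvature of the reduction of $\nabla$. What this shows is that Conjecture~\ref{Conj:luci} is \emph{equivalent} to the classical Grothendieck--Katz $p$-curvature conjecture for $(M_0,\nabla)$ over $K(x)$ --- an expected and worthwhile observation, but not a proof.

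The genuine gap is in Step~3. You assert that ``the reductions of an ODE over a number field have vanishing $p$-curvature for almost all $p$ precisely when the ODE has a full set of algebraic solutions'' is ``the known case of the Grothendieck--Katz conjecture with finite (hence solvable) monodromy''. This is circular: having a full set of algebraic solutions \emph{means} having finite differential Galois group (finite monodromy), so you are assuming the conclusion you want. The hard direction of Grothendieck--Katz --- from vanishing $p$-curvatures to algebraicity of solutions --- is open in general. The results of Katz, the Chudnovskys, Andr\'e and Bost settle only special cases (Picard--Fuchs equations, rank one, connections whose differential Galois Lie algebra is \emph{a priori} known to be solvable or to satisfy certain positivity conditions, and so on), and there is no reason the module $(M_0,\nabla)$ arising from an arbitrary $ID_q$-module should fall into any of these classes. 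Indeed, in characteristic zero the passage of Proposition~\ref{prop:compder} is essentially reversible, so every differential module over $K(x)$ arises this way; your reduction therefore shows that Conjecture~\ref{Conj:luci} is equivalent to, and exactly as hard as, the full Grothendieck--Katz conjecture. The ``main obstacle'' you flag (uniform spreading out in $v$) is a technicality by comparison; the substantive obstacle is that the theorem you invoke at the end of Step~3 does not exist.
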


\textbf{Computation of the curvature}\\

Fix a basis $\underline{e}$ of  $M$ such that the actions of $\delta_M^{(1)}, \delta_M^{(n)}$ w.r.t. $\underline{e}$ are  given by
$$\delta^{(1)}\underline{e}=0 \ \mbox{and} \ \delta^{(n)}(\underline{e})=A\underline{e} \ \mbox{with} \ A \in M_r(K(t)).$$
Set $A_{[1]}:= A$ and define inductively $A_{[k]}$ with $A_{[k+1]}:= \delta^{(n)}(A_{[k]}) +A_{[k]} A_{[1]}$. Then,
$$ (\delta^{(n)})^k(\underline{e})=A_{[k]}\underline{e}.$$
The matrix of the $\pi_v$-curvature $\psi_{\pi_v}$ with respect to the basis $\underline{e}$ is $A_{[p_v]}$.

Here is an example where Conjecture \ref{Conj:luci} holds.

\begin{ex}[Example \ref{ex:car0}]
 Let $a \in K$. Then, let us consider the
$ID_qE$ : $$\delta^{(1)}(\bold{y})=0 \ \mbox{ and} \
\delta^{(n)}(\bold{y})=\frac{a}{t^n}\bold{y}.$$
 Let $v$ be a sufficiently large place
of $K$. A simple calculation shows that the reduction of $
(\delta_M^{(n)})^{p_v}$ modulo $\pi_{v}$ is equal to
$\frac{a(a-1)...(a-(p_v-1))}{t^{np_v}}id_M$ (we have $A_{[1]}=\frac{a}{t^n},A_{[2]}=\frac{-a}{t^{2n}}
+\frac{a^2}{t^n}=\frac{a(a-1)}{t^{2n}},...$).\\
 If we assume that for almost all finite
places $v$, the reduction modulo $\pi_{v}$  of $(\delta_M^{(n)})^{p_v}$ is
equal to zero, we get that for almost all finite places $v$ there
exists $a_v \in \mathbb{Z}$ such that the valuation of  $a-a_v$ in
$\pi_v$ is strictly positive. By the Density Theorem of Chebotarev,
we obtain that $a \in \mathbb{Q}$. We have proved in Theorem
\ref{theorem:compmO} that $a \in \mathbb{Q}$ if and only if
 $\mathcal{M}$ has a finite Galois group.
\end{ex}

It would be also interesting to relate isotrivial $q$-difference module over $K(t)$ (in the classical  sense) and iterative
$q$-difference module. If one consider an element $q \in K$ not a root of unity, its reduction $q_v$ at a finite place $v$, if it exists, is a root
of unity. Thus, starting from a $q$-difference module one could ask what are the conditions such that given a finite place
$v$ of $K$ the reduction of $M$ modulo $\pi_v$ can be endowed with a structure of iterative $q_v$-difference module.\\

For iterative differential modules, this question give rise to a conjecture enounced by Matzat and Van
der Put (\cite{MP} p.51). The analogue of this conjecture in the $q$-difference world is \\
\\ \textit{Let a $q$-difference module $M$ over $K(t)$. Suppose that for almost all finite places $v$,
 the reduction of $\mathcal{M}$ modulo $\pi_v$ has a structure of iterative $q_v$-difference module and has a finite Galois
 group $G$ then the difference Galois group of $M$ is isomorphic to $G$.}\\
 
 This statement should be  a consequence or reformulation of the theorem of L. Di Vizio, that is Theorem $7.1.1$ in
 \cite{Lu}.

\end{document}